\tikzset{cross/.style={cross out, draw=black, minimum size=2*(#1-\pgflinewidth), inner sep=0pt, outer sep=0pt},
cross/.default={1pt}}
\def\@seccntformat#1{\@ifundefined{#1@cntformat}%
   {\csname the#1\endcsname\quad}  
   {\csname #1@cntformat\endcsname}
}
\let\oldappendix\appendix 
\renewcommand\appendix{%
    \oldappendix
    \newcommand{\section@cntformat}{\appendixname: }
}
\renewcommand{\paragraph}{%
  \@startsection{paragraph}{4}%
  {\z@}{1.5ex \@plus 1ex \@minus .2ex}{-1em}%
  {\normalfont\normalsize\bfseries}%
}
\theoremstyle{plain}
\newtheorem{Proposition}{\textbf{Proposition}}[section]
\newtheorem{Lemma}[Proposition]{\textbf{Lemma}}
\newtheorem{Corollary}[Proposition]{\textbf{Corollary}}
\newtheorem{Assumption}{\textbf{Assumption}} }
\theoremstyle{plain}
\newtheorem{Theorem}[Proposition]{Theorem}}
\theoremstyle{definition}
\newtheorem{Definition}[Proposition]{Definition}}
\theoremstyle{remark}
\newtheorem{Remark}[Proposition]{Remark}
\numberwithin{equation}{section}
\newcommand{\eps}{\varepsilon}
\newcommand{\bbC}{\mathbb{C}}
\newcommand{\bbG}{\mathbb{G}}
\newcommand{\bbK}{\mathbb{K}}
\newcommand{\bbR}{\mathbb{R}}
\newcommand{\N}{\mathbb{N}}
\newcommand{\Z}{\mathbb{Z}}
\newcommand{\R}{\mathbb{R}}
\renewcommand{\S}{\mathbb S} 
\newcommand{\cE}{\mathcal{E}}
\newcommand{\cX}{\mathcal{X}}
\newcommand{\rL}{\mathrm{L}}
\let\limsup\relax
\let\liminf\relax
\DeclareMathOperator* \limsup {\overline{lim}}
\DeclareMathOperator* \liminf {\underline{lim}}
\DeclareMathOperator*{\argmax}{arg\,max}
\DeclareMathOperator*{\argmin}{arg\,min}
\newcommand{\inclim}[1]{\lim_{#1}\!\!\uparrow\!}
\newcommand{\declim}[1]{\lim_{#1}\!\!\downarrow\!}
\let\originalleft\left
\let\originalright\right
\renewcommand{\left}{\mathopen{}\mathclose\bgroup\originalleft}
\renewcommand{\right}{\aftergroup\egroup\originalright}
\newcommand{\p}[1]{\left( #1 \right)}
\newcommand{\acc}[1]{\left\{ #1 \right\}}
\newcommand{\cro}[1]{\left[ #1 \right]}
\newcommand{\set}[2]{\acc{#1 \;\middle\vert\; #2 } }
\newcommand{\symdif}{\mathbin{\vartriangle}}
\newcommand{\ind}[1]{\mathds{1}_{#1}}
\newcommand{\dpe}{\coloneqq}
\newcommand{\eol}{\nonumber\\}
\def\restriction#1#2{\mathchoice
              {\setbox1\hbox{${\displaystyle #1}_{\scriptstyle #2}$}
              \restrictionaux{#1}{#2}}
              {\setbox1\hbox{${\textstyle #1}_{\scriptstyle #2}$}
              \restrictionaux{#1}{#2}}
              {\setbox1\hbox{${\scriptstyle #1}_{\scriptscriptstyle #2}$}
              \restrictionaux{#1}{#2}}
              {\setbox1\hbox{${\scriptscriptstyle #1}_{\scriptscriptstyle #2}$}
              \restrictionaux{#1}{#2}}}
\def\restrictionaux#1#2{{#1\,\smash{\vrule height .8\ht1 depth .85\dp1}}_{\,#2}} 
\newcommand{\module}[1]{\left\lvert #1 \right\rvert}
\newcommand{\norme}[2][]{\left\| #2 \right\|_{#1}}
\newcommand{\ceil}[1]{\!\left\lceil #1 \right\rceil\!}
\newcommand{\floor}[1]{\!\left\lfloor #1 \right\rfloor\!}
\newcommand{\ball}[2][]{\mathrm{B}_{#1}\p{#2}}
\newcommand{\clball}[2][]{\overline{\mathrm{B}}_{#1}\p{#2}}
\newcommand{\intervalle}[4]{#1#2\,,#3#4}
\newcommand{\intervalleff}[2]{\intervalle{\left[}{#1}{#2}{\right]}}
\newcommand{\intervalleof}[2]{\intervalle{\left(}{#1}{#2}{\right]}}
\newcommand{\intervallefo}[2]{\intervalle{\left[}{#1}{#2}{\right)}}
\newcommand{\intervalleoo}[2]{\intervalle{\left(}{#1}{#2}{\right)}}
\newcommand{\intint}[2]{\left\llbracket#1\,,#2\right\rrbracket}
\renewcommand{\d}{\mathrm{d}}
\newcommand{\base}[1]{\mathrm e_{#1}}
\newcommand\ps[2]{\left\langle #1, #2 \right\rangle}
\newcommand{\Borel}[1]{\mathcal{B}\left( #1 \right) }
\newcommand{\Leb}{\operatorname{Leb}}
\newcommand{\Dirac}[1]{\delta_{#1}}
\newcommand{\E}[2][]{\mathbb{E}_{#1} \left[ #2\right]}
\newcommand{\Pb}[2][]{\mathbb{P}_{#1}\left( #2\right)}
\newcommand{\Path}[1]{\overset{#1}{\rightsquigarrow}}
\newcommand{\concat}{\mathbin{*}}
\newcommand{\pro}{\mathcal{N}}
\newcommand{\projpro}{\mathcal{N^*}}
\newcommand{\ProSpace}[1][\R^d \times \intervalleoo0\infty]{\mathbf{N}\p{#1}}
\newcommand{\MM}[1][\pro]{\mathrm{M}_{ {#1} } }
\newcommand{\FMM}[2][\pro]{\mathrm{M}_{ {#1}^{(#2)} } }
\newcommand{\Trans}{\theta}
\newcommand{\AUXAnimal}{\mathcal{A}}
\newcommand{\AUXPath}{\mathcal{P}}
\newcommand{\AUXGeneric}{\mathcal{G}}
\newcommand{\AUXMassAnimal}{\mathrm{A}}
\newcommand{\AUXMassPath}{\mathrm{P}}
\newcommand{\AUXMassGeneric}{\mathrm{G}}
\newcommand{\AUXLimAnimal}{\mathbf{A}}
\newcommand{\AUXLimPath}{\mathbf{P}}
\newcommand{\AUXLimGeneric}{\mathbf{G}}
\newcommand{\AUXBiancre}[3]{\p{#1 \leftrightarrow #2, #3}}
\newcommand{\AUXDiamant}[2]{#1^{#2}}
\newcommand{\AUXFree}[1]{#1}
\newcommand{\Mass}[1]{\mathbf{m}\p{#1}}
\newcommand{\SetADF}[3]{\AUXFree{\AUXAnimal}\AUXBiancre{#1}{#2}{#3} }
\newcommand{\SetAUF}[1]{\AUXFree{\AUXAnimal}\p{#1} }
\newcommand{\SetADR}[4]{\AUXAnimal_{#1}\AUXBiancre{#2}{#3}{#4} }
\newcommand{\SetPDF}[3]{\AUXFree{\AUXPath}\AUXBiancre{#1}{#2}{#3} }
\newcommand{\SetPUF}[1]{\AUXFree{\AUXPath}\p{#1} }
\newcommand{\SetPDR}[4]{\AUXPath_{#1}\AUXBiancre{#2}{#3}{#4} }
\newcommand{\SetGDC}[4]{\AUXDiamant{\AUXGeneric}{#1}\AUXBiancre{#2}{#3}{#4} }
\newcommand{\SetGDF}[3]{\AUXFree{\AUXGeneric}\AUXBiancre{#1}{#2}{#3} }
\newcommand{\SetGUF}[1]{\AUXFree{\AUXGeneric}\p{#1} }
\newcommand{\SetGDR}[4]{\AUXGeneric_{#1}\AUXBiancre{#2}{#3}{#4} }
\newcommand{\SetADFalt}[3]{\AUXFree{\AUXAnimal}^{*}\AUXBiancre{#1}{#2}{#3} }
\newcommand{\SetAUFalt}[1]{\AUXFree{\AUXAnimal}^{*}\p{#1} }
\newcommand{\SetLADF}[3]{\AUXFree{\AUXAnimal}^{\mathrm L}\AUXBiancre{#1}{#2}{#3} }
\newcommand{\SetLPUF}[1]{\AUXFree{\AUXPath}^{\mathrm L}\p{#1} }
\newcommand{\MassADC}[4]{\AUXDiamant{\AUXMassAnimal}{#1}\AUXBiancre{#2}{#3}{#4} }
\newcommand{\MassADF}[3]{\AUXFree{\AUXMassAnimal}\AUXBiancre{#1}{#2}{#3} }
\newcommand{\MassAUF}[1]{\AUXFree{\AUXMassAnimal}\p{#1} }
\newcommand{\MassADR}[4]{\AUXMassAnimal_{#1}\AUXBiancre{#2}{#3}{#4} }
\newcommand{\MassPDF}[3]{\AUXFree{\AUXMassPath}\AUXBiancre{#1}{#2}{#3} }
\newcommand{\MassPUF}[1]{\AUXFree{\AUXMassPath}\p{#1} }
\newcommand{\MassPDR}[4]{\AUXMassPath_{#1}\AUXBiancre{#2}{#3}{#4} }
\newcommand{\MassGDC}[4]{\AUXDiamant{\AUXMassGeneric}{#1}\AUXBiancre{#2}{#3}{#4} }
\newcommand{\MassGDF}[3]{\AUXFree{\AUXMassGeneric}\AUXBiancre{#1}{#2}{#3} }
\newcommand{\MassGUF}[1]{\AUXFree{\AUXMassGeneric}\p{#1} }
\newcommand{\MassGDR}[4]{\AUXMassGeneric_{#1}\AUXBiancre{#2}{#3}{#4} }
\newcommand{\MassADFpen}[4]{\AUXFree{\AUXMassAnimal}^{(#4)}\AUXBiancre{#1}{#2}{#3} }
\newcommand{\MassAUFpen}[2]{\AUXFree{\AUXMassAnimal}^{(#2)}\p{#1} }
\newcommand{\MassLAUF}[1]{\mathrm{A^L}\p{#1} }
\newcommand{\MassLADF}[3]{\AUXFree{\AUXMassAnimal}^{\mathrm L}\AUXBiancre{#1}{#2}{#3} }
\newcommand{\MassLPUF}[1]{\mathrm{P^L}\p{#1} }
\newcommand{\MassLsaP}{\mathrm{P^L_{sa}}}
\newcommand{\MassLGUF}[1]{\mathrm{G^L}\p{#1} }
\newcommand{\MassLGDF}[3]{\AUXFree{\AUXMassGeneric}^{\mathrm L}\AUXBiancre{#1}{#2}{#3} }
\newcommand{\LimMassGDC}[1]{\AUXDiamant{\AUXLimGeneric}{#1} }
\newcommand{\LimMassGDCab}[2]{\AUXDiamant{\AUXLimGeneric}{#1}_{#2} }
\newcommand{\LimMassG}{\AUXLimGeneric }
\newcommand{\LimMassA}{\AUXLimAnimal }
\newcommand{\LimMassP}{\AUXLimPath}
\newcommand{\LimMassLA}{\AUXLimAnimal^{\mathrm L}}
\newcommand{\LimMassLG}{\AUXLimGeneric^{\mathrm L}}
\newcommand{\Diamant}[3]{\operatorname{Diam}^{#1}(#2 \leftrightarrow #3)}
\newcommand{\AntiDiamant}[3]{\operatorname{Diam}_{#1}(#2 \leftrightarrow #3)}
\newcommand{\Cone}[3]{\operatorname{Cone}^{#1}(#2, #3)}
\newcommand{\GeneralUB}{\mathbf{M}}
\newcommand{\Yinf}{\underline{Y}}
\newcommand{\Ysup}{\overline{Y}}
\newcommand{\TC}[1]{\gamma\p{#1} }
\newcommand{\Sym}[2][]{\mathfrak{S}_{#1}\p{#2} }
\newcommand{\SymbolsX}[1]{%
  \ensuremath{%
    \ifcase#1
    \or 
      *%
    \or 
      \dagger
    \or 
      \ddagger
    \or 
      **
    \or 
      \dagger\dagger
    \or 
      \ddagger \ddagger
    \or 
      \diamond 
    \fi
  }%
}   
\newcounter{error}
\newcommand{\cError}{X^{\SymbolsX{\theerror} }}
\title{Law of large numbers for greedy animals and paths in an ergodic environment}
\author{Julien \textsc{Verges} \\ julien.verges@univ-tours.fr}
\date{\today}
\begin{document}

\maketitle
\abstract{Consider a family of random masses $\mathbf{m}(v)$ indexed by vertices of the lattice $\Z^d$. In the case where the masses are i.i.d.\ and satisfy a certain moment condition, it is known that there exists a deterministic $A\ge 0$ such that the maximal mass $A_n$ of an animal containing $0$ with cardinal $n$ satisfies $A_n/n \rightarrow A$ when $n\to \infty$, almost surely. The same also goes for self-avoiding paths. We extend this result to the case where the family of masses is an ergodic marked point process, with a suitable definition for animals in this context. Special cases include the initial model with ergodic instead of i.i.d.\ masses and marked Poisson point processes. We also discuss some sufficient or necessary conditions for integrability.}
\section{Introduction}
\label{sec : intro}
\subsection{Context}
In 1993-94, Cox, Gandolfi, Griffin and Kesten \cite{Cox93, Gan94} introduced the models of \emph{greedy lattice animals} and \emph{greedy lattice paths} as such: consider an integer $d\ge 2$ and the standard lattice $\Z^d$, i.e.\ the graph with vertex set $\Z^d$, in which two vertices are neighbors if and only if their Euclidean distance is $1$. A \emph{lattice animal} is a finite connected subset of $\Z^d$. The \emph{length} of a lattice animal $\xi$ is defined as its cardinal; given a family of i.i.d.\ nonnegative variables $(\Mass v)_{v\in \Z^d}$ with distribution $\nu$, the \emph{mass} $\Mass{\xi}$ of a lattice animal $\xi$ is defined as the sum of the $\Mass v$, for $v\in \xi$. For all $n\ge 1$, we define\footnote{The exponent L stands for \emph{lattice}.} $\MassLAUF{n}$ as the maximal mass of an animal of length $n$, containing the origin. Animals realizing this maximum are called \emph{greedy}. Cox, Gandolfi, Griffin and Kesten \cite{Cox93, Gan94} proved a law of large numbers for the process $\p{\MassLAUF{n}}_{n\ge 1}$. More precisely, if for some $\eps>0$,
\begin{equation}
	\label{eqn : intro/context/CGGK_ass}
	\E{\Mass 0^d \p{\log(\Mass0)^+}^{d+\eps}} <\infty,
\end{equation}
then there exists a deterministic constant $\LimMassLA(0) \in \intervallefo0\infty$, such that almost surely and in $\rL^1$,
\begin{equation}
	\label{eqn : intro/context/LLN}
	\lim_{n\to\infty} \frac{\MassLAUF{n}}{n} = \LimMassLA(0).
\end{equation}
They also proved an analogous result the maximal mass $\MassLsaP(n)$ of a self-avoiding lattice path of length $n$, starting at the origin. In 2002, Martin \cite{Mar02} showed the same results with the weaker assumption
\begin{equation}
	\label{eqn : intro/context/greedy_condition}
	\int_0^\infty \nu\p{\intervallefo t\infty}^{1/d}\d t< \infty.
\end{equation}
and simpler arguments. Although not stated, his proof still holds for the maximal mass $\MassLPUF{n}$ of a lattice path of length $n$, starting at the origin.

This article aims to
\begin{enumerate} 
	\item Extend \eqref{eqn : intro/context/LLN} to any stationary, ergodic family of variables, provided $\E{\MassLAUF{n}}/n$ is bounded.
	\item Show a continuous analogue of \eqref{eqn : intro/context/LLN}, with a marked Poisson point process on $\R^d$ instead of $\p{\Mass v}_{v\in \Z^d}$. Greedy \emph{continuous} paths were already introduced by Gouéré and Marchand~\cite{Gou08} as a tool for the study of a continuous model of first-passage percolation. They proved that under an assumption similar to~\eqref{eqn : intro/context/greedy_condition}, the mass of a greedy continous path grows linearly. Gouéré and Théret~\cite{Gou17} also used this fact in a subsequent study of the same model.
\end{enumerate}
Corollaries~\ref{cor : intro/special/Zd} and~\ref{cor : intro/special/Poisson} answer these questions. Both are stated in Section~\ref{subsec : intro/special}. We work with a stationary, ergodic marked point process, which encompasses both situations. Moreover, our main result, Theorem~\ref{thm : intro/main/MAIN}, implies the analogue of~\eqref{eqn : intro/context/LLN} for the maximal mass of an animal of length $n$, containing $0$ and $n u$, uniformally with respect to $u$ on certain subsets of $\R^d$.

\subsection{Framework}
Let $d\ge 2$ be a integer and $\norme\cdot$ a norm on $\R^d$. For all $x\in \R^d$ and $r>0$, let $\ball{x,r}$ and $\clball{x,r}$ respectively denote the open and closed balls of center $x$ and radius $r$, for the norm $\norme\cdot$. Let $\S$ denote the unit sphere for $\norme\cdot$. For all subsets $A,B\subseteq \R^d$, we define
\begin{equation}
 	\d(A,B) \dpe \inf \set{\norme{x-y} }{x\in A,\quad y\in B}.
\end{equation} Given $p\in \intervallefo1\infty$, the norm $\norme[p]\cdot$ on $\R^d$ is defined by
\begin{equation}
	\norme[p]{x} \dpe \p{ \sum_{i=1}^d \module{x_i}^p }^{1/p},
\end{equation}
and the associated balls are denoted by $\ball[p]{\cdot,\cdot}$ and $\clball[p]{\cdot,\cdot}$. The choice $\norme\cdot = \norme[1]\cdot$ will be useful to see the lattice model as a special case of the one developed here, while the choice $\norme\cdot = \norme[2]\cdot$ will make the Poissonian model rotation-invariant. Let $\Leb$ denote the Lebesgue measure on $\R^d$. We denote by $(\base i)_{1\le i \le d}$ the canonical basis of $\R^d$.

\paragraph{Point processes.} Given a locally compact, second countable and Hausdorff topological space $\bbG$ --- we will call such a space \emph{regular} --- let $\ProSpace[\bbG]$ denote the space of measures on $\bbG$ which take integer values on compact subsets, endowed with the $\sigma$-algebra generated by the maps $\eta \mapsto \eta(A)$, for all Borel sets $A\subseteq \bbG$. We call \emph{point process} on $\bbG$ a random variable with values in $\ProSpace[\bbG]$. See~\cite{Bac20} and~\cite{Dal06, Dal07} for the general theory of point processes. It is well known (see e.g.\ Lemma 1.6.8 in \cite{Bac20}) that a point process $\Phi$ on $\bbG$ may be written as the sum
\begin{equation}
	\label{eqn : intro/main/decomposition_pro}
	\Phi = \sum_{n=1}^{N} \Dirac{z_n},
\end{equation}
where $N$ and the $x_n$ for $1\le n \le N$ are random variables with values in $\N\cup\acc\infty$ and $\bbG$ respectively. Let $\pro$ be a simple marked point process on $\R^d \times \intervalleoo0\infty$, i.e.\ a point process on $\R^d\times \intervalleoo0\infty$ such that almost surely, for all $x\in \R^d$, $\pro\p{\acc{x}\times \intervalleoo{0}{\infty}}\le 1$. Equation~\eqref{eqn : intro/main/decomposition_pro} then takes the form
\begin{equation}
	\label{eqn : intro/main/decomposition_pro2}
	\pro = \sum_{n=1}^{N} \Dirac{x_n, \Mass{x_n}},
\end{equation}
where, for all $1\le n \le N$, $x_n \in \R^d$ and $\Mass{x_n}\in \intervalleoo0\infty$. Let 
\begin{equation}
\projpro \dpe \set{ x\in \R^d }{ \pro\p{\acc{x}\times \intervalleoo{0}{\infty}}>0 } = \set{x_n}{1\le n \le N}.
\end{equation}
For all $z\in \R^d$, let $T_z\pro$ denote the image of $\pro$ by the map
\begin{align*}
	\Trans_z : \R^d\times \intervalleoo0\infty &\longrightarrow \R^d\times \intervalleoo0\infty \\
	(x,t) &\longmapsto (x+z,t).
\end{align*}
We assume $\pro$ to be stationary, i.e.\ for all $z\in \R^d$, $\pro$ and $T_z \pro$ have the same distribution.
\begin{Definition}
	\label{def : intro/framework/mass_path_animal}
	For every subset $A\subseteq \R^d$, the \emph{mass} of $A$ (with respect to $\pro$) is defined as
	\begin{equation}
		\label{eqn : intro/framework/mass_path_animal}
		\Mass{A} \dpe \int_{A \times \intervalleoo0\infty} t \pro(\d x, \d t) = \sum_{n=1}^N \Mass{x_n} \ind{x_n \in A}.
	\end{equation}
\end{Definition}
For all $x\in \R^d$, we adopt the notation $\Mass x = \Mass{\acc x}$, which is consistent with~\eqref{eqn : intro/main/decomposition_pro2}. 

\paragraph{Moment measures of point processes.}
We call \emph{mean measure} of a point process $\Phi$ on a regular space $\bbG$ the measure defined on $\bbG$ by
\begin{equation}
	\label{eqn : intro/suff_condition/def_mm}
	\MM[\Phi](E) \dpe \E{ \Phi(E) },
\end{equation}
for all Borel subset $E \subseteq \bbG$. If $\MM[\pro]$ is locally finite, one shows by a straight adaptation of Lemma~6.1.17 (iii) in~\cite{Bac20} that there exists\footnote{Indeed consider the measure $\nu$ on $\intervalleoo0\infty$ by $\nu(B) \dpe \E{\intervallefo01^d \times B}$. By the mentioned lemma, for all compact subsets $A\subseteq \R^d$ and $B\subseteq \intervalleoo0\infty$, $\MM[\pro](A\times B) = \Leb(A)\nu(B)$. } a measure $\nu$ on $\intervalleoo0\infty$ such that
\begin{equation}
	\label{eqn : intro/framework/mm_as_a_product}
	\MM[\pro] = \Leb \otimes \nu.
\end{equation}
This will be the case under the framework of our main theorem (see Proposition~\ref{prop : intro/nec_condition}). In particular, every Lebesgue-negligible subset of $\R^d$ has almost surely no mass. We will regularly use this fact. 

For all $k\ge1$, we call $k$-\emph{th factorial power} of $\Phi$ the point process on $\bbG^k$ defined by
\begin{equation}
	\label{eqn : intro/suff_condition/def_fmm}
	\Phi^{(k)} \dpe \sum_{ \substack{1\le n_1, \dots, n_k \le N \\ \text{pairwise distinct}} } \Dirac{\p{ z_{n_1},\dots, z_{n_k} }  },
\end{equation}
with the notations of~\eqref{eqn : intro/main/decomposition_pro}. Note that $\Phi^{(1)}= \Phi$. We call $k$-th \emph{factorial moment measure} of $\Phi$ the measure $\MM[\Phi^{(k)}]$.

\paragraph{Continuous paths and animals.}

\begin{Definition}
	\label{def : intro/framework/path_animal}
	Following Gouéré and Marchand (2008) \cite{Gou08}, we call (continuous) \emph{path} a finite sequence of points of $\R^d$. For a given norm $\norme\cdot$, the \emph{length} of a path $\gamma = (x_0,\dots, x_r)$ is defined as
	\begin{equation}
	\label{eqn : intro/framework/length_path}
		\norme\gamma \dpe \sum_{i=0}^{r-1}\norme{x_i - x_{i+1}}.
	\end{equation}
	We call (continuous) \emph{animal} a finite connected graph whose vertices are points of $\R^d$. The \emph{length} of a animal $\xi = (V,E)$ with vertex set $V$ and edge set $E$ is defined as
	\begin{equation}
	\label{eqn : intro/framework/length_animal}
		\norme\xi \dpe \sum_{\acc{x,y}\in E} \norme{x-y}.
	\end{equation}
\end{Definition}
When there is no ambiguity, we will identify a path $\gamma = (x_0, \dots, x_r)$ with the animal with vertex set $\acc{x_i}_{0\le i \le r}$ and edge set $\acc{(x_{i-1},x_{i})}_{1\le i \le r}$. We will also identify a path or an animal with its vertex set, e.g.\ for any animal $\xi =(V,E)$, $\Mass{\xi}= \Mass{V}$ and for any path $\gamma=(x_0,\dots, x_r)$, $\Mass{\gamma}=\Mass{\acc{x_0,\dots, x_r}}$. The following families of paths will be of interest. For all $x,y\in \R^d$ and $\ell\ge0$, we define:
\begin{itemize}
	\item  $\SetPUF{\ell}$ as the set of paths of length at most $\ell$, starting at $0$.
	\item  $\SetPDF{x}{y}{\ell}$ as the set of paths of length at most $\ell$, starting at $x$ and ending at $y$.
\end{itemize}
Likewise, we define:
\begin{itemize}
	\item  $\SetAUF{\ell}$ as the set of animals of length at most $\ell$, containing $0$.
	\item  $\SetADF{x}{y}{\ell}$ as the set of animals of length at most $\ell$, containing $x$ and $y$.
\end{itemize}
For all $x,y\in \Z^d$, $\ell \in \N$, $\SetLPUF{\ell}$,\ldots, $\SetLADF{x}{y}{\ell}$ are defined the same way as their counterparts without the exponent $\mathrm L$, with \emph{lattice} paths and animals.

\paragraph{The processes.}
For any set of paths or animals denoted by a calligraphic font letter, we use the same letter in roman typestyle to denote the supremum of the mass of animals or paths in this set. For example, for all $\ell\ge 0$,
\begin{equation}
	\label{eqn : intro/framework/def_MassPUF}
	\MassPUF{\ell} \dpe \sup_{\gamma \in \SetPUF{\ell} } \Mass{\gamma}.
\end{equation}
We also use this convention for a generic $\AUXGeneric \in \acc{\AUXPath, \AUXAnimal}$ : for all $x,y\in \R^d$ and $\ell\ge 0$,
\begin{equation}
	\label{eqn : intro/framework/def_MassGUF_MassGDF}
	\MassGUF{\ell} \dpe \sup_{\gamma \in \SetGUF{\ell} } \Mass{\gamma}%
	\text{ and }
	\MassGDF{x}{y}{\ell} \dpe \sup_{\gamma \in \SetGDF{x}{y}{\ell}} \Mass{\gamma}.
\end{equation}
Another natural analogue of $\MassLAUF\cdot$ in a continuous context consists in restricting the supremum to animals which are included in $\projpro$. More precisely, for all $x,y\in\R^d$ and $\ell\ge0$, we define:
\begin{itemize}
	\item $\SetAUFalt{\ell}$ as the set of animals $\xi$ such that $\norme\xi + \d(0,\xi) \le \ell$, or $\xi$ is empty,
	\item $\SetADFalt{x}{y}{\ell}$ as the set of animals $\xi$ such that $\norme\xi + \d(x,\xi) + \d(y,\xi)\le \ell$, or $\xi$ is empty,
\end{itemize}
and the corresponding variables 
\begin{equation}
	\label{eqn : intro/framework/animaux_contraints}
	\MassAUFpen\ell{\infty} \dpe \sup_{\substack{ \xi \in \SetAUFalt{\ell} \\ \xi \subseteq \projpro}  } \Mass\xi%
	\text{ and }
 	\MassADFpen xy\ell{\infty} \dpe \sup_{\substack{ \xi \in \SetADFalt xy{\ell} \\ \xi \subseteq \projpro}  } \Mass\xi.
\end{equation}
It is pointless to introduce similar processes for paths, since by triangle inequality, skipping vertices outside $\projpro$ along a path produces a path with the same mass and smaller length. The notation $\MassAUFpen\cdot\infty$ is linked to the following third analogue of $\MassLAUF\cdot$, which is an interpolation of the preceding two. For all $q\in\intervalleff0\infty$, $x,y\in\R^d$ and $\ell \ge 0$, we define
\begin{align}
	\label{eqn : intro/framework/animaux_pen1}
	\MassAUFpen{\ell}{q} &\dpe \sup_{\xi \in \SetAUFalt{\ell}} \cro{ \Mass{\xi} - q\#\p{\xi \cap \projpro^\mathrm{c}} }\\%
	\text{and }
	\label{eqn : intro/framework/animaux_pen2}
	\MassADFpen{x}{y}{\ell}{q} &\dpe \sup_{\xi \in \SetADFalt{x}{y}{\ell}} \cro{ \Mass{\xi} - q\#\p{\xi \cap \projpro^\mathrm{c}} }, 
\end{align}
i.e.\ the analogues of $\MassAUF{\ell}$ and $\MassADF{x}{y}{\ell}$, with a penalization $-q$ for every vertex of $\xi$ not belonging to the point process. By adding one vertex and one edge, one shows that any animal in $\SetAUFalt{\ell}$ is included in an animal in $\SetAUF{\ell}$, thus $\MassAUFpen{\ell}{0} \le \MassAUF{\ell}$. The inclusion $\SetAUF{\ell} \subseteq \SetAUFalt{\ell}$ gives the converse inequality, hence
\begin{align*}
	\MassAUFpen{\ell}{0} &= \MassAUF{\ell}.
	\intertext{Likewise,}
	\MassADFpen{x}{y}{\ell}{0} &= \MassADF{x}{y}{\ell}.
\end{align*} 
Besides, \eqref{eqn : intro/framework/animaux_contraints} is compatible with \eqref{eqn : intro/framework/animaux_pen1} and \eqref{eqn : intro/framework/animaux_pen2}.

Note that for all $\ell>0$ and $q\in\intervalleff0\infty$,
\begin{equation}
	\label{eqn : intro/framework/easy_inequality}
	\MassPUF\ell \le \MassAUFpen\ell q \le \MassAUF\ell \le \MassPUF{2\ell},
\end{equation}
where we have used in the last inequality the fact that any animal may be covered by the path obtained by a depth-first search.
\subsection{Main results}
We work under the following assumptions.
\begin{Assumption}
\label{ass : intro/main/Ergodic_Stationary}
The process $\pro$ is ergodic with respect to the translations by elements of $\R^d$, i.e.\ for all measurable subsets $E\subseteq \ProSpace$ such that
	\begin{equation}
		\label{eqn : intro/main/invariant_event}
		\forall z\in\R^d,\quad \Pb{\acc{\pro \in E} \symdif \acc{T_z \pro \in E} } =0,
	\end{equation}
	$\Pb{\pro \in E} \in \acc{0,1}$. 
\end{Assumption}
(See Definition~8.4.1 in \cite{Bac20}.) Subsets satisfying~\eqref{eqn : intro/main/invariant_event} are called \emph{invariant}.
\begin{Assumption}
	\label{ass : intro/main/Moment}
	\[ \GeneralUB\dpe\sup_{\ell \ge 1} \frac{\E{ \MassAUF{\ell} } }{\ell} < \infty. \]
\end{Assumption}
Let $\cX$ denote the subset of $\clball{0,1}^2\times \intervalleof01$ consisting of triplets $(x,y,\ell)$ such that $x$ and $y$ are colinear, and $\norme{x-y}< \ell$.
\begin{Theorem}
	\label{thm : intro/main/MAIN}
	Let $\AUXGeneric \in \acc{\AUXPath, \AUXAnimal}$. Assume that $\pro$ satisfies Assumptions~\ref{ass : intro/main/Ergodic_Stationary} and~\ref{ass : intro/main/Moment}. Then there exists a deterministic, concave, symmetric with respect to $u\mapsto -u$ function $\LimMassG : \ball{0,1}\rightarrow \intervallefo0\infty$, such that for all compact subsets $K\subseteq \cX$,
	\begin{equation}
		\label{eqn : intro/main/MAIN/cvg}
		\sup\set{ \module{ \frac{\MassGDF{Lx}{Ly}{L\ell} }{L} - \ell\LimMassG\p{\frac{x-y}{\ell}}  } }%
		{(x,y,\ell) \in K}%
		\xrightarrow[L\to \infty]{\text{a.s. and }\rL^1} 0.
	\end{equation}
	Moreover,
	\begin{equation}
		\label{eqn : intro/main/MAIN/cvg_undirected}
		\frac{\MassGUF L}{L} \xrightarrow[L\to \infty]{\text{a.s. and }\rL^1} \LimMassG(0).
	\end{equation}
\end{Theorem}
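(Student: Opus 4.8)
The plan is to prove the directed statement \eqref{eqn : intro/main/MAIN/cvg} first by a subadditivity argument, and then deduce the undirected statement \eqref{eqn : intro/main/MAIN/cvg_undirected} as a corollary together with a sandwiching inequality. The backbone is a multiparameter subadditive ergodic theorem applied to the process $\bigl(\MassGDF{Lx}{Ly}{L\ell}\bigr)$ along a fixed direction. The first step is to reduce to a one-dimensional picture: by the colinearity constraint defining $\cX$, a triplet $(x,y,\ell) \in \cX$ is essentially encoded by a direction $\theta \in \S$ and the scalars carrying $x$, $y$ along $\theta$. Fix $\theta$; for integers $0 \le m \le n$ set $g_{m,n} \dpe -\MassGDF{m\theta}{n\theta}{n-m}$ (reading lengths along $\theta$, up to adjusting the length budget by a constant factor absorbed later). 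One checks $g_{0,n} \le g_{0,m} + g_{m,n}$ by concatenating a near-optimal object from $m\theta$ to a point and one from that point to $n\theta$: the masses add (up to the shared vertex, which only helps) and the lengths add, so the concatenated object lies in the relevant class. Stationarity of $\pro$ gives that the distribution of $(g_{m,n})$ is translation-covariant, and Assumption~\ref{ass : intro/main/Ergodic_Stationary} gives ergodicity of the driving process; Assumption~\ref{ass : intro/main/Moment} supplies the integrability bound $\E{|g_{0,n}|} \le \GeneralUB\, n$ (via \eqref{eqn : intro/framework/easy_inequality}, since $\MassGDF{\cdot}{\cdot}{\cdot} \le \MassAUF{\cdot}$ up to the factor-$2$ path bound). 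Kingman's subadditive ergodic theorem then yields, for each $\theta$, an a.s.\ and $\rL^1$ limit $\MassGDF{0}{n\theta}{n}/n \to -\LimMassG(\theta)$ for a deterministic $\LimMassG(\theta) \ge 0$, where the limit is constant by ergodicity.

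The second step upgrades this direction-by-direction convergence to the uniform convergence over compact $K \subseteq \cX$ claimed in \eqref{eqn : intro/main/MAIN/cvg}. Here the key structural input is concavity: one shows the map $u \mapsto \LimMassG(u)$ on $\ball{0,1}$ is concave and $1$-Lipschitz-type controlled (hence continuous), using superadditivity of $(x,\ell) \mapsto \MassGDF{0}{x}{\ell}$ in $(x,\ell)$ — concatenating an optimal object for $(x_1,\ell_1)$ with one for $(x_2,\ell_2)$ gives a valid object for $(x_1+x_2, \ell_1+\ell_2)$, so $\LimMassG$ inherits concavity in the limit. Symmetry under $u \mapsto -u$ is immediate since reversing a path or animal preserves mass and length. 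Uniformity then follows from a standard argument: pointwise a.s.\ convergence of a sequence of concave (or monotone-in-$\ell$, Lipschitz-in-position) functions on a compact set, to a continuous limit, is automatically uniform (a dyadic-net argument plus equicontinuity). One also needs monotonicity/Lipschitz estimates relating $\MassGDF{Lx}{Ly}{L\ell}$ for nearby $(x,y,\ell)$ — increasing $\ell$ only increases the sup, and moving endpoints by $\delta$ changes things by $O(\MassAUF{L\delta})$ on average — which together with Assumption~\ref{ass : intro/main/Moment} gives the required uniform equicontinuity in probability, then a.s.\ along the full net by a Borel–Cantelli / monotone interpolation step.

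For \eqref{eqn : intro/main/MAIN/cvg_undirected}: by \eqref{eqn : intro/framework/easy_inequality} we have $\MassPUF{L} \le \MassGUF{L} \le \MassPUF{2L}$ for both choices of $\AUXGeneric$, so it suffices to handle $\AUXGeneric = \AUXPath$ and then note both limits coincide and equal $\LimMassG(0)$. The undirected path quantity $\MassPUF{L}$ dominates $\MassPDF{0}{x}{L}$ for any $x$ with $\norme{x} \le L$, hence $\liminf \MassPUF{L}/L \ge \sup_{\|u\|\le 1} \LimMassG(u) = \LimMassG(0)$ (the sup is attained at $0$ by concavity and symmetry). For the matching upper bound, an optimal path of length $\le L$ from $0$ ends at some point $x_L$ with $\norme{x_L} \le L$; restricting to its endpoint data and using the uniform directed convergence already proved (applied on a compact exhausting all admissible endpoints, after the rescaling $x \mapsto x_L/L \in \clball{0,1}$), gives $\MassPUF{L}/L \le \sup_{(x,y,\ell)\in K} \bigl(\ell\LimMassG((x-y)/\ell) + o(1)\bigr)$, and the sup over the relevant $K$ is $\LimMassG(0)$ again by concavity. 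The $\rL^1$ convergence in both displays then follows from a.s.\ convergence plus uniform integrability, which Assumption~\ref{ass : intro/main/Moment} provides since $\MassGUF{L}/L \le 2\GeneralUB$-type bounds hold in expectation and the family is dominated.

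The main obstacle I expect is the uniformity in \eqref{eqn : intro/main/MAIN/cvg}: Kingman's theorem only gives one direction at a time, and one must be careful that the exceptional null set does not depend on $\theta$ (uncountably many directions) nor on the endpoint/length parameters. The resolution is to prove a.s.\ convergence only on a countable dense set of parameters (using continuity of $\LimMassG$ and the concavity/monotonicity interpolation bounds to fill in the rest), and crucially to establish a quantitative modulus of continuity for $(x,y,\ell)\mapsto \MassGDF{Lx}{Ly}{L\ell}/L$ that is uniform in $L$ with high probability — this is where Assumption~\ref{ass : intro/main/Moment} does the real work, bounding the cost of perturbing endpoints by the expected mass of a small animal. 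Getting this modulus to hold simultaneously over a compact $K$ (rather than just in probability pointwise) is the technical heart and will likely require a chaining/maximal-inequality argument over a dyadic net in $\cX$.
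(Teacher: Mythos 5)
Your argument has a genuine gap at its foundation: the superadditivity you rely on (equivalently, subadditivity of $g_{m,n}=-\MassGDF{m\theta}{n\theta}{n-m}$) is false for the unrestricted process. If $\xi_1\in\SetGDF{0}{m\theta}{m}$ and $\xi_2\in\SetGDF{m\theta}{n\theta}{n-m}$ are near-optimal, their concatenation does lie in $\SetGDF{0}{n\theta}{n}$, but its mass is $\Mass{\xi_1}+\Mass{\xi_2}-\Mass{\xi_1\cap\xi_2}$, and the two optimizers may overlap on a region carrying a large mass: nothing confines $\xi_1$ to one side of $m\theta$ or $\xi_2$ to the other, so both may visit the same heavy cluster. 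Thus ``the masses add (up to the shared vertex, which only helps)'' is exactly backwards --- overlap destroys the inequality $X(s,u)\ge X(s,t)+X(t,u)$, and Kingman's theorem cannot be applied to this process. The same defect invalidates your concavity argument, which again concatenates two free optimizers and adds their masses. This overlap problem is the central difficulty of the theorem. The paper circumvents it by introducing auxiliary processes restricted to diamonds $\Diamant{\delta}{x}{y}$ (Section~\ref{sec : LLN}): two consecutive diamond-restricted optimizers intersect only in the single point where the diamonds touch, which is a.s.\ massless because $\MM[\pro]=\Leb\otimes\nu$. These restricted processes are genuinely pathwise superadditive, and the bulk of Section~\ref{sec : LLN} (antidiamonds, cone projections onto the axis, and the compactness step) is devoted to comparing them back to the unrestricted $\MassGDF{x}{y}{\ell}$.

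The remainder of your plan --- a countable net, a modulus of continuity for $\LimMassG$, and the sandwich $\MassPUF L=\sup_u\MassPDF{0}{Lu}{L}$ for the undirected statement --- is in the right spirit and parallels Sections~\ref{subsec : LLN/antidiamond}--\ref{subsec : LLN/directed} and the final proof, but it has no content until a correct superadditive auxiliary process produces the limit. Note also that for $\AUXGeneric=\AUXPath$ the comparison step is strictly harder than for animals, since paths can only be concatenated at their endpoints: the paper needs a dedicated decomposition of an arbitrary path into cylinder subpaths (Lemma~\ref{lem : LLN/directed/paths/decomposition}) combined with concavity of $\LimMassP$ to control the total mass, a step absent from your sketch. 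Finally, the uniform convergence~\eqref{eqn : intro/main/MAIN/cvg} only holds on compacts of $\cX$, which exclude $\norme{x-y}=\ell$; passing to $\MassPUF L$ therefore requires the rescaling $u\mapsto(1-\delta)u$ used at the end of the paper's proof, not a direct appeal to the uniform statement.
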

The analogous result for penalized maximal masses of animals also holds.
\begin{Theorem}
	\label{thm : intro/main/MAIN_PENALIZED}
	Let $q\in \intervalleff0\infty$. Assume that $\pro$ satisfies Assumptions~\ref{ass : intro/main/Ergodic_Stationary} and~\ref{ass : intro/main/Moment}. Then there exists a deterministic, concave, symmetric with respect to $u\mapsto -u$ function $\LimMassA^{(q)} : \ball{0,1}\rightarrow \intervallefo0\infty$, such that for all compact subsets $K\subseteq \cX$,
	\begin{equation}
		\label{eqn : intro/main/MAIN_PENALIZED/cvg}
		\sup\set{ \module{ \frac{\MassADFpen{Lx}{Ly}{L\ell}q }{L} - \ell\LimMassA^{(q)}\p{\frac{x-y}{\ell}}  } }%
		{(x,y,\ell) \in K}%
		\xrightarrow[L\to \infty]{\text{a.s. and }\rL^1} 0.
	\end{equation}
	Moreover,
	\begin{equation}
		\label{eqn : intro/main/MAIN_PENALIZED/cvg_undirected}
		\frac{\MassAUFpen Lq}{L} \xrightarrow[L\to \infty]{\text{a.s. and }\rL^1} \LimMassA^{(q)}(0).
	\end{equation}
\end{Theorem}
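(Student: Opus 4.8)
The strategy is to deduce the penalized statement from the already-established Theorem \ref{thm : intro/main/MAIN}, rather than to rerun the whole subadditive machinery. The key observation is that the map $q\mapsto \MassADFpen{x}{y}{\ell}{q}$ is, for each fixed configuration, a pointwise infimum over a family of affine functions of $q$ (indexed by animals $\xi\in\SetADFalt xy\ell$, with slope $-\#(\xi\cap\projpro^{\mathrm c})$ and intercept $\Mass\xi$), hence concave and nonincreasing in $q$; and it is sandwiched between $\MassPDF xy\ell=\MassADFpen xy\ell\infty$ and $\MassADF xy\ell=\MassADFpen xy\ell0$. The plan is therefore first to prove the result for the two extreme cases $q=0$ and $q=\infty$, then to interpolate. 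The case $q=0$ is literally Theorem \ref{thm : intro/main/MAIN} applied to $\AUXGeneric=\AUXAnimal$, using $\MassADFpen{x}{y}{\ell}{0}=\MassADF xy\ell$ and $\MassAUFpen\ell0=\MassAUF\ell$, so one sets $\LimMassA^{(0)}\dpe\LimMassG$ with $\AUXGeneric=\AUXAnimal$. The case $q=\infty$ should be handled by the observation (already noted in the excerpt around \eqref{eqn : intro/framework/animaux_contraints}) that for \emph{paths} skipping vertices outside $\projpro$ costs nothing, so $\MassPDF xy\ell$ and $\MassADFpen xy\ell\infty$ differ only by the extra legs $\d(x,\xi)+\d(y,\xi)$, which are $o(L)$ after rescaling on a compact $K\subseteq\cX$; thus the $q=\infty$ limit is again $\LimMassG$ with $\AUXGeneric=\AUXPath$. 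Since by \eqref{eqn : intro/framework/easy_inequality} one has $\MassPUF\ell\le\MassAUFpen\ell q\le\MassAUF\ell$, the two endpoint limits coincide as soon as the path- and animal-limits agree — but in general they need not agree, so the honest route is to treat a general fixed $q\in\intervalleff0\infty$ directly by subadditivity, exactly as in Theorem \ref{thm : intro/main/MAIN}, and only invoke the concavity/monotonicity in $q$ to organize the argument and to define $\LimMassA^{(q)}$ uniformly.

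Concretely, I would verify that the family $\p{\MassADFpen{x}{y}{\ell}{q}}$ satisfies the same structural properties that drove the proof of Theorem \ref{thm : intro/main/MAIN}: (i) a \emph{superadditivity/concatenation} inequality — given an animal realizing (up to $\eps$) $\MassADFpen{x}{z}{\ell_1}{q}$ and one realizing $\MassADFpen{z}{y}{\ell_2}{q}$, their union lies in $\SetADFalt xy{\ell_1+\ell_2}$ and its penalized mass is at least the sum minus the penalty at the junction vertex $z$ (a bounded correction), giving $\MassADFpen{x}{y}{\ell_1+\ell_2}{q}\ge\MassADFpen{x}{z}{\ell_1}{q}+\MassADFpen{z}{y}{\ell_2}{q}-C$; (ii) \emph{monotonicity} in $\ell$ and the \emph{a priori bound} $\MassADFpen xy\ell q\le\MassADF xy\ell$, so Assumption \ref{ass : intro/main/Moment} still controls $\E{\MassADFpen{0}{0}{\ell}{q}}/\ell$; (iii) \emph{stationarity} of the increments under the translations $T_z$, and ergodicity from Assumption \ref{ass : intro/main/Ergodic_Stationary}. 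With these in hand the Kingman-type argument (or the Ackoglu–Krengel multiparameter subadditive ergodic theorem, or whatever device powers Theorem \ref{thm : intro/main/MAIN}) yields a deterministic limit $\LimMassA^{(q)}(u)$ for $\MassADFpen{0}{Lu}{L}{q}/L$ first for rational/fixed directions, then the concavity in $u$ (inherited from concatenation) upgrades pointwise convergence to locally uniform convergence on $\ball{0,1}$; the symmetry $\LimMassA^{(q)}(u)=\LimMassA^{(q)}(-u)$ comes from the symmetry of animals under $\xi\mapsto$ (relabel endpoints). Finally the uniform statement \eqref{eqn : intro/main/MAIN_PENALIZED/cvg} over compact $K\subseteq\cX$ and the undirected statement \eqref{eqn : intro/main/MAIN_PENALIZED/cvg_undirected} follow from the same covering/monotonicity argument used for Theorem \ref{thm : intro/main/MAIN}, together with $\MassAUFpen L q\ge\MassADFpen 00{L}{q}$ for the lower bound and $\MassAUFpen Lq\le\MassAUF L$, which has limit $\LimMassA(0)=\LimMassA^{(0)}(0)$; but one must instead show the limit is $\LimMassA^{(q)}(0)$, so here one uses that an optimal penalized animal for $\MassAUFpen Lq$ can be "recentered" near $0$ losing $o(L)$, reducing to $\MassADFpen{0}{0}{L}{q}$.

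The main obstacle I anticipate is \textbf{not} the subadditivity bookkeeping — that is routine once Theorem \ref{thm : intro/main/MAIN} is granted — but rather making precise the claim that everything goes through "exactly as before" when the penalty term $-q\#(\xi\cap\projpro^{\mathrm c})$ is present. Two points need care. First, the a priori \emph{upper} bound: $\MassADFpen xy\ell q\le\MassADF xy\ell$ is fine, but one also wants that the penalized quantity is not too negative, i.e.\ that the empty animal (mass $0$) keeps $\MassADFpen xy\ell q\ge0$ whenever $\SetADFalt xy\ell$ is nonempty — this is immediate from the definition since $\xi=\emptyset$ is allowed, but it is what guarantees $\LimMassA^{(q)}\ge0$. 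Second, and more delicate: in the concatenation step one must ensure the junction correction is genuinely $O(1)$ and not $O(q)$ blowing up with $\ell$; since only one extra vertex/edge is created per concatenation this is fine, but when iterating $\sim n$ concatenations the total correction is $O(n)$, which after dividing by $L$ and sending $L\to\infty$ with the number of blocks fixed is harmless — still, the standard superadditive ergodic theorem wants an exactly superadditive array, so one should either absorb the correction by redefining $\tilde X_{m,n}\dpe\MassADFpen{me}{ne}{(n-m)}{q}+C(n-m)$ or appeal to the approximate-superadditivity version. I would write the argument so that $q$ appears only through the three structural facts above, thereby keeping the proof a genuine corollary of the method of Theorem \ref{thm : intro/main/MAIN} rather than a verbatim repetition.
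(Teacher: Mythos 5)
Your overall strategy — rerun the superadditive machinery of Theorem~\ref{thm : intro/main/MAIN} for each fixed $q$, checking that the penalty term preserves superadditivity, stationarity, and the a priori bound $\MassADFpen{x}{y}{\ell}{q}\le \MassADF{x}{y}{\ell}$ — is exactly what the paper intends: it explicitly omits the proof, calling it a minor adaptation of Theorem~\ref{thm : intro/main/MAIN}. Your first paragraph's interpolation idea is rightly abandoned, and points (ii)--(iii) and the recentering remark for~\eqref{eqn : intro/main/MAIN_PENALIZED/cvg_undirected} are fine.

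The one step that, as written, would fail is the concatenation. You join $\xi_1\in\SetADFalt{x}{z}{\ell_1}$ and $\xi_2\in\SetADFalt{z}{y}{\ell_2}$ through ``the junction vertex $z$'' and accept a correction equal to ``the penalty at $z$''. Since $z$ is almost surely not an atom of $\pro$, that penalty is $q$, which is \emph{infinite} when $q=\infty$ (and even for finite $q$ your proposed repair $\tilde X+C(n-m)$ only works because $C<\infty$). The correct construction — and the reason $\SetADFalt{x}{y}{\ell}$ is defined via $\norme{\xi}+\d(x,\xi)+\d(y,\xi)\le\ell$ rather than via containment of the endpoints — is to add \emph{no new vertex at all}: connect a closest vertex of $\xi_1$ to $z$ with a closest vertex of $\xi_2$ to $z$ by a single edge, whose length is at most $\d(z,\xi_1)+\d(z,\xi_2)$ by the triangle inequality, so the union fits in the budget $\ell_1+\ell_2$ and the penalized mass is exactly additive ($C=0$), restoring genuine superadditivity as required by Theorem~\ref{thm : intro/outline/AK81}. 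The same care is needed wherever the proof of Theorem~\ref{thm : intro/main/MAIN} grafts auxiliary segments onto an animal (e.g.\ the segments to the projections $p(x)$, $p(y)$ in the analogue of Proposition~\ref{prop : LLN/directed/cvg}): one must replace ``add the vertex $p(x)$ and an edge'' by ``charge $\d(p(x),\xi)$ to the length budget'', otherwise each grafted off-process vertex costs $q$.
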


\subsection{Special cases}
\label{subsec : intro/special}
Theorems~\ref{thm : intro/main/MAIN} and~\ref{thm : intro/main/MAIN_PENALIZED} applies for the original discrete model (up to a minor adjustment to ensure stationarity), provided the masses are ergodic and Assumption~\ref{ass : intro/main/Moment} holds, and for marked Poisson point processes, provided the distribution $\nu$ of the marks satisfies~\eqref{eqn : intro/context/greedy_condition}. The latter case may be extended to a certain class of determinantal point processes.

\paragraph{The discrete model.}
\begin{Corollary}
	\label{cor : intro/special/Zd}
	Let $\p{\Mass v}_{v\in\Z^d}$ be a stationary and ergodic family of nonnegative random variables, i.e.\ for all $i\in \intint1d$, $\p{\Mass{v+ \base i}}_{v\in\Z^d}$ has the same the distribution as $\p{\Mass v}_{v\in\Z^d}$, and for every event $\cE$ satisfying
	\begin{equation}
		\forall i\in\intint1d, \quad%
		\Pb{ \acc{\p{\Mass v}_{v\in\Z^d} \in \cE} \symdif %
			 \acc{\p{\Mass{v+\base i} }_{v\in\Z^d} \in \cE} }=0,
	\end{equation}
	$\Pb{\cE}\in\acc{0,1}$. Assume that
	\begin{equation}
		\label{eqn : intro/special/Zd/moment}
		\sup_{n\ge 1} \frac{ \E{\MassLAUF{n}} }{n} < \infty.
	\end{equation}
	Let $\AUXGeneric \in \acc{\AUXAnimal, \AUXPath}$. Fix $\norme\cdot = \norme[1]\cdot$. Then there exists a deterministic, concave, symmetric w.r.t. $u\mapsto -u$ function $\LimMassLG : \ball[1]{0,1}\rightarrow \intervallefo0\infty$ such that for all compact subsets $K\subseteq \cX$, 
	\begin{equation}
		\label{eqn : intro/special/Zd/cvg}
		\sup\set{\module{ \frac{ \MassLGDF{ \floor{Lx} }{ \floor{Ly} }{L\ell} }{L} - \ell\LimMassLG\p{\frac{x-y}{\ell}} }}%
		{(x,y,\ell) \in K}
		\xrightarrow[L\to\infty]{\text{a.s. and }\rL^1} 0,
	\end{equation}
	with the notation $\floor x \dpe \p{\floor{x_1},\dots, \floor{x_d}}$, for all $x= (x_1,\dots,x_d)\in \R^d$. Moreover,
	\begin{equation}
		\label{eqn : intro/special/Zd/cvg_undirected}
		\frac{\MassLGUF L}{L} \xrightarrow[L\to\infty]{\text{a.s. and }\rL^1}\LimMassG(0).
	\end{equation}
\end{Corollary}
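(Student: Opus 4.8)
The plan is to realize the discrete model as a special case of the point-process framework and then simply invoke Theorem~\ref{thm : intro/main/MAIN}. The natural attempt is to set $\pro \dpe \sum_{v\in\Z^d}\Dirac{v,\Mass v}$, conditionally on $\Mass v>0$ for all $v$; but this process is only stationary under $\Z^d$-shifts, not under all of $\R^d$, so Assumption~\ref{ass : intro/main/Ergodic_Stationary} fails as stated. The standard fix (the ``minor adjustment'' alluded to in the text) is to average over a uniform random shift: let $U$ be uniform on $\intervallefo01^d$, independent of $\p{\Mass v}_{v\in\Z^d}$, and set
\begin{equation*}
	\pro \dpe \sum_{v\in\Z^d} \Dirac{v+U,\, \Mass v},
\end{equation*}
with the convention that an atom with $\Mass v=0$ is omitted — or, to avoid zero masses altogether, one observes that vertices of mass $0$ never affect any maximal mass and may be deleted without changing $\MassLGDF\cdot\cdot\cdot$ or $\MassLGUF\cdot$, so we may as well assume $\Mass v>0$ a.s.\ (if $\Pb{\Mass 0=0}=1$ the statement is trivial with $\LimMassLG\equiv 0$). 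First I would check that this $\pro$ is stationary under all $\R^d$-translations: translating by $z$ replaces $U$ by $U+z \bmod 1$ and shifts the index $v$ by $\floor{U+z}-\floor U$, and the joint law is preserved by stationarity of $\p{\Mass v}_v$ together with uniformity of $U$ on the torus.

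Next I would verify ergodicity of $\pro$ under $\R^d$-translations. This is where a small argument is needed: an $\R^d$-invariant event for $\pro$ pulls back, via the deterministic map $\p{U,(\Mass v)_v}\mapsto \pro$, to an event in the product space $\intervallefo01^d\times\R_{\ge0}^{\Z^d}$ that is invariant under the skew-product action combining the torus rotation on $U$ with the $\Z^d$-shift on $(\Mass v)_v$. One then uses the standard fact that the skew product of an ergodic $\Z^d$-action with the (uniquely ergodic) $\R^d/\Z^d$ rotation is ergodic for the suspended $\R^d$-action; equivalently, invariance under all $\R^d$-translations forces, after integrating out $U$, invariance under all $\Z^d$-shifts of $(\Mass v)_v$, whence the event has probability $0$ or $1$ by the assumed ergodicity of the family. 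I expect this ergodicity transfer to be the only genuinely non-routine step.

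Then I would check Assumption~\ref{ass : intro/main/Moment}. With $\norme\cdot=\norme[1]\cdot$, a continuous animal of $\norme[1]$-length at most $\ell$ whose vertices lie in $\projpro=U+\Z^d$ corresponds, after subtracting $U$, to a connected subgraph of $\Z^d$ spanning at most $\ell+1$ lattice sites (edges have integer $\norme[1]$-length $\ge1$, and connectedness of the continuous animal forces the underlying lattice set to be connected once we note each continuous edge can be replaced by a lattice path of the same $\norme[1]$-length through lattice points); any continuous animal can moreover be enlarged into one with vertices in $\projpro$ at the cost of a bounded length increase, using~\eqref{eqn : intro/framework/easy_inequality}-type reasoning. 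Hence $\MassAUF\ell \le \MassLAUF{\floor\ell+C}$ for a dimensional constant $C$ (the origin $0$ being a.s.\ not in $\projpro$, one first walks to the nearest point of $U+\Z^d$, costing length $\le d$), so $\E{\MassAUF\ell}/\ell$ is bounded by $\sup_n \E{\MassLAUF n}/n<\infty$ from~\eqref{eqn : intro/special/Zd/moment} up to a harmless factor; conversely $\MassLAUF n \le \MassAUF{n}$ after translating by $-U$, which is not needed here but confirms the correspondence is tight.

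Finally I would translate the conclusion of Theorem~\ref{thm : intro/main/MAIN} back to the discrete quantities. The point is that $\MassGDF{Lx+U}{Ly+U}{L\ell}$ for the process $\pro$ equals $\MassLGDF{\cdot}{\cdot}{\cdot}$-type quantities for $(\Mass v)_v$ up to replacing the real endpoints $Lx,Ly$ by nearby lattice points and absorbing the $O(1)$ discrepancies (in length and in the $\norme[1]$-distance from $Lx+U$ to $\floor{Lx}$, both bounded by $d$) into the uniform error term, which is legitimate because the convergence in~\eqref{eqn : intro/main/MAIN/cvg} is \emph{uniform} over the compact $K$ and because $\ell\LimMassG(\tfrac{x-y}\ell)$ is uniformly continuous on $K$ (being concave and finite, hence Lipschitz on compacts). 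Since $\pro$ here is a genuine function of $(\Mass v)_v$ and $U$, and the limiting function $\LimMassG$ produced by the theorem is deterministic, it does not depend on $U$; one sets $\LimMassLG\dpe\LimMassG$, and the a.s.\ and $\rL^1$ convergence for $\pro$ yields the same for $(\Mass v)_v$ after conditioning on (or integrating out) $U$ via Fubini. This gives~\eqref{eqn : intro/special/Zd/cvg} and, from~\eqref{eqn : intro/main/MAIN/cvg_undirected} in the same way, \eqref{eqn : intro/special/Zd/cvg_undirected}.
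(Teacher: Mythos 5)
Your construction (uniform shift $U$, stationarity on the torus, the ergodicity transfer by integrating out $U$, and the verification of Assumption~\ref{ass : intro/main/Moment} from~\eqref{eqn : intro/special/Zd/moment} via~\eqref{eqn : intro/framework/easy_inequality}) is essentially the paper's, and your argument is complete for $\AUXGeneric=\AUXPath$, since deleting the non-lattice vertices of a path preserves its mass and decreases its length. The gap is in the case $\AUXGeneric=\AUXAnimal$: you invoke the \emph{unpenalized} Theorem~\ref{thm : intro/main/MAIN}, whose quantity $\MassADF{Lx}{Ly}{L\ell}$ is a supremum over animals that may use vertices outside $\projpro$ as Steiner points, and your claim that ``any continuous animal can be enlarged into one with vertices in $\projpro$ at the cost of a bounded length increase'' is unjustified and is exactly the crux. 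An animal of length $L\ell$ may have arbitrarily many vertices, so rounding each vertex to the lattice costs an unbounded total length; replacing the animal by a lattice animal spanning its lattice points either requires knowing that Steiner vertices off $\Z^d$ cannot shorten the $\norme[1]$-connection of a set of lattice points (a nontrivial Hanan-grid type statement, delicate for $d\ge3$), or costs a factor $2$ via the depth-first-search path of~\eqref{eqn : intro/framework/easy_inequality} --- harmless for Assumption~\ref{ass : intro/main/Moment}, but fatal for identifying the limit. You therefore only obtain $\MassLADF{\floor{Lx}}{\floor{Ly}}{L\ell}\le \MassADF{Lx}{Ly}{L\ell+O(1)}\cro{\pro}$ in one direction, and the reverse first-order comparison needed for~\eqref{eqn : intro/special/Zd/cvg} is missing: a priori the continuous limit given by Theorem~\ref{thm : intro/main/MAIN} could be strictly larger than the lattice one.

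This is precisely why the paper introduces the penalized processes and Theorem~\ref{thm : intro/main/MAIN_PENALIZED}. Its proof applies that theorem with $q=\infty$, so that the continuous supremum ranges only over animals included in $\projpro$; Lemma~\ref{lem : app/lattice/reduction} then gives the exact identity $\MassLADF xy{n+1} = \MassADFpen{x}{y}{n}\infty\cro\pro$ (a lattice animal of cardinal $n+1$ has a spanning tree of $\norme[1]$-length $n$; conversely each edge of a continuous animal with lattice vertices can be replaced by a lattice path of the same $\norme[1]$-length), and Lemma~\ref{lem : app/lattice/ergodicity} transfers this to the shifted process up to an additive $2d$ in the length. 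To repair your argument, replace the appeal to Theorem~\ref{thm : intro/main/MAIN} by Theorem~\ref{thm : intro/main/MAIN_PENALIZED} with $q=\infty$ and insert this exact correspondence; the rest of your outline (the $O(1)$ endpoint adjustments absorbed by uniformity over $K$ and continuity of the limit function) then goes through as in the paper.
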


\paragraph{Determinantal point processes.} Before stating the result, we need to recall a couple of definitions.

\begin{Definition}
	Let $\mu$ be a locally finite measure on a regular space $\bbG$ and $K : \bbG^2 \rightarrow \bbC$ be a measurable function. We say that $\Phi$ is a \emph{Determinantal point process} (DPP) with \emph{kernel} $K$ and \emph{background measure} $\mu$ if for all $k\ge 1$,
	\begin{equation}
		\FMM[\Phi]{k}\p{\d z_1,\dots,\d z_k} = \det \p{ K\p{z_i, z_j} }_{1\le i,j\le k}\mu(\d z_1) \dots \mu(\d z_k).
	\end{equation}
	In this article, we further say that $\Phi$ is a \emph{good DPP} if for $\mu^{\otimes k}$-almost all $(z_1,\dots,z_k)\in \bbG^k$, the matrix $ \p{ K( z_i, z_j )}_{1\le i,j\le k}$ is Hermitian nonnegative-definite (i.e.\ it is self-adjoint and its eigenvalues are nonnegative).
\end{Definition}

Chapter~5 of \cite{Bac20} provides a general study of DPPs.
\begin{Corollary}
	\label{cor : intro/special/DPP}
	Let $\pro$ be a stationary simple marked point process on $\R^d \times \intervalleoo0\infty$ with mean measure $\MM[\pro]=\Leb \otimes \nu$. Assume that 
	\begin{enumerate}[(i)]
		\item \label{item : intro/special/DPP/good_DPP} The point process $\pro$ is a good DPP with \emph{kernel} $K$ and \emph{background measure} $\Leb\otimes \nu$.
		\item \label{item : intro/special/DPP/vanish_infty} For all $s,t \in \intervalleoo0\infty$,
		\begin{equation}
			\label{eqn : intro/special/DPP/vanish_infty}
			K\p{(0,s), (z,t)} \xrightarrow[\norme z \to \infty]{} 0.
		\end{equation}
		\item \label{item : intro/special/DPP/greedy_condition} The distribution $\nu$ satisfies~\eqref{eqn : intro/context/greedy_condition}.
	\end{enumerate}
	Then Assumptions~\ref{ass : intro/main/Ergodic_Stationary} and~\ref{ass : intro/main/Moment}, and thus the conclusions of Theorems~\ref{thm : intro/main/MAIN} and~\ref{thm : intro/main/MAIN_PENALIZED} hold. Moreover,
	\begin{equation}
		\GeneralUB \le \Cr{cst : Dirac_bound} \int_0^\infty \nu\p{\intervallefo t\infty}^{1/d} \d t,
	\end{equation}
	where $\Cr{cst : Dirac_bound}$ is a constant introduced in Proposition~\ref{prop : intro/suff_condition/fmm}.
\end{Corollary}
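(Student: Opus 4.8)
The strategy is to verify the two hypotheses of Theorems~\ref{thm : intro/main/MAIN} and~\ref{thm : intro/main/MAIN_PENALIZED} --- ergodicity (Assumption~\ref{ass : intro/main/Ergodic_Stationary}) and the moment bound (Assumption~\ref{ass : intro/main/Moment}) --- separately, since everything else then follows directly from those theorems. For ergodicity, the plan is to invoke a standard fact about determinantal point processes: a stationary DPP on $\R^d$ whose kernel decays to $0$ at infinity is mixing, hence ergodic. Concretely, I would reduce the invariance/mixing condition to a statement about the factorial moment measures, use the determinantal structure $\FMM[\pro]{k} = \det(K(z_i,z_j))\,(\Leb\otimes\nu)^{\otimes k}$, and exploit~\eqref{eqn : intro/special/DPP/vanish_infty} together with multilinearity of the determinant to see that the joint correlation of an event localized near $0$ and its translate by $z$ factorizes as $\norme z\to\infty$. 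This is essentially Soshnikov-type mixing for DPPs; Chapter~5 of~\cite{Bac20} should supply the precise statement, and mixing implies ergodicity with respect to the translation group, giving Assumption~\ref{ass : intro/main/Ergodic_Stationary}. One subtlety is that ``good DPP'' is needed to guarantee that $\pro$ exists as a genuine point process and that the correlation functions are those of an actual random measure; I would cite the relevant existence theorem for Hermitian nonnegative-definite kernels.

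For Assumption~\ref{ass : intro/main/Moment}, the plan is to bound $\E{\MassAUF\ell}$ by a first-moment / union-bound argument over animals, exactly as in Martin's~\cite{Mar02} and Gouéré--Marchand's~\cite{Gou08} treatments of the greedy problem, but using the factorial moment measure of $\pro$ in place of independence. The key input is Proposition~\ref{prop : intro/suff_condition/fmm} (referenced in the statement), which presumably controls $\MM[\pro^{(k)}]$ --- for a good DPP, Hadamard's inequality for Hermitian nonnegative-definite matrices gives $\det(K(z_i,z_j))_{i,j} \le \prod_i K(z_i,z_i)$, so the $k$-th factorial moment measure is dominated by the product of one-point intensities, i.e.\ by $(\Leb\otimes\nu)^{\otimes k}$ up to the diagonal entries of $K$; under stationarity those diagonal entries are constant in the space variable. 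This domination lets one run the same counting estimate as in the i.i.d.\ case: cover space by boxes, estimate the expected mass collected in a box of a given side in terms of $\int_0^\infty \nu([t,\infty))^{1/d}\,\d t$ via the layer-cake formula, and sum over the at most exponentially-many shapes of an animal of length $\ell$ to get $\E{\MassAUF\ell} \le \Cr{cst : Dirac_bound}\,\ell\int_0^\infty \nu([t,\infty))^{1/d}\,\d t$, which is precisely the claimed bound on $\GeneralUB$. Taking the supremum over $\ell\ge1$ after dividing by $\ell$ yields Assumption~\ref{ass : intro/main/Moment}, finiteness being guaranteed by~\eqref{eqn : intro/context/greedy_condition}.

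With both assumptions verified, Theorems~\ref{thm : intro/main/MAIN} and~\ref{thm : intro/main/MAIN_PENALIZED} apply verbatim, and the explicit constant in the bound on $\GeneralUB$ is read off from the estimate above. The main obstacle I anticipate is the ergodicity step: while ``kernel vanishes at infinity $\Rightarrow$ mixing $\Rightarrow$ ergodic'' is folklore, making it rigorous in the \emph{marked} setting on $\R^d\times\intervalleoo0\infty$ (rather than on $\R^d$) requires being careful that the translation acts only on the spatial coordinate, so one must check that the mixing estimate survives integration over the mark coordinates --- which it does, because the mark measure $\nu$ is the same before and after translation and the determinantal identity is jointly in the $(x,t)$ variables. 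The counting estimate for Assumption~\ref{ass : intro/main/Moment}, while technical, is routine once Proposition~\ref{prop : intro/suff_condition/fmm} is in hand; the real content there is already packaged in that proposition via Hadamard's inequality.
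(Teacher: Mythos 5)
Your proposal is correct and follows the same two-pronged strategy as the paper: Hadamard's inequality for the Hermitian nonnegative-definite matrices $(K(z_i,z_j))$, combined with the normalization $K((x,t),(x,t))=1$ forced by the mean measure $\Leb\otimes\nu$, gives the moment property~\eqref{eqn : intro/suff_condition/fmm} with $C=1$, whence Proposition~\ref{prop : intro/suff_condition/fmm} yields Assumption~\ref{ass : intro/main/Moment} together with the stated bound on $\GeneralUB$ (you do not need to redo the counting estimate --- it is entirely contained in that proposition, as you note); and the kernel decay~\eqref{eqn : intro/special/DPP/vanish_infty} gives mixing, hence ergodicity. The one place where your route differs in implementation is the mixing step: you propose to factorize joint correlation functions (Soshnikov-style), whereas the paper works with void probabilities, expanding $\Pb{\pro(D)=0}$ as the alternating Fredholm-type series of Corollary~5.1.19 in \cite{Bac20}, splitting the permutations in each determinant into block-preserving and block-mixing ones, killing the latter via~\eqref{eqn : intro/special/DPP/vanish_infty}, and dominating the series by Hadamard again; since $\pro$ is simple, Rényi's theorem says void probabilities determine the law, so factorization of void probabilities suffices for mixing. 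The void-probability route is what lets the paper avoid the point you leave implicit: convergence of factorial moment measures alone does not give convergence of probabilities of events without an additional moment-growth control, so a fully rigorous version of your sketch would need to supply that (it does hold here, again by Hadamard), or else switch to void probabilities as the paper does. This is a difference of technical packaging rather than of ideas, and your identification of the marked-setting subtlety (translation acts only on the spatial coordinate) matches what the paper actually has to verify in Lemma~\ref{lem : app/DPP/mixing_vp}.
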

Poisson point processes fall under~Corollary~\ref{cor : intro/special/DPP} (see e.g.\ Example~5.1.6 in \cite{Bac20}).
\begin{Corollary}
	\label{cor : intro/special/Poisson}
	Assume that
	\begin{enumerate}[(i)]
		\item The point process $\pro$ is Poisson with mean measure $\Leb\otimes \nu$.
		\item The distribution $\nu$ satisfies~\eqref{eqn : intro/context/greedy_condition}.
	\end{enumerate}
	Then Assumptions~\ref{ass : intro/main/Ergodic_Stationary} and~\ref{ass : intro/main/Moment}, and thus the conclusions of Theorems~\ref{thm : intro/main/MAIN} and~\ref{thm : intro/main/MAIN_PENALIZED} hold. Moreover,
	\begin{equation}
	 	\GeneralUB \le \Cr{cst : Dirac_bound} \int_0^\infty \nu\p{\intervallefo t\infty}^{1/d} \d t.
	 \end{equation} 
\end{Corollary}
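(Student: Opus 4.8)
The plan is to deduce this corollary directly from Corollary~\ref{cor : intro/special/DPP} by verifying that a Poisson point process with mean measure $\Leb\otimes\nu$ is a good DPP satisfying hypotheses~\eqref{item : intro/special/DPP/good_DPP}--\eqref{item : intro/special/DPP/vanish_infty} of that corollary; hypothesis~\eqref{item : intro/special/DPP/greedy_condition} is assumed here verbatim, so nothing is needed there. First I would recall the classical fact (e.g.\ Example~5.1.6 in \cite{Bac20}) that a Poisson point process $\pro$ on a regular space with mean measure $\lambda$ is a DPP with background measure $\lambda$ and kernel $K$ the identity kernel, i.e.\ $K(z,z')=\ind{z=z'}$; equivalently one checks that its $k$-th factorial moment measure factorizes as $\FMM[\pro]{k}=\lambda^{\otimes k}$, which for a diagonal kernel is exactly $\det\p{K(z_i,z_j)}_{1\le i,j\le k}\,\lambda(\d z_1)\dots\lambda(\d z_k)$ since the determinant of the identity-type matrix $\p{\ind{z_i=z_j}}$ equals $1$ for $\lambda^{\otimes k}$-almost all tuples (which are pairwise distinct). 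With $\lambda=\Leb\otimes\nu$ this gives exactly the setup of hypothesis~\eqref{item : intro/special/DPP/good_DPP}, once we also note that the matrix $\p{\ind{z_i=z_j}}$ is the identity matrix for $\lambda^{\otimes k}$-almost every tuple, hence Hermitian and nonnegative-definite: the process is a \emph{good} DPP.

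Next I would verify the decay hypothesis~\eqref{item : intro/special/DPP/vanish_infty}. For the diagonal kernel $K\p{(0,s),(z,t)}=\ind{(0,s)=(z,t)}$, the value is $0$ as soon as $z\ne 0$, so in particular $K\p{(0,s),(z,t)}=0$ for all $\norme z$ large enough, and a fortiori $K\p{(0,s),(z,t)}\to 0$ as $\norme z\to\infty$. Thus all three hypotheses of Corollary~\ref{cor : intro/special/DPP} hold, and its conclusion yields Assumptions~\ref{ass : intro/main/Ergodic_Stationary} and~\ref{ass : intro/main/Moment}, the conclusions of Theorems~\ref{thm : intro/main/MAIN} and~\ref{thm : intro/main/MAIN_PENALIZED}, and the bound $\GeneralUB\le\Cr{cst : Dirac_bound}\int_0^\infty\nu\p{\intervallefo t\infty}^{1/d}\d t$, which is precisely the asserted inequality.

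The only genuinely delicate point is the normalization convention for the DPP kernel of a Poisson process: depending on whether one works with the "identity kernel" $K(z,z')=\ind{z=z'}$ against the background measure or with a kernel expressed relative to some reference measure, there can be a subtlety about what "$\det\p{K(z_i,z_j)}$" means when the $z_i$ include repeats. Since the factorial moment measure only ever integrates against pairwise-distinct tuples (that is the whole point of the factorial construction in~\eqref{eqn : intro/suff_condition/def_fmm}), on the relevant support the matrix is the identity and the determinant is $1$, so the identity $\FMM[\pro]{k}=(\Leb\otimes\nu)^{\otimes k}$ is exactly the DPP relation; invoking Example~5.1.6 in \cite{Bac20} lets us sidestep re-deriving this. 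Everything else is a direct citation of Corollary~\ref{cor : intro/special/DPP}.
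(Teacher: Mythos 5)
Your proposal is correct and follows essentially the same route as the paper, which disposes of this corollary in one line by citing Example~5.1.6 of \cite{Bac20} to see a Poisson process as a (good) DPP with the identity kernel and then invoking Corollary~\ref{cor : intro/special/DPP}. Your additional verifications (the factorial moment measures factorize, the diagonal kernel is Hermitian nonnegative-definite almost everywhere, and condition~\eqref{eqn : intro/special/DPP/vanish_infty} holds trivially since the kernel vanishes off the diagonal) are exactly the details the paper leaves implicit.
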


\subsection{Around Assumption~\ref{ass : intro/main/Moment}}

We say that a point process $\Phi$ on the regular space $\bbG$ satisfies the \emph{moment property} with the constant $C>0$ if for all $k\ge 1$ and Borel subsets $B_1,\dots,B_k \subseteq \bbG$,
\begin{equation}
	\label{eqn : intro/suff_condition/fmm}
	\FMM[\Phi]{k}\p{\prod_{i=1}^k B_i} \le C^k \prod_{i=1}^k \MM[\Phi](B_i).
\end{equation} 

We have the following sufficient condition for Assumption~\ref{ass : intro/main/Moment}.
\begin{Proposition}
	\label{prop : intro/suff_condition/fmm}
	Let $\pro$ be a stationary simple marked point process on $\R^d \times \intervalleoo0\infty$, with mean measure $\Leb\otimes \nu$. Assume that
	\begin{enumerate}[(i)]
		\item The point process $\pro$ satisfies the moment property with the constant $C>0$.
		\item The distribution $\nu$ satisfies~\eqref{eqn : intro/context/greedy_condition}.
	\end{enumerate}
	Then
	\begin{equation}
		\label{eqn : intro/suff_condition/fmm/bound}
		\E{\sup_{\ell \ge 1} \frac{\MassAUF\ell}{\ell}} \le \Cl{cst : Dirac_bound} C^{1/d}\int_0^\infty \nu\p{\intervallefo t\infty }^{1/d}\d t,
	\end{equation}
	where $\Cr{cst : Dirac_bound}>0$ only depends on $d$ and $\norme\cdot$. In particular, Assumption~\ref{ass : intro/main/Moment} holds.
\end{Proposition}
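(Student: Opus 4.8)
I would adapt Martin's argument~\cite{Mar02} for the i.i.d.\ lattice case, replacing independence by the moment property and discretising space at a scale tuned to $C$; the key trick is to push the ``layer-cake'' decomposition of the mass outside everything. Write $p_t\dpe\nu\p{\intervallefo t\infty}$ and, for an animal $\xi$, $N_t(\xi)\dpe\#\set{v\in\xi\cap\projpro}{\Mass v\ge t}$. Since $\Mass v=\int_0^\infty\ind{\Mass v\ge t}\,\d t$ for $v\in\projpro$, we get $\Mass\xi=\int_0^\infty N_t(\xi)\,\d t$, hence $\MassAUF\ell\le\int_0^\infty\sup_{\xi\in\SetAUF\ell}N_t(\xi)\,\d t$; dividing by $\ell$ and taking $\sup_{\ell\ge1}$,
\[
	\sup_{\ell\ge1}\frac{\MassAUF\ell}{\ell}\le\int_0^\infty G_t\,\d t,\qquad G_t\dpe\sup_{\ell\ge1}\frac1\ell\sup_{\xi\in\SetAUF\ell}N_t(\xi),
\]
so by Tonelli it suffices to prove $\E{G_t}\le c\,C^{1/d}p_t^{1/d}$ for every $t$, with $c=c(d,\norme\cdot)$. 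Now $G_t$ depends on $\pro$ only through the point process $H_t$ on $\R^d$ obtained by keeping the points with mark $\ge t$ and discarding the marks: it is exactly the greedy-animal rate of the indicator field $H_t$. Its mean measure is $p_t\Leb$, and since factorial moment measures of such a sub-process are dominated by those of $\pro$, $H_t$ again satisfies the moment property with the same $C$; in particular, for any bounded Borel $A\subseteq\R^d$ and $u\ge0$,
\[
	\E{(1+u)^{H_t(A)}}=\sum_{k\ge0}\frac{u^k}{k!}\FMM[H_t]{k}\p{A^k}\le\sum_{k\ge0}\frac{\p{uCp_t\Leb(A)}^k}{k!}=\exp\!\p{uCp_t\Leb(A)}.
\]

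Fix $t$ with $p_t>0$ and put $\delta\dpe\p{Cp_t}^{-1/d}$. Tile $\R^d$ by the cubes $Q_z\dpe\delta\p{z+\intervallefo01^d}$, $z\in\Z^d$, let $Q_0$ contain the origin, and identify a set of cubes with the corresponding subset of $\Z^d$, ``connected'' referring to the usual adjacency. For $\ell\ge\delta$, realizing $\xi\in\SetAUF\ell$ as the union of the segments of its edges makes it a connected set of one-dimensional measure $\le\norme\xi\le\ell$, and traversing its spanning tree shows $\xi$ lies in the union of a connected cluster $\mathcal C(\xi)$ with $Q_0\in\mathcal C(\xi)$ and $\#\mathcal C(\xi)\le m_\ell\dpe c_0\,\ell/\delta$ ($c_0=c_0(d,\norme\cdot)$); hence $\sup_{\xi\in\SetAUF\ell}N_t(\xi)\le Z_\ell$, where $Z_\ell\dpe\sup\set{\sum_{Q\in\mathcal C}H_t(Q)}{\mathcal C\ni Q_0\text{ connected},\ \#\mathcal C\le m_\ell}$. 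There are at most $c_1^{\,j}$ connected clusters of cardinality $j$ containing $Q_0$ ($c_1=c_1(d)$), and for such a cluster $\mathcal C$ the set $\bigcup\mathcal C$ has volume $j\delta^d=j/(Cp_t)$, so $\E{(1+u)^{\sum_{Q\in\mathcal C}H_t(Q)}}\le e^{uj}$ --- here the choice of $\delta$ has killed the factor $C$. A Chernoff bound gives $\Pb{\sum_{Q\in\mathcal C}H_t(Q)\ge\lambda j}\le e^{-jI(\lambda)}$ with $I(\lambda)\dpe\sup_{u\ge0}(\lambda\log(1+u)-u)\sim\lambda\log\lambda$; fixing $\lambda_0=\lambda_0(d)$ with $I(\lambda_0)\ge2\log c_1$ and summing over all clusters of size $\le m_\ell$, for $\lambda\ge\lambda_0$ one gets $\Pb{Z_\ell\ge\lambda m_\ell}\le m_\ell\,e^{-m_\ell I(\lambda)/2}$. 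Since $m_{2^{k+1}}\asymp 2^k/\delta$ grows geometrically in $k$, these tails are summable over the dyadic scales $2^{k+1}\ge\delta$, so $\Pb{\sup_{\ell\ge\delta}Z_\ell/\ell\ge c_2\lambda/\delta}\le c_3\,e^{-I(\lambda)/2}$ for $\lambda\ge\lambda_0$, whence $\E{\sup_{\ell\ge\delta}\tfrac1\ell\sup_{\xi\in\SetAUF\ell}N_t(\xi)}\le\E{\sup_{\ell\ge\delta}Z_\ell/\ell}\le c_4/\delta=c_4\,C^{1/d}p_t^{1/d}$.

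For the remaining scales $1\le\ell<\delta$ we just use $\sup_{\xi\in\SetAUF\ell}N_t(\xi)\le H_t\p{\clball{0,\ell}}$ (valid since $\xi\subseteq\clball{0,\ell}$), so $\sup_{1\le\ell<\delta}\tfrac1\ell\sup_{\xi\in\SetAUF\ell}N_t(\xi)\le\sum_{j\ge0,\,2^j<\delta}2^{-j}H_t\p{\clball{0,2^{j+1}}}$, whose expectation is $\Leb\p{\clball{0,1}}2^d p_t\sum_{j\ge0,\,2^j<\delta}2^{j(d-1)}\lesssim p_t\,\delta^{d-1}=C^{-(d-1)/d}p_t^{1/d}\le C^{1/d}p_t^{1/d}$ (the sum is geometric with ratio $2^{d-1}\ge2$, and $C\ge1$). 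Combining the two regimes gives $\E{G_t}\le c\,C^{1/d}p_t^{1/d}$, and hence
\[
	\E{\sup_{\ell\ge1}\frac{\MassAUF\ell}{\ell}}\le\int_0^\infty\E{G_t}\,\d t\le c\,C^{1/d}\int_0^\infty\nu\p{\intervallefo t\infty}^{1/d}\,\d t,
\]
which is~\eqref{eqn : intro/suff_condition/fmm/bound}; in particular $\GeneralUB<\infty$, i.e.\ Assumption~\ref{ass : intro/main/Moment} holds.

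The heart of the matter is the second paragraph. Two points must work together. First, the geometric reduction ``an animal of length $\le\ell$ through $0$ lies in a connected cluster of $O(\ell/\delta)$ grid cubes containing $Q_0$'', which turns the question into a bona fide lattice-animal problem accessible to a union bound; and for the supremum over $\ell$ to end up inside the expectation, the resulting tail $m_\ell e^{-m_\ell I(\lambda)/2}$ must decay fast enough to be summed over \emph{all} dyadic scales at once --- which works precisely because the ``weights'' $H_t(Q)$ of the thresholded field are light-tailed. Second, the choice of mesh $\delta\asymp\p{Cp_t}^{-1/d}$ is what makes $\E{(1+u)^{H_t(\bigcup\mathcal C)}}\le e^{uj}$ free of $C$, so that $C$ re-enters only through $m_\ell\asymp C^{1/d}\ell\,p_t^{1/d}$ and produces exactly the factor $C^{1/d}$ --- a worse power of $C$ would be easy to get and no power would be wrong. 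The non-trivial integrability hypothesis~\eqref{eqn : intro/context/greedy_condition} is then used only in the very last step, when integrating $\E{G_t}\lesssim C^{1/d}p_t^{1/d}$ over $t$.
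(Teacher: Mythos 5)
Your proof is correct, and its outer shell coincides with the paper's: the same layer-cake reduction $\sup_\ell\MassAUF\ell/\ell\le\int_0^\infty\sup_\ell\MassAUF\ell\cro{\pro^{(t)}\otimes\Dirac1}/\ell\,\d t$ (Lemma~\ref{lem : suff_condition/projection}), the same observation that the thresholded, unmarked process inherits the moment property with the same $C$ (Lemma~\ref{lem : suff_condition/fmm_pty_mapping}), and the same target estimate $\E{\sup_\ell\MassAUF\ell\cro{\Phi\otimes\Dirac1}/\ell}\lesssim(C\lambda)^{1/d}$ for a process with mean measure $\lambda\Leb$. Where you genuinely diverge is in proving that core estimate (the paper's Lemma~\ref{lem : suff_condition/Dirac_case}). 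The paper follows Gouéré--Marchand: it works with \emph{paths}, bounds $\Pb{\sup_\ell\MassPUF\ell/\ell\ge\alpha}$ by the expected number of $k$-tuples of pairwise distinct atoms forming a path of length $\le k/\alpha$ from the origin, controls that first moment by $C^k\int\exp(k-\alpha\sum_i\norme{x_i-x_{i-1}})\d x_{1:k}=(C\alpha^{-d}e2^d)^k$, sums the geometric series to get a polynomial tail $\lesssim C\alpha^{-d}$, integrates it (convergent only because $d\ge2$), and finally passes from paths to animals via $\MassAUF\ell\le\MassPUF{2\ell}$. You instead work with animals directly, coarse-grain at the scale $\delta=(Cp_t)^{-1/d}$, reduce to a lattice-animal problem over grid cubes, and use the exponential-moment identity $\E{(1+u)^{\Phi(A)}}=\sum_k\frac{u^k}{k!}\FMM[\Phi]{k}(A^k)\le e^{uC\lambda\Leb(A)}$ together with a Chernoff bound and a union bound over clusters --- essentially the Cox--Gandolfi--Griffin--Kesten/Martin route transplanted to the continuum, with the moment property standing in for independence. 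Your approach costs more machinery (cluster counting, dyadic summation over scales, and a separate treatment of $1\le\ell<\delta$, where you correctly need $C\ge1$, which holds for $k=1$ whenever $\nu\neq0$), but it yields a stretched-exponential tail for $\sup_\ell\MassAUF\ell\cro{\pro^{(t)}}/\ell$ rather than the paper's barely-integrable $\alpha^{-d}$ tail, and it produces the same sharp factor $C^{1/d}$. The generating-function identity you use to extract exponential moments from the factorial moment bounds is a nice observation that the paper does not exploit.
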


Along the proof of Proposition~\ref{prop : intro/suff_condition/fmm}, we actually show the stronger version~\eqref{eqn : intro/nec_condition/layers} of Assumption~\ref{ass : intro/main/Moment}, which implies~\eqref{eqn : intro/context/greedy_condition}. Recall the decomposition~\eqref{eqn : intro/main/decomposition_pro2}. For all $t>0$, we consider the point process on $\R^d$
\begin{equation}
	\label{eqn : intro/main/truncated}
	\pro^{(t)} \dpe \sum_{n=1}^N \ind{\Mass{x_n}\ge t } \Dirac{x_n}.
\end{equation}
The measure $\pro^{(t)}\otimes \Dirac1$ is a stationary marked point process on $\R^d \times \intervalleoo0\infty$. Informally, it corresponds to giving mass $1$ to every point in $\pro^{(t)}$. The notation $\MassAUF\ell \cro{\pro^{(t)}\otimes \Dirac1 }$ in~\eqref{eqn : intro/nec_condition/layers} denotes the analogue of $\MassAUF\ell$, constructed from the process $\pro^{(t)}\otimes \Dirac1$ rather than $\pro$.
\begin{Proposition}
	\label{prop : intro/nec_condition}
	Let $\pro$ be a stationary simple point process on $\R^d \times \intervalleoo0\infty$ that satisfies Assumptions~\ref{ass : intro/main/Ergodic_Stationary} and~\ref{ass : intro/main/Moment}. Then $\MM[\pro]$ is locally finite, thus admits a decomposition of the form $\Leb \otimes \nu$. Moreover, if
	\begin{equation}
		\label{eqn : intro/nec_condition/layers}
		\int_0^\infty \sup_{\ell \ge 1} \frac{\E{\MassAUF\ell \cro{\pro^{(t)}\otimes \Dirac1 } }  }\ell \d t < \infty,
	\end{equation}
	then $\nu$ satisfies~\eqref{eqn : intro/context/greedy_condition}.
\end{Proposition}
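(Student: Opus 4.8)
The plan is to prove the two assertions of Proposition~\ref{prop : intro/nec_condition} separately, and the key device for both is to compare $\pro$ with the unit-mass point processes $\pro^{(t)}\otimes\Dirac1$ obtained by truncation. First I would establish that $\MM[\pro]$ is locally finite. Fix a bounded Borel set $A\subseteq\R^d$; I want to bound $\E{\pro(A\times\intervalleoo0\infty)}$. The idea is that every point of $\pro$ inside $A$ contributes at least its own mass to a suitable animal covering $A$: precisely, one can choose $\ell$ large enough (depending only on $A$ and $\norme\cdot$) so that there is an animal of length at most $\ell$ that contains $0$ and whose vertex set can be taken to be any finite subset of $A\cup\{0\}$ — indeed any finite subset of a bounded region can be spanned by a tree of bounded total length. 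Since $\MassAUF\ell$ dominates the mass of such an animal, and by Assumption~\ref{ass : intro/main/Moment} we have $\E{\MassAUF\ell}\le\GeneralUB\,\ell<\infty$, we cannot have $\pro$ putting infinitely many (or infinite total) mass in $A$ in expectation. A clean way to make this rigorous: for $t>0$, every point $x_n\in A$ with $\Mass{x_n}\ge t$ contributes mass $\ge t$, so $t\cdot\pro^{(t)}(A)\le\Mass{\text{(spanning animal)}}\le\MassAUF{\ell}$ on the event that the spanning animal exists, giving $\E{\pro^{(t)}(A)}\le\GeneralUB\,\ell/t$; letting $t\downarrow0$ via monotone convergence and using that $\pro$ is simple with $\Mass{x_n}>0$ shows $\E{\pro(A\times\intervalleoo0\infty)}=\lim_{t\downarrow0}\E{\pro^{(t)}(A)}$ is finite (after also controlling the number of points with small mass, e.g.\ by first bounding $\E{\pro(A\times\intervalleff s\infty)}$ for each $s>0$ and then arguing the total count in $A$ is a.s.\ finite by local compactness together with finiteness of the mass). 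Once $\MM[\pro]$ is locally finite, the factorization $\MM[\pro]=\Leb\otimes\nu$ follows exactly as in the footnote after \eqref{eqn : intro/framework/mm_as_a_product}, by stationarity and the adaptation of Lemma~6.1.17(iii) of \cite{Bac20}.

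For the second assertion, I would show the contrapositive-style implication directly: assuming \eqref{eqn : intro/nec_condition/layers}, I want $\int_0^\infty\nu\p{\intervallefo t\infty}^{1/d}\d t<\infty$. The point process $\pro^{(t)}\otimes\Dirac1$ is stationary with mean measure $\Leb\otimes\bigl(\nu\p{\intervallefo t\infty}\Dirac1\bigr)$, i.e.\ it is a stationary unit-mass point process on $\R^d$ of intensity $\lambda(t)\dpe\nu\p{\intervallefo t\infty}$. For such a process, $\MassAUF{\ell}[\pro^{(t)}\otimes\Dirac1]$ is just the maximal number of points of $\pro^{(t)}$ that an animal of length $\le\ell$ can contain. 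I would lower-bound $\E{\MassAUF{\ell}[\pro^{(t)}\otimes\Dirac1]}$ from below by the expected number of points of $\pro^{(t)}$ in a fixed ball of radius comparable to $\ell$ that admits a spanning animal of length $\le\ell$: choosing a ball $\ball{0,c\ell}$ (with $c$ depending only on $d,\norme\cdot$) that can be spanned by a connected animal of length $\le\ell$, one gets $\E{\MassAUF{\ell}[\pro^{(t)}\otimes\Dirac1]}\ge\E{\pro^{(t)}(\ball{0,c\ell})}=\Leb(\ball{0,c\ell})\,\lambda(t)=c'\ell^d\lambda(t)$. Hence $\sup_{\ell\ge1}\E{\MassAUF{\ell}[\pro^{(t)}\otimes\Dirac1]}/\ell\ge\sup_{\ell\ge1}c'\ell^{d-1}\lambda(t)$, which is $+\infty$ unless $\lambda(t)=0$ — that is too crude. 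The right move instead is to \emph{not} take the sup over $\ell$ but to observe that \eqref{eqn : intro/nec_condition/layers} in particular implies $\int_0^\infty\frac{\E{\MassAUF{\ell_t}[\pro^{(t)}\otimes\Dirac1]}}{\ell_t}\d t<\infty$ for \emph{any} measurable choice $t\mapsto\ell_t\ge1$; choosing $\ell_t$ so as to optimize (morally $\ell_t\asymp\lambda(t)^{-1/d}$, the natural scale at which a ball of volume $\asymp\ell_t^d$ contains $\grando(1)$ points) turns the integrand into a constant multiple of $\lambda(t)^{1/d}=\nu\p{\intervallefo t\infty}^{1/d}$, yielding exactly \eqref{eqn : intro/context/greedy_condition}. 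Concretely, with $\ell_t\dpe\max(1,\lambda(t)^{-1/d})$, for $t$ with $\lambda(t)\le1$ we have $\E{\MassAUF{\ell_t}[\pro^{(t)}\otimes\Dirac1]}\ge\E{\pro^{(t)}(\ball{0,c\ell_t})}=c'\ell_t^d\lambda(t)\ge c'$, so $\E{\MassAUF{\ell_t}}/\ell_t\ge c'\lambda(t)^{1/d}$; the region $\{t:\lambda(t)>1\}$ is a bounded interval $[0,t_0]$ (since $\lambda$ is nonincreasing and, by the first part, $\lambda$ is finite, actually $\to0$), and there $\lambda(t)^{1/d}\le\lambda(t)$, whose integral over $[0,t_0]$ is finite because it equals a constant times the first moment contribution and is dominated using $\E{\MassAUF1[\pro^{(t)}\otimes\Dirac1]}\ge\Pb{\pro^{(t)}(\ball{0,1/2})\ge1}$ — more simply, $\int_0^{t_0}\lambda(t)\,\d t\le\int_0^\infty\nu\p{\intervallefo t\infty}\d t=\E{\Mass0\text{ over a unit cell}}\le\GeneralUB<\infty$ by Fubini and Assumption~\ref{ass : intro/main/Moment}. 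Combining, $\int_0^\infty\nu\p{\intervallefo t\infty}^{1/d}\d t\le\frac1{c'}\int_0^\infty\frac{\E{\MassAUF{\ell_t}[\pro^{(t)}\otimes\Dirac1]}}{\ell_t}\d t+(\text{finite})<\infty$.

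I expect the main obstacle to be the geometric input that underlies both halves: controlling the length of a connected animal that spans a prescribed finite point set inside a ball of radius $R$, and dually, guaranteeing that inside $\ball{0,c\ell}$ \emph{all} the points of the process can be linked into one animal of length $\le\ell$. This is where the factor $\ell^d$ versus $\ell$ and the exponent $1/d$ genuinely enter. A careful argument proceeds via a dyadic or grid-based construction: subdivide $\ball{0,c\ell}$ into $\grando(\ell^d)$ unit cells, connect a representative point in each nonempty cell to a spanning tree of the cell-adjacency graph (length $\grando(\ell^d)$ in the worst case — too long), so instead one must argue probabilistically, replacing "all points in the ball" by "the expected number of points," which only requires the \emph{existence} of \emph{some} animal of the right length reaching a positive-density set of cells — this is precisely the trade-off engineered in Martin's argument \cite{Mar02} and is the step I would transcribe with care. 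Once that geometric lemma is in place (it should be stated as an auxiliary lemma and is presumably proved elsewhere in the paper alongside Proposition~\ref{prop : intro/suff_condition/fmm}, since the remark after that proposition already asserts \eqref{eqn : intro/nec_condition/layers} implies \eqref{eqn : intro/context/greedy_condition}), the rest is Fubini, monotone convergence, and the optimization over $\ell_t$ described above.
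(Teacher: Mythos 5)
Your high-level strategy matches the paper's: reduce to the unit-mass layers $\pro^{(t)}\otimes\Dirac1$, lower-bound their greedy functionals by point counts in boxes, and integrate over $t$; and you correctly sense that the whole difficulty is the geometric length-versus-count trade-off. But the two concrete mechanisms you offer for that trade-off are both false, so the proof has a genuine gap at its core. In the first half, the claim that ``any finite subset of a bounded region can be spanned by a tree of bounded total length'' is wrong: $n$ well-separated points in a unit cube require a spanning tree of length of order $n^{(d-1)/d}$, which is unbounded in $n$, so there is no fixed $\ell=\ell(A)$ for which an animal of length $\le\ell$ contains all atoms of $\pro^{(t)}$ in $A$. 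In the second half, the inequality $\E{\MassAUF{\ell_t}\cro{\pro^{(t)}\otimes\Dirac1}}\ge\E{\pro^{(t)}\p{\ball{0,c\ell_t}}}$ fails for the same reason: $\MassAUF{\ell}$ only sees points that can be joined by a \emph{single} animal of length $\le\ell$, and no choice of $\ell_t$ makes all points of a positive-volume ball reachable. Moreover, even the weakened version you gesture at cannot be closed with expectations alone: for a general stationary process, $\E{\pro^{(t)}(\ball{0,c\ell_t})}=c'$ gives no lower bound on $\Pb{\pro^{(t)}(\ball{0,c\ell_t})\ge1}$ (think of rare, very dense clusters), so ``replacing all points by the expected number of points'' is precisely the step that does not work.

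The paper closes this gap with two ingredients your proposal never names. First, Few's theorem on the travelling salesman problem (Lemma~\ref{lem : nec_condition/TSP}): $n$ points in $\intervalleff0L^d$ can be joined to the origin by a path of length at most $\Cr{cst : TSP}n^{(d-1)/d}L$ --- this is where the exponent $1/d$ actually comes from, since $n\approx\lambda L^d$ points yield mass $\lambda L^d$ at length cost $\Cr{cst : TSP}\lambda^{(d-1)/d}L^d$, hence a ratio $\lambda^{1/d}/\Cr{cst : TSP}$. Second, the ergodic theorem (Theorem~\ref{thm : nec_condition/ergodic_thm}, valid even for infinite intensity, which is exactly what makes the local-finiteness claim nontrivial): it converts the \emph{expected} count $\lambda L^d$ into an \emph{actual} count achieved with probability at least $1/2$ on a sufficiently large box (Lemma~\ref{lem : nec_condition/ergodic_thm}). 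Combining the two gives $\sup_{\ell\ge1}\E{\MassPUF\ell\cro{\pro^{(t)}\otimes\Dirac1}}/\ell\ge\E{\pro^{(t)}\p{\intervalleff01^d}}^{1/d}/(2\Cr{cst : TSP})$, from which both local finiteness (taking $\lambda\to\infty$ against Assumption~\ref{ass : intro/main/Moment}) and, after integrating in $t$, the implication \eqref{eqn : intro/nec_condition/layers}$\Rightarrow$\eqref{eqn : intro/context/greedy_condition} follow; your separate treatment of the region $\{t:\nu\p{\intervallefo t\infty}>1\}$ is then unnecessary. Without these two inputs the argument does not go through.
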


Besides, if the masses of the atoms of $\pro$ are i.i.d, then Assumption~\ref{ass : intro/main/Moment} implies that $\nu$ has a $d$-th moment. More precisely, we introduce the notion of i.i.d.\  markings of point processes (see Definition~2.2.18 in \cite{Bac20}).
\begin{Definition}
	\label{def : intro/iid_marking}
	Let $\bbG$ and $\bbK$ be regular spaces , $\Phi$ be a point process on $\bbG$ and $\mu$ a probability distribution on $\bbK$. Consider a sequence of i.i.d.\  random variables $(t_n)_{n \ge 1}$. With the notations of~\eqref{eqn : intro/main/decomposition_pro},
	\begin{equation}
		\tilde\Phi \dpe \sum_{n=1}^N \Dirac{(z_n, t_n)}
	\end{equation}
	is a point process on $\bbG \times \bbK$, called an \emph{i.i.d.\  marking of $\Phi$, with mark distribution $\mu$}.
\end{Definition}
\begin{Proposition}
	\label{prop : intro/moment_d}
	Let $\Phi$ be an ergodic and stationary simple point process on $\R^d$ such that $\MM[\Phi] = \lambda \Leb$, with $0 < \lambda < \infty$. Let $\nu$ be a probability measure on $\intervalleoo0\infty$ and $\pro$ assume that is an i.i.d.\  marking of $\Phi$ with mark distribution $\nu$. If $\pro$ satisfies Assumption~\ref{ass : intro/main/Moment}, then
	\begin{equation}
		\label{eqn : intro/moment_d}
		\int_{\intervalleoo0\infty} t^d \nu(\d t) < \infty.
	\end{equation} 
\end{Proposition}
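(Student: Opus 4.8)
We argue by contraposition: assuming $\int_{\intervalleoo0\infty} t^d\,\nu(\d t)=\infty$, the goal is to show that $\GeneralUB=\sup_{\ell\ge1}\E{\MassAUF{\ell}}/\ell=\infty$, so that Assumption~\ref{ass : intro/main/Moment} fails. First a reduction handling heavy first moments: if $\int_{\intervalleoo0\infty} t\,\nu(\d t)=\infty$ then $\E{\MassAUF{\ell}}=+\infty$ for every $\ell>0$, and we are done. Indeed, joining $0$ to a single atom by one edge gives $\MassAUF{\ell}\ge\max\set{\Mass x}{x\in\projpro\cap\clball{0,\ell}}$, and conditioning on $\Phi$ yields $\Pb{\exists\,x\in\projpro\cap\clball{0,\ell}\ :\ \Mass x>s}\ge\bar\nu(s)\,\Pb{\Phi(\clball{0,\ell})\ge1}$ with $\bar\nu(s)\dpe\nu\p{\intervalleoo s\infty}$, so that $\E{\MassAUF{\ell}}\ge\Pb{\Phi(\clball{0,\ell})\ge1}\int_0^\infty\bar\nu(s)\,\d s=+\infty$. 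Hence we may assume $\int t\,\nu(\d t)<\infty$. Setting $G(w)\dpe\inf\set{s>0}{\bar\nu(s)\le w}$ for $w\in\intervalleoo01$, the layer-cake formula gives $\int_0^1 G(w)\,\d w=\int t\,\nu(\d t)<\infty$ and $\int_0^1 G(w)^d\,\d w=\int t^d\,\nu(\d t)=\infty$; the plan is to turn this divergence into a superlinear lower bound for $\E{\MassAUF{\ell}}$.

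The construction is a greedy animal concentrated near the origin. Write $\omega_d\dpe\Leb\p{\clball{0,1}}$, fix a scale $R>0$ (eventually large), and let $\xi_R^{(m)}$ be the union of $\acc0$, the $m$ atoms of $\pro$ in $\clball{0,R}$ of largest mass, and a minimal spanning tree of this finite set. This is an animal containing $0$, and by the classical deterministic bound on Euclidean minimal spanning trees its length is at most $c_d\p{R\,m^{1-1/d}+R}$, with $c_d$ depending only on $d$ and $\norme\cdot$. Choosing $m=m(R,\ell)$ maximal so that this is $\le\ell$ makes $\xi_R^{(m)}\in\SetAUF{\ell}$, whence $\MassAUF{\ell}\ge\Mass{\xi_R^{(m)}}\ge S_{R,m}$, the sum of the $m$ largest masses among the atoms of $\pro$ in $\clball{0,R}$. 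Since an i.i.d.\ marking of an ergodic point process is ergodic, the ergodic theorem for stationary point processes (Wiener's theorem for $\R^d$-actions) gives, for each threshold $t$, $R^{-d}\,\#\set{x\in\projpro\cap\clball{0,R}}{\Mass x\ge t}\to\lambda\,\omega_d\,\bar\nu(t)$ almost surely as $R\to\infty$; together with monotonicity of order statistics in the sample size this yields, for $R$ large,
\[
	\E{S_{R,m}}\ \gtrsim\ \lambda\,\omega_d\,R^d\int_0^{\theta}G(w)\,\d w,\qquad \theta\asymp\frac{m}{\lambda\,\omega_d\,R^d}.
\]
Feeding in the constraint $R\,m^{1-1/d}\asymp\ell$ and letting $R=R(\ell)$ vary so that $\theta=\theta(\ell)$ sweeps $\intervalleof01$, one gets after simplification
\[
	\GeneralUB\ \gtrsim\ \sup_{\theta\in\intervalleof01}\ \theta^{\,1/d-1}\int_0^{\theta}G(w)\,\d w ,
\]
and a layered refinement of the same construction — nested balls $\clball{0,\rho_1}\subseteq\dots\subseteq\clball{0,\rho_K}$ and a geometric range of thresholds $t_1<\dots<t_K$, with the mass read off from $\Mass\xi=\int_0^\infty\#\p{\xi\cap\set{x\in\projpro}{\Mass x\ge t}}\,\d t$ — lets one sum the profits coming from several thresholds at once.

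The remaining task is real-analytic: one must check that the divergence $\int_0^1 G^d=\infty$ feeds, through the layered construction, into $\GeneralUB=\infty$. This is the delicate step, and the place where the exponent $d$ is used in an essential way: one chooses the thresholds $t_k$ and radii $\rho_k$ so that the total spanning-tree length of $\acc0\cup\bigcup_k\set{x\in\projpro}{\Mass x\ge t_k,\ \norme x\le\rho_k}$ stays within the budget $\ell$ while the mass outgrows $M\ell$ for arbitrarily large $M$. I expect this balancing of scales against thresholds, and the matching lower bound on the accumulated mass, to be the main obstacle. The other ingredients are routine: the layer-cake rewritings of the moments of $\nu$, the $O\p{R\,m^{1-1/d}}$ bound for minimal spanning trees, and the ergodicity of i.i.d.\ markings together with the pointwise ergodic theorem; the principal technical care beyond the real-analytic core lies in making the ergodic-theoretic concentration uniform over the finitely many thresholds used and valid along a sequence $\ell\to\infty$, after which $\E{\MassAUF{\ell}}\ge\E{\Mass{\xi}\,\ind{G_\ell}}\ge M\ell$ on a high-probability event $G_\ell$.
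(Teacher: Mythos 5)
Your overall strategy — proving the contrapositive by lower-bounding $\E{\MassAUF\ell}$ directly through a greedy multi-scale construction — has a genuine gap exactly at the step you flag as "the delicate step", and the gap is not merely technical: the construction you describe provably fails at the critical case. Take $\nu\p{\intervallefo t\infty}=\min\p{1,t^{-d}}$, which has infinite $d$-th moment but finite first moment. Then $G(w)=w^{-1/d}$, and your single-scale bound gives $\sup_\theta \theta^{1/d-1}\int_0^\theta G(w)\,\d w=\frac{1}{1-1/d}<\infty$. The layered refinement does not rescue this: atoms of mass of order $t$ have spatial density of order $\lambda t^{-d}$, hence typical spacing of order $t\lambda^{-1/d}$, so collecting $m$ of them costs spanning length of order $m\,t\,\lambda^{-1/d}$ while yielding mass of order $m\,t$. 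Every threshold and every radius therefore has the same mass-to-length efficiency $\lambda^{1/d}$, and any deterministic union of layers fitting the length budget $\ell$ has expected mass $O(\ell)$. (The expected maximum over a ball of radius $\ell$ is also only of order $\ell$ here.) The quantity $\E{\MassAUF\ell}/\ell$ does diverge in this case — the proposition is true — but the divergence comes from the optimizing animal adapting its choice of scale to the realization of the marks, which a construction fixed in advance cannot capture; turning that into an expectation lower bound would be a substantially harder argument than what is needed.

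The paper sidesteps the expectation bound entirely. It shows the almost-sure statement $\limsup_{\ell}\MassAUF\ell/\ell=\infty$: ordering the atoms $z_n$ of $\Phi$ by increasing norm, the ergodic theorem gives $\norme{z_n}\le c\,n^{1/d}\lambda^{-1/d}$ for large $n$, while $\sum_n\Pb{\Mass{z_n}\ge s\,n^{1/d}}=\sum_n\Pb{\Mass{z_1}^d\ge s^d n}=\infty$ precisely because the $d$-th moment is infinite; independence of the marks and the second Borel--Cantelli lemma then produce infinitely many atoms with $\Mass{z_n}\ge s\,n^{1/d}\ge s\lambda^{1/d}\norme{z_n}/c$, and the one-edge animal $\p{0,z_n}$ gives $\MassAUF{\norme{z_n}}\ge s\lambda^{1/d}\norme{z_n}/c$ for every $s>0$. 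This contradicts the conclusion of Theorem~\ref{thm : intro/main/MAIN}, which under Assumptions~\ref{ass : intro/main/Ergodic_Stationary} and~\ref{ass : intro/main/Moment} forces a finite almost-sure limit; hence Assumption~\ref{ass : intro/main/Moment} fails. Note where the i.i.d.\ hypothesis enters: in your argument it only supplies ergodicity and ergodic averages, whereas in the paper it supplies the independence needed for the divergence half of Borel--Cantelli. That unused ingredient is what makes the single-atom, almost-sure route work where the expectation route stalls; I would recommend restructuring your proof around it.
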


\subsection{Outline of the paper}

Section~\ref{sec : LLN} is devoted to the proof of Theorem~\ref{thm : intro/main/MAIN}. Our main tool is the following extension of Kingman's theorem, adapted from Akcoglu and Krengel (1981) \cite[Theorem 2.4]{AK81}. Since our version does not involve new ideas, we place it in the Appendix.
\begin{Theorem}
	\label{thm : intro/outline/AK81}
	Let $\p{ X(s,t) }_{s<t}$ be a random process indexed by ordered pairs of real numbers. Assume that $X(\cdot,\cdot)$ is 
	\begin{enumerate}[(i)]
		\item nonnegative,
		\item stationary, i.e.\ for all $u\in \R$, $\p{X(s+u,t+u)}_{s<t}$ has the same distribution as $\p{X(s,t)}_{s<t}$,
		\item superadditive, i.e.\ for all $s<t<u$, $X(s,u) \ge X(s,t) + X(t,u)$, 
		\item and satisfies\begin{equation}
	 	\sup_{t\ge 1} \frac1t\E{X(0,t)}<\infty.
	\end{equation} 
	\end{enumerate}
	Then for all $a,b \in \R$ such that $a\le b$, the limit
	\begin{equation}
		\label{eqn : intro/outline/AK81/convergence}
	 	Y(a,b)\dpe \lim_{\ell\to\infty} \frac1\ell X(a\ell ,b\ell)
	\end{equation}
	exists \textrm{a.s.}\ and in $\rL^1$. Moreover, for all sequences of rational numbers $(a_n)$ and $(b_n)$ such that\footnote{Actually for all $(a,b)$, $Y$ is almost surely continuous at $(a,b)$, but we only need the weaker version~\eqref{eqn : intro/outline/AK81/convergence_ab}, which appears along the proof of~Theorem~\ref{thm : intro/outline/AK81}.}
	\begin{equation*}
		\inclim{n\to \infty} a_n = a\text{ and } \declim{n\to \infty} b_n =b,
	\end{equation*}
	almost surely,
	\begin{equation}
		\label{eqn : intro/outline/AK81/convergence_ab}
		Y(a,b) = \lim_{n\to \infty} Y(a_n, b_n).
	\end{equation}
\end{Theorem}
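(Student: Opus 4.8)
First I record a monotonicity fact used throughout: combining (i) and (iii), whenever $[s',t']\subseteq[s,t]$ one has $X(s,t)\ge X(s,s')+X(s',t')+X(t',t)\ge X(s',t')$, so $X$ is nondecreasing with respect to inclusion of intervals. This is the only place nonnegativity enters, and it is what lets me replace real endpoints by nearby integer or rational ones. The plan is then to reduce the two–parameter statement to the one–parameter superadditive ergodic theorem of Akcoglu and Krengel and to read the continuity of $Y$ off a soft computation with expectations.

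\emph{Core convergence.} Realize $X$ on a probability space carrying a measure–preserving flow $(\theta_u)_{u\in\R}$ with $X(s,t)=F_{t-s}\circ\theta_s$ for a superadditive family $(F_r)_{r>0}$. By (iv), $\gamma\dpe\sup_{r\ge1}\frac1r\E{F_r}<\infty$; since $r\mapsto\E{F_r}$ is superadditive and bounded on $[1,2]$ (by the monotonicity fact, $\E{F_r}\le\E{F_{\ceil r}}\le\ceil r\,M$), the continuous Fekete lemma gives $\frac1r\E{F_r}\to\gamma$. Applying Akcoglu–Krengel to $(F_r)$ produces a $\theta$–invariant $\bar F\ge0$ with $\E{\bar F}=\gamma$ and $\frac1t F_t\to\bar F$ a.s.\ and in $\rL^1$; applying it to the reversed process $\tilde X(s,t)\dpe X(-t,-s)$ (again stationary, superadditive, nonnegative) gives $\frac1t X(-t,0)\to\bar F$ a.s.\ and in $\rL^1$, the limit being the same $\bar F$ because $\Econd{X(-t,0)}{\cI}=\Econd{F_t}{\cI}$ for the flow–invariant field $\cI$. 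More generally, the same machinery --- the maximal ergodic inequality for superadditive processes together with the approximation of $\bar F$ by $\frac1K\Econd{F_K}{\cI_K}$ for large $K$ --- gives, for each fixed $a<b$ and $\ell$ running through the integers,
\[
	\frac1\ell X(a\ell,b\ell)\xrightarrow[\ell\to\infty]{\text{a.s.\ and }\rL^1}(b-a)\bar F ,
\]
and the monotonicity fact upgrades this to $\ell\to\infty$ over the reals by sandwiching $[a\ell,b\ell]$ between intervals $[am,bm']$ with $m,m'\in\{\floor\ell,\ceil\ell\}$ chosen according to the signs of $a$ and $b$. Set $Y(a,b)\dpe(b-a)\bar F$ and $Y(a,a)\dpe0$.

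\emph{Rational endpoints and continuity.} For rational $a<b$ with common denominator $q$, apply the previous step to the rescaled process $(s,t)\mapsto X(s/q,t/q)$, which is again stationary, superadditive, nonnegative and satisfies (iv) (for $1/q\le r<1$, $\E{X(0,r/q)}\le\E{X(0,1)}\le M$); this yields the convergence at $(a,b)$ with limit $(b-a)\bar F$. For real $a<b$, pick rationals $\alpha_n\downarrow a$, $\beta_n\uparrow b$ (with $\alpha_n<\beta_n$) and $\bar\alpha_n\uparrow a$, $\bar\beta_n\downarrow b$; the monotonicity fact gives $X(\alpha_n\ell,\beta_n\ell)\le X(a\ell,b\ell)\le X(\bar\alpha_n\ell,\bar\beta_n\ell)$, hence $(\beta_n-\alpha_n)\bar F\le\liminf_\ell\frac1\ell X(a\ell,b\ell)\le\limsup_\ell\frac1\ell X(a\ell,b\ell)\le(\bar\beta_n-\bar\alpha_n)\bar F$, and letting $n\to\infty$ gives a.s.\ convergence to $(b-a)\bar F$; $\rL^1$ convergence follows since $\frac1\ell X(a\ell,b\ell)$ is dominated by $\frac1\ell X(\bar\alpha_1\ell,\bar\beta_1\ell)$, which converges in $\rL^1$ and is therefore uniformly integrable. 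Finally, for $a_n\uparrow a$ and $b_n\downarrow b$ with $a\le b$, $Y(a_n,b_n)=(b_n-a_n)\bar F\to(b-a)\bar F=Y(a,b)$ almost surely, which is the last assertion. (One can avoid the explicit formula: $[a,b]\subseteq[a_n,b_n]$ and monotonicity give $Y(a_n,b_n)\ge Y(a,b)$, decreasing in $n$ to some $L\ge Y(a,b)$; since $\E{Y(a,b)}=\lim_\ell\frac1\ell\E{X(a\ell,b\ell)}=(b-a)\gamma$ by stationarity, $\E L=\lim_n(b_n-a_n)\gamma=(b-a)\gamma=\E{Y(a,b)}$, so $L=Y(a,b)$ a.s.)

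The one substantive step is the a.s.\ \emph{lower} bound $\liminf_\ell\frac1\ell X(a\ell,b\ell)\ge(b-a)\bar F$; everything else is monotonicity and expectation bookkeeping. Superadditivity by itself only bounds ``interior'' windows from \emph{above} --- an interval sitting as a middle piece of a larger one has $X$–value at most the difference of the two larger values --- so for a window $[a\ell,b\ell]$ not containing the origin one genuinely needs the maximal ergodic inequality of Akcoglu–Krengel (equivalently: chop $[a\ell,b\ell]$ into $\asymp\ell/K$ blocks of length $K$, average over the blocks by a Birkhoff–type argument, and control the moving base point uniformly via that inequality, then conclude along integer $\ell$ by Borel–Cantelli and pass to real $\ell$ by monotonicity). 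This is precisely the part that ``does not involve new ideas'' once AK81 is granted, and it is what I would carry out in detail.
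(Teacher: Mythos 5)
Your outer reductions are sound: the monotonicity of $X$ under interval inclusion, the sandwiching of real endpoints and real $\ell$ by rational and integer data, the rescaling to clear denominators, and the matching-expectations argument for~\eqref{eqn : intro/outline/AK81/convergence_ab} all work. But the load-bearing step --- almost sure convergence of $\frac1\ell X(a\ell,b\ell)$ for a window that does not contain the origin, equivalently the lower bound $\liminf_\ell \frac1\ell X(a\ell,b\ell)\ge (b-a)\bar F$ --- is the actual content of the theorem, and you defer it to a parenthetical sketch ending with ``it is what I would carry out in detail.'' That is a genuine gap. Your sketch also hides a real difficulty: the Birkhoff limit of the block sums over blocks of length $K$ is $\frac1K\Econd{F_K}{\cI_K}$ for an invariant field $\cI_K$ that depends on $K$, and recovering $\bar F$ from these as $K\to\infty$ --- as well as the identity $\lim_t X(-t,0)/t=\lim_t X(0,t)/t$, which your formula $Y(a,b)=(b-a)\bar F$ requires --- rests on the Kingman-type identification of the limit via conditional expectations given the flow-invariant field, which you assert but do not prove and which is of the same order of difficulty as the theorem itself. (The identity $Y(a,b)=(b-a)\bar F$ is true, but it is strictly stronger than what the theorem asserts, and the theorem does not need it.)

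It is worth seeing how the paper closes exactly this gap, because its route avoids controlling a moving base point altogether. The maximal inequality (Corollary~4.5 of \cite{AK81}) is applied only to origin-centered windows of the defect process $\hat X_t(n,m)\dpe X(nt,mt)-S_t(n,m)$, whose time constant $\TC{\hat X_t}$ is at most $\eps$; since $\hat X_t(ka,kb)\le \hat X_t(-kc,kc)$ with $c=\module a\vee\module b$, this plus Birkhoff for $S_t$ settles the case $a\le 0\le b$, where $\ell\mapsto X(a\ell,b\ell)$ is monotone so real $\ell$ comes for free. For $0\le a\le b$ the paper never proves an almost sure lower bound at all: it combines the differencing upper bound $X(a\ell,b\ell)\le X(0,b\ell)-X(0,a\ell)$ with the purely expectation-level identity $\E{\Yinf(a,b)}=(b-a)\TC{X}$ (block sums plus Fatou) to force $\Yinf(a,b)=\Ysup(a,b)=Y(0,b)-Y(0,a)$. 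If you want to retain your explicit formula, the cheapest complete argument is this one together with the scaling observation $Y(0,b)=b\,Y(0,1)$; the ``uniform control of the moving base point'' you propose instead is the harder of the two routes and is precisely the part you have not written.
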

Note that $Y(a,b)$ may not be a deterministic constant. However, we will use~\eqref{thm : intro/outline/AK81} in a context where it will be the case, by ergodicity.

To prove Theorem~\ref{thm : intro/main/MAIN}, the general ideas are somewhat similar to those of Gandolfi-Kesten~\cite{Gan94} and Martin~\cite{Mar02}. They defined auxiliary processes as the maximal mass of an animal with prescribed width and leftmost point (in the direction $\base 1$). The expectation of theses processes are superadditive, thus Fekete's lemma applies. The method of bounded differences (in \cite{Gan94}) or a concentration inequality due to Talagrand (1995) \cite[Theorem~8.1.1]{Tal95} (in \cite{Mar02}) then gives a sharp bound for the probability of $\frac{ \MassLAUF n} n$ taking values far from $\LimMassLA(0)$, yielding the LLN for $\MassLAUF{n}$ by Borel-Cantelli's lemma. 

Since we do not assume independence of the masses, we do not have access to bounded differences nor concentration inequalities. To circumvent this issue, we make the following changes in the strategy. First, the auxiliary processes we consider are similar to the ones defined by~\eqref{eqn : intro/framework/def_MassGUF_MassGDF}, by choosing special values of $(x,y,\ell)$ and restricting the supremum to the subset of $\SetGDF{x}{y}{\ell}$ consisting of paths or animals included in a certain diamond ($x$ and $y$ are extremal points of which). They are superadditive in the strong sense (not simply in expectation). Theorem~\ref{thm : intro/outline/AK81} gives the LLN for theses processes. Second, we use elementary concatenation arguments to compare $\MassGDF{x}{y}{\ell}$ to the auxiliary processes.

Since the proof of Theorem~\ref{thm : intro/main/MAIN_PENALIZED} only requires some minor adaptations, we leave it to the reader.

Section~\ref{sec : suff_condition} contains the proofs of Propositions~\ref{prop : intro/suff_condition/fmm},~\ref{prop : intro/nec_condition} and~\ref{prop : intro/moment_d}. The first one relies on a straightforward adaptation of Theorem~1.2 in Gouéré and Marchand (2008) \cite{Gou08}, which gives a bound for the mass of a path in the Poissonian case with unit masses. The second one is based on a classic upper bound for the Travelling salesman problem. The last one uses Borel-Cantelli's lemma.

Section~\ref{sec : app} is devoted to proving Corollaries~\ref{cor : intro/special/Zd} and~\ref{cor : intro/special/DPP}. For the second one, we use a void-probability-based criterion to show Assumption~\ref{ass : intro/main/Ergodic_Stationary} and Proposition~\ref{prop : intro/suff_condition/fmm} to show Assumption~\ref{ass : intro/main/Moment}.


\subsection{Related works and open questions}
\label{subsec : intro/open_questions}

\paragraph{Integrability.}
Consider the case where $\pro$ is a Poisson point process on $\R^d \times \intervalleoo0\infty$, with intensity $\Leb\otimes \nu$. Corollary~\ref{cor : intro/special/Poisson} and Proposition~\ref{prop : intro/moment_d} leave a gap in our understanding of the asymptotic behaviour of $\MassAUF\ell$ and similar processes, as in the case where $\nu$ has a finite $d$-th moment but does not satisfy~\eqref{eqn : intro/context/greedy_condition}, we do not know if Assumption~\ref{ass : intro/main/Moment} holds. In particular, we do not know if Assumption~\ref{ass : intro/main/Moment} and~\eqref{eqn : intro/nec_condition/layers} are equivalent. Note that the sharpest known necessary and sufficient conditions for the original discrete model, as stated by Martin \cite{Mar02}, are analogous to the ones for Poisson point processes.

In general, no moment condition on $\nu$ alone can guarantee Assumption~\ref{ass : intro/main/Moment}, even in the discrete model. Indeed, let $\nu$ be any probability measure on $\intervalleoo0\infty$ with unbounded support. Let $(X_v)_{v\in \Z^2}$ be a family of random variables with distribution $\nu$, such that
\begin{enumerate}
	\item For all $(v_1, v_2) \in \Z^2$, $X_{v_1, v_2} = X_{v_1,0}$.
	\item The variables $\p{X_{v_1, 0}}_{v_1 \in \Z }$ are independent. 
\end{enumerate}
Note that $(X_v)_{v\in \Z^2}$ is stationary and ergodic. For all $s>0$, almost surely, there exists $v_1 \in \N$ such that $X_{v_1,0} \ge s$. Thus by considering the path \[\gamma \dpe \p{ (0,0), (1,0), \dots, (v_1,0), (v_1, 1), \dots, (v_1, n) },\]
for $n \ge 1$, one shows that
\begin{align*}
	\limsup_{n\to\infty} \frac{ \MassLAUF n }{n} &\ge s,
\intertext{hence}
	\limsup_{n\to\infty} \frac{ \MassLAUF n }{n} &= \infty.
\end{align*}

\paragraph{Extension to possibly negative masses.} Dembo, Gandolfi and Kesten proved in 2001 \cite{Dem01} that~\eqref{eqn : intro/context/LLN} still holds when the masses $\p{\Mass v}_{v\in \Z^d}$ are not assumed to be necessarily negative, provided their positive parts satisfy~\eqref{eqn : intro/context/CGGK_ass}. They also study the maximal mass $G_n$ of an animal of any size, included in $\intint0n^d$. The order of $G_n$ is at most $n$ if $\LimMassLA(0)<0$ and $n^d$ if $\LimMassLA(0)>0$. In 2006, Hammond \cite{Ham06} pushed the study further by providing an estimate for $G_n$ in the critical case. He also proved that in the supercritical case the limit $\lim_{n \to \infty} \frac{G_n}{n^d}$ exists almost surely, and the animal realizing $G_n$ is dense, in the sense that it intersects all open sites of the largest cluster for a box-level percolation process on $\intint0n^d$ with arbitrarily high parameter.

In a recent article, Chang and Zheng \cite{Chang24} proved the law of large numbers for greedy lattice paths for possibly negative masses, provided their positive parts satisfy~\eqref{eqn : intro/context/CGGK_ass} and their negative parts have a finite fourth moment.

\paragraph{Large deviations.} In the article mentioned above \cite{Dem01}, Dembo, Gandolfi and Kesten proved a large deviation estimate for abnormally large values of $\MassLAUF n$, under an exponential moment condition. To our knowledge, the existence of the corresponding rate function remains to be shown. Besides, large deviations for abnormally small values of $\MassLAUF n$ seem not to have been studied.

\paragraph{About the limiting constant.} Lee showed in 1993 \cite{Lee93} that except in the case where the vertices have maximal mass with a probability greater than or equal to the site-percolation critical parameter, the limiting constant $\LimMassLA(0)$ for the greedy lattice animals is strictly greater than its analogue for greedy lattice paths. The same author showed in 1997 \cite{Lee97_Continuity} that under a domination assumption, it is continuous with respect to the distribution of $\Mass0$, and provide \cite{Lee97_PowerLaws} estimates for their behaviour near criticality for masses taking values in $\acc{0,1}$.

\subsection{Notations}
\label{subec : intro/notations}
\paragraph{$\pro$-mesurable random variables.}
In contexts where more than one point process is considered, we will indicate the dependence on the point process by square brackets, e.g.\ for any point process $\tilde \pro$ on $\R^d \times \intervalleoo0\infty$ and any subset $A\subseteq \R^d$,
\begin{equation}
	\label{eqn : intro/framework/mass_subset_other_process}
	\Mass{A}\cro{\tilde\pro} \dpe \int_{A \times \intervalleoo0\infty} t \tilde\pro(\d x, \d t).
\end{equation}
\paragraph{Animals and paths.} Given two paths $\gamma_1 = (x_0, \dots x_r)$ and $\gamma_2 = (y_0, \dots, y_s)$ such that $x_r=y_0$, we define the \emph{concatenation} of $\gamma_1$ and $\gamma_2$ as the path
\begin{equation}
	\gamma_1 \concat \gamma_2 \dpe (x_0, \dots, x_r, y_1, \dots, y_s).
\end{equation}
Similarly, given two animals $\xi_1 = (V_1, E_1)$ and $\xi_2 = (V_2, E_2)$ such that $V_1 \cap V_2 \neq \emptyset$, we define the \emph{concatenation} of $\xi_1$ and $\xi_2$ as the animal
\begin{equation}
	\xi_1 \concat \xi_2 \dpe (V_1\cup V_2, E_1\cup E_2).
\end{equation}

\paragraph{Vectors and subsets of $\R^d$.}
Fix $\Cl{cst : equiv_norms}>0$ (depending on $\norme\cdot$) such that
\begin{equation}
	\label{eqn : intro/notations/equiv_norms}
	\frac1{\Cr{cst : equiv_norms}}\norme[2]{\cdot} 	\le \norme{\cdot} \le \Cr{cst : equiv_norms}\norme[2]{\cdot}.
\end{equation}
We denote by $\S$ the unit sphere for $\norme\cdot$. For all $x,y\in \R^d$, we denote by $\intervalleff xy$ the segment between $x$ and $y$. For all $x\in \R^d$, $u\in \R^d\setminus \acc 0$ and $0<\delta<1$, we define the \emph{cone}
\begin{equation}
	\label{eqn : intro/notations/cone}
	\Cone{\delta}{x}{u} \dpe \set{z\in \R^d }{\ps{z-x}{\frac{u}{\norme[2]{u} }} \ge (1-\delta)\norme[2]{z-x} }.
\end{equation}
For all distinct points $x,y\in \R^d$ and $0<\delta < 1$, we define the \emph{diamond}
\begin{align}
	\label{eqn : intro/notations/diamond}
	\begin{split}
	\Diamant{\delta}{x}{y} &\dpe \Cone{\delta}{x}{y-x} \cap \Cone{\delta}{y}{x-y} 
	\end{split}
	\intertext{and the \emph{antidiamond} }
	\label{eqn : intro/notations/antidiamond}
	\begin{split}
	\AntiDiamant{\delta}{x}{y} &\dpe  \biggl( \R^d \setminus \p{\Cone{\delta}{x}{x-y}\cup \Cone{\delta}{y}{y-x}} \biggr)\cup\acc{x,y} 
	\end{split}
\end{align}
(see Figure~\ref{fig : intro/notations/diamond}).
\begin{figure}
\center
\def\svgwidth{0.6\linewidth}
\begingroup%
  \makeatletter%
  \providecommand\color[2][]{%
    \errmessage{(Inkscape) Color is used for the text in Inkscape, but the package 'color.sty' is not loaded}%
    \renewcommand\color[2][]{}%
  }%
  \providecommand\transparent[1]{%
    \errmessage{(Inkscape) Transparency is used (non-zero) for the text in Inkscape, but the package 'transparent.sty' is not loaded}%
    \renewcommand\transparent[1]{}%
  }%
  \providecommand\rotatebox[2]{#2}%
  \newcommand*\fsize{\dimexpr\f@size pt\relax}%
  \newcommand*\lineheight[1]{\fontsize{\fsize}{#1\fsize}\selectfont}%
  \ifx\svgwidth\undefined%
    \setlength{\unitlength}{154.98526626bp}%
    \ifx\svgscale\undefined%
      \relax%
    \else%
      \setlength{\unitlength}{\unitlength * \real{\svgscale}}%
    \fi%
  \else%
    \setlength{\unitlength}{\svgwidth}%
  \fi%
  \global\let\svgwidth\undefined%
  \global\let\svgscale\undefined%
  \makeatother%
  \begin{picture}(1,0.33700748)%
    \lineheight{1}%
    \setlength\tabcolsep{0pt}%
    \put(0,0){\includegraphics[width=\unitlength,page=1]{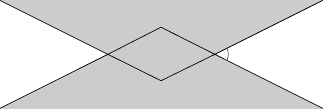}}%
    \put(0.71374595,0.16009468){\color[rgb]{0,0,0}\makebox(0,0)[lt]{\lineheight{1.25}\smash{\begin{tabular}[t]{l}$\delta$\end{tabular}}}}%
    \put(0.26094807,0.16009468){\color[rgb]{0,0,0}\makebox(0,0)[lt]{\lineheight{1.25}\smash{\begin{tabular}[t]{l}$\delta$\end{tabular}}}}%
    \put(0,0){\includegraphics[width=\unitlength,page=2]{FIG1_Diamond.pdf}}%
    \put(0.32258459,0.1309009){\color[rgb]{0,0,0}\makebox(0,0)[lt]{\lineheight{1.25}\smash{\begin{tabular}[t]{l}$x$\end{tabular}}}}%
    \put(0.65934327,0.13515766){\color[rgb]{0,0,0}\makebox(0,0)[lt]{\lineheight{1.25}\smash{\begin{tabular}[t]{l}$y$\end{tabular}}}}%
    \put(0,0){\includegraphics[width=\unitlength,page=3]{FIG1_Diamond.pdf}}%
  \end{picture}%
\endgroup%

\caption{Illustration of the definitions~\eqref{eqn : intro/notations/diamond} and~\eqref{eqn : intro/notations/antidiamond} in dimension $2$. The diamond $\Diamant{\delta}{x}{y}$ and the antidiamond $\AntiDiamant{\delta}{x}{y}$ are represented by the striped region and the shaded region respectively. On the figure, $1-\delta$ denote the cosine of the half-angle.}
\label{fig : intro/notations/diamond}
\end{figure}
For all $x=(x_1, \dots, x_d)\in \R^d$, we define
\begin{equation}
	\label{eqn : intro/notations/floor}
	\floor{x} = \p{\floor{x_1},\dots, \floor{x_d}}.
\end{equation}
We say that a function $f:\ball{0,1}\rightarrow \R$ is \emph{symmetric} if it is symmetric with respect to $u\mapsto -u$.
\paragraph{Positive and negative parts.} For every $a\in \R$, we use the notations $a^+ \dpe \max(a,0)$ and $a^- \dpe \max(-a,0)$.

\section{LLN for greedy animals and paths}
\label{sec : LLN}
In this section we prove Theorem~\ref{thm : intro/main/MAIN}. We fix $\AUXGeneric\in \acc{\AUXPath, \AUXAnimal}$.
Except when specified otherwise, the figures will relate to the case $\AUXGeneric = \AUXAnimal$ and $d=2$. For all $0< \delta < 1$, $\ell>0$ and distinct points $x,y\in \R^d$, we define
\begin{align}
	\SetGDC{\delta}{x}{y}{\ell} &\dpe \set{\xi \in \SetGDF{x}{y}{\ell} }{\xi \subseteq \Diamant{\delta}{x}{y}}\\
	\text{and}\quad%
	\SetGDR{\delta}{x}{y}{\ell} &\dpe \set{\xi \in \SetGDF{x}{y}{\ell} }{\xi \subseteq \AntiDiamant{\delta}{x}{y} \vphantom{\Diamant{\delta}{x}{y}} }.
\end{align}

\subsection{Pointwise convergence for animals and paths restricted to an antidiamond}
\label{subsec : LLN/simple}
This subsection aims to prove Proposition~\ref{prop : LLN/simple}, i.e.\  the pointwise analogue of Theorem~\ref{thm : intro/main/MAIN} for animals and paths restricted to an antidiamond. 
\begin{Proposition}
\label{prop : LLN/simple}
	There exists a concave, symmetric function $\LimMassG : \ball{0,1} \rightarrow \intervalleff0\GeneralUB$ such that for all $0<\delta<1$, $u\in \ball{0,1}\setminus\acc0$ and $-\infty < a < b < \infty$,
	\begin{equation}
		\label{eqn : LLN/simple}
		\frac{\MassGDR{\delta}{\ell a u}{\ell b u}{(b-a)\ell} }{(b-a)\ell} \xrightarrow[\ell \to \infty]{\text{a.s. and }\rL^1} \LimMassG(u).
	\end{equation}
	Moreover, for all $e\in \S$, $\beta \mapsto \LimMassG(\beta e)$ is nonincreasing on $\intervallefo01$ and uniformly continuous on $\intervalleoo{-1}{1}$.
\end{Proposition}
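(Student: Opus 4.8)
The plan is to apply the superadditive ergodic theorem (Theorem~\ref{thm : intro/outline/AK81}) to a suitable two-parameter process, then upgrade the resulting (a priori random) limit to a deterministic constant by ergodicity, and finally establish concavity, symmetry, monotonicity and continuity by elementary geometric manipulations of diamonds and animals/paths. Concretely, for a fixed direction $u\in\ball{0,1}\setminus\acc0$ and a fixed $\delta$, I would set
\begin{equation*}
	X_{u,\delta}(s,t) \dpe \MassGDR{\delta}{s u}{t u}{(t-s)\norme u}
\end{equation*}
for $s<t$ (or with length $(t-s)$ in place of $(t-s)\norme u$; one must be slightly careful about which length normalization makes the objects in~\eqref{eqn : LLN/simple} match — the statement uses length $(b-a)\ell$ for endpoints $\ell a u,\ell b u$, so I would rescale $u$ or absorb $\norme u$ accordingly). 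The key point is that $X_{u,\delta}$ is nonnegative, stationary in $(s,t)$ because $\pro$ is stationary under translations of $\R^d$ (here I use that translation by $su$ along the line $\R u$ is a lattice translation of the ambient space, so $X_{u,\delta}(s+w,t+w)$ has the same law as $X_{u,\delta}(s,t)$), and \emph{superadditive}: given an optimal animal/path from $su$ to $tu$ inside $\AntiDiamant{\delta}{su}{tu}$ and one from $tu$ to $ru$ inside $\AntiDiamant{\delta}{tu}{ru}$, their concatenation at the common vertex $tu$ is an animal/path from $su$ to $ru$ of the summed length, and a geometric check shows the union sits inside $\AntiDiamant{\delta}{su}{ru}$ (this is exactly why the \emph{anti}diamond, which opens outward, is the right object — unlike the diamond, it is stable under this kind of concatenation). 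Assumption~\ref{ass : intro/main/Moment} gives $\E{X_{u,\delta}(0,t)} \le \GeneralUB\cdot(t\norme u)$, hence the hypothesis $\sup_{t\ge1}\tfrac1t\E{X(0,t)}<\infty$. Theorem~\ref{thm : intro/outline/AK81} then yields the a.s.\ and $\rL^1$ convergence in~\eqref{eqn : LLN/simple} to a limit $Y_{u,\delta}(0,1)$.

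Next I would argue the limit is a deterministic constant and independent of $\delta$. Determinism: the event $\acc{\lim = c}$ (for each rational $c$, say, or directly the limit as a random variable) is invariant under the translations $T_z$ because $X_{u,\delta}(s,t)\cro{T_z\pro}$ differs from $X_{u,\delta}(s,t)$ only by a boundary effect that is washed out in the limit — more precisely, I would bound $\module{X_{u,\delta}(0,n)\cro{\pro} - X_{u,\delta}(0,n)\cro{T_z\pro}}$ or, more cleanly, compare $\tfrac1n X_{u,\delta}(0,n)$ along $\pro$ and along $T_z\pro$ using monotonicity in the endpoints and the sub/superadditivity, to see the limit is a.s.\ $T_z$-invariant for every $z$; Assumption~\ref{ass : intro/main/Ergodic_Stationary} then forces it to be a.s.\ constant. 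Independence of $\delta$: since $\AntiDiamant{\delta}{x}{y}$ increases as $\delta$ increases (larger $\delta$ removes smaller cones), $\delta\mapsto Y_{u,\delta}$ is nondecreasing; for the reverse inequality I would take an optimal animal/path for a large $\delta$ and, over a macroscopic scale, chop it into pieces whose endpoints are spaced so that each piece, being short relative to the spacing, fits into a thin antidiamond $\AntiDiamant{\delta'}{}{}$ for the target small $\delta'$ — the standard "coarse-graining into corridors" argument, losing only a vanishing fraction of the length. This defines $\LimMassG(u)$, and the bound $0\le\LimMassG(u)\le\GeneralUB$ is immediate from $0\le X\le \MassAUF{\cdot}$ and Assumption~\ref{ass : intro/main/Moment}. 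For $u=0$ one sets $\LimMassG(0)\dpe\sup_{u\ne0}\LimMassG(u)$ or defines it by the separate statement~\eqref{eqn : intro/main/MAIN/cvg_undirected}; since the proposition only asserts convergence for $u\ne0$, I would just extend by continuity at the end.

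Finally, the structural properties. Symmetry $\LimMassG(-u)=\LimMassG(u)$ follows because reversing a path and reflecting an animal through the midpoint of $\intervalleff{au}{bu}$ turns the $u$-problem into the $(-u)$-problem, while $\AntiDiamant{\delta}{x}{y}=\AntiDiamant{\delta}{y}{x}$ and $\pro$ is stationary (so the location of the midpoint does not matter in the limit). Concavity: for $u,v\in\ball{0,1}\setminus\acc0$ and $\lambda\in(0,1)$, write $\lambda u+(1-\lambda)v$ as the endpoint reached by going distance $\sim\lambda\ell$ in direction $u$ then distance $\sim(1-\lambda)\ell$ in direction $v$ — concatenate a near-optimal antidiamond animal/path for the first leg with one for the second; one checks the total length is $\ell\norme{\lambda u+(1-\lambda)v}+o(\ell)$ up to adjusting the legs (using $\norme{\lambda u+(1-\lambda)v}\le\lambda\norme u+(1-\lambda)\norme v$, the concatenated object may be slightly too long, so one shortens the legs proportionally, which only scales the masses), and that the union lies in a single antidiamond around the two endpoints, giving $\ell\LimMassG(\lambda u+(1-\lambda)v)\ge \lambda\ell\,\LimMassG(u)+(1-\lambda)\ell\,\LimMassG(v)+o(\ell)$; dividing by $\ell$ and letting $\ell\to\infty$ gives midpoint/weighted concavity, hence concavity (a nonnegative concave function on a convex open set is automatically continuous on the interior, which will give uniform continuity on compact sub-balls). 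Monotonicity of $\beta\mapsto\LimMassG(\beta e)$ on $\intervallefo01$: if $0\le\beta'<\beta$, a near-optimal antidiamond animal/path for endpoints $0,\ell\beta e$ contains, or can be truncated to, one realizing essentially the $\beta'$-value on the shorter segment (the antidiamond for $(0,\ell\beta' e)$ contains the relevant portion), while its total mass is at least that of the $\beta'$-configuration; combined with concavity and $\LimMassG\ge0$ this forces $\LimMassG(\beta'e)\ge\LimMassG(\beta e)$. Uniform continuity on $\intervalleoo{-1}{1}$ (radially, and then in all directions) follows from concavity plus boundedness by $\GeneralUB$ on a slightly larger ball, via the standard Lipschitz-on-compact-subsets estimate for concave functions.

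\medskip
The step I expect to be the main obstacle is the superadditivity-under-concatenation together with the \emph{stability of antidiamonds}: one must verify carefully that concatenating an antidiamond-configuration from $x$ to $y$ with one from $y$ to $z$ (with $x,y,z$ colinear, in order) yields a configuration contained in $\AntiDiamant{\delta}{x}{z}$ — i.e.\ that the two removed cones at $x$ and $z$ for the long segment are contained in the union of removed regions for the two short segments. This is a genuine, if elementary, convex-geometry lemma about cones, and getting the constants/inclusions right (and handling the case where an intermediate configuration pokes near the line) is where the real work lies; the $\delta$-independence coarse-graining argument is the second most delicate point, as one must control the boundary losses when cutting a long optimal object into pieces that each fit a thinner antidiamond.
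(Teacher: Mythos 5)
Your skeleton --- superadditive ergodic theorem, then ergodicity to make the limit deterministic, then concatenation arguments for the structural properties --- is the paper's, and your superadditivity claim for the antidiamond process is in fact correct: for collinear points $su,tu,ru$ in order, $\AntiDiamant{\delta}{su}{tu}\cup\AntiDiamant{\delta}{tu}{ru}\subseteq\AntiDiamant{\delta}{su}{ru}$, because each excluded cone of the big antidiamond is a translate of the corresponding excluded cone of a piece by one of that cone's own elements, hence contained in it. So Theorem~\ref{thm : intro/outline/AK81} does apply directly to $X(s,t)=\MassGDR{\delta}{su}{tu}{t-s}$ and gives the pointwise convergence for each fixed $\delta$; the ``main obstacle'' you flag at the end is not where the difficulty lies. (The paper instead applies the ergodic theorem to the diamond-restricted process $\MassGDC{\delta}{\cdot}{\cdot}{\cdot}$ and transfers to the antidiamond afterwards; your shortcut is fine for this step.)

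The genuine gaps are in the two places where the paper needs the diamond process, which you have discarded. First, $\delta$-independence: you have the geometry backwards. As $\delta$ increases the removed cones $\Cone{\delta}{x}{x-y}$ and $\Cone{\delta}{y}{y-x}$ grow, so $\AntiDiamant{\delta}{x}{y}$ \emph{shrinks} and $\delta\mapsto Y_{u,\delta}$ is nonincreasing, not nondecreasing; moreover your ``chopping'' argument fits pieces of an optimal configuration into antidiamonds for smaller $\delta'$, which are \emph{larger} sets, so it only reproves the trivial inequality. The substantive direction is to dominate the value on a large antidiamond by the value on a small one; the paper does this in Lemma~\ref{lem : LLN/simple/cvg2} by flanking an antidiamond animal with two diamond animals so that the concatenation lies in one big diamond, and then concludes in Lemma~\ref{lem : LLN/simple/inv_delta} from the fact that the diamond limit is nondecreasing in $\delta$, the antidiamond limit is nonincreasing, and the two coincide. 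You have no working substitute. Second, concavity: concatenating a near-optimal animal of $\AntiDiamant{\delta}{0}{\lambda\ell u}$ with one of $\AntiDiamant{\delta}{\lambda\ell u}{\lambda\ell u+(1-\lambda)\ell v}$ does \emph{not} yield an animal contained in $\AntiDiamant{\delta}{0}{\lambda\ell u+(1-\lambda)\ell v}$: each leg's antidiamond is the complement of two cones attached to that leg's own endpoints and is nowhere near contained in the target antidiamond (for non-parallel $u,v$ the first leg is free to enter $\Cone{\delta}{0}{-(\lambda u+(1-\lambda)v)}$, which the target excludes). The paper's Lemma~\ref{lem : LLN/simple/concavity} takes the two legs to be \emph{diamond}-restricted precisely so that, for $\delta$ small enough, both legs sit inside the target antidiamond. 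Repairing your proof essentially forces you to reintroduce the diamond process and the comparison of Lemma~\ref{lem : LLN/simple/cvg2}.
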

\begin{proof}
Equation~\eqref{eqn : LLN/simple} is a consequence of Lemmas~\ref{lem : LLN/simple/cvg},~\ref{lem : LLN/simple/cvg2},~\ref{lem : LLN/simple/inv_delta} and~\ref{lem : LLN/simple/concavity} below. Let $e\in \S$ and define
\begin{align*}
	f : \intervalleoo{-1}{1} &\longrightarrow \intervalleff0\GeneralUB \\ \beta &\longmapsto \LimMassG(\beta e).
\end{align*}
The function $f$ is even and concave, therefore for all $\beta \intervallefo01$,
\begin{equation*}
	f(\beta) = \frac{f(-\beta) + f(\beta)}{2} \le f(0).
\end{equation*}
In other words, $f$ has a maximum at $\beta=0$. Using concavity again, we deduce that $f$ is nonincreasing on $\intervallefo01$. Since $f$ is nonnegative, it has a finite limit at $-1$ and $1$. Moreover $f$ is continuous on $\intervalleoo{-1}{1}$, thus it has a continuous extension on $\intervalleff{-1}{1}$, thus it is uniformly continuous on $\intervalleoo{-1}1$.
\end{proof}
\begin{Lemma}
	\label{lem : LLN/simple/cvg}
	Let $u\in \ball{0,1}\setminus\acc0$ and $0<\delta <1$. Then there exists a constant $\LimMassGDC\delta(u) \in \intervalleff0\GeneralUB$ such that for all $a<b$,
	\begin{equation}
	\label{eqn : LLN/simple/cvg/diamond}
		\frac{ \MassGDC{\delta}{\ell a u}{\ell b u}{(b-a)\ell} }{(b-a)\ell} %
			\xrightarrow[\ell \to\infty]{\text{a.s. and }\rL^1} \LimMassGDC\delta(u).
	\end{equation}
\end{Lemma}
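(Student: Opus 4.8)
The plan is to apply the Akcoglu--Krengel extension of Kingman's theorem, Theorem~\ref{thm : intro/outline/AK81}, to a suitable two-parameter process, and then to invoke ergodicity (Assumption~\ref{ass : intro/main/Ergodic_Stationary}) to conclude that the limit is deterministic. Concretely, for fixed $u\in\ball{0,1}\setminus\acc0$ and $0<\delta<1$, I would set
\begin{equation*}
	X(s,t)\dpe \MassGDC{\delta}{su}{tu}{(t-s)\norme u}\cdot\frac{1}{\norme u}\qquad\text{for }s<t,
\end{equation*}
or rather, to match the scaling in the statement, the cleaner choice $X(s,t)\dpe\MassGDC{\delta}{su}{tu}{t-s}$ is not quite right dimensionally; the right object is the mass of the best $\AUXGeneric$ inside the diamond $\Diamant{\delta}{su}{tu}$ with length budget exactly $t-s$ when $u$ is scaled so that $\norme u$ is absorbed — I would simply define $X(s,t)$ to be $\MassGDC{\delta}{su}{tu}{(t-s)}$ interpreting lengths consistently, so that $X(a\ell,b\ell)=\MassGDC{\delta}{\ell a u}{\ell b u}{(b-a)\ell}$. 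Then \eqref{eqn : LLN/simple/cvg/diamond} is exactly the statement that $\tfrac1\ell X(a\ell,b\ell)$ converges, divided by the constant $b-a$.

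The three hypotheses of Theorem~\ref{thm : intro/outline/AK81} must be checked. Nonnegativity is immediate since masses are nonnegative (and the empty animal has mass $0$, so the supremum is over a nonempty set, or is $0$ by convention). Stationarity: for $w\in\R$, the process $\p{X(s+w,t+w)}_{s<t}$ is built from $T_{wu}\pro$ exactly as $\p{X(s,t)}_{s<t}$ is built from $\pro$ — translating the two endpoints $su,tu$ by $wu$ and translating the diamond, which is translation-covariant, $\Diamant{\delta}{su+wu}{tu+wu}=wu+\Diamant{\delta}{su}{tu}$ — so stationarity of $\pro$ under $T_{wu}$ gives (ii). Superadditivity: for $s<t<r$, the key geometric fact is $\Diamant{\delta}{su}{tu}\cup\Diamant{\delta}{tu}{ru}\subseteq\Diamant{\delta}{su}{ru}$ (the three points are colinear on the ray $\R u$, and the cone/diamond construction nests along a ray); given a near-optimal $\AUXGeneric$ from $su$ to $tu$ of length $\le t-s$ in the first diamond and one from $tu$ to $ru$ of length $\le r-t$ in the second, their concatenation at the common vertex $tu$ is an element of $\SetGDF{su}{ru}{r-s}$ lying in $\Diamant{\delta}{su}{ru}$, with mass at least the sum of the two masses (masses are subadditive under union, but since we may take the two pieces to share only the vertex $tu$ — or at worst overcount one vertex — and we are taking suprema, the inequality $X(s,r)\ge X(s,t)+X(t,u)$ follows; in the path case the shared vertex contributes once and one can always delete it from one piece). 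Finally, hypothesis (iv), $\sup_{t\ge1}\tfrac1t\E{X(0,t)}<\infty$, follows from Assumption~\ref{ass : intro/main/Moment} together with the comparisons \eqref{eqn : intro/framework/easy_inequality}: $X(0,t)=\MassGDC{\delta}{0}{tu}{t}\le\MassGUF{}{}$-type bound $\le\MassAUF{t}$ (any animal in the diamond of length $\le t$ containing $0$ lies in $\SetAUF{t}$, up to the trivial empty case and the length of a connecting edge; more carefully $X(0,t)\le\MassAUF{t+\norme{tu}}\le\MassAUF{2t}$ for $t$ large since $\norme u\le1$), and $\E{\MassAUF{2t}}\le 2\GeneralUB t$.

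Applying Theorem~\ref{thm : intro/outline/AK81} yields the a.s.\ and $\rL^1$ convergence of $\tfrac1\ell X(a\ell,b\ell)$ to a limit $Y(a,b)$ for each $a<b$. It remains to identify $Y(a,b)$ as $(b-a)$ times a \emph{deterministic} constant $\LimMassGDC\delta(u)$. By stationarity $Y(a,b)$ and $Y(a+w,b+w)$ have the same law, and in fact $Y(a+w,b+w)=Y(a,b)$ a.s.\ because the process $X(\cdot,\cdot)$ applied to $T_{wu}\pro$ agrees with $X(\cdot,\cdot)$ applied to $\pro$ after the index shift, so the limit is $T_{wu}$-invariant for every $w$; a standard argument (the event $\acc{Y(0,1)\le c}$ is invariant under $T_{wu}$ for all $w\in\R$, hence by Assumption~\ref{ass : intro/main/Ergodic_Stationary} has probability $0$ or $1$) shows $Y(0,1)$ is a.s.\ constant, call it $\LimMassGDC\delta(u)$. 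Superadditivity plus the scaling relation $X(a\ell,b\ell)=X(0,(b-a)\ell)$ shifted by $a\ell$ gives $Y(a,b)=(b-a)Y(0,1)$ a.s. Finally $\LimMassGDC\delta(u)\in\intervalleff0\GeneralUB$: nonnegativity is clear, and the upper bound comes from $\E{X(0,\ell)}/\ell\le\E{\MassAUF\ell}/\ell\le\GeneralUB$ passing to the limit, using $\rL^1$-convergence. The main obstacle is the geometric superadditivity step — verifying that diamonds with colinear, ordered apexes on a common ray nest, $\Diamant{\delta}{su}{tu}\cup\Diamant{\delta}{tu}{ru}\subseteq\Diamant{\delta}{su}{ru}$, so that concatenated near-optimizers stay admissible — which requires an elementary but careful argument with the cone definition \eqref{eqn : intro/notations/cone}; everything else is bookkeeping.
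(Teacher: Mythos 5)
Your setup is the same as the paper's: apply Theorem~\ref{thm : intro/outline/AK81} to $X(s,t)\dpe\MassGDC{\delta}{su}{tu}{t-s}$, checking nonnegativity, stationarity under $T_{wu}$, superadditivity via concatenation at the shared apex $tu$ (which a.s.\ carries no mass since $\MM[\pro]=\Leb\otimes\nu$ and $\R u$ is Lebesgue-null --- the paper works on the a.s.\ event $\acc{\pro(\R u\times\intervalleoo0\infty)=0}$, which is the clean way to phrase what you gesture at), and the moment bound from Assumption~\ref{ass : intro/main/Moment}. All of that matches the paper and is correct.

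The genuine gap is in the last step, where you identify the limit as a deterministic constant. You argue that $\acc{Y(0,1)\le c}$ is invariant under $T_{wu}$ for all $w\in\R$ and then invoke Assumption~\ref{ass : intro/main/Ergodic_Stationary}. But that assumption is ergodicity with respect to the \emph{full} translation group $\R^d$: only events invariant under $T_z$ for \emph{every} $z\in\R^d$ are forced to be trivial. Invariance under the one-parameter subgroup $\acc{T_{wu}}_{w\in\R}$ is strictly weaker and does not trigger the assumption; indeed $Y(a,b)$ is determined by the configuration in a conical neighbourhood of the line $\R u$, and a transverse translation moves that region, so there is something real to prove. The paper closes this by showing $\LimMassGDCab{\delta}{a,b}(u)\cro{T_{-z}\pro}\le\frac{b_n-a_n}{b-a}\LimMassGDCab{\delta}{a_n,b_n}(u)$ for \emph{arbitrary} $z\in\R^d$: for large $\ell$ the translated diamond $\Diamant{\delta}{a\ell u+z}{b\ell u+z}$ is contained in the slightly larger diamond $\Diamant{\delta}{a_n\ell u}{b_n\ell u}$ with $a_n\uparrow a$, $b_n\downarrow b$ rational, and one attaches two connecting segments to any competitor; letting $n\to\infty$ via the continuity clause~\eqref{eqn : intro/outline/AK81/convergence_ab} of Theorem~\ref{thm : intro/outline/AK81} gives $T_z$-invariance of the limit for all $z\in\R^d$, and only then does ergodicity apply. (Once the limit is a.s.\ constant, its independence of $(a,b)$ and the bound by $\GeneralUB$ follow from $\E{Y(a,b)}=(b-a)\gamma(X)$ as you say.) You need to supply this transverse-invariance argument; without it the proof does not go through under the stated ergodicity hypothesis.
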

\begin{Lemma}
	\label{lem : LLN/simple/cvg2}
	Let $u\in \ball{0,1}\setminus\acc0$ and $0<\delta <1$. Then for all $a<b$,
	\begin{equation}
	\label{eqn : LLN/simple/cvg/antidiamond}
		\frac{ \MassGDR{\delta}{\ell a u}{\ell b u}{(b-a)\ell} }{(b-a)\ell} %
			\xrightarrow[\ell \to\infty]{\text{a.s. and }\rL^1} \LimMassGDC\delta(u).
	\end{equation}
\end{Lemma}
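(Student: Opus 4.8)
The plan is to sandwich $\MassGDR{\delta}{\ell a u}{\ell b u}{(b-a)\ell}$ between two quantities that both converge to $\LimMassGDC\delta(u)$, using elementary concatenation arguments at the two endpoints. The key geometric observation is that the antidiamond $\AntiDiamant{\delta}{x}{y}$ is much larger than the diamond $\Diamant{\delta}{x}{y}$ — it is essentially all of $\R^d$ minus two cones pointing outward from $x$ and $y$ — so the inequality $\MassGDC{\delta}{\cdot}{\cdot}{\cdot} \le \MassGDR{\delta}{\cdot}{\cdot}{\cdot}$ is immediate, giving the lower bound $\liminf \ge \LimMassGDC\delta(u)$ by Lemma~\ref{lem : LLN/simple/cvg}. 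The real work is the matching upper bound.

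For the upper bound, fix a large $T$ (to be sent to infinity after $\ell$). First I would partition $\intervalleff ab$ into $[a, a+\eps']$, $[a+\eps', b-\eps']$, $[b-\eps', b]$ for small $\eps'>0$, or more precisely use a slightly shrunk interval: consider an animal/path $\xi \in \SetGDR{\delta}{\ell a u}{\ell b u}{(b-a)\ell}$ realizing (nearly) the supremum, and compare it to a diamond-constrained object on the interval $[\ell(a+\eps')u, \ell(b-\eps')u]$. Since $\xi$ lives in the antidiamond, near the point $\ell a u$ it is confined (apart from $\ell a u$ itself) to the complement of the cone $\Cone{\delta}{\ell a u}{\ell a u - \ell b u}$; the part of $\xi$ at distance $\ge \eps'\ell\norme u$ from $\ell a u$ that is also within total length $(b-a)\ell$ of $\ell b u$ must therefore lie (for $\eps'$ small depending only on $\delta$) inside the diamond $\Diamant{\delta'}{\ell(a+\eps')u}{\ell b u}$ for a slightly worse aperture $\delta' $ — this is the crux of the geometry. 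One then concatenates a short path of length $O(\eps'\ell)$ at each end to land exactly at the diamond's apexes, producing an object in $\SetGDC{\delta'}{\ell(a+\eps')u}{\ell(b-\eps')u}{(b-2\eps')\ell}$ whose mass differs from $\Mass\xi$ by at most the mass picked up on two balls of radius $O(\eps'\ell)$, call it $R_\ell$. Dividing by $(b-a)\ell$ and using Lemma~\ref{lem : LLN/simple/cvg} for the diamond process (at parameters $(a+\eps', b-\eps', \delta')$), I get $\limsup_\ell \frac{\MassGDR{\delta}{\ell a u}{\ell b u}{(b-a)\ell}}{(b-a)\ell} \le \frac{b-2\eps'}{b-a}\LimMassGDC{\delta'}(u) + \limsup_\ell \frac{R_\ell}{(b-a)\ell}$.

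It remains to control $R_\ell$ and to let $\eps'\to 0$, $\delta'\to\delta$. For $R_\ell$: the expected mass on a ball of radius $r$ is $\Leb(\ball{0,r})\int t\,\nu(\d t)$ by~\eqref{eqn : intro/framework/mm_as_a_product}, but this may be infinite, so instead I would bound $R_\ell \le \MassAUF{C\eps'\ell}[\text{translated}] + \MassAUF{C\eps'\ell}[\text{translated}]$ — the mass of any animal near an endpoint within a ball of radius $O(\eps'\ell)$ is dominated by $\MassAUF{O(\eps'\ell)}$ of a translated copy of $\pro$ — and by Assumption~\ref{ass : intro/main/Moment} and stationarity, $\E{R_\ell} \le 2\GeneralUB\cdot O(\eps'\ell)$, so $\limsup_\ell \frac{R_\ell}{(b-a)\ell} \le O(\eps')$ in $\rL^1$; for the almost-sure statement one uses the convergence $\MassAUF{m}/m$ along the a.s.-bounded-in-expectation argument already available, or more simply absorbs $R_\ell$ into the error by noting $\MassAUF{m}/m$ is a.s.\ bounded in $\limsup$ via Theorem~\ref{thm : intro/outline/AK81} applied to $\MassGUF{\cdot}$. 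Then sending $\eps'\to 0$ and invoking continuity of $\delta\mapsto \LimMassGDC{\delta}(u)$ — or just monotonicity, since $\LimMassGDC{\delta'}(u)$ is nonincreasing in the aperture and one can alternatively avoid worsening $\delta$ by absorbing the endpoint corrections into the length budget rather than the aperture — I conclude $\limsup \le \LimMassGDC{\delta}(u)$, matching the lower bound. The $\rL^1$ convergence follows from the a.s.\ convergence together with the uniform integrability provided by the domination $\MassGDR{\delta}{\ell a u}{\ell b u}{(b-a)\ell}\le \MassAUF{(b-a)\ell}$ of a translated process and Assumption~\ref{ass : intro/main/Moment}. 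The main obstacle is the geometric step: verifying that truncating an antidiamond object away from its endpoints forces it into a genuine diamond with only a controlled loss in aperture and length, and that the concatenated correction paths stay within the allotted length budget.
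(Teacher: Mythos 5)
Your lower bound ($\MassGDC{}{}{}{}\le\MassGDR{}{}{}{}$, hence $\liminf\ge\LimMassGDC{\delta}(u)$) and your $\rL^1$ argument by domination are fine, but the geometric step you yourself flag as the crux of the upper bound fails. An animal $\xi\in\SetGDR{\delta}{\ell a u}{\ell b u}{(b-a)\ell}$ is only constrained to avoid two \emph{outward} cones at the endpoints; it may contain points at perpendicular distance of order $(b-a)\ell$ from the axis $\R u$ (for instance a point above the midpoint of $\intervalleff{\ell a u}{\ell b u}$ at height $(b-a)\ell/2$, reachable within the length budget), and it may also contain points lying behind the hyperplane through $\ell a u$ orthogonal to $u$. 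No diamond with apexes at $\ell(a+\eps')u$ and $\ell(b-\eps')u$ contains such points unless its aperture $\delta'$ is bounded away from $0$ \emph{independently of} $\eps'$ — the required half-angle is at least $\arctan(1/\norme u)\ge 45^\circ$ — so you cannot send $\delta'\to\delta$. Your two fallbacks do not repair this: monotonicity of $\delta\mapsto\LimMassGDC{\delta}(u)$ goes the wrong way (the diamond process is nondecreasing in $\delta$, so you only get $\limsup\le\LimMassGDC{\delta'}(u)\ge\LimMassGDC{\delta}(u)$), and invoking $\delta$-independence of the limit is circular, since in this paper that fact (Lemma~\ref{lem : LLN/simple/inv_delta}) is deduced \emph{from} the present lemma by combining the monotonicities of the diamond and antidiamond processes in $\delta$.

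The paper's resolution keeps the aperture $\delta$ fixed and instead pushes the apexes far away: choosing $\Cr{cst : LLN/simple/cvg/big_diamond}$ with $\clball{0,1}\subseteq\Diamant{\delta}{-\Cr{cst : LLN/simple/cvg/big_diamond}u}{(1+\Cr{cst : LLN/simple/cvg/big_diamond})u}$, the whole ball $\clball{a\ell u,(b-a)\ell}$ (hence $\xi$) sits inside the long diamond $\Diamant{\delta}{a\ell u-\Cr{cst : LLN/simple/cvg/big_diamond}(b-a)\ell u}{b\ell u+\Cr{cst : LLN/simple/cvg/big_diamond}(b-a)\ell u}$. One then concatenates $\xi$ with two flanking diamond-animals joining the far apexes to $\ell a u$ and $\ell b u$, and, crucially, \emph{subtracts} rather than adds: $\Mass{\xi}\le\MassGDC{\delta}{\cdot}{\cdot}{(2\Cr{cst : LLN/simple/cvg/big_diamond}+1)(b-a)\ell}-\Mass{\xi_1}-\Mass{\xi_2}$, and taking suprema and the three limits from Lemma~\ref{lem : LLN/simple/cvg} gives $(2\Cr{cst : LLN/simple/cvg/big_diamond}+1-2\Cr{cst : LLN/simple/cvg/big_diamond})\LimMassGDC{\delta}(u)=\LimMassGDC{\delta}(u)$. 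This subtraction device is the idea missing from your plan; without it (or an independent proof that the diamond limit does not depend on $\delta$) the upper bound does not close. Your treatment of the endpoint mass $R_\ell$ also needs more care for the almost-sure statement (the balls are centered at moving points $\ell a u$, $\ell b u$, so stationarity alone gives only distributional control), but that issue is secondary and disappears in the paper's argument.
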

\begin{Lemma}
	\label{lem : LLN/simple/inv_delta}
	Let $u\in \ball{0,1}\setminus\acc0$. Then $\LimMassG(u) \dpe \LimMassGDC\delta(u)$ does not depend on $\delta$.
\end{Lemma}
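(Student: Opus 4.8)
The plan is to show that for any two parameters $\delta,\delta'\in\intervalleoo01$ one has $\LimMassGDC{\delta}(u)=\LimMassGDC{\delta'}(u)$, by a soft sandwiching argument relying only on Lemmas~\ref{lem : LLN/simple/cvg} and~\ref{lem : LLN/simple/cvg2} already proved above, together with a single elementary inclusion between a diamond and an antidiamond.

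The one geometric ingredient is the following: for all distinct $x,y\in\R^d$ and all $\delta,\delta'\in\intervalleoo01$,
\[
	\Diamant{\delta}{x}{y}\subseteq\AntiDiamant{\delta'}{x}{y}.
\]
To see this, let $z\in\Diamant{\delta}{x}{y}$ with $z\notin\acc{x,y}$. By definition of the diamond, $\ps{z-x}{\tfrac{y-x}{\norme[2]{y-x}}}\ge(1-\delta)\norme[2]{z-x}>0$ and $\ps{z-y}{\tfrac{x-y}{\norme[2]{x-y}}}\ge(1-\delta)\norme[2]{z-y}>0$. On the other hand, $z\in\Cone{\delta'}{x}{x-y}$ would force $\ps{z-x}{\tfrac{y-x}{\norme[2]{y-x}}}\le-(1-\delta')\norme[2]{z-x}<0$, and $z\in\Cone{\delta'}{y}{y-x}$ would force $\ps{z-y}{\tfrac{x-y}{\norme[2]{x-y}}}\le-(1-\delta')\norme[2]{z-y}<0$; both are impossible. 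Hence $z$ lies in neither of the two cones whose union is removed in the definition of $\AntiDiamant{\delta'}{x}{y}$, so $z\in\AntiDiamant{\delta'}{x}{y}$. Since $x$ and $y$ are explicitly adjoined to the antidiamond, the inclusion follows.

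Consequently, any animal or path of length at most $\ell$ joining $x$ and $y$ inside $\Diamant{\delta}{x}{y}$ is in particular inside $\AntiDiamant{\delta'}{x}{y}$, i.e.\ $\SetGDC{\delta}{x}{y}{\ell}\subseteq\SetGDR{\delta'}{x}{y}{\ell}$, whence
\[
	\MassGDC{\delta}{x}{y}{\ell}\le\MassGDR{\delta'}{x}{y}{\ell}.
\]
I then apply this with $x=\ell a u$, $y=\ell b u$ and length budget $(b-a)\ell$, divide by $(b-a)\ell$, and let $\ell\to\infty$: by Lemma~\ref{lem : LLN/simple/cvg} the left-hand side converges (a.s.\ and in $\rL^1$) to $\LimMassGDC{\delta}(u)$ and by Lemma~\ref{lem : LLN/simple/cvg2} the right-hand side converges to $\LimMassGDC{\delta'}(u)$, so $\LimMassGDC{\delta}(u)\le\LimMassGDC{\delta'}(u)$. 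Exchanging the roles of $\delta$ and $\delta'$ gives the reverse inequality, hence $\LimMassGDC{\delta}(u)=\LimMassGDC{\delta'}(u)$. This is exactly the assertion that $\LimMassG(u)\dpe\LimMassGDC{\delta}(u)$ is well defined, independently of $\delta$.

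There is no real obstacle here: the argument is purely formal once Lemmas~\ref{lem : LLN/simple/cvg} and~\ref{lem : LLN/simple/cvg2} are available. The only point calling for a little care is the inclusion $\Diamant{\delta}{x}{y}\subseteq\AntiDiamant{\delta'}{x}{y}$, and even there the content is simply that, for $\delta,\delta'<1$, the defining inequalities of the diamond have the opposite sign to those of the two cones removed to form the antidiamond.
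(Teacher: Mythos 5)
Your proof is correct and takes essentially the same route as the paper: both arguments rest on the fact that Lemmas~\ref{lem : LLN/simple/cvg} and~\ref{lem : LLN/simple/cvg2} identify the same limit for the diamond- and antidiamond-restricted processes, combined with an elementary set inclusion. The paper phrases the comparison via monotonicity in $\delta$ (the diamond process is nondecreasing in $\delta$, the antidiamond process nonincreasing), whereas you use the single cross-inclusion $\Diamant{\delta}{x}{y}\subseteq\AntiDiamant{\delta'}{x}{y}$ valid for all $\delta,\delta'$; the substance is identical.
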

\begin{Lemma}
	\label{lem : LLN/simple/concavity}
	The function $\LimMassG$ admits a concave, symmetric extension on $\ball{0,1}$. 
\end{Lemma}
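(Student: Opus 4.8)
The plan is to show that $\LimMassG$, defined on $\ball{0,1}\setminus\acc0$ by the common value $\LimMassGDC\delta(u)$ from Lemmas~\ref{lem : LLN/simple/cvg} and~\ref{lem : LLN/simple/inv_delta}, extends continuously and concavely to all of $\ball{0,1}$. First I would record the two symmetries already available: $\LimMassG(\lambda u)=\lambda \LimMassG(u)$ for $\lambda>0$ with $u,\lambda u\in \ball{0,1}\setminus\acc 0$ (positive homogeneity of degree one, which follows from~\eqref{eqn : LLN/simple/cvg/diamond} by reparametrizing $a,b$ — indeed $\MassGDC{\delta}{\ell a u}{\ell b u}{(b-a)\ell}$ depends on $u$ and the pair $(a,b)$ only through $au$, $bu$, so scaling $u$ by $\lambda$ and $(a,b)$ by $1/\lambda$ leaves it unchanged), and $\LimMassG(-u)=\LimMassG(u)$, which is immediate from the invariance of the antidiamond and the role of $x,y$ in $\SetGDR{\delta}{x}{y}{\ell}$ under swapping. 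Homogeneity lets us reduce everything to the unit sphere, or better, extend $\LimMassG$ to a positively homogeneous function $\widetilde{\LimMassG}$ on all of $\R^d$ by $\widetilde{\LimMassG}(v)\dpe \norme{v}\,\LimMassG(v/\norme v)$ for $v\neq 0$ and $\widetilde{\LimMassG}(0)\dpe 0$; concavity of $\widetilde{\LimMassG}$ on $\R^d$ is then equivalent to superadditivity $\widetilde{\LimMassG}(v+w)\ge \widetilde{\LimMassG}(v)+\widetilde{\LimMassG}(w)$, and restricting to $\ball{0,1}$ gives the desired extension (with the convention at $0$, which is consistent since continuity at $0$ follows from the bound $\LimMassG\le \GeneralUB$ and homogeneity).

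The heart of the matter is superadditivity. Given nonzero $v,w\in\R^d$, I would realize $\widetilde{\LimMassG}(v)$ as a limit of masses of animals (or paths) in an antidiamond from $0$ to $\ell v$, realize $\widetilde{\LimMassG}(w)$ as a limit of masses in an antidiamond from $\ell v$ to $\ell(v+w)$ — using stationarity of $\pro$ to place the second near-optimal animal appropriately and Lemma~\ref{lem : LLN/simple/cvg2} to identify its asymptotic mass with $\|w\|\LimMassG(w/\|w\|)$ — and then \emph{concatenate} the two animals. The concatenation has length at most the sum of the two lengths plus, if necessary, one extra edge bridging $\ell v$ to the endpoint, and is an admissible competitor for $\MassGDF{0}{\ell(v+w)}{(\text{sum of lengths})}$; provided $v$ and $w$ are such that the concatenated object lies in the antidiamond $\AntiDiamant{\delta}{0}{\ell(v+w)}$ (which holds for $\delta$ small depending on the angle between $v$ and $w$, since the antidiamond is a wide cone complement — the only vertices to worry about are near $\ell v$, and one checks $\ell v$ lies in $\AntiDiamant{\delta}{0}{\ell(v+w)}$ for $\delta$ small), one gets
\begin{equation*}
	\MassGDR{\delta}{0}{\ell(v+w)}{(\|v\|+\|w\|)\ell+o(\ell)} \ge \MassGDR{\delta}{0}{\ell v}{\|v\|\ell} + \MassGDR{\delta}{\ell v}{\ell(v+w)}{\|w\|\ell}.
\end{equation*}
Dividing by $\ell$ and letting $\ell\to\infty$, using Lemmas~\ref{lem : LLN/simple/cvg}--\ref{lem : LLN/simple/inv_delta} and the $\delta$-independence to pass to the limit on the left, yields $\widetilde{\LimMassG}(v+w)\ge \widetilde{\LimMassG}(v)+\widetilde{\LimMassG}(w)$. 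Since $\delta$ is arbitrary, the angle restriction is harmless: any $v,w$ can be accommodated by choosing $\delta$ small enough, and the inequality does not involve $\delta$ in the limit.

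The main obstacle, I expect, is the bookkeeping around the antidiamond containment when concatenating: one must verify that gluing an animal living in $\AntiDiamant{\delta}{0}{\ell v}$ to one living in $\AntiDiamant{\delta}{\ell v}{\ell(v+w)}$ produces something inside $\AntiDiamant{\delta'}{0}{\ell(v+w)}$ for a suitable $\delta'$, and to quantify how the length budget degrades (the $o(\ell)$ term from the bridging edge, and possibly from the fact that the near-optimal animals only \emph{approximately} realize the limit — so one should work with $\MassGDR{\delta}{\cdot}{\cdot}{(1-\eps)\cdot}$ or take $\ell$ along a subsequence). A secondary subtlety is ensuring the translated copy of the second optimal animal is genuinely independent enough — but here we only need it to exist with the right asymptotic mass, which Lemma~\ref{lem : LLN/simple/cvg2} delivers after translating by $\ell v$ and invoking stationarity; the almost-sure convergence for the shifted process follows from stationarity of $\pro$ under $T_{\ell v}$. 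Once superadditivity is in hand, continuity of the concave extension on the open ball is automatic, and the behaviour at the boundary was already handled in the proof of Proposition~\ref{prop : LLN/simple} via homogeneity and the uniform bound $\LimMassG\le\GeneralUB$.
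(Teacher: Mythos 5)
Your argument has a fatal flaw at its very first step: the claimed positive homogeneity $\LimMassG(\lambda u)=\lambda\LimMassG(u)$ is false. The quantity $\MassGDC{\delta}{\ell a u}{\ell b u}{(b-a)\ell}$ does \emph{not} depend on $(u,a,b)$ only through the endpoints $au$, $bu$: its third argument, the length budget $(b-a)\ell$, changes under the reparametrization $(u,a,b)\mapsto(\lambda u, a/\lambda, b/\lambda)$ (it becomes $(b-a)\ell/\lambda$), and since the point process is not scale-invariant there is no relation between the two resulting limits. Concretely, $\LimMassG(u)$ is the asymptotic mass per unit of length \emph{budget} when the endpoints are at distance $\norme{u}$ per unit of budget; as $\norme u$ decreases the animal gains slack and the mass can only increase, which is exactly why Proposition~\ref{prop : LLN/simple} asserts that $\beta\mapsto\LimMassG(\beta e)$ is \emph{nonincreasing} on $\intervallefo01$ with maximum $\LimMassG(0)$ (generically positive). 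A $1$-homogeneous function with these properties would have to vanish identically. What is $1$-homogeneous is the perspective function $(v,\ell)\mapsto\ell\LimMassG(v/\ell)$ appearing in Theorem~\ref{thm : intro/main/MAIN}, jointly in $(v,\ell)$ — you have conflated this with homogeneity in $v$ alone. Relatedly, your realization of $\norme v\,\LimMassG(v/\norme v)$ as $\lim_\ell \MassGDR{\delta}{0}{\ell v}{\norme v\ell}/\ell$ evaluates $\LimMassG$ at the unit vector $v/\norme v$, which is not even in the domain $\ball{0,1}$; the zero-slack budget $\norme v\ell$ is the degenerate boundary case, and the resulting superadditivity inequality says nothing about interior points.

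The correct concatenation is close in spirit to yours but the length bookkeeping is different and is the whole point: to prove $\theta_1\LimMassG(u_1)+\theta_2\LimMassG(u_2)\le\LimMassG(\theta_1u_1+\theta_2u_2)$ one concatenates an animal in $\SetGDC{\delta}{0}{\theta_1\ell u_1}{\theta_1\ell}$ with one in $\SetGDC{\delta}{\theta_1\ell u_1}{\theta_1\ell u_1+\theta_2\ell u_2}{\theta_2\ell}$, i.e.\ the total budget $\ell$ is split as $\theta_1\ell+\theta_2\ell$ according to the \emph{convex weights}, not according to $\norme{u_1},\norme{u_2}$. The concatenation lies in $\AntiDiamant{\delta}{0}{\ell(\theta_1u_1+\theta_2u_2)}$ for $\delta$ small (this part of your geometric discussion is fine, as is your treatment of the degenerate collinear/zero cases by continuity), and one concludes via stationarity of expectations and Lemmas~\ref{lem : LLN/simple/cvg}--\ref{lem : LLN/simple/cvg2}. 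As written, your proof does not establish concavity.
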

\begin{proof}[Proof of Lemma~\ref{lem : LLN/simple/cvg}]
	\emph{Existence of the limit.} Let $u\in \ball{0,1}\setminus\acc0$, $0< \delta < 1$ and $a<b$. Consider the process defined for all $s< t$ by 
	\begin{equation}
		X(s,t) \dpe \MassGDC{\delta}{s u}{t u}{t-s}.
	\end{equation}
	The process $X$ is stationary since $\pro$ is stationary. We claim that on the almost sure event $\acc{\pro(\R u \times \intervalleoo0\infty) = 0}$, it is superadditive. Indeed let $s_1 < s_2 < s_3$. Let $\xi_1 \in \SetGDC{\delta}{s_1u}{s_2u}{s_2-s_1}$ and $\xi_2 \in \SetGDC{\delta}{s_2u}{s_3u}{s_3-s_2}$. Then
	\begin{equation}
		\label{eqn : LLN/simple/cvg/concat}
		\xi_1\concat\xi_2 \in \SetGDC{\delta}{s_1u}{s_3u}{s_3-s_1}.
	\end{equation}
	Moreover, $\xi_1\cap \xi_2 = \acc{s_2 u}$, therefore
	\begin{align}
		\Mass{\xi_1\concat\xi_2} &= \Mass{\xi_1} + \Mass{\xi_2}.\nonumber
		\intertext{By definition of $\MassGDC{\delta}{s_1 u}{s_3 u}{s_3-s_1}$,}
		\MassGDC{\delta}{s_1 u}{s_3 u}{s_3-s_1} &\ge \Mass{\xi_1} + \Mass{\xi_2}.\nonumber
	\end{align}
	Taking the supremum in $\xi_1$ and $\xi_2$, we get
	\begin{equation}
		\label{eqn : LLN/simple/cvg/superadd}
		\MassGDC{\delta}{s_1 u}{s_3 u}{s_3-s_1} \ge\MassGDC{\delta}{s_1 u}{s_2 u}{s_2-s_1}+ \MassGDC{\delta}{s_2 u}{s_3 u}{s_3-s_2},\nonumber
	\end{equation}
	i.e.\  $X$ is superadditive. Besides, Assumption~\ref{ass : intro/main/Moment} implies that $\sup_{t\ge 1}\frac{\E{X(0,t)}}{t}<\infty$. Theorem~\ref{thm : intro/outline/AK81} yields the existence of the limit
	\begin{equation}
		\label{eqn : LLN/simple/cvg/existence_limit}
		\LimMassGDCab{\delta}{a,b}(u) \dpe \lim_{\ell \to \infty} \frac{ \MassGDC{\delta}{\ell a u}{\ell b u}{(b-a)\ell} }{(b-a)\ell},
	\end{equation}
	a.s. and in $\rL^1$.

	\emph{Invariance of the limit.} We now prove that $\LimMassGDCab{\delta}{a,b}(u)$ is a deterministic constant and does not depend on $(a,b)$. Let $(a_n)$ and $(b_n)$ be two strictly monotone sequences of rational numbers such that
	\begin{equation*}
		\inclim{n\to\infty} a_n =a \text{ and } \declim{n\to \infty} b_n =b.
	\end{equation*}
	We claim that for all $z\in \R^d$ and $n\ge 1$, almost surely,
	\begin{equation}
		\label{eqn : LLN/simple/cvg/invariance/inequality_as}
		\LimMassGDCab{\delta}{a,b}(u)\cro{ T_{-z}\pro } \le \frac{b_n - a_n}{b-a}\LimMassGDCab{\delta}{a_n, b_n}(u).
	\end{equation}
	Indeed fix $z\in \R^d$ and $n \ge 1$. Consider an animal $\xi \in \SetGDC{\delta}{a\ell u+z}{b\ell u + z}{(b-a)\ell}$ and consider
	\begin{equation}
		\label{eqn : LLN/simple/cvg/inv_xi'}
		\xi' \dpe \p{ a_n\ell u , a\ell u +z}\concat \xi \concat \p{b\ell u +z , b_n\ell u }
	\end{equation}
	(see Figure~\ref{fig : LLN/simple/cvg/inv_xi'}). %
	\begin{figure}
		\center
		\def\svgwidth{0.4\textwidth}
		\begingroup%
  \makeatletter%
  \providecommand\color[2][]{%
    \errmessage{(Inkscape) Color is used for the text in Inkscape, but the package 'color.sty' is not loaded}%
    \renewcommand\color[2][]{}%
  }%
  \providecommand\transparent[1]{%
    \errmessage{(Inkscape) Transparency is used (non-zero) for the text in Inkscape, but the package 'transparent.sty' is not loaded}%
    \renewcommand\transparent[1]{}%
  }%
  \providecommand\rotatebox[2]{#2}%
  \newcommand*\fsize{\dimexpr\f@size pt\relax}%
  \newcommand*\lineheight[1]{\fontsize{\fsize}{#1\fsize}\selectfont}%
  \ifx\svgwidth\undefined%
    \setlength{\unitlength}{210.33775029bp}%
    \ifx\svgscale\undefined%
      \relax%
    \else%
      \setlength{\unitlength}{\unitlength * \real{\svgscale}}%
    \fi%
  \else%
    \setlength{\unitlength}{\svgwidth}%
  \fi%
  \global\let\svgwidth\undefined%
  \global\let\svgscale\undefined%
  \makeatother%
  \begin{picture}(1,1.03054206)%
    \lineheight{1}%
    \setlength\tabcolsep{0pt}%
    \put(0,0){\includegraphics[width=\unitlength,page=1]{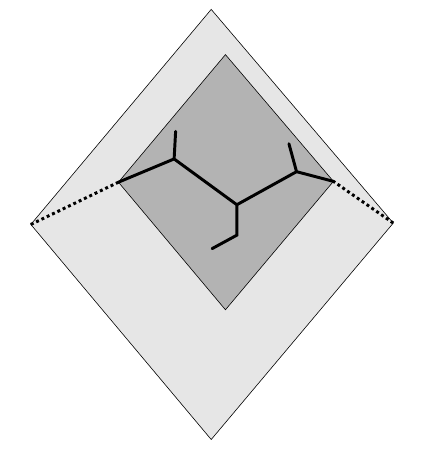}}%
    \put(-0.0474075,0.47233551){\color[rgb]{0,0,0}\makebox(0,0)[lt]{\lineheight{1.25}\smash{\begin{tabular}[t]{l}$a_n\ell u$\end{tabular}}}}%
    \put(0.89923294,0.47233551){\color[rgb]{0,0,0}\makebox(0,0)[lt]{\lineheight{1.25}\smash{\begin{tabular}[t]{l}$b_n\ell u$\end{tabular}}}}%
    \put(0.08820896,0.64530444){\color[rgb]{0,0,0}\makebox(0,0)[lt]{\lineheight{1.25}\smash{\begin{tabular}[t]{l}$a\ell u +z$\end{tabular}}}}%
    \put(0.79196985,0.64530444){\color[rgb]{0,0,0}\makebox(0,0)[lt]{\lineheight{1.25}\smash{\begin{tabular}[t]{l}$b\ell u +z$\end{tabular}}}}%
    \put(0.51890826,0.71957937){\color[rgb]{0,0,0}\makebox(0,0)[lt]{\lineheight{1.25}\smash{\begin{tabular}[t]{l}$\xi$\end{tabular}}}}%
  \end{picture}%
\endgroup%
		\caption{Construction of the animal $\xi'$ defined by~\eqref{eqn : LLN/simple/cvg/inv_xi'}. The lightly shaded region is $\Diamant{\delta}{ a_n\ell u}{ b_n \ell u}$. The shaded region is $\Diamant{\delta}{a \ell u + z }{ b \ell u + z}$. The animal $\xi'$ is the concatenation of $\xi$ (thick, solid lines) with the two segments represented by thick dotted lines. }
		\label{fig : LLN/simple/cvg/inv_xi'}
	\end{figure}%
	Its length satisfies
	\begin{align}
		\norme{\xi'} &= \norme{z + \ell u(a-a_n)}  + \norme\xi + \norme{ \ell u(b_n-b) -z }\eol
			&\le\norme{z + \ell u(a-a_n)} + (b-a)\ell + \norme{ \ell u(b_n-b) -z }\eol
			&\le \cro{ b-a + \norme u(b_n -b +a -a_n) + \frac{2\norme z}{\ell}}\ell.\nonumber
		\intertext{thus for large enough $\ell$,}
		\label{eqn : LLN/simple/cvg/invariance/length_xi'}
		\norme{\xi'} &\le \p{ b_n - a_n }\ell.
	\end{align}
	Besides, for large enough $\ell$, $\Diamant{\delta}{a\ell u+z}{b\ell u+z}\subseteq \Diamant{\delta}{a_n\ell u}{b_n\ell u}$, therefore
	\begin{equation*}
		\xi' \in \SetGDC{\delta}{ a_n\ell u}{b_n\ell u}{\p{ b_n - a_n }\ell}.
	\end{equation*}
	In particular,
	\begin{align}
		\Mass{\xi} \le \Mass{\xi'} &\le \MassGDC{\delta}{a_n\ell u}{b_n\ell u}{\p{ b_n - a_n }\ell}.\nonumber
		\intertext{Taking the supremum in $\xi$, we get}
		\MassGDC{\delta}{a\ell u+z}{b\ell u + z}{(b-a)\ell} &\le \MassGDC{\delta}{a_n\ell u}{b_n\ell u}{\p{ b_n - a_n }\ell}.\nonumber
	\end{align}
	Dividing by $(b-a)\ell$ and letting $\ell\to \infty$, we get~\eqref{eqn : LLN/simple/cvg/invariance/inequality_as}. 
	
	Letting $n\to \infty$ in~\eqref{eqn : LLN/simple/cvg/invariance/inequality_as} and applying~\eqref{eqn : intro/outline/AK81/convergence_ab}, we obtain

	\begin{equation}
		\label{eqn : LLN/simple/cvg/invariance/inequality_as2}
		\LimMassGDCab{\delta}{a,b}(u)\cro{ T_{-z} \pro} \le \LimMassGDCab{\delta}{a,b}(u).
	\end{equation}
	Since $z$ is any vector in $\R^d$, \eqref{eqn : LLN/simple/cvg/invariance/inequality_as2} is actually an equality. Besides, $\pro$ is ergodic therefore $\LimMassGDCab{\delta}{a,b}(u)$ is a.s. equal to its expectation, hence it does not depend on $a$ and $b$.
	\end{proof}

	\begin{proof}[Proof of Lemma~\ref{lem : LLN/simple/cvg2}]
	Let $u\in\ball{0,1}\setminus\acc0$, $0<\delta<1$ and $a<b$. Given~\eqref{eqn : LLN/simple/cvg/diamond} and the inequality
	\begin{equation*}
		\MassGDC{\delta}{a\ell u}{b\ell u}{(b-a)\ell} \le \MassGDR{\delta}{a\ell u}{b\ell u}{(b-a)\ell},
	\end{equation*}
	it is sufficient to prove the existence of $\Cl{cst : LLN/simple/cvg/big_diamond}>0$ such that for all $\ell>0$,
	\begin{equation}
	\label{eqn : LLN/antidiamond/cvg_simple/diamond_vs_antidiamond}
	\begin{split}
	\MassGDR{\delta}{a\ell u}{b\ell u}{(b-a)\ell}%
		&\le \MassGDC{\delta}{a\ell u - \Cr{cst : LLN/simple/cvg/big_diamond}(b-a)\ell u}{b\ell u + \Cr{cst : LLN/simple/cvg/big_diamond}(b-a)\ell u}{ (2\Cr{cst : LLN/simple/cvg/big_diamond}+1)(b-a)\ell}\\
		&\quad -\MassGDC{\delta}{a\ell u - \Cr{cst : LLN/simple/cvg/big_diamond}(b-a)\ell u}{a\ell u}{ \Cr{cst : LLN/simple/cvg/big_diamond}(b-a)\ell}\\
		&\quad -\MassGDC{\delta}{b\ell u}{b\ell u + \Cr{cst : LLN/simple/cvg/big_diamond}(b-a)\ell u}{\Cr{cst : LLN/simple/cvg/big_diamond}(b-a)\ell}.
	\end{split}
	\end{equation}
	Let $\Cr{cst : LLN/simple/cvg/big_diamond}>0$ be such that
	\begin{equation}
		\label{eqn : LLN/simple/cvg/diamond_vs_antidiamond/ball_in_diamond1}
		\clball{0,1} \subseteq \Diamant{\delta}{-\Cr{cst : LLN/simple/cvg/big_diamond}u}{(1+\Cr{cst : LLN/simple/cvg/big_diamond})u}.
	\end{equation}
	Consider three animals
	\begin{align}
		\xi &\in \SetGDR{\delta}{a\ell u}{b\ell u}{(b-a)\ell},\nonumber\\
		\xi_1 &\in \SetGDC{\delta}{a\ell u -\Cr{cst : LLN/simple/cvg/big_diamond}(b-a)\ell u }{a \ell u}{\Cr{cst : LLN/simple/cvg/big_diamond}(b-a)\ell},\nonumber\\
		\xi_2 &\in\SetGDC{\delta}{b\ell u}{b\ell u + \Cr{cst : LLN/simple/cvg/big_diamond}(b-a)\ell u}{\Cr{cst : LLN/simple/cvg/big_diamond}(b-a)\ell},\nonumber
	\end{align}
	and their concatenation	$\xi' \dpe \xi_1 \concat \xi \concat \xi_2$ (see Figure~\ref{fig : LLN/simple/cvg/diamond_vs_antidiamond/xi'}). We claim that
	\begin{equation}
		\label{eqn : LLN/simple/cvg/diamond_vs_antidiamond/xi'_good}
		\xi' \in \SetGDC{\delta}{a\ell u - \Cr{cst : LLN/simple/cvg/big_diamond}(b-a)\ell u}{b\ell u + \Cr{cst : LLN/simple/cvg/big_diamond}(b-a)\ell u}{(2\Cr{cst : LLN/simple/cvg/big_diamond} +1)(b-a)\ell},
	\end{equation}
	which is straightforward except for the inclusion $\xi' \subseteq \Diamant{\delta}{a\ell u - \Cr{cst : LLN/simple/cvg/big_diamond}(b-a)\ell u}{b\ell u + \Cr{cst : LLN/simple/cvg/big_diamond}(b-a)\ell u}$. By~\eqref{eqn : LLN/simple/cvg/diamond_vs_antidiamond/ball_in_diamond1},
	\begin{align}
		\xi &\subseteq \clball{a\ell u,(b-a)\ell} \subseteq \Diamant{\delta}{a\ell u -\Cr{cst : LLN/simple/cvg/big_diamond}(b-a)\ell u}{b\ell u  + \Cr{cst : LLN/simple/cvg/big_diamond}(b-a)\ell u}.\nonumber%
		\intertext{Moreover,}
		\xi_1 &\subseteq \Diamant{\delta}{a\ell u -\Cr{cst : LLN/simple/cvg/big_diamond}(b-a)\ell u }{a \ell u} \subseteq \Diamant{\delta}{a\ell u - \Cr{cst : LLN/simple/cvg/big_diamond}(b-a)\ell u}{b\ell u + \Cr{cst : LLN/simple/cvg/big_diamond}(b-a)\ell u}\nonumber\\
		\text{and }\xi_2 &\subseteq \Diamant{\delta}{b\ell u}{b\ell u + \Cr{cst : LLN/simple/cvg/big_diamond}(b-a)\ell u} \subseteq \Diamant{\delta}{a\ell u - \Cr{cst : LLN/simple/cvg/big_diamond}(b-a)\ell u}{b\ell u + \Cr{cst : LLN/simple/cvg/big_diamond}(b-a)\ell u},\nonumber%
	\end{align}
	thus~\eqref{eqn : LLN/simple/cvg/diamond_vs_antidiamond/xi'_good}.
	\begin{figure}
		\center
		\def\svgwidth{\textwidth}
		\begingroup%
  \makeatletter%
  \providecommand\color[2][]{%
    \errmessage{(Inkscape) Color is used for the text in Inkscape, but the package 'color.sty' is not loaded}%
    \renewcommand\color[2][]{}%
  }%
  \providecommand\transparent[1]{%
    \errmessage{(Inkscape) Transparency is used (non-zero) for the text in Inkscape, but the package 'transparent.sty' is not loaded}%
    \renewcommand\transparent[1]{}%
  }%
  \providecommand\rotatebox[2]{#2}%
  \newcommand*\fsize{\dimexpr\f@size pt\relax}%
  \newcommand*\lineheight[1]{\fontsize{\fsize}{#1\fsize}\selectfont}%
  \ifx\svgwidth\undefined%
    \setlength{\unitlength}{612.88455513bp}%
    \ifx\svgscale\undefined%
      \relax%
    \else%
      \setlength{\unitlength}{\unitlength * \real{\svgscale}}%
    \fi%
  \else%
    \setlength{\unitlength}{\svgwidth}%
  \fi%
  \global\let\svgwidth\undefined%
  \global\let\svgscale\undefined%
  \makeatother%
  \begin{picture}(1,0.29052706)%
    \lineheight{1}%
    \setlength\tabcolsep{0pt}%
    \put(0,0){\includegraphics[width=\unitlength,page=1]{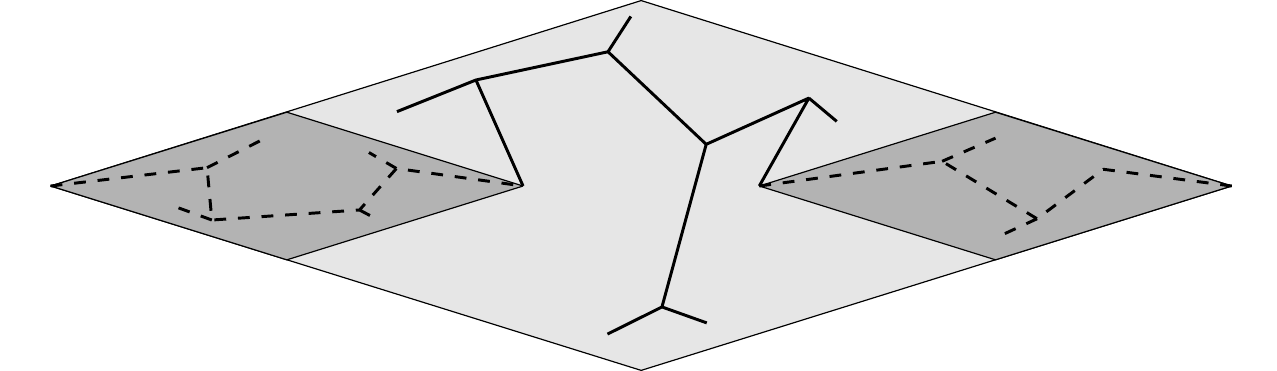}}%
    \put(0.40041114,0.1225915){\color[rgb]{0,0,0}\makebox(0,0)[lt]{\lineheight{1.25}\smash{\begin{tabular}[t]{l}$a\ell u$\\\end{tabular}}}}%
    \put(0.58667423,0.1225915){\color[rgb]{0,0,0}\makebox(0,0)[lt]{\lineheight{1.25}\smash{\begin{tabular}[t]{l}$b\ell u$\\\end{tabular}}}}%
    \put(0.91900551,0.10730292){\color[rgb]{0,0,0}\makebox(0,0)[lt]{\lineheight{1.25}\smash{\begin{tabular}[t]{l}$b\ell u + C_3(b-a)\ell u$\\\end{tabular}}}}%
    \put(-0.02634544,0.10301724){\color[rgb]{0,0,0}\makebox(0,0)[lt]{\lineheight{1.25}\smash{\begin{tabular}[t]{l}$a\ell u - C_3(b-a)\ell u$\\\end{tabular}}}}%
    \put(0.22279467,0.15804305){\color[rgb]{0,0,0}\makebox(0,0)[lt]{\lineheight{1.25}\smash{\begin{tabular}[t]{l}$\xi_1$\end{tabular}}}}%
    \put(0.79184487,0.16544223){\color[rgb]{0,0,0}\makebox(0,0)[lt]{\lineheight{1.25}\smash{\begin{tabular}[t]{l}$\xi_2$\end{tabular}}}}%
    \put(0.52754513,0.23173815){\color[rgb]{0,0,0}\makebox(0,0)[lt]{\lineheight{1.25}\smash{\begin{tabular}[t]{l}$\xi$\end{tabular}}}}%
  \end{picture}%
\endgroup%
		\caption{Construction of the animal $\xi'$ defined in the proof of Lemma~\ref{lem : LLN/simple/cvg2}. The lightly shaded region is $\Diamant{\delta}{a\ell u - C_3(b-a)\ell u}{b\ell u + C_3(b-a)\ell u}$.  The shaded regions are $\Diamant{\delta}{a\ell u -C_3(b-a)\ell u}{a\ell u}$ and $\Diamant{\delta}{b\ell u}{b\ell u + C_3(b-a)\ell u}$. The animal $\xi'$ is the concatenation of $\xi$ (thick, solid lines) with $\xi_1$ and $\xi_2$ (thick, dashed lines). }
		\label{fig : LLN/simple/cvg/diamond_vs_antidiamond/xi'}
	\end{figure}%

	Besides, the intersection between any two animals among $\xi$, $\xi_1$ and $\xi_2$ is included in $\R u$, thus on the a.s.\ event $\acc{\pro\p{\R u \times \intervalleoo0\infty} = 0 }$, we have
	\begin{equation*}
		\Mass{\xi_1} + \Mass{\xi} + \Mass{\xi_2} = \Mass{\xi'}.
	\end{equation*}
	In particular, by~\eqref{eqn : LLN/simple/cvg/diamond_vs_antidiamond/xi'_good}, 
	\begin{equation*}
		\Mass{\xi_1} + \Mass{\xi} + \Mass{\xi_2} \le \MassGDC{\delta}{a\ell u - \Cr{cst : LLN/simple/cvg/big_diamond}(b-a)\ell u}{b\ell u + \Cr{cst : LLN/simple/cvg/big_diamond}(b-a)\ell u}{(2\Cr{cst : LLN/simple/cvg/big_diamond} +1)(b-a)\ell}  .
	\end{equation*}
	Taking the supremum with respect to $\xi_1$, $\xi$ and $\xi_2$ yields~\eqref{eqn : LLN/antidiamond/cvg_simple/diamond_vs_antidiamond}.
\end{proof}

\begin{proof}[Proof of Lemma~\ref{lem : LLN/simple/inv_delta}]
	By~\eqref{eqn : LLN/simple/cvg/diamond} and~\eqref{eqn : LLN/simple/cvg/antidiamond}, $\delta \mapsto \LimMassGDC\delta(u)$ is both nonincreasing and nondecreasing on $\intervalleoo01$, therefore it is constant.
\end{proof}

\begin{proof}[Proof of Lemma~\ref{lem : LLN/simple/concavity}]
	Define
	\begin{equation}
		\LimMassG(0) \dpe \limsup_{u\to 0} \LimMassG(u).
	\end{equation}
	The symmetry is a consequence of~\eqref{eqn : LLN/simple/cvg/diamond} and the stationnarity of $\pro$.

	We make the following claim, which is somewhat weaker than concavity: for all $u_1, u_2 \in \ball{0,1}\setminus\acc0$ such that $u_2\notin \R^-u_1$, for all $0 < \theta_1, \theta_2< 1$ such that $\theta_1+\theta_2 =1$,
	\begin{equation}
		\label{eqn : LLN/simple/concavity/weak_version}
	 	\theta_1 \LimMassG(u_1) + \theta_2\LimMassG(u_2) \le \LimMassG\p{\theta_1u_1 + \theta_2u_2}.
	\end{equation}
	Indeed let $u_1,u_2,\theta_1, \theta_2$ be as above and $0<\delta<1$ small enough so that for all $\ell>0$,
	\begin{align*}
		\Diamant{\delta}{0}{\theta_1 \ell u_1} &\subseteq \AntiDiamant{\delta}{0}{\theta_1\ell u_1 + \theta_2\ell u_2},\\
		\Diamant{\delta}{\theta_1 \ell u_1}{\theta_1 \ell u_1 + \theta_2\ell u_2 } &\subseteq \AntiDiamant{\delta}{0}{\theta_1\ell u_1 + \theta_2\ell u_2},
	\end{align*}
	\begin{equation*}
		\Diamant{\delta}{0}{\theta_1 \ell u_1}\cap \Diamant{\delta}{\theta_1 \ell u_1}{\theta_1 \ell u_1 + \theta_2\ell u_2 } = \acc{\theta_1 \ell u_1}.
	\end{equation*} Let $\ell>0$, $\xi_1\in \SetGDC{\delta}{0}{\theta_1 \ell u_1}{\theta_1 \ell}$ and $\xi_2\in \SetGDC{\delta}{\theta_1\ell u_1}{\theta_1\ell u_1+ \theta_2\ell u_2}{\theta_2\ell}$. Define $\xi\dpe \xi_1\concat \xi_2$ (see Figure~\ref{fig : LLN/simple/concavity}).
	\begin{figure}
		\center
		\def\svgwidth{0.3\linewidth}
		\begingroup%
  \makeatletter%
  \providecommand\color[2][]{%
    \errmessage{(Inkscape) Color is used for the text in Inkscape, but the package 'color.sty' is not loaded}%
    \renewcommand\color[2][]{}%
  }%
  \providecommand\transparent[1]{%
    \errmessage{(Inkscape) Transparency is used (non-zero) for the text in Inkscape, but the package 'transparent.sty' is not loaded}%
    \renewcommand\transparent[1]{}%
  }%
  \providecommand\rotatebox[2]{#2}%
  \newcommand*\fsize{\dimexpr\f@size pt\relax}%
  \newcommand*\lineheight[1]{\fontsize{\fsize}{#1\fsize}\selectfont}%
  \ifx\svgwidth\undefined%
    \setlength{\unitlength}{291.7072975bp}%
    \ifx\svgscale\undefined%
      \relax%
    \else%
      \setlength{\unitlength}{\unitlength * \real{\svgscale}}%
    \fi%
  \else%
    \setlength{\unitlength}{\svgwidth}%
  \fi%
  \global\let\svgwidth\undefined%
  \global\let\svgscale\undefined%
  \makeatother%
  \begin{picture}(1,0.52576036)%
    \lineheight{1}%
    \setlength\tabcolsep{0pt}%
    \put(0,0){\includegraphics[width=\unitlength,page=1]{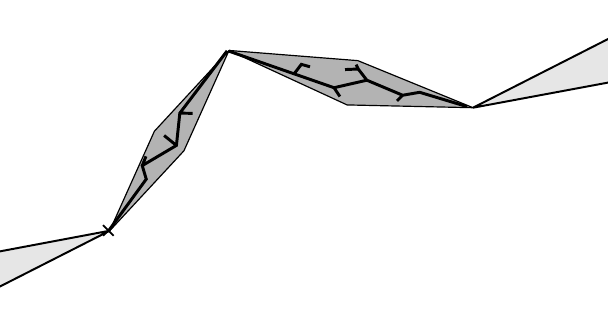}}%
    \put(0.31281967,0.26389554){\color[rgb]{0,0,0}\makebox(0,0)[lt]{\lineheight{1.25}\smash{\begin{tabular}[t]{l}$\xi_1$\end{tabular}}}}%
    \put(0.55574246,0.29183502){\color[rgb]{0,0,0}\makebox(0,0)[lt]{\lineheight{1.25}\smash{\begin{tabular}[t]{l}$\xi_2$\end{tabular}}}}%
    \put(0.15958492,0.0818664){\color[rgb]{0,0,0}\makebox(0,0)[lt]{\lineheight{1.25}\smash{\begin{tabular}[t]{l}$0$\end{tabular}}}}%
    \put(0.33630078,0.47821333){\color[rgb]{0,0,0}\makebox(0,0)[lt]{\lineheight{1.25}\smash{\begin{tabular}[t]{l}$\theta_1 \ell u_1$\end{tabular}}}}%
    \put(0.74907304,0.29696674){\color[rgb]{0,0,0}\makebox(0,0)[lt]{\lineheight{1.25}\smash{\begin{tabular}[t]{l}$\theta_1 \ell u_1 + \theta_2 \ell u_2$ \end{tabular}}}}%
  \end{picture}%
\endgroup%
		\caption{Illustration of the animal $\xi$ defined in the proof of Lemma~\ref{lem : LLN/simple/concavity}. The diamonds $\Diamant{\delta}{0}{\theta_1 \ell u_1}$ and $\Diamant{\delta}{\theta_1 \ell u_1}{\theta_1 \ell u_1 + \theta_2\ell u_2 }$ are represented by the shaded regions. The antidiamond $\AntiDiamant{\delta}{0}{\theta_1\ell u_1 + \theta_2\ell u_2}$ is represented by the complementary of the lightly shaded region. The animal $\xi$ is the concatenation of $\xi_1$ and $\xi_2$ (thick lines).}
		\label{fig : LLN/simple/concavity}
	\end{figure}
	Since $\xi_1\cap \xi_2 = \acc{\theta_1 \ell u_1}$ and $\xi\in \SetGDR{\delta}{0}{\ell(\theta_1 u_1 + \theta_2 u_2)}{\ell}$, on the a.s.\ event $\acc{\pro\p{\R u \times \intervalleoo0\infty}=0 }$,
	\begin{equation*}
		\Mass{\xi_1}  + \Mass{\xi_2} \le \MassGDR{\delta}{0}{\ell(\theta_1 u_1 + \theta_2 u_2)}{\ell}.
	\end{equation*}
	Taking the supremum in $\xi_1$ and $\xi_2$ leads to
	\begin{align}
		\MassGDC{\delta}{0}{\theta_1 \ell u_1}{\theta_1 \ell} + \MassGDC{\delta}{\theta_1\ell u_1}{\theta_1\ell u_1+ \theta_2\ell u_2}{\theta_2\ell}%
			&\le \MassGDR{\delta}{0}{\ell(\theta_1 u_1 + \theta_2 u_2)}{\ell},\nonumber%
	\intertext{thus}
		\frac{\E{\MassGDC{\delta}{0}{\theta_1 \ell u_1}{\theta_1 \ell}}}{\ell} + \frac{\E{\MassGDC{\delta}{\theta_1\ell u_1}{\theta_1\ell u_1+ \theta_2\ell u_2}{\theta_2\ell}}}{\ell}%
		&\le \frac{\E{\MassGDR{\delta}{0}{\ell(\theta_1 u_1 + \theta_2 u_2)}{\ell}}}{\ell}.\nonumber%
	\intertext{Applying stationarity, we get}
		\theta_1\cdot\frac{\E{\MassGDC{\delta}{0}{\theta_1 \ell u_1}{\theta_1 \ell}}}{\theta_1 \ell} + \theta_2\cdot\frac{\E{\MassGDC{\delta}{0}{\theta_2\ell u_2}{\theta_2\ell}}}{\theta_2\ell}%
			&\le \frac{\E{\MassGDR{\delta}{0}{\ell(\theta_1 u_1 + \theta_2 u_2)}{\ell}}}{\ell}.\nonumber
	\end{align}
	Letting $\ell\to \infty$ and using Lemmas~\ref{lem : LLN/simple/cvg} and~\ref{lem : LLN/simple/cvg2}, we obtain~\eqref{eqn : LLN/simple/concavity/weak_version}.

	We now prove~\eqref{eqn : LLN/simple/concavity/weak_version} in full generality, i.e.\  that $\LimMassG$ is concave on $\ball{0,1}$. The inequality~\eqref{eqn : LLN/simple/concavity/weak_version} implies that the restrictions of $\LimMassG$ on balls included in $\ball{0,1}\setminus\acc0$ are concave, thus continuous. Let $u_1, u_2 \in \ball{0,1}$ and $0< \theta_1, \theta_2 < 1$ such that $\theta_1 + \theta_2 = 1$. The only non trivial cases left to consider are
	\begin{enumerate}
		\item $u_1=0, u_2\neq 0$, 
		\item $u_1 = \lambda u_2$, with $\lambda<0$ and $u_1, u_2\in \ball{0,1}\setminus\acc0$.
	\end{enumerate}
	For the first one, let $(u_1^{(n)})_{n\ge1}$ be a sequence of elements of $\ball{0,1}\setminus\acc0$ converging to $u_1=0$ such that \[ \lim_{n\to\infty}\LimMassG\p{u_1^{(n)}} = \LimMassG(0), \] and $(u_2^{(n)})_{n\ge1}$ be a sequence converging to $u_2$ such that for all $n$, $u_1^{(n)}$ and $u_2^{(n)}$ are linearly independent. Thanks to the claim~\eqref{eqn : LLN/simple/concavity/weak_version} applied to $u_1^{(n)}$ and $u_2^{(n)}$,
	\begin{equation*}
		\theta_1 \LimMassG\p{u_1^{(n)}} +\theta_2 \LimMassG\p{u_2^{(n)}} \le \LimMassG\p{\theta_1 u_1^{(n)} + \theta_2 u_2^{(n)}}.
	\end{equation*}
	Since $\LimMassG$ is continuous at $\theta_2 u_2$ and $u_2$, letting $n\to\infty$ gives~\eqref{eqn : LLN/simple/concavity/weak_version} for $u_1$ and $u_2$. For the second one, let $(u_1^{(n)})_{n\ge1}$ and $(u_2^{(n)})_{n\ge1}$ be sequences converging to $u_1$ and $u_2$ respectively, such that for all $n$, $u_1^{(n)}$ and $u_2^{(n)}$ are linearly independent. The end of the argument is analogous to the first case.
\end{proof}
\subsection{Uniform upper bound for animals and paths restricted to an antidiamond}
\label{subsec : LLN/antidiamond}
The goal of this section is to prove Proposition~\ref{prop : LLN/antidiamond}.
\begin{Proposition}
	\label{prop : LLN/antidiamond}
	Let $e\in \S$, $0<\delta<1$ and $0<\alpha < 1$. Almost surely,
	\begin{equation}
	\label{eqn : LLN/antidiamond/mainUB}
	 \lim_{L \to \infty} \sup\set{ \p{\frac{\MassGDR{\delta}{Lx}{Ly}{L\ell} }{L} - \ell\LimMassG\p{\frac{x-y}{\ell} }  }^+}%
		{\begin{array}{c} x,y\in \intervalleff{-e}{e} \\ \alpha  \le \ell \le 1 \\ \ell > \norme{x-y} \end{array} } = 0.
	\end{equation}
\end{Proposition}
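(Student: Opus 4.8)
The plan is to upgrade the pointwise convergence of Proposition~\ref{prop : LLN/simple} to the uniform upper bound~\eqref{eqn : LLN/antidiamond/mainUB} by a covering argument, the two ingredients being a deterministic monotone comparison estimate and the uniform continuity of the candidate limit on a compact parameter set. Since $x,y\in\intervalleff{-e}{e}$, write $x=se$ and $y=te$ with $s,t\in\intervalleff{-1}{1}$; then $\norme{x-y}=\module{s-t}$ (as $\norme e=1$) and $\frac{x-y}{\ell}=\frac{s-t}{\ell}e$, whose $\norme\cdot$-norm is $<1$ by the constraint $\ell>\norme{x-y}$. Let $f:\intervalleff{-1}{1}\to\intervalleff{0}{\GeneralUB}$ be the continuous extension of $\beta\mapsto\LimMassG(\beta e)$ provided by Proposition~\ref{prop : LLN/simple}, and set $g(s,t,\ell)\dpe\ell\, f\p{\frac{s-t}{\ell}}$; on the index set one has $g(s,t,\ell)=\ell\LimMassG\p{\frac{x-y}{\ell}}$. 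On the compact set $\bar I\dpe\set{(s,t,\ell)\in\intervalleff{-2}{2}^2\times\intervalleff{\alpha}{2}}{\module{s-t}\le\ell}$ the quotient $\frac{s-t}{\ell}$ is well defined and continuous (since $\ell\ge\alpha$), hence $g$ is continuous, therefore uniformly continuous on $\bar I$; let $\omega$ be a modulus of continuity.

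The comparison estimate is: for $(s,t,\ell)$ in the index set with $s<t$ and any triple $(s_0,t_0,\ell_0)$ with $s_0\le s<t\le t_0$ and $\ell_0\ge\ell+(s-s_0)+(t_0-t)$, one has, for every $L>0$,
\begin{equation*}
	\MassGDR{\delta}{Lse}{Lte}{L\ell}\;\le\;\MassGDR{\delta}{Ls_0e}{Lt_0e}{L\ell_0}
\end{equation*}
(and symmetrically when $s>t$). Indeed, given $\xi\in\SetGDR{\delta}{Lse}{Lte}{L\ell}$, the concatenation $\xi'\dpe(Ls_0e,Lse)\concat\xi\concat(Lte,Lt_0e)$ has length at most $L\ell+L(s-s_0)+L(t_0-t)\le L\ell_0$, contains $Ls_0e$ and $Lt_0e$, satisfies $\Mass{\xi'}\ge\Mass\xi$ (a sum over a larger vertex set), and lies in $\AntiDiamant{\delta}{Ls_0e}{Lt_0e}$; taking the supremum over $\xi$ gives the inequality. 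The inclusion $\xi'\subseteq\AntiDiamant{\delta}{Ls_0e}{Lt_0e}$ rests on two elementary facts about the cones~\eqref{eqn : intro/notations/cone}: sliding the apex of a cone outward along $\R e$ without changing its aperture can only shrink it, so $\Cone{\delta}{Ls_0e}{-e}\subseteq\Cone{\delta}{Lse}{-e}$ and $\Cone{\delta}{Lt_0e}{+e}\subseteq\Cone{\delta}{Lte}{+e}$ (a triangle-inequality computation from the definition), whence $\AntiDiamant{\delta}{Lse}{Lte}\subseteq\AntiDiamant{\delta}{Ls_0e}{Lt_0e}$ after one checks the adjoined apices; and the two connecting segments lie on $\R e$ strictly between the relevant apices of $\AntiDiamant{\delta}{Ls_0e}{Lt_0e}$, hence inside it. (All objects scale with $L$, so it is enough to verify these inclusions for $L=1$.)

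Now fix $\eta\in\intervalleoo{0}{1/3}$ and a grid of mesh $\eta$ in $\intervalleff{-2}{2}^2\times\intervalleff{\alpha}{2}$. Given $(s,t,\ell)$ in the index set with $s<t$, let $s_0$ be the largest grid value $\le s$, $t_0$ the smallest grid value $\ge t$, and $\ell_0$ the smallest grid value $\ge\ell+(s-s_0)+(t_0-t)$; then $s_0<t_0$, $\module{s_0-t_0}<\ell_0$, $(s_0,t_0,\ell_0)\in\bar I$, and $(s_0,t_0,\ell_0)$ lies within distance $3\eta$ of $(s,t,\ell)$. Applying Proposition~\ref{prop : LLN/simple} with $u=\frac{t_0-s_0}{\ell_0}e\in\ball{0,1}\setminus\acc{0}$ and a suitable pair $a<b$ (so that $\MassGDR{\delta}{Ls_0e}{Lt_0e}{L\ell_0}$ is exactly the quantity appearing there with scaling parameter $L\ell_0$), we get $\frac{1}{L}\MassGDR{\delta}{Ls_0e}{Lt_0e}{L\ell_0}\to\ell_0\LimMassG(u)=g(s_0,t_0,\ell_0)$ almost surely as $L\to\infty$ (using the symmetry of $\LimMassG$ to identify the limit). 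Combining this with the comparison estimate and with $\module{g(s_0,t_0,\ell_0)-g(s,t,\ell)}\le\omega(3\eta)$, the part of the supremum in~\eqref{eqn : LLN/antidiamond/mainUB} over triples with $s<t$ is bounded above by
\[
	\omega(3\eta)+\max\set{\p{\frac{1}{L}\MassGDR{\delta}{Ls_0e}{Lt_0e}{L\ell_0}-g(s_0,t_0,\ell_0)}^+}{(s_0,t_0,\ell_0)\ \text{a grid point in}\ \bar I\ \text{with}\ 0<\module{s_0-t_0}<\ell_0},
\]
a finite maximum that converges to $0$ almost surely as $L\to\infty$. Triples with $s>t$ are symmetric; the diagonal triples $s=t$ are immediate, since then $\norme{x-y}=0$ and, with the convention $\AntiDiamant{\delta}{x}{x}=\acc{x}$, $\MassGDR{\delta}{Lx}{Lx}{L\ell}=\Mass{Lx}=0$ almost surely for $x\in\R e$ while $g(x,x,\ell)\ge0$. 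Hence, almost surely, $\limsup_{L\to\infty}$ of the supremum in~\eqref{eqn : LLN/antidiamond/mainUB} is at most $\omega(3\eta)$. Taking $\eta=1/k$, $k\ge3$ an integer, and intersecting the countably many almost sure events, we obtain, almost surely, that this $\limsup$ is at most $\omega(3/k)$ for every $k\ge3$, hence equals $0$; since the supremum is nonnegative, this is~\eqref{eqn : LLN/antidiamond/mainUB}.

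The main obstacle is the comparison estimate, in particular the inclusion $\AntiDiamant{\delta}{Lse}{Lte}\subseteq\AntiDiamant{\delta}{Ls_0e}{Lt_0e}$: the cone inclusions have to be checked from~\eqref{eqn : intro/notations/cone}, and the apex points, explicitly adjoined in~\eqref{eqn : intro/notations/antidiamond}, treated by hand. The remaining ingredients—matching parameters to the hypotheses of Proposition~\ref{prop : LLN/simple}, keeping the enclosing triple inside the enlarged box $\intervalleff{-2}{2}^2\times\intervalleff{\alpha}{2}$, and the measurability of the uncountable supremum (dominated by the displayed finite maximum)—are bookkeeping.
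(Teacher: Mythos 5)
Your proposal is correct and follows essentially the same route as the paper: a finite net of parameters, pointwise convergence on the net via Proposition~\ref{prop : LLN/simple}, a monotone comparison obtained by concatenating two segments along $\R e$ (the paper's Lemma~\ref{lem : LLN/antidiamond/adjusting_format}, whose antidiamond inclusion you verify explicitly where the paper calls it straightforward), and continuity of the limit to absorb the discretization error. The only cosmetic difference is that you invoke uniform continuity of $(s,t,\ell)\mapsto\ell\LimMassG\p{\frac{(s-t)e}{\ell}}$ on a compact set with $\ell\ge\alpha$, whereas the paper combines the monotonicity of $\beta\mapsto\LimMassG(\beta e)$ with a one-dimensional modulus of continuity.
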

\begin{Remark}
	The analoguous result for $\AUXDiamant{\AUXMassGeneric}{\delta}$ is also true but we don't make it a proposition since it is not needed in the proof of Theorem~\ref{thm : intro/main/MAIN}. 
\end{Remark}
Fix $e$, $\delta$ and $\alpha$ as in Proposition~\ref{prop : LLN/antidiamond}. Given $x,y \in \R e$, we write $x\le y$ if $y-x \in \R^+ e$. We proceed as follows:
\begin{enumerate}
	\item Establish an upper bound for $\MassGDR{\delta}{Lx}{Ly}{L\ell}$ for $(x,y,\ell)$ taking values in a finite, $ \frac{1}{N}$-dense set of parameters thanks to Proposition~\ref{prop : LLN/simple}.
	\item Extend this bound to any values of $(x,y,\ell)$ with Lemma~\ref{lem : LLN/antidiamond/adjusting_format}.
\end{enumerate}
\begin{Lemma}
	\label{lem : LLN/antidiamond/adjusting_format}
	For all $x,y,x',y'\in \R e$ such that $x' \le x \le y \le y'$ and $\ell>\norme{x-y}$, for all $0<\delta<1$, for all $L >0$,
	\begin{equation}
		\label{eqn : LLN/antidiamond/adjusting_format}
		\MassGDR{\delta}{L x}{L y}{L \ell} %
			\le \MassGDR{\delta}{L x'}{ L y'}{ L \ell + L\norme{x-x'} + L\norme{y-y'} }.
	\end{equation}
\end{Lemma}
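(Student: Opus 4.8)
The plan is to take any $\xi \in \SetGDR{\delta}{Lx}{Ly}{L\ell}$ and enlarge it into a competitor for $\MassGDR{\delta}{Lx'}{Ly'}{L\ell + L\norme{x-x'}+L\norme{y-y'}}$ by attaching the two segments $[Lx',Lx]$ and $[Ly,Ly']$, i.e. setting
\[
	\xi' \dpe (Lx',Lx)\concat \xi \concat (Ly,Ly').
\]
Since $x'\le x\le y\le y'$ all lie on the line $\R e$ and $Lx$, $Ly$ are respectively the starting and ending points (in the path case) or belong to the vertex set (in the animal case) of $\xi$, this concatenation is well defined in both cases, $\xi'$ contains $Lx'$ and $Ly'$, and its length is
\[
	\norme{\xi'} = L\norme{x-x'} + \norme\xi + L\norme{y-y'} \le L\ell + L\norme{x-x'}+L\norme{y-y'}.
\]
Moreover the vertex set of $\xi'$ is that of $\xi$ together with $Lx'$ and $Ly'$, so since masses are nonnegative, $\Mass{\xi'}\ge \Mass\xi$. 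Thus, once we check $\xi'\subseteq \AntiDiamant{\delta}{Lx'}{Ly'}$, we get $\xi' \in \SetGDR{\delta}{Lx'}{Ly'}{L\ell + L\norme{x-x'}+L\norme{y-y'}}$, whence $\Mass\xi \le \Mass{\xi'} \le \MassGDR{\delta}{Lx'}{Ly'}{L\ell+L\norme{x-x'}+L\norme{y-y'}}$; taking the supremum over $\xi$ yields~\eqref{eqn : LLN/antidiamond/adjusting_format}.

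The only real content is therefore the inclusion $\xi'\subseteq \AntiDiamant{\delta}{Lx'}{Ly'}$. As $Lx',Ly' \in \AntiDiamant{\delta}{Lx'}{Ly'}$ trivially and the vertices of $\xi$ lie in $\AntiDiamant{\delta}{Lx}{Ly}$ by assumption, it suffices to prove the monotonicity property $\AntiDiamant{\delta}{Lx}{Ly}\subseteq \AntiDiamant{\delta}{Lx'}{Ly'}$ whenever $x'\le x\le y\le y'$ on $\R e$. By the definition~\eqref{eqn : intro/notations/antidiamond}, this amounts to the two cone inclusions
\[
	\Cone{\delta}{Lx'}{L(x'-y')}\subseteq \Cone{\delta}{Lx}{L(x-y)}
	\quad\text{and}\quad
	\Cone{\delta}{Ly'}{L(y'-x')}\subseteq \Cone{\delta}{Ly}{L(y-x)},
\]
together with the elementary observation that $Lx$ and $Ly$ avoid the two cones based at $Lx'$ and $Ly'$: each of $Lx-Lx'$, $Ly-Lx'$ is a nonnegative multiple of $e$ while the axis $L(x'-y')$ is a nonpositive multiple of $e$, and each of $Lx-Ly'$, $Ly-Ly'$ is a nonpositive multiple of $e$ while the axis $L(y'-x')$ is a nonnegative multiple of $e$ (and whenever one of these differences vanishes, the point in question is one of the cone's bases, hence again lies in $\AntiDiamant{\delta}{Lx'}{Ly'}$).

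For the two cone inclusions I would invoke the elementary fact that translating the apex of a cone in the direction opposite to its axis enlarges it: if $\ps{z-p}{\hat w}\ge (1-\delta)\norme[2]{z-p}$ with $\hat w \dpe w/\norme[2]w$, then for every $t\ge 0$, writing $q \dpe p - t\hat w$ and using $\ps{z-q}{\hat w} = \ps{z-p}{\hat w}+t$ and $\norme[2]{z-q}\le \norme[2]{z-p}+t$, one gets $\ps{z-q}{\hat w}\ge (1-\delta)\norme[2]{z-q}$, i.e. $\Cone{\delta}{p}{w}\subseteq \Cone{\delta}{q}{w}$. In the first inclusion the axes $L(x'-y')$ and $L(x-y)$ are positive multiples of one another (both negative multiples of $e$), and $Lx = Lx' + L(x-x')$ is obtained from $Lx'$ by a translation opposite to that axis; in the second, $L(y'-x')$ and $L(y-x)$ are positive multiples of one another, and $Ly = Ly' + L(y-y')$ is $Ly'$ translated opposite to that axis. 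Both inclusions follow. I expect this cone computation — equivalently, the monotonicity of the antidiamond under pushing its two tips apart along their common line — to be the only non-routine step; everything else is the same concatenation bookkeeping already carried out in the proofs of Lemmas~\ref{lem : LLN/simple/cvg} and~\ref{lem : LLN/simple/cvg2}.
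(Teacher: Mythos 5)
Your proposal is correct and is essentially identical to the paper's proof: the paper defines the same $\xi' = (Lx',Lx)\concat\xi\concat(Ly,Ly')$, asserts the inclusion $\xi'\subseteq\AntiDiamant{\delta}{Lx'}{Ly'}$ as "straightforward to check", bounds the length, and takes the supremum. Your cone-translation computation correctly fills in the detail the paper leaves implicit, and the rest matches exactly.
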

We first prove Proposition~\ref{prop : LLN/antidiamond}, assuming Lemma~\ref{lem : LLN/antidiamond/adjusting_format} is true.
\begin{proof}[Proof of Proposition~\ref{prop : LLN/antidiamond}]
	Let $N\ge 1$ be an integer. Consider the sets
	\begin{equation*}
		E_1 \dpe \set{ \frac{n}{N} e}{ n\in \intint{-N}{N} },%
		\quad E_2 \dpe \set{ \frac{n}{N} }{n \in \intint1{N+2} }.
	\end{equation*}
	Proposition~\ref{prop : LLN/simple} implies that
	\begin{equation}
		\label{eqn : LLN/antidiamond/errorX}
		\cError(L) \dpe \max\set{ \p{ \frac{\MassGDR{\delta}{L x}{L y}{L \ell} }{L} - \ell\LimMassG\p{\frac{x-y}{\ell} } }^+}{\begin{array}{c}x,y\in E_1\\ \ell \in E_2 \\ \ell > \norme{x-y} > 0 \end{array}} \xrightarrow[\ell \to \infty]{\text{a.s. and }\rL^1} 0.\\
	\end{equation}

	Let $L>0$, $x,y \in \intervalleff{- e}{ e}$ and $\alpha  \le \ell \le 1$ such that $\ell > \norme{x-y}$. Let $\omega$ be a modulus of continuity of $\beta \mapsto \LimMassG(\beta e)$ on $\intervalleoo{-1}{1}$. Without loss of generality, assume $x\le y$.  There exists distinct $x', y' \in E_1$ such that
	\begin{equation}
	\label{eqn : LLN/antidiamond/approx_a}
	x- \frac eN \le x'\le x \le y \le y' \le y+\frac eN.
	\end{equation} By \eqref{eqn : LLN/antidiamond/adjusting_format},
	\begin{align}
		\MassGDR{\delta}{Lx}{Ly}{L\ell} %
			&\le \MassGDR{\delta}{Lx'}{Ly'}{ L\ell + \frac{2L}N }. \nonumber%
		\intertext{Let $\ell'\dpe \frac{1}{N} \ceil{\ell N +2}  $. Note that $\ell'\in E_2$ and $\ell' \ge \ell +  \frac{2}{N} > \norme{x'-y'}$. Therefore, by definition of $\cError(L)$,}
		\frac{ \MassGDR{\delta}{L x}{L y}{L \ell} }L %
			&\le \frac{ \MassGDR{\delta}{L x'}{L y'}{L \ell'} }L\nonumber\\
			&\le \ell'\LimMassG\p{\frac{x'-y'}{\ell'} } + \cError(L).\nonumber
	\end{align}
	Since $\ell' \le \frac1N(\ell N + 3)$,
	\begin{equation*}
	 	\norme{x-y}\p{\frac1\ell - \frac{1}{\ell'}} \le \ell \p{\frac1\ell - \frac{1}{\ell'}}\le 1- \frac{\ell N}{\ell N +3} = \frac{3}{\ell N+3}\le \frac3{\ell N}.
	\end{equation*} Besides $\beta\mapsto \LimMassG(\beta e)$ is nonincreasing on $\intervallefo01$, therefore
	\begin{align}
		\frac{ \MassGDR{\delta}{L x}{L y}{L \ell} }L%
			&\le \ell'\LimMassG\p{\frac{x-y}{\ell'}} + \cError(L) \nonumber\\
			&\le \p{\ell + \frac{3}{N}} \cro{\LimMassG\p{\frac{x-y}{\ell}} + \omega\p{\frac{3}{\ell N}   }  } + \cError(L).\nonumber
	\end{align}
	Consequently,
	\begin{align}
		\p{ \frac{ \MassGDR{\delta}{Lx}{Ly}{L\ell} }L -  \ell\LimMassG\p{\frac{x-y}{\ell}} }^+%
			&\le  \frac{3}{ N}\cro{\LimMassG\p{\frac{x-y}{\ell}} + \omega\p{ \frac{3}{\ell N} } }%
				+ \ell \omega\p{\frac{3}{\ell N} } + \cError(L).\nonumber
		\intertext{Since $\ell \ge \alpha $,}
		\p{ \frac{ \MassGDR{\delta}{Lx}{Ly}{L\ell} }L -  \ell\LimMassG\p{\frac{x-y}{\ell}} }^+%
			&\le  \frac{3}{N}\cro{\LimMassG\p{\frac{x-y}{\ell}} + \omega\p{ \frac{3}{\alpha N} } }%
				+ \ell  \omega\p{ \frac{3}{\alpha N} }  + \cError(L),\nonumber
	\end{align}
	thus 
	\begin{equation}
		\label{eqn : LLN/antidiamond/UB_pre_limit}
		\begin{split}
		&\sup\set{ \p{\frac{\MassGDR{\delta}{L x}{L y}{L \ell} }{L} - \ell \LimMassG\p{\frac{x-y}{\ell} }  }^+}%
		{\begin{array}{c} x,y\in \intervalleff{- e}{e} \\ \alpha  \le \ell \le 1 \\ \ell > \norme{x-y} \end{array} } \\
			&\quad\le \frac{3}{ N}\cro{\LimMassG\p{0} + \omega\p{ \frac{3}{\alpha N} } }%
				+ \omega\p{ \frac{3}{\alpha N} }  + \cError(L).
		\end{split}
	\end{equation}
	Consequently, almost surely,
	\begin{equation*}
	\begin{split}
		&\limsup_{L\to \infty} \sup\set{ \p{\frac{\MassGDR{\delta}{L x}{L y}{L \ell} }{L} - \ell \LimMassG\p{\frac{x-y}{\ell} }  }^+}%
		{\begin{array}{c} x,y\in \intervalleff{- e}{e} \\ \alpha  \le \ell \le 1 \\ \ell > \norme{x-y} \end{array} } \\
			&\quad\le \frac{3}{  N}\cro{\LimMassG\p{0} + \omega\p{ \frac{3}{\alpha N} } }%
				+   \omega\p{ \frac{3}{\alpha N} }.
		\end{split}
	\end{equation*}
	Letting $N\to \infty$ gives~\eqref{eqn : LLN/antidiamond/mainUB}.
\end{proof}
\begin{proof}[Proof of Lemma~\ref{lem : LLN/antidiamond/adjusting_format}]
	Let $x,y,x',y',\delta, \ell$ and  $L$ as in the lemma. Let $\xi \in \SetGDR{\delta}{L x}{L y}{L \ell}$. Define the animal
	\begin{equation}
		\xi' \dpe (L x',L x) \concat \xi \concat (L y,L y').
	\end{equation}
	It is straightforward to check that \[ \xi' \subseteq \AntiDiamant{\delta}{Lx'}{Ly'}. \]
	Moreover,
	\begin{equation*}
		\norme{\xi'} \le L\norme{x-x'}  + \norme{\xi} + L\norme{y-y'}%
			\le L\norme{x-x'}  + L\ell + L\norme{y-y'}%
	\end{equation*}
	thus
	\begin{equation*}
	\xi' \in \SetGDR{\delta}{L x'}{L y'}{L \ell + L \norme{x-x'} + L \norme{y-y'} }.
	\end{equation*}
	Consequently,
	\begin{equation*}
		\Mass{\xi} \le \MassGDR{\delta}{Lx'}{Ly'}{L\ell + L\norme{x-x'} + L\norme{y-y'} }.
	\end{equation*}
	Taking the supremum with respect to $\xi$ concludes the proof.
\end{proof}
\subsection{Pointwise convergence for directed animals and paths}
\label{subsec : LLN/directed}

In this section we prove Proposition~\ref{prop : LLN/directed/cvg} which is a pointwise version of the almost sure part of~\eqref{eqn : intro/main/MAIN/cvg}.
\begin{Proposition}
\label{prop : LLN/directed/cvg}
	For all $e\in \S$ and $0\le \beta < 1$, for $a<b$, almost surely,
	\begin{equation}
	\label{eqn : LLN/directed/cvg}
		\lim_{\ell \to \infty }\frac{ \MassGDF{a\ell \beta e}{b \ell \beta e}{(b-a)\ell}}{(b-a)\ell}  = \LimMassG(\beta e).
	\end{equation}
\end{Proposition}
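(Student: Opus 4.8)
The statement being an equality, the plan is to establish the two one-sided bounds for $\frac{\MassGDF{a\ell\beta e}{b\ell\beta e}{(b-a)\ell}}{(b-a)\ell}$ as $\ell\to\infty$.

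\emph{Lower bound.} Every animal (resp.\ path) of $\SetGDR{\delta}{a\ell\beta e}{b\ell\beta e}{(b-a)\ell}$ also lies in $\SetGDF{a\ell\beta e}{b\ell\beta e}{(b-a)\ell}$, so $\MassGDF{a\ell\beta e}{b\ell\beta e}{(b-a)\ell}\ge\MassGDR{\delta}{a\ell\beta e}{b\ell\beta e}{(b-a)\ell}$ for every $0<\delta<1$. When $\beta\in\intervalleoo01$, Proposition~\ref{prop : LLN/simple} applied with $u=\beta e$ (so that $\ell a u=a\ell\beta e$ and $\ell b u=b\ell\beta e$) shows that the right-hand side, divided by $(b-a)\ell$, converges a.s.\ and in $\rL^1$ to $\LimMassG(\beta e)$, whence $\liminf_{\ell\to\infty}\frac{\MassGDF{a\ell\beta e}{b\ell\beta e}{(b-a)\ell}}{(b-a)\ell}\ge\LimMassG(\beta e)$. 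For $\beta=0$ the two endpoints coincide and one bounds $\MassGDF{0}{0}{(b-a)\ell}$ below by $\MassGDR{\delta}{0}{(b-a)\ell\beta'e}{(b-a)\ell}$ for a small $\beta'>0$ (for paths a minor preliminary step replacing an open path by a ``there and back'' one is needed), divides by $(b-a)\ell$, lets $\ell\to\infty$ and then $\beta'\to0$, using the continuity and monotonicity of $\beta'\mapsto\LimMassG(\beta'e)$ from Proposition~\ref{prop : LLN/simple}.

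\emph{Upper bound.} This is the main point, and I would obtain it by the ``elementary concatenation'' announced in the introduction, comparing the free directed quantity to antidiamond-restricted ones. Fix $0<\delta<1$ and let $\xi\in\SetGDF{a\ell\beta e}{b\ell\beta e}{(b-a)\ell}$ be near-optimal. The plan is to decompose $\xi$ into a bounded number of consecutive pieces $\xi_1,\dots,\xi_N$ with successive ``milestones'' $p_0=a\ell\beta e,p_1,\dots,p_N=b\ell\beta e$ chosen on the line $\R e$, with lengths $\ell_j\dpe\norme{\xi_j}$ satisfying $\sum_j\ell_j\le(b-a)\ell+o(\ell)$, and with each piece contained in $\AntiDiamant{\delta}{p_{j-1}}{p_j}$. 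Since the $p_j$ lie on the Lebesgue-negligible line $\R e$, they carry no mass a.s., so $\Mass{\xi}\le\sum_j\Mass{\xi_j}$ and $\Mass{\xi_j}\le\MassGDR{\delta}{p_{j-1}}{p_j}{\ell_j}$. As all milestones are colinear with $e$ and the $\ell_j$ are of order $\ell$, the uniform estimate of Proposition~\ref{prop : LLN/antidiamond} gives $\MassGDR{\delta}{p_{j-1}}{p_j}{\ell_j}\le\ell_j\LimMassG\p{\frac{p_j-p_{j-1}}{\ell_j}}+\ell\,\eps_\ell$ with $\eps_\ell\to0$ independently of $j$. Summing and using the concavity of $\LimMassG$ (Lemma~\ref{lem : LLN/simple/concavity}) through Jensen's inequality with weights $\ell_j/\sum_k\ell_k$, together with $\sum_j(p_j-p_{j-1})=(b-a)\ell\beta e$ and the fact that $t\mapsto t\,\LimMassG(\beta e/t)$ is nondecreasing on $\intervalleff\beta1$ (a perspective-function computation from concavity and $\LimMassG\ge0$), one gets
\[
\sum_j\ell_j\LimMassG\p{\frac{p_j-p_{j-1}}{\ell_j}}\le\p{\sum_k\ell_k}\LimMassG\p{\frac{(b-a)\ell\beta e}{\sum_k\ell_k}}\le(b-a)\ell\,\LimMassG(\beta e).
\]
Dividing by $(b-a)\ell$ and letting $\ell\to\infty$ (with $N$ bounded, so $N\ell\eps_\ell=o(\ell)$) gives $\limsup\le\LimMassG(\beta e)$, which together with the lower bound proves~\eqref{eqn : LLN/directed/cvg}. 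The same computation covers $\beta=0$, where the displacement vanishes and Jensen returns the bound $(b-a)\ell\,\LimMassG(0)$.

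\emph{Main obstacle.} The delicate step is the decomposition: a free animal need not ever return to the line $\R e$ and may make excursions of order $\ell$ away from the segment $\intervalleff{a\ell\beta e}{b\ell\beta e}$, so one cannot naively cut it at points of $\R e$. Carrying it out requires (i) choosing the milestones and bounding the rerouting cost by $o(\ell)$ while keeping $N$ bounded, exploiting — now quantitatively, via the concavity of $\LimMassG$ — that an excursion of length $s$ far from the segment forces a loss of efficiency at least as large as travelling directly; and (ii) arranging that all comparison antidiamonds have their endpoints along the single direction $e$ with parameters in the range covered by Proposition~\ref{prop : LLN/antidiamond}. Once this is set up, concavity of $\LimMassG$ — precisely the statement that going directly is the optimal trade-off between progress and available length — closes the argument.
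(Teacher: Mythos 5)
Your lower bound and your closing bookkeeping (the uniform estimate of Proposition~\ref{prop : LLN/antidiamond}, Jensen's inequality via the concavity of $\LimMassG$, and the monotonicity of the perspective $t\mapsto t\,\LimMassG(\beta e/t)$) are all correct and match what the paper does. The problem is that the step you yourself label the ``main obstacle'' is the entire content of the proposition, and it is neither carried out nor correctly specified. You ask for boundedly many milestones $p_0,\dots,p_N$ on $\R e$, each piece of $\xi$ contained in $\AntiDiamant{\delta}{p_{j-1}}{p_j}$, and a total rerouting cost $o(\ell)$ \emph{for fixed} $\delta$. That target is not attainable: a near-optimal animal may spend a positive fraction of its length at distance of order $\ell$ from $\R e$, so any cut point that must be reconnected to a milestone on the axis costs order $\ell$. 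No choice of finitely many axis milestones makes the rerouting cost $o(\ell)$.

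The paper's resolution is different in the two cases, and in both the extra length is $O(\sqrt\delta\,\ell)$ or $O(\delta\ell)$ rather than $o(\ell)$, with $\delta\to0$ taken only \emph{after} $\ell\to\infty$ (using continuity of $\LimMassG$). For animals there is no decomposition at all: one picks $x$ (resp.\ $y$) minimizing (resp.\ maximizing) $\ps{\cdot}{e}$ among the points of $\xi$ lying in the thin cone $\Cone{\delta}{a\ell\beta e}{-e}$ (resp.\ $\Cone{\delta}{b\ell\beta e}{e}$); by this extremality, $\xi$ together with the two segments joining $x$ and $y$ to their projections $p(x),p(y)$ on $\R e$ sits inside the \emph{single} antidiamond $\AntiDiamant{\delta}{p(x)}{p(y)}$ --- the antidiamond is designed precisely to tolerate order-$\ell$ excursions --- and Lemma~\ref{lem : LLN/directed/cone_bound} bounds the added length by $2\Cr{cst : cone_bound}\sqrt\delta\,(b-a)\ell$. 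For paths this trick fails (concatenation is only possible at endpoints), and one does need a multi-piece decomposition of the kind you describe, but into $O(1/\delta)$ cylinder subpaths whose cut points are successive running extrema of $\ps{\cdot}{e}$ among points within distance $\delta^2\ell$ of the axis (Lemma~\ref{lem : LLN/directed/paths/decomposition}); the reconnection cost is then $O(\delta\ell)$ and the piece lengths are bounded below by $2\delta^2\ell$, which is what makes Proposition~\ref{prop : LLN/antidiamond} applicable uniformly over the pieces. Without one of these constructions the upper bound, and hence the proposition, remains unproved.
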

The hard part is to show an upper bound for $\MassGDF{a\ell \beta e}{b \ell \beta e}{(b-a)\ell}$. In the case $\AUXGeneric = \AUXAnimal$, we show that any animal $\xi\in \SetADF{a\ell \beta e}{b \ell \beta e}{(b-a)\ell}$ is included in a slightly larger animal $\xi'\in \SetADR{\delta}{x}{y}{\ell'}$, with $(x,y)$ depending on $\xi$, such that $\frac{\norme{x-y}}{\ell'}\gtrsim \beta$ : it is sufficient to link the leftmost and rightmost points of $\xi$ belonging to thin cones around $\R e$ to their projections on $\R e$. Proposition~\ref{prop : LLN/antidiamond} then provides $\Mass{\xi'}\lesssim \ell' \LimMassA\p{\frac{y-x}{\ell'}}$. Since $\hat \beta \mapsto \LimMassA\p{\hat \beta e}$ is nonincreasing on $\intervallefo01$, this gives a suitable upper bound for $\MassADF{a\ell \beta e}{b \ell \beta e}{(b-a)\ell}$. 

In the case $\AUXGeneric = \AUXPath$, the same argument does not allow to conclude, because paths may only be concatenated on their endpoints. A variant of this issue already arose in Gandolfi and Kesten \cite{Gan94}, and Martin \cite{Mar02}. We add a preliminary step consisting essentially in writing any path $\gamma \in \SetPDF{0}{\ell \beta e }{\ell}$ as the concatenation of subpaths whose endpoints are also their leftmost and rightmost points on thin cylinders around $\R e$. The mass of each subpath may be controlled with the argument used in the case $\AUXGeneric = \AUXAnimal$. We then apply the concavity of $\LimMassG$ to bound the total mass.    

In both cases, we use Lemma~\ref{lem : LLN/directed/cone_bound} to control the additional length introduced by our constructions.
\begin{Lemma}
	\label{lem : LLN/directed/cone_bound}
	Let $x\in \R^d$, $v\in \R^d\setminus\acc0$, $0<\delta<1$ and $y\in \Cone{\delta}{x}{v}$. Denote by $p(y)$ the orthogonal projection of $y$ on $x+\R v$. Then
	\begin{equation}
		\label{eqn : LLN/directed/cone_bound1}
		\norme{y-p(y)}\le \Cl{cst : cone_bound} \sqrt\delta \norme{y-x},\\
	\end{equation}
	where $\Cr{cst : cone_bound}$ only depends on $\norme{\cdot}$.
\end{Lemma}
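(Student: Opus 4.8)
The plan is to reduce the statement to a Pythagorean computation for the Euclidean structure, and then absorb the mismatch between $\norme[2]\cdot$ and $\norme\cdot$ into the constant at the very end. First I would set $\hat v \dpe v/\norme[2]{v}$, so that the orthogonal projection of $y$ onto the line $x + \R v$ is $p(y) = x + \ps{y-x}{\hat v}\,\hat v$. Writing $r \dpe \norme[2]{y-x}$ and $c \dpe \ps{y-x}{\hat v}$, one has $y - p(y) = (y-x) - c\,\hat v$, which is Euclidean-orthogonal to $\hat v$; hence the decomposition $y - x = \p{y - p(y)} + c\,\hat v$ is orthogonal and the Pythagorean theorem gives $\norme[2]{y-p(y)}^2 = r^2 - c^2$.

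Next I would feed in the hypothesis $y \in \Cone{\delta}{x}{v}$, which by~\eqref{eqn : intro/notations/cone} says precisely that $c \ge (1-\delta)\,r$. Since $0<\delta<1$ and $r\ge 0$, this forces $c\ge 0$, hence $c^2 \ge (1-\delta)^2 r^2$, and therefore
\begin{equation*}
	\norme[2]{y-p(y)}^2 = r^2 - c^2 \le \p{1-(1-\delta)^2}\,r^2 = (2\delta - \delta^2)\,r^2 \le 2\delta\, r^2,
\end{equation*}
that is, $\norme[2]{y-p(y)} \le \sqrt{2\delta}\,\norme[2]{y-x}$.

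Finally I would pass from $\norme[2]\cdot$ to $\norme\cdot$ using the comparison constant $\Cr{cst : equiv_norms}$ of~\eqref{eqn : intro/notations/equiv_norms}:
\begin{equation*}
	\norme{y-p(y)} \le \Cr{cst : equiv_norms}\,\norme[2]{y-p(y)} \le \Cr{cst : equiv_norms}\sqrt{2\delta}\,\norme[2]{y-x} \le \Cr{cst : equiv_norms}^2\sqrt{2\delta}\,\norme{y-x},
\end{equation*}
which is exactly~\eqref{eqn : LLN/directed/cone_bound1}; one may take $\Cr{cst : cone_bound} \dpe \sqrt2\,\Cr{cst : equiv_norms}^2$, a quantity depending only on $\norme\cdot$. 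There is no genuine obstacle here: the computation is elementary, and the only point requiring attention is that the cone in~\eqref{eqn : intro/notations/cone} is defined through the Euclidean norm and inner product whereas the conclusion is stated for the ambient norm $\norme\cdot$, so one must track which norm is in play and pay the comparison price only once, at the end.
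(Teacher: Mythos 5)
Your proof is correct and follows essentially the same route as the paper's: the Pythagorean identity $\norme[2]{y-p(y)}^2=\norme[2]{y-x}^2-\ps{y-x}{\hat v}^2$, the cone inequality $\ps{y-x}{\hat v}\ge(1-\delta)\norme[2]{y-x}$ giving the factor $1-(1-\delta)^2\le 2\delta$, and a final appeal to the norm equivalence~\eqref{eqn : intro/notations/equiv_norms}. The only cosmetic difference is that the paper first normalizes to $x=0$ and $\norme[2]{v}=1$, whereas you carry the general notation throughout; the constant $\sqrt2\,\Cr{cst : equiv_norms}^2$ is the same.
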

\begin{proof}[Proof of Lemma~\ref{lem : LLN/directed/cone_bound}] 
 	Without loss of generality, we assume $x=0$ and $\norme[2]{v}=1$. Since $y\in \Cone{\delta}{x}{v}$,
	\begin{equation*}
		\norme[2]{p(y)} = \ps{y}{v} \ge (1-\delta) \norme[2]{y}.
	\end{equation*}
	Consequently,
	\begin{align*}
		\norme[2]{y-p(y)}^2 %
			&= \norme[2]{y}^2 - \norme[2]{p(y)}^2\\
			&\le \cro{1-(1-\delta)^2} \norme[2]{y}^2 \\
			&\le 2\delta \cdot \norme[2]{y}^2.
	\end{align*}
	Applying the norm equivalence~\eqref{eqn : intro/notations/equiv_norms} yields~\eqref{eqn : LLN/directed/cone_bound1}. 

\begin{proof}[Proof of Proposition~\ref{prop : LLN/directed/cvg}]
Fix $e\in \S$, $0\le \beta <1$ and $a<b$. By the straightforward lower bound
\begin{equation*}
	\MassGDF{a \ell \beta e}{b \ell \beta e}{(b-a)\ell} \ge \MassGDR{1/2}{a \ell \beta e}{b \ell \beta e}{(b-a)\ell}
\end{equation*}
and by Proposition~\ref{prop : LLN/simple}, almost surely,
\begin{equation}
	\liminf_{\ell \to \infty} \frac{ \MassGDF{a \ell \beta e}{b \ell \beta e}{(b-a)\ell} }{(b-a)\ell} \ge \LimMassG(\beta e).
\end{equation}

We now turn to the upper bound, i.e.\  we prove that almost surely,
\begin{equation}
	\label{eqn : LLN/directed/UB_generic}
	\limsup_{\ell \to \infty} \frac{ \MassGDF{a \ell \beta e}{b \ell \beta e}{(b-a)\ell} }{(b-a)\ell} \le \LimMassG(\beta e).
\end{equation}

\emph{Case 1: Assume that } $\AUXGeneric = \AUXAnimal$.%
\stepcounter{error}
Let $0<\delta <1$. Proposition~\ref{prop : LLN/antidiamond} implies that
\stepcounter{error}%
\begin{equation}
	\label{eqn : LLN/directed/animals/X}
	\begin{split}
	\cError(\ell)\dpe\sup &\Biggl\{ \p{\frac{\MassADR{\delta}{x}{y}{(1+2\Cr{cst : cone_bound}\sqrt\delta )(b-a)\ell } }{(1+2\Cr{cst : cone_bound}\sqrt\delta )(b-a)\ell } - \LimMassA\p{\frac{x-y }{(1+2\Cr{cst : cone_bound}\sqrt\delta )(b-a)\ell }  }}^+ \Biggm|\\
	&\quad\begin{array}{c} x,y\in \intervalleff{- 3 \p{ \module a\vee \module b}(1+\Cr{cst : cone_bound}\sqrt\delta )\ell e }{  3 \p{ \module a\vee \module b}(1+\Cr{cst : cone_bound}\sqrt\delta )\ell e} \\ \norme{x-y}< (1+2\Cr{cst : cone_bound}\sqrt\delta )(b-a)\ell \end{array} \Biggr\} \xrightarrow[\ell \to \infty]{\text{a.s.}} 0.
	\end{split}
\end{equation}
Let $\ell>0$ and $\xi\in \SetADF{a \ell \beta e}{b \ell \beta e}{(b-a)\ell}$. Consider two points
\begin{equation*}
	x \in \argmin_{z\in \xi \cap \Cone{\delta}{a \ell \beta e}{-e}} \ps{z}{e} \text{ and } y \in \argmax_{z\in \xi \cap \Cone{\delta}{b \ell \beta e}{e}} \ps{z}{e},
\end{equation*}
and denote by $p(x)$ and $p(y)$ their orthogonal projections on $\R e$. By Lemma~\ref{lem : LLN/directed/cone_bound},
\begin{equation}
	\label{eqn : LLN/directed/animals/projection1}
	\norme{x - p(x)} \le \Cr{cst : cone_bound}\sqrt\delta \norme{x - a\ell \beta e} \le \Cr{cst : cone_bound}\sqrt\delta (b-a)\ell.
\end{equation}
In particular, by triangle inequality,
\begin{equation*}
	\norme{p(x) - a\ell \beta e } \le \cro{ 1+ \Cr{cst : cone_bound}\sqrt\delta}(b-a)\ell,
\end{equation*}
thus
\begin{equation}
	\label{eqn : LLN/directed/animals/projection2}
	p(x) \in \intervalleff{-3(\module a \vee \module b)(1+\Cr{cst : cone_bound}\sqrt\delta)\ell  e}{3(\module a \vee \module b)(1+\Cr{cst : cone_bound}\sqrt\delta)\ell e}.
\end{equation}
Likewise,
\begin{equation}
	\label{eqn : LLN/directed/animals/projection3}
	\norme{y-p(y)} \le  \Cr{cst : cone_bound}\sqrt\delta (b-a)\ell
\end{equation}
and%
\begin{equation}
	\label{eqn : LLN/directed/animals/projection4}
	p(y) \in \intervalleff{-3(\module a \vee \module b)(1+\Cr{cst : cone_bound}\sqrt\delta)\ell  e}{3(\module a \vee \module b)(1+\Cr{cst : cone_bound}\sqrt\delta)\ell e}.
\end{equation}
Consider the animal
\begin{equation}
	\label{eqn : LLN/directed/xi'}
	\xi' \dpe \p{p(x) , x} \concat \xi \concat \p{y, p(y)}
\end{equation}
(see Figure~\ref{fig : LLN/cvg_free/construction}). By~\eqref{eqn : LLN/directed/animals/projection1} and~\eqref{eqn : LLN/directed/animals/projection3},
\begin{equation*}
	\xi' \in \SetADR{\delta}{p(x)}{p(y)}{(b-a)\ell (1+2\Cr{cst : cone_bound}\sqrt\delta )},
\end{equation*}
\begin{figure}
	\center
	\def\svgwidth{\textwidth}
	\begingroup%
  \makeatletter%
  \providecommand\color[2][]{%
    \errmessage{(Inkscape) Color is used for the text in Inkscape, but the package 'color.sty' is not loaded}%
    \renewcommand\color[2][]{}%
  }%
  \providecommand\transparent[1]{%
    \errmessage{(Inkscape) Transparency is used (non-zero) for the text in Inkscape, but the package 'transparent.sty' is not loaded}%
    \renewcommand\transparent[1]{}%
  }%
  \providecommand\rotatebox[2]{#2}%
  \newcommand*\fsize{\dimexpr\f@size pt\relax}%
  \newcommand*\lineheight[1]{\fontsize{\fsize}{#1\fsize}\selectfont}%
  \ifx\svgwidth\undefined%
    \setlength{\unitlength}{343.27355332bp}%
    \ifx\svgscale\undefined%
      \relax%
    \else%
      \setlength{\unitlength}{\unitlength * \real{\svgscale}}%
    \fi%
  \else%
    \setlength{\unitlength}{\svgwidth}%
  \fi%
  \global\let\svgwidth\undefined%
  \global\let\svgscale\undefined%
  \makeatother%
  \begin{picture}(1,0.33898908)%
    \lineheight{1}%
    \setlength\tabcolsep{0pt}%
    \put(0,0){\includegraphics[width=\unitlength,page=1]{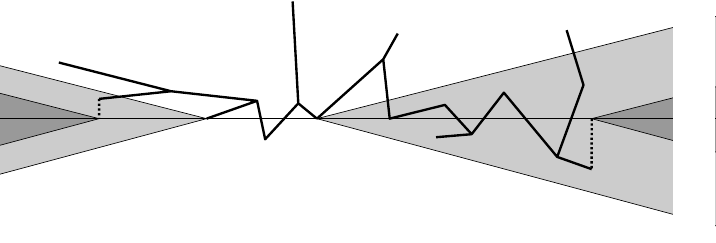}}%
    \put(0.36742121,0.27855413){\color[rgb]{0,0,0}\makebox(0,0)[lt]{\lineheight{1.25}\smash{\begin{tabular}[t]{l}$\xi$\end{tabular}}}}%
    \put(0.10341117,0.19682507){\color[rgb]{0,0,0}\makebox(0,0)[lt]{\lineheight{1.25}\smash{\begin{tabular}[t]{l}$x$\end{tabular}}}}%
    \put(0.09572154,0.15152668){\color[rgb]{0,0,0}\makebox(0,0)[lt]{\lineheight{1.25}\smash{\begin{tabular}[t]{l}$p(x)$\end{tabular}}}}%
    \put(0.81345383,0.19760456){\color[rgb]{0,0,0}\makebox(0,0)[lt]{\lineheight{1.25}\smash{\begin{tabular}[t]{l}$p(y)$\end{tabular}}}}%
    \put(0.83825138,0.09935136){\color[rgb]{0,0,0}\makebox(0,0)[lt]{\lineheight{1.25}\smash{\begin{tabular}[t]{l}$y$\end{tabular}}}}%
    \put(0.27978053,0.14511362){\color[rgb]{0,0,0}\makebox(0,0)[lt]{\lineheight{1.25}\smash{\begin{tabular}[t]{l}$a \ell \beta e$\end{tabular}}}}%
    \put(0.95398664,0.17031412){\color[rgb]{0,0,0}\makebox(0,0)[lt]{\lineheight{1.25}\smash{\begin{tabular}[t]{l}$\bbR e$\end{tabular}}}}%
    \put(0.43160796,0.1466726){\color[rgb]{0,0,0}\makebox(0,0)[lt]{\lineheight{1.25}\smash{\begin{tabular}[t]{l}$b \ell \beta e$\end{tabular}}}}%
  \end{picture}%
\endgroup%

	\caption{Construction of the animal $\xi'$ defined by~\eqref{eqn : LLN/directed/xi'}. The lightly shaded regions are $\Cone{\delta}{a \ell \beta e }{-e}$ and $\Cone{\delta}{b \ell \beta e}{e}$. The shaded regions are $\Cone{\delta}{p(x)}{- e}$ and $\Cone{\delta}{p(y)}{e}$. The animal $\xi'$ is the concatenation of $\xi$ (thick, solid lines), with the paths $(x, p(x))$ and $(y, p(y))$ (thick, dotted lines).}
	\label{fig : LLN/cvg_free/construction}
\end{figure}%
thus
\begin{equation}
	\label{eqn : LLN/directed/animals/UB_MassXi_1}
	\Mass{\xi} \le \Mass{\xi'} \le \MassADR{\delta}{p(x)}{p(y)}{(b-a)\ell (1+2\Cr{cst : cone_bound}\sqrt\delta )}.
\end{equation}

By the definition of $\cError(\ell)$,~\eqref{eqn : LLN/directed/animals/projection2} and~\eqref{eqn : LLN/directed/animals/projection4},
\begin{align}
	\Mass{\xi} &\le (b-a)\ell\p{1+2\Cr{cst : cone_bound}\sqrt\delta }\cro{\LimMassA\p{\frac{p(x)-p(y)}{(b-a)\ell\p{1+2\Cr{cst : cone_bound}\sqrt\delta }}}  + \cError(\ell)}.\nonumber
	\intertext{Since $\hat\beta\mapsto\LimMassA(\hat\beta e)$ is nonincreasing on $\intervallefo01$ and $\norme{p(x) - p(y)} \ge (b-a)\ell \beta$,}
	\Mass{\xi}%
		&\le (b-a)\ell\p{1+2\Cr{cst : cone_bound}\sqrt\delta }\cro{\LimMassA\p{\frac{\beta e}{1+2\Cr{cst : cone_bound}\sqrt\delta}}  + \cError(\ell)}.\nonumber
	\intertext{Taking the supremum in $\xi$ and applying~\eqref{eqn : LLN/directed/animals/X}, we deduce that almost surely,}
	\limsup_{\ell \to \infty}\frac{ \MassADF{a \ell \beta e}{ b \ell \beta e}{(b-a)\ell} }{(b-a)\ell}%
		&\le \p{1+2\Cr{cst : cone_bound}\sqrt\delta }\LimMassA\p{\frac{\beta e}{1+2\Cr{cst : cone_bound}\sqrt\delta}}.
\end{align}
Since $\LimMassA$ is continuous, letting $\delta\to 0$ yields~\eqref{eqn : LLN/directed/UB_generic}. 

\emph{Case 2: Assume that }$\AUXGeneric = \AUXPath$.
For $h>0$, we say that a path $x\Path\gamma y$ is a $h$-\emph{cylinder path} if \begin{equation}
	x \in \argmin \set{ \ps{z}{e} }{z\in \gamma, \norme{z- p(z)} \le h}%
	\text{ and }%
	y \in \argmax \set{ \ps{z}{e}  }{z\in \gamma, \norme{z- p(z)} \le h},
\end{equation}
or vice versa. Note that in particular, this implies that $\norme{x- p(x)} \le h$ and $\norme{y- p(y)} \le h$. It is a variant of the notion introduced by Martin above (7.4) in \cite{Mar02}. Lemma~\ref{lem : LLN/directed/paths/decomposition}, proven at the end of the section, is analogous to Lemma~7 there.
\begin{Lemma}
	\label{lem : LLN/directed/paths/decomposition}
	Let $\ell>0$, $0<\delta < 1/4$ and a path $x\Path\gamma y$ of length at most $(b-a)\ell$ whose endpoints lie on $\R e$. Consider the path
	\begin{equation}
		\gamma' \dpe (x- \delta \ell e, x) \concat \gamma \concat (y, y+ \delta\ell e).	
	\end{equation} 
	Then there exist $r \le \frac{2(b-a)}{\delta}+3$ and a sequence $(\gamma_i')_{1\le i \le r}$ of $\delta^2\ell$-cylinder paths such that 
	\begin{equation}
		\gamma' = \gamma_1' \concat \dots \concat \gamma_r'.
	\end{equation}
\end{Lemma}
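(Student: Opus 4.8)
The plan is to reduce the statement to a purely combinatorial partition problem for the list of vertices of $\gamma'$ lying close to the axis, then build the decomposition greedily and bound the number of pieces. Set $\phi(z)\dpe\ps{z}{e}$ (the coordinate of $z$ along $\R e$) and call a vertex $z$ of $\gamma'$ \emph{central} if $\norme{z-p(z)}\le\delta^2\ell$, where $p(z)$ is the orthogonal projection of $z$ onto $\R e$; the endpoints $x-\delta\ell e$ and $y+\delta\ell e$ of $\gamma'$ lie on $\R e$, hence are central. Unwinding the definition of a $\delta^2\ell$-cylinder path, a subpath of $\gamma'$ is one exactly when its two end vertices realise, respectively, a minimum and a maximum of $\phi$ over the central vertices it contains. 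So it suffices to cut the finite ordered list of central vertices of $\gamma'$ into at most $\tfrac{2(b-a)}{\delta}+3$ consecutive blocks, each beginning and ending at a vertex achieving the extreme values of $\phi$ over the block.

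To produce the cut points I would proceed greedily: let $z_0$ be the first vertex of $\gamma'$; once a central vertex $z_{i-1}$ has been chosen, let $z_i$ be the \emph{last} central vertex lying after $z_{i-1}$ such that the subpath of $\gamma'$ from $z_{i-1}$ to it is a $\delta^2\ell$-cylinder path, and stop, setting $r\dpe i$, when $z_i$ is the last vertex of $\gamma'$. This is well defined: the first central vertex after $z_{i-1}$ always qualifies, since for that subpath the only central vertices are its two endpoints, so it is trivially a cylinder path; hence $z_i$ lies strictly after $z_{i-1}$, the procedure terminates, and $z_i$ is central because it realises an argmin or an argmax over central vertices. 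Writing $\gamma_i'$ for the subpath from $z_{i-1}$ to $z_i$ we get $\gamma'=\gamma_1'\concat\cdots\concat\gamma_r'$ with each $\gamma_i'$ a $\delta^2\ell$-cylinder path; this part is routine bookkeeping with the definitions.

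The nontrivial point, and the main obstacle, is the bound $r\le\tfrac{2(b-a)}{\delta}+3$. Putting $\psi_i\dpe\phi(z_i)$, one half is immediate: $\sum_{i=1}^r\module{\psi_i-\psi_{i-1}}\le\sum_{i=1}^r\norme{z_i-z_{i-1}}\le\sum_{i=1}^r\norme{\gamma_i'}=\norme{\gamma'}\le(b-a)\ell+2\delta\ell$ (with a harmless extra factor depending on $\norme\cdot$ when $\norme\cdot\neq\norme[2]\cdot$). Turning this into a bound on the number of pieces is the heart of the matter, and is where the length $\delta\ell$ of the two stubs is essential: the stubs are added precisely so that $\phi$-oscillations of amplitude $\lesssim\delta\ell$ are swallowed by a single cylinder piece instead of producing many short ones, so that — apart from a bounded number of exceptional pieces — each piece is forced to displace $\phi$ by at least $\sim\delta\ell$, whence $r\lesssim\norme{\gamma'}/(\delta\ell)\lesssim(b-a)/\delta$. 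Carrying this out requires understanding consecutive pieces (which one expects to alternate in the sign of $\psi_i-\psi_{i-1}$, because the central vertex that blocks the prolongation of $\gamma_i'$ past $z_i$ forces $z_i$ to be an extremum of $\gamma_{i+1}'$ as well), identifying exactly which pieces can be short, and chasing the constants down to the sharp value $\tfrac{2(b-a)}{\delta}+3$; this is the delicate step, and it is the analogue of the counting argument behind Lemma~7 of Martin \cite{Mar02}, on which I would model it.
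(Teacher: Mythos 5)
There is a genuine gap: the quantitative bound $r\le \frac{2(b-a)}{\delta}+3$ is never proved, and it is the entire content of the lemma. Producing \emph{some} decomposition of $\gamma'$ into $\delta^2\ell$-cylinder paths is easy (your greedy scan does it, and so would cutting at every consecutive pair of central vertices); what the application in Proposition~\ref{prop : LLN/directed/cvg} needs is that the number of pieces is $O((b-a)/\delta)$, since each piece contributes an additive error of order $\delta^2\ell$. You explicitly defer this step ("identifying exactly which pieces can be short\dots is the delicate step"), and the heuristic you offer for it — that each non-exceptional piece must displace $\phi=\ps{\cdot}{e}$ by at least $\sim\delta\ell$ — is not justified for your greedy decomposition. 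The maximality of $z_i$ only tells you something about windows of the form $[z_{i-1},w]$ with $w>z_i$; it does not prevent the \emph{next} piece $[z_i,z_{i+1}]$ from having small $\phi$-displacement, because the obstruction to extending piece $i$ past $z_i$ is that $z_i$ becomes an interior extremum, which says nothing about how far $\phi$ moves before the piece $i+1$ is itself blocked. Nothing in your argument ties the number of pieces to the stubs of length $\delta\ell$, which is exactly where the factor $1/\delta$ must come from.

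For comparison, the paper does not scan greedily from the left. It anchors the construction at the global extremes of $\gamma$ among central vertices ($j(0)=$ first global minimizer, $i(0)=$ last global maximizer of $\ps{\cdot}{e}$) and builds two sequences outward by alternately taking the last argmin/argmax over the remaining tail (resp.\ the first over the remaining head). This forces the consecutive gaps $\norme{p(x_{i(n)})-p(x_{i(n+1)})}$ to be nonincreasing, so one can define stopping times $N^\pm$ as the first (odd) index at which the gap drops below $\delta\ell$: before $N^\pm$ every piece has length at least $\delta\ell-2\delta^2\ell\ge\delta\ell/2$, giving $N^++N^-\le 2(b-a)/\delta$, and at $N^\pm$ the stub endpoint $y+\delta\ell e$ (resp.\ $x-\delta\ell e$) dominates all remaining central vertices, so the final piece absorbs the whole tail and is still a cylinder path. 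If you want to salvage a left-to-right construction, you would need to replace the cylinder-path property of each window by an explicit alternation-plus-monotone-gap structure of this kind; as written, the counting argument is missing.
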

Lemma~\ref{lem : LLN/directed/cone_bound} implies that for small enough $0< \delta < 1/4$, for all $z\in \Cone{\delta^5}{0}{e} \cap \clball{0,(b-a +\delta^2)\ell}$,
\begin{equation}
	\label{eqn : LLN/directed/paths/projection}
	\norme{z-p(z)}\le \delta^2 \ell.	
\end{equation}
Fix $0< \delta < 1/4$ satisfying this property. We claim that for all $\delta^2 \ell$-cylinder paths $\gamma$ with endpoints $x$ and $y$ and length at most $(b-a)\ell$,
\begin{equation}
	\label{eqn : LLN/directed/paths/cylinder_path_subset_antidiamond}
	\gamma \subseteq \AntiDiamant{\delta^5}{p(x)}{p(y)}.
\end{equation}
Indeed let $\gamma$ be such a path. Then $\gamma \subseteq \clball{p(x) ,(b-a +\delta^2)\ell}$ therefore for all $z \in \gamma \cap \Cone{\delta^5}{p(x)}{-e}$, by~\eqref{eqn : LLN/directed/paths/projection} we have
\begin{equation*}
	\norme{z - p(z)} \le \delta^2 \ell,
\end{equation*}
thus $\ps z e \ge \ps x e$, which gives $z= p(x)$. Similarly, if $z \in \gamma \cap \Cone{\delta^5}{p(y)}{e}$, then $z=p(y)$, hence~\eqref{eqn : LLN/directed/paths/cylinder_path_subset_antidiamond}.
Proposition~\ref{prop : LLN/antidiamond} implies that
\stepcounter{error}
\begin{equation}
	\label{eqn : LLN/directed/paths/X}
	\begin{split}
	\cError(\ell)\dpe\sup &\Biggl\{ \p{\frac{\MassPDR{\delta^5}{x}{y}{\ell'} }{\ell'} - \LimMassP\p{\frac{x-y }{\ell'}  }}^+ \\%
		&\quad \Biggl| \begin{array}{c}%
			x,y\in \intervalleff{-\p{ 3(\module a \vee \module b)+2\delta^2 + \delta} \ell e }{ \p{ 3(\module a \vee \module b)+2\delta^2 + \delta} \ell e } \\%
			\norme{x-y}< \ell' \\%
			2\delta^2 \ell \le \ell' \le \p{b-a +2\delta^2 + \delta}\ell%
		\end{array} \Biggr \}%
		\xrightarrow[\ell \to \infty]{\text{a.s.}} 0.
	\end{split}
\end{equation}
Let $\ell>0$ and $\gamma \in \SetPDF{a \ell \beta e}{b \ell \beta e}{(b-a)\ell}$. With the notations of Lemma~\ref{lem : LLN/directed/paths/decomposition}, for all $i\in\intint1r$, we denote by $x_{i-1}'$ and $x_i'$ the endpoints of $\gamma_i'$. For all $i\in \intint1r$, we consider the path
\begin{equation}
	\gamma_i'' \dpe (p(x_{i-1}'), x_{i-1}') \concat \gamma_i' \concat (x_i', p(x_i')).
\end{equation}
Then for all $i\in\intint1r$, by~\eqref{eqn : LLN/directed/paths/cylinder_path_subset_antidiamond},
\begin{equation}
	\gamma_i''\in \SetPDR{\delta^5}{p(x_{i-1}')}{p(x_i')}{  \norme{\gamma_i'} + 2\delta^2\ell }.
\end{equation}
Moreover, reasoning like for~\eqref{eqn : LLN/directed/animals/projection2} and~\eqref{eqn : LLN/directed/animals/projection4}, we get that for all $i\in \intint0r$,
\begin{equation*}
	p(x_i') \in \intervalleff{-\p{ 3(\module a \vee \module b)+2\delta^2 + \delta} \ell e }{ \p{ 3(\module a \vee \module b)+2\delta^2 + \delta} \ell e }.
\end{equation*}
Consequently, by definition of $\cError(\ell)$, for all $i\in\intint1r$,
\begin{align}
	\Mass{\gamma_i''} &\le \p{\norme{\gamma_i'} + 2\delta^2\ell }\cro{ \LimMassP\p{ \frac{p(x_{i-1}')-p(x_i')}{\norme{\gamma_i'} + 2\delta^2\ell} } + \cError(\ell) }.\nonumber
	\intertext{Summing over $i$ and applying the concavity of $\LimMassP$, we get }
	\Mass{\gamma}%
	&\le \sum_{i=1}^r\Mass{\gamma_i''}\eol
	&\le \sum_{i=1}^r\p{\norme{\gamma_i'} + 2\delta^2\ell }\cro{ \LimMassP\p{ \frac{p(x_{i-1}')-p(x_i')}{\norme{\gamma_i'} + 2\delta^2\ell} } + \cError(\ell) }\eol
	&\le \sum_{i=1}^r \p{\norme{\gamma_i'} + 2\delta^2\ell }\LimMassP\p{ \frac{p(x_{i-1}')-p(x_i')}{\norme{\gamma_i'} + 2\delta^2\ell} } + \sum_{i=1}^r \p{ \norme{\gamma_i'} + 2\delta^2 \ell } \cError(\ell)\eol
	&\le \p{ \sum_{i=1}^r \p{ \norme{\gamma_i'} + 2\delta^2 \ell } } \LimMassP\p{ \frac{ \sum_{i=1}^r \p{p(x_{i-1}')-p(x_i')} }{ \sum_{i=1}^r \p{\norme{\gamma_i'} + 2\delta^2\ell }   } } +  \sum_{i=1}^r \p{ \norme{\gamma_i'} + 2\delta^2 \ell  } \cError(\ell) \eol
	&= \p{ \norme{\gamma'}  +2r\delta^2 \ell}\LimMassP\p{ \frac{ x_0' - x_r' }{ \norme{\gamma'}  +2r\delta^2 \ell } } +\p{ \norme{\gamma'} + 2r\delta^2 \ell  } \cError(\ell).
\end{align}
Applying $r\le \frac{2(b-a)}{\delta} + 3$, $\norme{\gamma'} \le (b-a+2\delta)\ell$ and the monotonicity of $\beta \mapsto \LimMassP(\beta e)$ on $\intervallefo01$ yields
\begin{align}
	\frac{\Mass\gamma}{(b-a)\ell}%
		&\le \p{ 1 + \frac{2\delta + 6 \delta^2}{b-a} + 4\delta  }\LimMassP\p{ \frac{ \beta e }{ 1 + \frac{2\delta + 6 \delta^2}{b-a} + 4\delta } } + \p{ 1 + \frac{2\delta + 6 \delta^2}{b-a} + 4\delta   } \cError(\ell).\nonumber
	\intertext{Taking the supremum in $\gamma$ gives}
	\frac{\MassPDF{a \ell \beta e}{b \ell \beta e}{(b-a)\ell} }{(b-a)\ell}%
		&\le \p{ 1 + \frac{2\delta + 6 \delta^2}{b-a} + 4\delta  }\LimMassP\p{ \frac{ \beta e }{ 1 + \frac{2\delta + 6 \delta^2}{b-a} + 4\delta  } } + \p{ 1 + \frac{2\delta + 6 \delta^2}{b-a} + 4\delta   } \cError(\ell).\nonumber
	\intertext{Considering $\ell\to \infty$ we deduce that almost surely,}
	\label{eqn : LLN/directed/paths/Final_UB}
	\limsup_{\ell \to \infty} \frac{\MassPDF{a \ell \beta e}{b \ell \beta e}{(b-a)\ell} }{(b-a)\ell}%
		&\le \p{ 1 + \frac{2\delta + 6 \delta^2}{b-a} + 4\delta }\LimMassP\p{ \frac{ \beta e }{ 1 + \frac{2\delta + 6 \delta^2}{b-a} + 4\delta } }.
\end{align}
Since $\LimMassP$ is continuous, letting $\delta\to 0$ gives~\eqref{eqn : LLN/directed/UB_generic} and concludes the proof of Proposition~\ref{prop : LLN/directed/cvg}.
\end{proof}

\begin{proof}[Proof of Lemma~\ref{lem : LLN/directed/paths/decomposition}]
Denote $\gamma = (x=x_0, \dots, x_I=y)$. Let us consider
\begin{align}
	j(0) &\dpe \min \argmin \set{ \ps{x_j}{e} }{ j\in\intint0I, \norme{x_j - p(x_j)} \le \delta^2\ell}\\%
	\text{ and }i(0) &\dpe \max \argmax \set{ \ps{x_i}{e} }{ i\in\intint0I, \norme{x_i - p(x_i)} \le \delta^2\ell}.
\end{align}
We only treat the case where $j(0) \le i(0)$, the other case is similar. We then define (see Figure~\ref{fig : Decomposition}) a increasing then constant sequence by the recursion
\begin{equation}
	\label{eqn : LLN/directed/paths/decomposition/i}
	i(n) \dpe %
	\begin{cases}
		I &\text{if $i(n-1)=I$,}\\
		\max \argmin \set{ \ps{x_i}{e} }{ i\in\intint{i(n-1)+1}I, \norme{x_i - p(x_i)} \le \delta^2\ell} \qquad &\text{if $n$ is odd,}\\
		\max \argmax \set{ \ps{x_i}{e} }{ i\in\intint{i(n-1)+1}I, \norme{x_i - p(x_i)} \le \delta^2\ell} \qquad &\text{if $n$ is even.}
	\end{cases}
\end{equation}
Similarly, we define a decreasing then stationary sequence by the recursion
\begin{equation}
	\label{eqn : LLN/directed/paths/decomposition/j}
	j(n) \dpe %
	\begin{cases}
		0 &\text{if $j(n-1)=0$,}\\
		\min \argmax \set{ \ps{x_j}{e} }{ j\in\intint0{j(n-1)-1}, \norme{x_j - p(x_j)} \le \delta^2\ell} \qquad &\text{if $n$ is odd,}\\
		\min \argmin \set{ \ps{x_j}{e} }{ j\in\intint0{j(n-1)-1}, \norme{x_j - p(x_j)} \le \delta^2\ell} \qquad &\text{if $n$ is even.}
	\end{cases}
\end{equation}%
\begin{figure}
	\center
	\def\svgwidth{\textwidth}
	\begingroup%
  \makeatletter%
  \providecommand\color[2][]{%
    \errmessage{(Inkscape) Color is used for the text in Inkscape, but the package 'color.sty' is not loaded}%
    \renewcommand\color[2][]{}%
  }%
  \providecommand\transparent[1]{%
    \errmessage{(Inkscape) Transparency is used (non-zero) for the text in Inkscape, but the package 'transparent.sty' is not loaded}%
    \renewcommand\transparent[1]{}%
  }%
  \providecommand\rotatebox[2]{#2}%
  \newcommand*\fsize{\dimexpr\f@size pt\relax}%
  \newcommand*\lineheight[1]{\fontsize{\fsize}{#1\fsize}\selectfont}%
  \ifx\svgwidth\undefined%
    \setlength{\unitlength}{485.75275571bp}%
    \ifx\svgscale\undefined%
      \relax%
    \else%
      \setlength{\unitlength}{\unitlength * \real{\svgscale}}%
    \fi%
  \else%
    \setlength{\unitlength}{\svgwidth}%
  \fi%
  \global\let\svgwidth\undefined%
  \global\let\svgscale\undefined%
  \makeatother%
  \begin{picture}(1,0.21456276)%
    \lineheight{1}%
    \setlength\tabcolsep{0pt}%
    \put(0,0){\includegraphics[width=\unitlength,page=1]{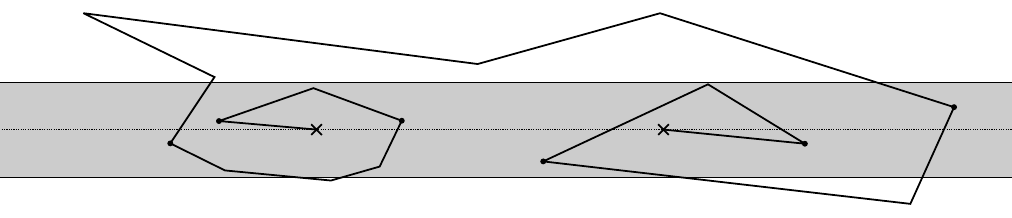}}%
    \put(0.30549617,0.06431301){\color[rgb]{0,0,0}\makebox(0,0)[lt]{\lineheight{1.25}\smash{\begin{tabular}[t]{l}$x$\end{tabular}}}}%
    \put(0.64831182,0.06431301){\color[rgb]{0,0,0}\makebox(0,0)[lt]{\lineheight{1.25}\smash{\begin{tabular}[t]{l}$y$\end{tabular}}}}%
    \put(0.12619971,0.0609189){\color[rgb]{0,0,0}\makebox(0,0)[lt]{\lineheight{1.25}\smash{\begin{tabular}[t]{l}$x_{j(0)}$\end{tabular}}}}%
    \put(0.40739987,0.09016425){\color[rgb]{0,0,0}\makebox(0,0)[lt]{\lineheight{1.25}\smash{\begin{tabular}[t]{l}$x_{j(1)}$\end{tabular}}}}%
    \put(0.19590594,0.07704029){\color[rgb]{0,0,0}\makebox(0,0)[lt]{\lineheight{1.25}\smash{\begin{tabular}[t]{l}$x_{j(2)}$\end{tabular}}}}%
    \put(0.81115465,0.06290664){\color[rgb]{0,0,0}\makebox(0,0)[lt]{\lineheight{1.25}\smash{\begin{tabular}[t]{l}$x_{i(2)}$\end{tabular}}}}%
    \put(0.48089472,0.05287067){\color[rgb]{0,0,0}\makebox(0,0)[lt]{\lineheight{1.25}\smash{\begin{tabular}[t]{l}$x_{i(1)}$\end{tabular}}}}%
    \put(0.95926908,0.10648538){\color[rgb]{0,0,0}\makebox(0,0)[lt]{\lineheight{1.25}\smash{\begin{tabular}[t]{l}$x_{i(0)}$\end{tabular}}}}%
    \put(0.44274395,0.18179428){\color[rgb]{0,0,0}\makebox(0,0)[lt]{\lineheight{1.25}\smash{\begin{tabular}[t]{l}$\gamma$\end{tabular}}}}%
  \end{picture}%
\endgroup%
	\caption{Illustration of $i(n)$ and $j(n)$ defined by~\eqref{eqn : LLN/directed/paths/decomposition/i} and~\eqref{eqn : LLN/directed/paths/decomposition/j}. The shaded strip represents the set of points $z\in \R^d$ such that $\norme{z-p(z)}\le \delta^2\ell$.}
	\label{fig : Decomposition}
\end{figure}%
Note that the sequences $\p{ \norme{ p \p{ x_{i(n)} } - p \p{ x_{i(n+1)} } } }$ and $\p{ \norme{ p(x_{j(n)}) - p(x_{j(n+1)}) } }$ are nonincreasing and that every subpath of the form $\gamma_0\dpe \bigl( x_{j(0)}, \dots, x_{i(0)} \bigr) $, $ \gamma_n^+ \dpe \bigl( x_{i(n)},  x_{i(n)+1}, \dots, x_{i(n+1)} \bigr) $ or $\gamma_n^- \dpe \bigl( x_{j(n+1)},  x_{j(n+1)+1}, \dots, x_{j(n)} \bigr)$ a is $\delta^2\ell$-cylinder path. Define
\begin{align}
	N^+ &\dpe \min\set{n\ge 0 }{ \text{$n$ is odd and }\norme{ p(x_{i(n)}) - p(x_{i(n+1)}) } < \delta \ell }
	\intertext{and}
	N^- &\dpe \min\set{n\ge 0 }{ \text{$n$ is odd and }\norme{ p(x_{j(n)}) - p(x_{j(n+1)}) } < \delta \ell }.
\end{align}
Consider the paths
\begin{align}
	\hat \gamma_{N^+}^+ &\dpe \p{ x_{i(N^+)}, \dots, x_I = y} \concat \p{y, y+\delta \ell e} 
	\intertext{and}
	\hat \gamma_{N^-}^- &\dpe \p{x - \delta\ell e, x} \concat \p{ x = x_0 , \dots, x_{j(N^-)} }.
\end{align}
We claim that they are $\delta^2 \ell$-cylinder paths. Indeed, $\norme{x_{i(N^+)}- p\p{x_{i(N^+)}} } \le \delta^2 \ell$ and $\norme{y+\delta \ell e - p(y+\delta \ell e)}=0 \le \delta^2 \ell$. Moreover, by definition of $i(N^+)$,
\begin{equation}
	\label{eqn : LLN/directed/paths/decomposition/last_bit_cylinder1}
	\ps{x_{i(N^+)} }{e} = \min \set{\ps{z}{e} }{ z \in \hat \gamma_{N^+}^+, \norme{z - p(z)}\le \delta^2\ell }.
\end{equation}
By definition of $i(N^+ +1)$,
\begin{equation*}
	\ps{x_{i(N^++1)} }{e} = \max \set{\ps{x_i}{e} }{ i \in \intint{i(N^+)}{I}, \norme{x_i - p(x_i)}\le \delta^2\ell }.
\end{equation*}
Besides, $\norme{ p(x_{i(N^+)}) - p(x_{i(N^++1)}) }< \delta \ell$ and $y-p(x_{i(N^+)}) \in \R^+ e$ therefore
\begin{equation}
	\label{eqn : LLN/directed/paths/decomposition/last_bit_cylinder2}
	\ps{y+\ell \delta e }{e} = \max \set{\ps{z}{e} }{ z \in \hat \gamma_{N^+}^+, \norme{z - p(z)}\le \delta^2\ell }.
\end{equation}
Consequently, $\hat \gamma_{N^+}^+$ is a $\delta^2\ell$-cylinder path. The same goes for $\hat \gamma_{N^-}^-$ Our candidate for $(\gamma'_i)_{1\le i \le r}$ is the sequence $\p{\hat \gamma_{N^-}^-, \gamma_{N^--1}^-, \dots, \gamma_0^-, \gamma_0, \gamma_0^+, \dots, \gamma_{N^+-1}^+, \hat \gamma_{N^+}^+  }$.

For all $n\in \intint{0}{N^+ -1}$, by triangle inequality,
\begin{align*}
	\norme{ x_{i(n)} - x_{i(n+1)} } %
		&\ge \norme{ p(x_{i(n)}) - p(x_{i(n+1)}) } - \norme{ p(x_{i(n)}) - x_{i(n)} } - \norme{ p(x_{i(n+1)}) - x_{i(n+1)} }\\
		&\ge \delta \ell - 2\delta^2 \ell \ge \frac{\delta\ell}{2}.
	\intertext{Likewise, for all $n\in \intint{0}{N^--1}$,}
	\norme{ x_{j(n)} - x_{j(n+1)} }%
		&\ge \frac{\delta\ell}{2}.
\end{align*}
Consequently,
\begin{equation*}
	N^+ + N^- \le \frac{(b-a)\ell}{\frac{\delta\ell}{2}}  = \frac{2(b-a)}{\delta},
\end{equation*}
thus $r \le \frac{2(b-a)}{\delta}+3$.
\end{proof}

\subsection{Proof of Theorem~\ref{thm : intro/main/MAIN}}

Let $K$ be a compact subset of $\cX$. Consider
\begin{equation}
	0< \delta < 1 - \sup_{(x,y,\ell)\in K} \frac{\norme{x-y} }{\ell}. 
\end{equation}
Let $0< \eps \le \frac{\delta^3}9$. Consider an integer $M\ge \eps^{-1}$ and a modulus of continuity $\omega$ of $\LimMassG$ on $\clball{0,1-\delta+\eps}$. By compactness there exists a finite family $\p{(x_n, y_n)}_{1\le n \le N}$ of pairs of colinear points in $\clball{0,1}$, such that for all pair $(x,y)$ of colinear points in $\clball{0,1}$,
\begin{equation*}
	\norme{x-x_n} + \norme{y-y_n} \le \eps
\end{equation*}
for some $1\le n \le N$. For all $L>0$, define
\stepcounter{error}%
\begin{equation}
	\cError(L) \dpe%
	\max\set%
		{
		\module{ \frac{ \MassGDF{L x_n}{L y_n}{\frac{mL}{M} }}L - \frac mM\LimMassG\p{ \frac{M(y_n - x_n)}{m} } }
		}
		{
		\begin{array}{c}
			 n \in\intint1N,\\
			 m \in\intint1{2M},\\
			\norme{x_n - y_n} < \frac{m}{M}
		\end{array}
		}.
\end{equation}
Let $L>0$ and $(x,y,\ell) \in K$. We claim that
\begin{equation}
	\label{LLN/conclusion/bound_neg_part}
	\cro{\frac{\MassGDF{Lx}{Ly}{L\ell} }{L} - \ell\LimMassG\p{ \frac{y-x}{\ell} }  }^- \le \delta\LimMassG(0) +  \omega\p{\frac{3\eps}{\delta(\delta- 2\eps)}} + \cError(L) .
\end{equation}
Indeed the inequality is clear if $\ell < \delta$. Assume that $\ell \ge \delta$. There exists $1\le n \le N$ such that
\begin{equation}
	\label{LLN/conclusion/bound_neg_part/approx_xy}
	\norme{x-x_n} + \norme{y- y_n} \le \eps.
\end{equation}
Let $m\dpe \floor{ M(\ell - \eps) }$. Note that 
\begin{equation}
	\label{LLN/conclusion/bound_neg_part/LB_k}
	m \ge  M(\ell - \eps) -1 = M\p{\ell - \eps -\frac1M} \ge M(\ell-2\eps).
\end{equation}
We first estimate $\frac{M(y_n - x_n)}{m}$. By two iterations of the triangle inequality we have
\begin{align}
	\norme{\frac{M(y_n - x_n)}{m} - \frac{y-x}{\ell} }%
		&\le \norme{\frac{M(y_n - x_n)}{m} - \frac{M(y-x)}{m} }  + \norme{\frac{M(y-x)}{m} - \frac{y-x}{\ell} }\eol
		&\le \frac{M}{m}\cro{ \norme{y_n -y} + \norme{x_n - x} } + \module{ \frac{M}{m} - \frac1\ell } \cdot \norme{y-x}\eol
		&= \frac{M}{m}\cro{ \norme{y_n - y} + \norme{x_n -  x} } + \frac{M\ell - m}{\ell m}\cdot\norme{y-x}.\nonumber
	\intertext{Applying~\eqref{LLN/conclusion/bound_neg_part/approx_xy},~\eqref{LLN/conclusion/bound_neg_part/LB_k} and $\norme{y-x} \le 1$ yields}
	\norme{\frac{M(y_n - x_n)}{m} - \frac{y-x}{\ell} }%
		&\le \frac{\eps}{ \ell - 2 \eps} + \frac{ 2\eps  }{\ell(\ell - 2\eps)}.\nonumber
	\intertext{Since $\ell \ge \delta$, we get}
	\label{LLN/conclusion/bound_neg_part/erreur_format}
	\norme{\frac{M(y_n - x_n)}{m} - \frac{y-x}{\ell} }%
		&\le \frac{\eps}{ \delta - 2 \eps} + \frac{ 2\eps }{\delta(\delta- 2\eps)} \le \frac{3\eps}{\delta(\delta- 2\eps)}.
\end{align}
In particular, $\frac mM > \norme{x_n - y_n}$. Let $\xi \in \SetGDF{Lx_n}{Ly_n}{ \frac{mL}{M} }$. Then the animal
\begin{equation*}
	\xi' \dpe (Lx,Lx_n) \concat \xi \concat (L y_n, Ly)
\end{equation*}
satisfies $\norme{\xi'} \le \frac{mL}{M} + \eps L \le L\ell$, thus $\xi' \in \SetGDF{Lx}{Ly}{L\ell}$. Consequently,
\begin{equation}
	\Mass\xi \le \Mass{\xi'} \le \MassGDF{Lx}{Ly}{L\ell}.
\end{equation}
Taking the supremum with respect to $\xi$ leads to
\begin{equation}
	\label{LLN/conclusion/bound_neg_part/pre_LB1}
	\MassGDF{Lx}{Ly}{L\ell} \ge \MassGDF{Lx_n}{Ly_n}{ \frac{mL}{M} }.
\end{equation}
By definition of $\cError(L)$, we get
\begin{equation}
	\label{LLN/conclusion/bound_neg_part/LB1}
	\frac{ \MassGDF{Lx}{Ly}{L\ell}}L \ge \frac{m}{M} \LimMassG\p{\frac{M(y_n - x_n)}{m}}   - \cError(L).
\end{equation}
Plugging~\eqref{LLN/conclusion/bound_neg_part/LB_k} and~\eqref{LLN/conclusion/bound_neg_part/erreur_format} into~\eqref{LLN/conclusion/bound_neg_part/LB1} gives
\begin{align}
	\frac{ \MassGDF{Lx}{Ly}{L\ell}  }L%
		&\ge \p{\ell - 2\eps }\cdot \cro{\LimMassG\p{\frac{y-x}{\ell}} - \omega\p{\frac{3\eps}{\delta(\delta- 2\eps)}} } - \cError(L) \eol
		&\ge \ell \LimMassG\p{\frac{y-x}{\ell}} - \ell  \omega\p{\frac{3\eps}{\delta(\delta- 2\eps)} } -2\eps \LimMassG(0) - \cError(L) ,\nonumber
\end{align}
thus
\begin{equation}
	\label{LLN/conclusion/bound_neg_part2}
	\cro{ \frac{ \MassGDF{Lx}{Ly}{L\ell}  }L - \ell \LimMassG\p{\frac{y-x}{\ell}} }^- \le \omega\p{\frac{3\eps}{\delta(\delta- 2\eps)}} +2\eps \LimMassG(0) + \cError(L),
\end{equation}
which gives~\eqref{LLN/conclusion/bound_neg_part}.

In particular, by Proposition~\ref{prop : LLN/directed/cvg},
\begin{equation}
	\label{LLN/conclusion/bound_neg_part/LB}
\sup\set{\cro{ \frac{ \MassGDF{Lx}{Ly}{L\ell} }{L} - \ell\LimMassG\p{\frac{y-x}{\ell} } }^-}%
	{ (x,y,\ell)\in K}
		\xrightarrow[L \to\infty]{\text{a.s.}} 0.
\end{equation}
Similarly, letting $m' \dpe \ceil{M(\ell + \eps)} \le M(\ell + 2\eps) \le 2M$ leads to
\begin{equation}
 	\label{LLN/conclusion/bound_neg_part/pre_UB1}
 	\MassGDF{Lx}{Ly}{L\ell} \le \MassGDF{Lx_n}{Ly_n}{ \frac{m'L}{M} },
\end{equation} 
thus the analogue of~\eqref{LLN/conclusion/bound_neg_part/LB} with $\cro{ \frac{ \MassGDF{Lx}{Ly}{L\ell} }{L} - \LimMassG\p{\frac{y-x}{\ell} } }^+$, thus the almost sure part of~\eqref{eqn : intro/main/MAIN/cvg}. To prove the $\rL^1$ convergence, let $\alpha >0$ be such that $ \clball{0,2} \subseteq \Diamant{1/2}{-\alpha\base 1}{\alpha\base 1}$. For all $L>0$ and $(x,y,\ell)\in K$, a straightforward concatenation argument yields
\begin{equation}
	\label{eqn : LLN/conclusion/domination}
	\frac{ \MassGDF{Lx}{Ly}{\ell L}}{L} \le \frac{ \MassADC{1/2}{-\alpha L \base 1}{\alpha L \base 1}{(1+4\alpha)L} }{L}.
\end{equation}
Lemma~\ref{lem : LLN/simple/cvg} and the domination~\eqref{eqn : LLN/conclusion/domination} imply the $\rL^1$ part of \eqref{eqn : intro/main/MAIN/cvg}.

In the case $\AUXGeneric = \AUXAnimal$, \eqref{eqn : intro/main/MAIN/cvg_undirected} is a direct consequence of~\eqref{eqn : intro/main/MAIN/cvg}, as for all $L>0$, $\MassAUF L = \MassADF00L$. Let us turn to the case $\AUXGeneric = \AUXPath$. Let $L>0$. We have
\begin{align}
	\MassPUF L &\ge \MassPDF00L, \nonumber \intertext{therefore, almost surely } \liminf_{L \to \infty} \frac{\MassPUF L}{L} &\ge \LimMassP(0).\label{eqn : LLN/conclusion/LB_lim_PUF}
\end{align}
Besides, for all $u\in \clball{0,1}$,
\begin{align}
	\MassPDF{0}{L u}{L} %
		&\le \MassPDF{0}{\frac{L}{1-\delta} \cdot (1-\delta)u}{\frac{L}{1-\delta}},\nonumber
	\intertext{thus}
	\frac{\MassPUF L}L%
		&= \frac{1}{L}\sup_{u\in \clball{0,1} } \MassPDF{0}{L u}{L}\nonumber\\
		&\le \frac{1}{1-\delta}\sup_{v\in \clball{0,1-\delta}} \frac{ \MassPDF{0}{\frac{L}{1-\delta} \cdot v}{\frac{L}{1-\delta}} }{\frac{L}{1-\delta}}. \label{eqn : LLN/conclusion/UB_PUF}
\end{align}
By~\eqref{eqn : intro/main/MAIN/cvg} and $\LimMassP(0) = \sup_{v \in \ball{0,1}} \LimMassP(v)$, almost surely,
\begin{equation}
	\limsup_{L \to \infty} \frac{\MassPUF L}L%
		\le \frac{\LimMassP(0)}{1-\delta}\label{eqn : LLN/conclusion/UB_lim_PUF}
\end{equation}
The almost sure part of \eqref{eqn : intro/main/MAIN/cvg_undirected} is a consequence of \eqref{eqn : LLN/conclusion/LB_lim_PUF} and \eqref{eqn : LLN/conclusion/UB_lim_PUF}. The domination \eqref{eqn : LLN/conclusion/UB_PUF} gives the convergence in $\rL^1$. \qed
\section{A sufficient condition and a necessary condition for integrability}
\label{sec : suff_condition}

Let $\pro$ be a simple marked stationary point process on $\R^d \times \intervalleoo0\infty$, with mean measure $\Leb \otimes \nu$ (see~\eqref{eqn : intro/framework/mm_as_a_product}).
\subsection{A sufficient condition for integrability: proof of Proposition~\ref{prop : intro/suff_condition/fmm}}
\label{subsec : suff_condition}

We rely on Lemmas~\ref{lem : suff_condition/fmm_pty_mapping},~\ref{lem : suff_condition/Dirac_case}, and~\ref{lem : suff_condition/projection} which respectively states that:
\begin{enumerate}
	\item The moment property~\eqref{eqn : intro/suff_condition/fmm} is stable by mapping and restriction.
	\item If $\pro$ has the form $\Phi \otimes \Dirac1$ (i.e.\ all the marks are equal to $1$) and satisfies the moment property (see~\eqref{eqn : intro/suff_condition/fmm}) with the constant $C$, then $\sup_{\ell \ge 1} \frac{\MassAUF \ell}{\ell}$ is bounded by a quantity the only depends on its mean measure, $C$ and $d$.
	\item The variable $\sup_{\ell \ge 1} \frac{\MassAUF \ell\cro\pro}{\ell}$ is bounded by the integral on $\intervalleoo0\infty$ of the corresponding quantity with $\pro^{(t)}$, defined by~\eqref{eqn : intro/main/truncated} (see~\eqref{eqn : suff_condition/projection}), which is of the form $\Phi \otimes \Dirac1$.
\end{enumerate}
Their proofs are postponed to the end of the section. By equivalence of the norms on $\R^d$ there exists a constant $\Cl{EN}>0$ such that for all animals $\xi$,
\begin{equation*}
	\frac{1}{\Cr{EN}}\norme[1]{\xi} \le \norme{\xi} \le \Cr{EN}\norme[1]{\xi}, 
\end{equation*}
thus the truth value of $\E{ \sup_{\ell \ge 1} \frac{\MassAUF \ell}{\ell} }<\infty$ does not depend on $\norme\cdot$. We may thus assume $\norme\cdot = \norme[1]\cdot$.

\begin{Lemma}
	\label{lem : suff_condition/fmm_pty_mapping}
	Let $\Phi$ be a point process on a regular space $\bbG$. Assume that $\Phi$ satisfies the moment property with the constant $C$.
	\begin{enumerate}[(i)]
		\item \label{item : suff_condition/fmm_pty_mapping/restriction} For all Borel subset $\bbG' \subseteq \bbG$, the point process $\restriction\Phi{\bbG'}$ satisfies the moment property with the constant $C$. 
		\item \label{item : suff_condition/fmm_pty_mapping/mapping} For all regular space $\bbG'$ and measurable map $f: \bbG \rightarrow \bbG'$, the point process $\Phi\circ f^{-1}$ satisfies the moment property with the constant $C$.
	\end{enumerate}
\end{Lemma}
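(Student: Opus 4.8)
The plan is to reduce both parts to two elementary identities at the level of factorial powers, and then to substitute these into the defining inequality~\eqref{eqn : intro/suff_condition/fmm}. Throughout, write $\Phi = \sum_{n=1}^N \Dirac{z_n}$ as in~\eqref{eqn : intro/main/decomposition_pro}, and use that the factorial power~\eqref{eqn : intro/suff_condition/def_fmm} is insensitive to the labelling of the atoms.

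For~\eqref{item : suff_condition/fmm_pty_mapping/restriction}, I would first observe that $\restriction{\Phi}{\bbG'}$ admits the decomposition $\sum_{n \,:\, z_n \in \bbG'}\Dirac{z_n}$, so that, directly from~\eqref{eqn : intro/suff_condition/def_fmm},
\[
	\p{\restriction{\Phi}{\bbG'}}^{(k)} = \restriction{\Phi^{(k)}}{(\bbG')^k}.
\]
Combined with the identity $\MM[\restriction{\Phi}{\bbG'}](B) = \MM[\Phi](B\cap\bbG')$, valid for every Borel $B\subseteq\bbG$, this gives, for any Borel $B_1,\dots,B_k\subseteq\bbG$,
\begin{align*}
	\MM[(\restriction{\Phi}{\bbG'})^{(k)}]\p{\prod_{i=1}^k B_i}
		&= \MM[\Phi^{(k)}]\p{\prod_{i=1}^k (B_i\cap\bbG')}\\
		&\le C^k\prod_{i=1}^k \MM[\Phi](B_i\cap\bbG')
		= C^k\prod_{i=1}^k \MM[\restriction{\Phi}{\bbG'}](B_i),
\end{align*}
the middle inequality being the moment property of $\Phi$ applied to the sets $B_i\cap\bbG'$; this is precisely the moment property for $\restriction{\Phi}{\bbG'}$ with the same constant $C$.

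For~\eqref{item : suff_condition/fmm_pty_mapping/mapping}, let $f^{\times k}\colon\bbG^k\to(\bbG')^k$ denote the (measurable) product map $(z_1,\dots,z_k)\mapsto(f(z_1),\dots,f(z_k))$. The corresponding identity is
\[
	\p{\Phi\circ f^{-1}}^{(k)} = \Phi^{(k)}\circ\p{f^{\times k}}^{-1},
\]
which follows termwise from~\eqref{eqn : intro/suff_condition/def_fmm}: the (possibly non simple) representation $\Phi\circ f^{-1}=\sum_{n=1}^N\Dirac{f(z_n)}$ yields $\p{\Phi\circ f^{-1}}^{(k)}=\sum\Dirac{(f(z_{n_1}),\dots,f(z_{n_k}))}$, the sum over pairwise distinct $n_1,\dots,n_k$, which is exactly the image of $\Phi^{(k)}$ under $f^{\times k}$. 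Hence, for Borel $B_1,\dots,B_k\subseteq\bbG'$,
\begin{align*}
	\MM[(\Phi\circ f^{-1})^{(k)}]\p{\prod_{i=1}^k B_i}
		&= \MM[\Phi^{(k)}]\p{\prod_{i=1}^k f^{-1}(B_i)}\\
		&\le C^k\prod_{i=1}^k \MM[\Phi](f^{-1}(B_i))
		= C^k\prod_{i=1}^k \MM[\Phi\circ f^{-1}](B_i),
\end{align*}
the inequality again being the moment property of $\Phi$, now applied to the sets $f^{-1}(B_i)$.

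The proof is thus essentially immediate; the only point deserving care is the verification of the two displayed factorial-power identities when the image process fails to be simple, or when $N=\infty$. This is pure bookkeeping with definition~\eqref{eqn : intro/suff_condition/def_fmm}, which already accounts for multiplicities through the \emph{labels} $n_1,\dots,n_k$ rather than the points $z_{n_i}$ themselves, so no genuine obstacle arises; measurability of $f^{\times k}$ and of the restriction operation is clear.
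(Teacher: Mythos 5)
Your proof is correct and follows essentially the same route as the paper's: both parts reduce to the identities $\p{\restriction{\Phi}{\bbG'}}^{(k)} = \restriction{\Phi^{(k)}}{(\bbG')^k}$ and $\p{\Phi\circ f^{-1}}^{(k)} = \Phi^{(k)}\circ (f^{\times k})^{-1}$, followed by an application of~\eqref{eqn : intro/suff_condition/fmm} to the sets $B_i\cap\bbG'$, respectively $f^{-1}(B_i)$. The only cosmetic difference is that the paper tests the restricted process against Borel subsets of $\bbG'$ directly rather than intersecting subsets of $\bbG$ with $\bbG'$.
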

\begin{Lemma}
	\label{lem : suff_condition/Dirac_case}
	Let $\Phi$ be a stationary point process on $\R^d$ with mean measure (see the definition~\eqref{eqn : intro/suff_condition/def_mm}) $\lambda \Leb$, where $\lambda < \infty$. Assume that $\Phi$ satisfies the moment property with the constant $C$. Then there exists $\Cr{cst : Dirac_bound}>0$, depending only on $d$ and $\norme\cdot$, such that
	\begin{equation}
		\label{eqn : suff_condition/Dirac_case}
		\E{ \sup_{\ell \ge 1}\frac{\MassAUF\ell\cro{\Phi \otimes \Dirac1} }{\ell} } \le \Cr{cst : Dirac_bound} \cdot (C\lambda)^{1/d}.
	\end{equation}
\end{Lemma}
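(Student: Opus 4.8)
The plan is to adapt the first-moment estimate of Gouéré and Marchand \cite[Theorem~1.2]{Gou08}, the point being that only the moment property~\eqref{eqn : intro/suff_condition/fmm} is used, not any Poisson structure. First I would reduce from animals to paths. Since $\pro=\Phi\otimes\Dirac1$ gives to a Borel set $A$ the mass $\Mass A\cro{\Phi\otimes\Dirac1}=\int_A\Phi(\d x)=\Phi(A)$, the variable $\MassPUF\ell\cro{\Phi\otimes\Dirac1}=\sup\set{\Phi(\gamma)}{\gamma\in\SetPUF\ell}$ is $\N$-valued and nondecreasing in $\ell$, and the last inequality of~\eqref{eqn : intro/framework/easy_inequality} gives $\MassAUF\ell\le\MassPUF{2\ell}$, whence $\sup_{\ell\ge1}\MassAUF\ell/\ell\le2\sup_{\ell\ge1}\MassPUF\ell/\ell$. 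It thus suffices to bound $\E{\sup_{\ell\ge1}\MassPUF\ell\cro{\Phi\otimes\Dirac1}/\ell}$.

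The core step is a tail bound: for all $\ell>0$ and all integers $n\ge1$,
\[
	\Pb{\MassPUF\ell\cro{\Phi\otimes\Dirac1}\ge n}\le\p{\frac{\kappa\ell}{n}}^{dn},\qquad\kappa\dpe\frac ed\,\p{d!\,\Leb(\ball{0,1})}^{1/d}(C\lambda)^{1/d}.
\]
To obtain it I would set $R_{n,\ell}\dpe\set{(w_1,\dots,w_n)\in(\R^d)^n}{\norme{w_1}+\sum_{i=1}^{n-1}\norme{w_{i+1}-w_i}\le\ell}$ and argue that $\MassPUF\ell\cro{\Phi\otimes\Dirac1}\ge n$ implies $\Phi^{(n)}(R_{n,\ell})\ge1$: a path $\gamma\in\SetPUF\ell$ carrying $n$ atoms of $\Phi$, selected by their indices so that coinciding locations are allowed, visits their locations for the first time at successive indices, and the telescoping triangle inequality bounds $\norme{w_1}+\sum_i\norme{w_{i+1}-w_i}$ by disjoint sub-arcs of $\gamma$, hence by $\norme\gamma\le\ell$. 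By Markov, $\Pb{\MassPUF\ell\ge n}\le\MM[\Phi^{(n)}](R_{n,\ell})$. The moment property~\eqref{eqn : intro/suff_condition/fmm} together with $\MM[\Phi]=\lambda\Leb$ yields $\MM[\Phi^{(n)}]\le(C\lambda)^n\Leb^{\otimes n}$ (verify on boxes, then extend by outer regularity of $\Leb^{\otimes n}$). The linear change of variables $u_1=w_1$, $u_i=w_i-w_{i-1}$ identifies $R_{n,\ell}$ with $\set{(u_1,\dots,u_n)}{\sum_{i=1}^n\norme{u_i}\le\ell}$, whose Lebesgue measure equals $\p{d!\,\Leb(\ball{0,1})}^n\ell^{dn}/(dn)!$ (by scaling in $\ell$ and an induction on $n$ using a Beta integral). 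Combining these and using $(dn)!\ge(dn/e)^{dn}$ gives the displayed bound, with $\kappa=c_0(C\lambda)^{1/d}$ for a constant $c_0$ depending only on $d$ and $\norme\cdot$.

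Finally I would integrate the tail. Since $\ell\mapsto\MassPUF\ell$ is nondecreasing and integer-valued, the event $\{\sup_{\ell\ge1}\MassPUF\ell/\ell>s\}$ forces $\MassPUF{n/s}\ge n$ for some integer $n\ge1$ (take $n=\lfloor s\ell_0\rfloor+1$ for a witnessing $\ell_0\ge1$), so applying the tail bound at $\ell=n/s$ gives $\Pb{\sup_{\ell\ge1}\MassPUF\ell/\ell>s}\le\sum_{n\ge1}(\kappa/s)^{dn}\le2(\kappa/s)^d$ for $s>2\kappa$. Hence
\[
	\E{\sup_{\ell\ge1}\frac{\MassPUF\ell\cro{\Phi\otimes\Dirac1}}{\ell}}\le2\kappa+\int_{2\kappa}^\infty2\p{\frac\kappa s}^d\,\d s=\p{2+\frac{2^{2-d}}{d-1}}\kappa\le3\kappa,
\]
where $d\ge2$ is used in the last step. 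Together with the animal-to-path reduction this proves the lemma, with $\Cr{cst : Dirac_bound}$ taken to be a suitable multiple of $c_0$. The only genuinely delicate point is the geometric claim $\Phi^{(n)}(R_{n,\ell})\ge1$ when $\Phi$ fails to be simple or when the path revisits vertices; the remedy, as indicated, is to select the $n$ atoms by their indices and order them by first visit time, after which the disjointness of the corresponding sub-arcs of $\gamma$ makes the length bound automatic. The domination of the factorial moment measures, the volume computation, and the integration of the tail are all routine.
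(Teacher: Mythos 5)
Your proof is correct and follows essentially the same route as the paper's: a first-moment (Markov/union) bound on the expected number of tuples of atoms traversable by a short path from the origin, domination of the factorial moment measures by $(C\lambda)^k\Leb^{\otimes k}$ via the moment property, and integration of the resulting polynomial tail, followed by the animal-to-path reduction from~\eqref{eqn : intro/framework/easy_inequality}. The only differences are cosmetic — you compute the simplex volume exactly and invoke Stirling where the paper bounds the indicator by an exponential, and you carry the factor $C\lambda$ throughout where the paper treats $\lambda=1$ first and rescales.
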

\begin{Lemma}
	\label{lem : suff_condition/projection}
	Almost surely,
	\begin{equation}
		\label{eqn : suff_condition/projection}
		\sup_{\ell \ge 1} \frac{\MassAUF\ell\cro\pro}{\ell} \le \int_0^\infty \sup_{\ell \ge 1} \frac{\MassAUF\ell \cro{\pro^{(t)}} }{\ell} \d t.
	\end{equation}
\end{Lemma}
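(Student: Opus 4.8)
The plan is to establish the inequality separately for each $\ell\ge1$ by a layer-cake (Tonelli) decomposition of the mass along the mark values. Throughout, work on the almost sure event on which $\pro$ is simple and locally finite, and write $\pro=\sum_{n=1}^N\Dirac{x_n,\Mass{x_n}}$ as in~\eqref{eqn : intro/main/decomposition_pro2}. The starting point is the elementary identity $\Mass{x_n}=\int_0^\infty\ind{\Mass{x_n}\ge t}\,\d t$, valid because each mark belongs to $\intervalleoo0\infty$. Fix $\ell\ge1$ and an animal $\xi\in\SetAUF\ell$; applying this identity to each vertex of $\xi$ and exchanging the sum with the integral (legitimate by Tonelli, and in any case the sum has finitely many nonzero terms since $\pro$ is simple and $\xi$ is finite), one gets
\begin{equation*}
	\Mass{\xi}\cro{\pro}=\sum_{n=1}^N\ind{x_n\in\xi}\,\Mass{x_n}=\int_0^\infty\sum_{n=1}^N\ind{x_n\in\xi}\ind{\Mass{x_n}\ge t}\,\d t=\int_0^\infty\Mass{\xi}\cro{\pro^{(t)}}\,\d t,
\end{equation*}
the last equality being just the definition~\eqref{eqn : intro/main/truncated} of $\pro^{(t)}$, each of whose points is understood to carry mass $1$.

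Next I would bound the integrand pointwise in $t$: since $\xi\in\SetAUF\ell$, one has $\Mass{\xi}\cro{\pro^{(t)}}\le\MassAUF\ell\cro{\pro^{(t)}}$ for every $t>0$. Integrating this inequality in $t$ and then taking the supremum over $\xi\in\SetAUF\ell$ gives $\MassAUF\ell\cro{\pro}\le\int_0^\infty\MassAUF\ell\cro{\pro^{(t)}}\,\d t$. (The map $t\mapsto\MassAUF\ell\cro{\pro^{(t)}}$ is nonincreasing, because $t'\ge t$ forces $\pro^{(t')}\le\pro^{(t)}$ and hence $\Mass{\xi}\cro{\pro^{(t')}}\le\Mass{\xi}\cro{\pro^{(t)}}$ for every $\xi$; so it is measurable and the right-hand integral is well-defined, and should $\MassAUF\ell\cro{\pro}$ be infinite the inequality holds trivially since the right-hand side is then infinite too.) Finally, for $\ell\ge1$ we have $\MassAUF\ell\cro{\pro^{(t)}}/\ell\le\sup_{\ell'\ge1}\MassAUF{\ell'}\cro{\pro^{(t)}}/\ell'$, so dividing the previous display by $\ell$,
\begin{equation*}
	\frac{\MassAUF\ell\cro{\pro}}{\ell}\le\int_0^\infty\sup_{\ell'\ge1}\frac{\MassAUF{\ell'}\cro{\pro^{(t)}}}{\ell'}\,\d t,
\end{equation*}
and taking the supremum over $\ell\ge1$ on the left-hand side is precisely~\eqref{eqn : suff_condition/projection}.

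I do not expect a genuine obstacle here. The only subtleties are the sum–integral exchange (immediate, as noted) and the fact that the supremum over animals is exchanged with the integral only in the harmless direction — integrate a pointwise inequality, then pass to the supremum — so that no uniform estimate or measurable selection is needed; the monotonicity observation above takes care of the measurability of the layer functions $t\mapsto\MassAUF\ell\cro{\pro^{(t)}}$.
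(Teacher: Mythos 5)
Your proof is correct and follows essentially the same route as the paper's: the layer-cake identity $\Mass{x}=\int_0^\infty\ind{\Mass{x}\ge t}\,\d t$ applied vertexwise, a sum--integral exchange giving $\Mass{\xi}\cro{\pro}=\int_0^\infty\Mass{\xi}\cro{\pro^{(t)}}\,\d t$, then a pointwise bound and suprema over $\xi$ and $\ell$. Your added remarks on measurability (via monotonicity of $t\mapsto\MassAUF\ell\cro{\pro^{(t)}}$) are harmless extra care that the paper leaves implicit.
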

Assume the hypotheses of Proposition~\ref{prop : intro/suff_condition/fmm}. Let $t>0$. By the first part of Lemma~\ref{lem : suff_condition/fmm_pty_mapping}, the restriction of $\pro$ on $\R^d \times \intervallefo t\infty$ satisfies the moment property with the constant $C$. By the second part of this lemma, applied with the mapping
\begin{align*}
	f : \R^d \times \intervallefo t\infty &\longrightarrow \R^d \\ (x,s) &\longmapsto x,
\end{align*}
the process $\pro^{(t)}$ also satisfies the moment property with the constant $C$.

Besides, we claim that
\begin{equation}
	\label{eqn : suff_condition/intesity_prot}
	\MM[ \pro^{(t)} ] = \nu\p{ \intervallefo t\infty } \Leb.
\end{equation}
Indeed, let $B \subseteq \R^d$ be a Borel set. We have
\begin{align*}
	\MM[ \pro^{(t)} ](B) &= \E{ \int_{B} \pro^{(t)}(\d x)    }\\
		&= \E{ \int_{B \times \intervallefo t\infty } \pro(\d x, \d s)  }\\
		&= \MM[\pro]\p{ B \times \intervallefo t\infty }\\
		&= \Leb(B) \nu \p{\intervallefo t\infty},
\end{align*}
i.e.~\eqref{eqn : suff_condition/intesity_prot}.

Lemma~\ref{lem : suff_condition/Dirac_case} then yields
\begin{equation*}
	\E{ \sup_{\ell \ge 1}\frac{\MassAUF\ell\cro{\pro^{(t)} \otimes \Dirac1 } }{\ell} } \le \Cr{cst : Dirac_bound} C^{1/d} \nu\p{\intervallefo t\infty}^{1/d}.
\end{equation*}
Combining this inequality with~\eqref{eqn : suff_condition/projection} leads to~\eqref{eqn : intro/suff_condition/fmm/bound}. \qed
\begin{proof}[Proof of Lemma~\ref{lem : suff_condition/fmm_pty_mapping}]
\emph{Proof of~\eqref{item : suff_condition/fmm_pty_mapping/restriction}. } Note that for all $k\ge$, for all Borel sets $B_1, \dots, B_k \subseteq \bbG'$, 
\begin{equation*}
	\p{ \restriction\Phi{\bbG'} }^{(k)}\p{\prod_{i=1}^k B_i } = \Phi^{(k)}\p{\prod_{i=1}^k B_i }.
\end{equation*}

\emph{Proof of~\eqref{item : suff_condition/fmm_pty_mapping/mapping}. }
	Let $k\ge 1$ and $B_1,\dots,B_k$ be Borel subsets of $\bbG'$. Recall~\eqref{eqn : intro/main/decomposition_pro}. Then
	\begin{align}
		\p{ \Phi \circ f^{-1} }^{(k)} \p{ \prod_{i=1}^k B_i }%
			&= \sum_{ \substack{1\le n_1, \dots, n_k \le N \\ \text{pairwise distinct} } } \prod_{i=1}^k \ind{B_i}\p{f(x_{n_i})}\nonumber\\
			&= \sum_{ \substack{1\le n_1, \dots, n_k \le N \\ \text{pairwise distinct} } } \prod_{i=1}^k \ind{f^{-1}(B_i)}\p{x_{n_i}}\nonumber\\
			&= \Phi^{(k)}\p{ \prod_{i=1}^k f^{-1}(B_i) }.\nonumber
	\end{align}
	Taking the expectancy and applying~\eqref{eqn : intro/suff_condition/fmm} concludes the proof.
\end{proof}
\begin{proof}[Proof of Lemma~\ref{lem : suff_condition/Dirac_case}]
	First assume that $\lambda =1$. We follow Lemma~2.1 in Gouéré, Marchand (2008) \cite{Gou08}. Let
	\begin{equation}
		\label{eqn : suff_condition/Dirac_case/alpha0}
		\alpha_0 \dpe \p{ 2^{d+1}e C}^{1/d}.
	\end{equation}
	Given an integer $k\ge 1$ and $\alpha \ge \alpha_0$, let $\Pi(k,\alpha)$ denote the set of $k$-uples $(x_1,\dots, x_k) \in (\R^d)^k$ of pairwise distinct atoms of $\Phi$ such that
	\begin{equation*}
	  	\norme{(0, x_1, \dots, x_k)} = \sum_{i=1}^k \norme{x_i - x_{i-1} } \le \frac{k}{\alpha},
	\end{equation*}  
	with the convention $x_0=0$. We have: 
	\begin{align}
		\Pb{ \Pi(k, \alpha) \neq \emptyset}%
			&\le \E{ \#\Pi(k, \alpha) }\eol %
			&= \int_{(\R^d)^k} \ind{k \ge \alpha\sum_{i=1}^k \norme{x_i - x_{i-1} } } \FMM[\Phi]{k}(\d x_1, \dots, \d x_k)\eol
			&\le \int_{(\R^d)^k} \exp\p{k - \alpha\sum_{i=1}^k \norme{x_i - x_{i-1} } } \FMM[\Phi]{k}(\d x_1, \dots, \d x_k).\nonumber
		\intertext{Applying~\eqref{eqn : intro/suff_condition/fmm} gives}
		\Pb{ \Pi(k, \alpha) \neq \emptyset}%
			&\le C^k \int_{(\R^d)^k} \exp\p{k - \alpha\sum_{i=1}^k \norme{x_i - x_{i-1} } } \d x_1 \dots \d x_k\eol
			&= \p{C\int_{\R^d} \exp\p{1-\alpha \norme x} \d x }^k \eol
			&= \p{C\alpha^{-d}\cdot e2^d }^k .
	\end{align}
	By union bound,
	\begin{align}
		\Pb{ \sup_{\ell\ge 1} \frac{\MassPUF\ell\cro{ \Phi \otimes \Dirac 1  } }{\ell} \ge \alpha }%
			&\le \sum_{k\ge 1} \Pb{ \Pi(k, \alpha) \neq \emptyset}\eol
			&\le \sum_{k\ge 1} \p{C\alpha^{-d}\cdot e2^d  }^k. \eol
		\intertext{By definition of $\alpha_0$ (see~\eqref{eqn : suff_condition/Dirac_case/alpha0}), the series converges, and}
		\Pb{ \sup_{\ell \ge 1} \frac{\MassPUF\ell\cro{ \Phi \otimes \Dirac 1  }}{\ell} \ge \alpha }%
			&\le C\alpha^{-d}\cdot e2^d  \cdot \p{1- C\alpha^{-d}\cdot e2^d  }^{-1}\eol
			&\le 2^{d+1} e C \alpha^{-d}.\nonumber
	\end{align}
	Consequently,
	\begin{align}
		\E{ \sup_{\ell \ge 1} \frac{\MassPUF\ell\cro{ \Phi \otimes \Dirac 1  }}{\ell} }%
			&= \int_0^\infty \Pb{ \sup_{\ell \ge 1} \frac{\MassPUF\ell}{\ell} \ge \alpha } \d \alpha \eol
			&\le \alpha_0 + 2^{d+1}e C \cdot \p{\int_{\alpha_0}^\infty \alpha^{-d} \d \alpha}  \eol
			&= \alpha_0 + \frac{2^{d+1} e C \alpha_0^{1-d} }{d-1},\nonumber
		\intertext{thus}
		\E{ \sup_{\ell \ge 1} \frac{\MassPUF\ell\cro{ \Phi \otimes \Dirac 1  }}{\ell} }%
			&\le \frac{ d(2^{d+1} e C)^{1/d} }{ d-1 }.
	\end{align}

	In the general case, consider the homothety $f : x \mapsto \lambda^{1/d}x$ on $\R^d$. By Lemma~\ref{lem : suff_condition/fmm_pty_mapping}, $\Phi\circ f^{-1}$ satisfies~\eqref{eqn : intro/suff_condition/fmm} with the constant $C$. Besides, since $\Phi\circ f^{-1}$ has mean measure $\Leb$, by the previous case,
	\begin{equation*}
		\E{ \sup_{\ell \ge 1} \frac{\MassPUF\ell \cro{(\Phi \circ f^{-1})\otimes \Dirac1 }}{\ell} }%
			\le \frac{ d(2^{d+1} e C)^{1/d} }{ d-1 }.
	\end{equation*}
	Moreover, for all continuous paths $\gamma$,
	\begin{align*}
		\Mass{\gamma}\cro{\Phi \otimes \Dirac1} &= \Mass{f(\gamma)} \cro{(\Phi \circ f^{-1})\otimes \Dirac1 }%
		\intertext{and}
		\norme{f(\gamma)} &= \lambda^{1/d} \norme\gamma,
	\end{align*}
	hence
	\begin{equation}
		\E{ \sup_{\ell \ge 1} \frac{\MassPUF\ell \cro{\Phi \otimes \Dirac1 }}{\ell} }%
			\le \frac{ d(2^{d+1} e C \lambda)^{1/d} }{ d-1 },
	\end{equation}
	which concludes the proof with~\eqref{eqn : intro/framework/easy_inequality}.
\end{proof}
\begin{proof}[Proof of Lemma~\ref{lem : suff_condition/projection}]
	We follow the proof of Theorem~2.3 in Martin (2002) \cite{Mar02}. Let $\xi$ be an animal. Then
	\begin{align*}
		\Mass{\xi}\cro\pro %
			&= \sum_{x\in\xi} \Mass{x}\cro{\pro}\\
			&= \sum_{x\in\xi} \int_0^\infty \ind{t\le \Mass{x}\cro\pro}\d t\\
			&=  \int_0^\infty \p{\sum_{x\in\xi} \ind{t\le \Mass{x}\cro\pro} }\d t\\
			&=  \int_0^\infty \p{\Mass\xi\cro{\pro^{(t)}}   } \d t.
	\end{align*}
	Taking the supremum with respect to $\xi \in \SetAUF\ell$, then $\ell>0$ yields~\eqref{eqn : suff_condition/projection}.
\end{proof}

\subsection{Necessary conditions for integrability: proofs of Propositions~\ref{prop : intro/nec_condition} and~\ref{prop : intro/moment_d} }
\label{subsec : nec_condition}

We first state a minor adaptation of the ergodic theorem for point processes (see e.g.\ Theorem~12.2.IV in \cite{Dal07}), in which we assume ergodicity but not locally finiteness of the mean measure.
\begin{Theorem}
	\label{thm : nec_condition/ergodic_thm}
	Let $\Phi$ be a stationary ergodic simple point process on $\R^d$ and $K\subseteq \R^d$ be a compact, convex subset with positive Lebesgue measure. Almost surely,
	\begin{equation}
		\frac{ \Phi(\ell K) }{\ell^d \Leb(K)} \xrightarrow[\ell \to \infty]{} \E{\Phi\p{\intervalleff01^d}}.
	\end{equation}
\end{Theorem}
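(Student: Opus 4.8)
The strategy is to derive the statement from the classical pointwise ergodic theorem for the translation flow $(\theta_z)_{z\in\R^d}$ on the canonical space of $\Phi$ --- which is measure preserving and, by hypothesis, ergodic --- applied to a \emph{smoothed} version of the counting functional, \emph{truncated} when the intensity is infinite. Write $\lambda\dpe\E{\Phi(\intervalleff01^d)}\in\intervalleff0\infty$. By stationarity the mean measure $\MM[\Phi]$ is translation invariant, so $\MM[\Phi]=\lambda\Leb$ when $\lambda<\infty$, whereas when $\lambda=\infty$ one has $\E{\Phi(U)}=\infty$ for every nonempty open $U$ (cover $\intervalleff01^d$ by finitely many translates of a small open cube contained in $U$). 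When $\lambda<\infty$ the assertion is exactly Theorem~12.2.IV in~\cite{Dal07}; the point is to also cover $\lambda=\infty$, and it is convenient to treat both uniformly.

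Fix a continuous probability density $\rho$ on $\R^d$ supported in $\clball{0,\eta}$, where $\eta>0$ is a small parameter --- sent to $0$ at the end, and taken small enough that the inner parallel body $A\dpe\set{x\in\R^d}{x+\clball{0,2\eta}\subseteq K}$ has positive Lebesgue measure; let $B\dpe K+\clball{0,2\eta}$, so that $A\subseteq K\subseteq B$ and $\Leb(A),\Leb(B)\to\Leb(K)$ as $\eta\downarrow0$. Put $g(\mu)\dpe\int\rho\,\d\mu$ for $\mu\in\ProSpace[\R^d]$; then $(z,\omega)\mapsto g(\theta_z\Phi)=\sum_n\rho(x_n+z)$ is jointly measurable (continuous in $z$, measurable in $\omega$), a.s.\ finite, and $\E g=\int\rho\,\d\MM[\Phi]=\lambda$ (the latter being $+\infty$ when $\lambda=\infty$, since $\rho$ is continuous and positive somewhere). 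A Tonelli computation gives, for any Borel $S\subseteq\R^d$,
\[
\int_S g(\theta_z\Phi)\,\d z=\int_{\clball{0,\eta}}\rho(w)\,\Phi(w-S)\,\d w .
\]
Because $\ell A=\set{y}{y+\clball{0,2\ell\eta}\subseteq\ell K}$ and $\ell B=\ell K+\clball{0,2\ell\eta}$, for $w\in\clball{0,\eta}$ and $\ell\ge1$ one has the elementary inclusions $w+\ell A\subseteq\ell K\subseteq w+\ell B$. Since $\Phi$ is nondecreasing in its argument and $\int\rho=1$, this yields, for every $M>0$ and $\ell\ge1$,
\[
\int_{-\ell A}(g\wedge M)(\theta_z\Phi)\,\d z\;\le\;\int_{-\ell A}g(\theta_z\Phi)\,\d z\;\le\;\Phi(\ell K)\;\le\;\int_{-\ell B}g(\theta_z\Phi)\,\d z .
\]

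Next I apply the pointwise ergodic theorem for $(\theta_z)$ over the dilates of the \emph{fixed} convex bodies $-A$ and $-B$: to the bounded function $g\wedge M$, and, when $\lambda<\infty$, also to $g\in\rL^1$. Since $\Leb(\ell(-A))=\ell^d\Leb(A)$ and likewise for $B$, one gets, almost surely, $\ell^{-d}\int_{-\ell A}(g\wedge M)(\theta_z\Phi)\,\d z\to\Leb(A)\,\E{g\wedge M}$ and, when $\lambda<\infty$, $\ell^{-d}\int_{-\ell B}g(\theta_z\Phi)\,\d z\to\Leb(B)\,\lambda$. Dividing the previous display by $\ell^d\Leb(K)$ and letting $\ell\to\infty$ gives, almost surely, $\tfrac{\Leb(A)}{\Leb(K)}\,\E{g\wedge M}\le\liminf_{\ell\to\infty}\tfrac{\Phi(\ell K)}{\ell^d\Leb(K)}$ for every $M$, and, when $\lambda<\infty$, also $\limsup_{\ell\to\infty}\tfrac{\Phi(\ell K)}{\ell^d\Leb(K)}\le\tfrac{\Leb(B)}{\Leb(K)}\,\lambda$. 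If $\lambda=\infty$, letting $M\to\infty$ (so $\E{g\wedge M}\uparrow\infty$) already forces $\Phi(\ell K)/(\ell^d\Leb(K))\to\infty$ a.s. If $\lambda<\infty$, letting first $M\to\infty$ (so $\E{g\wedge M}\uparrow\lambda$) and then $\eta\downarrow0$ (so $\Leb(A),\Leb(B)\to\Leb(K)$), along countable sequences, squeezes $\Phi(\ell K)/(\ell^d\Leb(K))$ to $\lambda$.

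The one genuinely non-elementary ingredient --- the ``minor adaptation'' referred to --- is the continuous-parameter $\R^d$-ergodic theorem over dilates of an \emph{arbitrary} convex body (the bodies $-A,-B$ need not contain the origin): this is classical, following from the Besicovitch covering theorem for homothets of a convex set, which provides the Wiener-type weak maximal inequality, together with the usual density argument; for $\lambda<\infty$ it is precisely the point-process form of Theorem~12.2.IV in~\cite{Dal07}. The remaining steps --- the Tonelli identity, the inclusions $w+\ell A\subseteq\ell K\subseteq w+\ell B$, and the joint-measurability remark --- are routine bookkeeping.
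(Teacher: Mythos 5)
Your proof is correct, and it takes a genuinely different route from the paper's for the delicate case $\E{\Phi\p{\intervalleff01^d}}=\infty$. The paper reduces to the locally finite case by thinning the \emph{process} itself: it keeps only suitably isolated atoms to obtain stationary ergodic processes $\Phi_k$ with locally finite mean measure and $\Phi_k\uparrow\Phi$, applies Theorem~12.2.IV of Daley--Vere-Jones to each $\Phi_k$, and lets $k\to\infty$; the finite case is quoted verbatim and the passage from integer to real dilation parameters is dismissed as an elementary inclusion argument. You instead truncate the \emph{test functional}: you smooth the counting measure against a mollifier, bound $g\wedge M$ so that the classical $\R^d$ pointwise ergodic theorem applies without any integrability hypothesis on $\Phi$, and recover $\Phi(\ell K)$ by sandwiching between dilates of inner and outer parallel bodies. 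Your version buys three things at once: it treats the finite and infinite intensity cases uniformly, it re-derives rather than merely cites the finite case, and the parallel-body sandwich automatically yields the limit along real $\ell$. What it costs is the need for the ergodic theorem over dilates $\ell(-A)$ of a convex body not containing the origin (a bounded-ratio moving average); this is classical, as you say, but it is exactly the same black box the paper leans on, and you are right to flag it as the one non-elementary ingredient. All the elementary steps (the Tonelli identity, the inclusions $w+\ell A\subseteq\ell K\subseteq w+\ell B$, monotone convergence $\E{g\wedge M}\uparrow\E{g}$, and $\Leb(A),\Leb(B)\to\Leb(K)$ using that $\partial K$ is Lebesgue-null for convex $K$) check out.
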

\begin{proof}
	We only need to treat the case where $ \E{\Phi\p{\intervalleff01^d}} = \infty$. Fix an integer $k\ge 1$ and denote by $\Phi_k$ the point process obtained from $\Phi$ by removing every atom of $\Phi$ with distance to the nearest . Note that $\Phi_k$ is stationary, ergodic, has a locally finite mean measure and almost surely, for all $B \in \Borel{\R^d}$,
	\begin{equation*}
	  	\inclim{k\to \infty} \Phi_k(B) = \Phi(B).
	\end{equation*} 
	By Theorem~12.2.IV in \cite{Dal07}, almost surely, for all $k\ge 1$,
	\begin{equation*}
		\lim_{n \to \infty} \frac{ \Phi_k(n K)}{n^d \Leb(K)} = \E{\Phi_k\p{\intervalleff01^d} }.
	\end{equation*}
	Consequently, almost surely, for all $k\ge1$,
	\begin{equation*}
		\liminf_{n \to \infty} \frac{ \Phi(n K) }{n^d \Leb(K)} %
			\ge \E{\Phi_k\p{\intervalleff01^d} } .
	\end{equation*}
	Letting $k\to \infty$, we get that almost surely,
	\begin{equation*}
		\lim_{n \to \infty} \frac{ \Phi(n K) }{n^d \Leb(K)} %
			= \infty .
	\end{equation*}
	An elementary inclusion argument provides the analogous result with a limit along $\ell \in \intervalleoo0\infty$.
\end{proof}

The proofs of Propositions~\ref{prop : intro/nec_condition} and~\ref{prop : intro/moment_d} rely on Lemmas~\ref{lem : nec_condition/ergodic_thm} and~\ref{lem : nec_condition/TSP}. They are consequences of Theorem~\ref{thm : nec_condition/ergodic_thm} above and Theorem~3 in Few (1955) \cite{Few55}, which gives asymptotics for the minimal time in the so-called \emph{Travelling salesman problem} for the Euclidean distance in a unit cube.
\begin{Lemma}
	\label{lem : nec_condition/ergodic_thm}
	Let $\Phi$ be a stationary ergodic simple point process on $\R^d$ and $\lambda < \E{\Phi\p{\intervalleff01^d}}$. Then there exists $L>0$ such that
	\begin{equation}
		\Pb{ \Phi\p{\intervalleff0L^d} \ge \lambda L^d } \ge 1/2.
	\end{equation}
\end{Lemma}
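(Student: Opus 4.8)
The plan is to deduce this from the ergodic theorem for point processes just established, namely Theorem~\ref{thm : nec_condition/ergodic_thm}, applied with the convex body $K = \intervalleff01^d$ (so that $\Leb(K) = 1$), followed by a dominated convergence argument.

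First I would dispose of the trivial case: if $\lambda \le 0$, then since $\Phi$ takes nonnegative values we have $\Pb{\Phi(\intervalleff0L^d) \ge \lambda L^d} = 1 \ge 1/2$ for any $L>0$, and there is nothing to prove. Hence we may assume $\lambda > 0$, and write $m \dpe \E{\Phi(\intervalleff01^d)} \in \intervalleoo\lambda\infty \cup \acc\infty$ for the (possibly infinite) intensity.

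Next, Theorem~\ref{thm : nec_condition/ergodic_thm} with $K = \intervalleff01^d$ gives that almost surely
\[
	\frac{\Phi(\intervalleff0\ell^d)}{\ell^d} \xrightarrow[\ell \to \infty]{} m,
\]
the convergence being along real $\ell \to \infty$ (this is the version the theorem asserts, via the inclusion argument in its proof). Since $m > \lambda$, almost surely there is some $\ell_0$ with $\Phi(\intervalleff0\ell^d) \ge \lambda\ell^d$ for every $\ell \ge \ell_0$; equivalently, $\ind{\Phi(\intervalleff0\ell^d) \ge \lambda\ell^d} \to 1$ almost surely as $\ell \to \infty$. These random variables are bounded by $1$, so dominated convergence yields
\[
	\Pb{\Phi(\intervalleff0\ell^d) \ge \lambda\ell^d} = \E{\ind{\Phi(\intervalleff0\ell^d) \ge \lambda\ell^d}} \xrightarrow[\ell\to\infty]{} 1,
\]
and in particular there exists $L>0$ with $\Pb{\Phi(\intervalleff0L^d) \ge \lambda L^d} \ge 1/2$, as required.

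There is no serious obstacle here: Theorem~\ref{thm : nec_condition/ergodic_thm} does all the work, including the case $m = \infty$, which is why it was stated without assuming local finiteness of the mean measure. The only points needing a word of care are that the convergence in that theorem is taken along continuous $\ell$ (so that the indicator functions of the events converge pointwise), and that the passage from almost sure convergence of bounded indicators to convergence of their expectations is just bounded convergence.
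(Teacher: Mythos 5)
Your proposal is correct and is exactly the route the paper intends: it derives the lemma as a direct consequence of Theorem~\ref{thm : nec_condition/ergodic_thm} applied to $K=\intervalleff01^d$, using the almost sure convergence of $\Phi(\intervalleff0\ell^d)/\ell^d$ to $\E{\Phi\p{\intervalleff01^d}}>\lambda$ and bounded convergence of the indicators. The paper gives no further detail, so there is nothing to compare beyond noting that your treatment of the cases $\lambda\le 0$ and $\E{\Phi\p{\intervalleff01^d}}=\infty$ is a welcome bit of extra care.
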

\begin{Lemma}
	\label{lem : nec_condition/TSP}
	There exists a constant $\Cl{cst : TSP}$, depending only on $d$ and $\norme\cdot$, such that for all $n \in \N^*$ and $x_1, \dots, x_n \in \intervalleff0L^d$, there exists a path $\gamma$ starting at the origin, such that for all $i\in \intint1n, x_i \in \gamma$, and
	\begin{equation}
		\norme{\gamma} \le \Cr{cst : TSP}n^{\frac{d-1}{d}}L.
	\end{equation}
\end{Lemma}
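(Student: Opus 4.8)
The plan is to reduce the statement to the classical $d$-dimensional travelling salesman estimate of Few~\cite{Few55} by a rescaling, and then to prepend a segment joining the origin to the resulting path.

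First I would rescale into the unit cube: set $y_i \dpe x_i/L$ for $1 \le i \le n$, so that $y_1, \dots, y_n \in \intervalleff01^d$. Theorem~3 of Few~\cite{Few55} then furnishes a permutation $\sigma$ of $\intint1n$ and constants $c_d, c_d' > 0$ depending only on $d$ such that the polygonal path through $y_{\sigma(1)}, \dots, y_{\sigma(n)}$ has Euclidean length at most $c_d\, n^{\frac{d-1}{d}} + c_d'$; since $d \ge 2$ and $n \ge 1$ one has $n^{\frac{d-1}{d}} \ge 1$, so this quantity is at most $(c_d + c_d')\, n^{\frac{d-1}{d}}$. (One could instead avoid quoting Few and argue directly with a H\"older-$\tfrac1d$ space-filling curve $\psi : \intervalleff01 \to \intervalleff01^d$: ordering the $y_i$ according to preimages $t_i$ with $\psi(t_i) = y_i$ and bounding $\sum (t_{\sigma(i+1)} - t_{\sigma(i)})^{1/d} \le n^{\frac{d-1}{d}}$ by the power-mean inequality yields the same estimate, entirely elementarily.)

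Next I would undo the rescaling and attach the origin. Multiplying every vertex by $L$ gives a path through all of $x_1, \dots, x_n$ of Euclidean length at most $(c_d + c_d')\, n^{\frac{d-1}{d}} L$, starting at some $x_{\sigma(1)} \in \intervalleff0L^d$. Since $\norme[2]{x_{\sigma(1)}} \le \sqrt d\, L \le \sqrt d\, L\, n^{\frac{d-1}{d}}$, prepending the vertex $0$ produces a path $\gamma \dpe (0, x_{\sigma(1)}, \dots, x_{\sigma(n)})$ that starts at the origin, passes through every $x_i$, and has Euclidean length at most $(c_d + c_d' + \sqrt d)\, n^{\frac{d-1}{d}} L$. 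Applying the norm equivalence~\eqref{eqn : intro/notations/equiv_norms} edge by edge yields $\norme\gamma \le \Cr{cst : equiv_norms}(c_d + c_d' + \sqrt d)\, n^{\frac{d-1}{d}} L$, so the lemma holds with $\Cr{cst : TSP} \dpe \Cr{cst : equiv_norms}(c_d + c_d' + \sqrt d)$, which depends only on $d$ and $\norme\cdot$.

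There is no genuine obstacle here: the whole content is the travelling salesman bound, which I regard as an external input (or prove via the space-filling curve as indicated). The only points to watch are that Few's estimate carries additive lower-order terms whereas the statement asks for a clean multiple of $n^{\frac{d-1}{d}} L$ valid for all $n \ge 1$ --- harmless because $n^{\frac{d-1}{d}} \ge 1$ when $d \ge 2$ --- and that one must not forget the segment from the origin; both are handled above, and the passage from $\norme[2]\cdot$ to $\norme\cdot$ is routine.
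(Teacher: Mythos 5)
Your proof is correct and follows the same route as the paper, which itself gives no argument beyond citing Theorem~3 of Few~\cite{Few55}; your rescaling, the absorption of Few's additive constant using $n^{\frac{d-1}{d}}\ge 1$, the prepended segment from the origin, and the norm equivalence are exactly the routine details being left implicit there.
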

\begin{proof}[Proof of Proposition~\ref{prop : intro/nec_condition}]
Let $t>0$ and $\lambda <  \E{\pro^{(t)}\p{\intervalleff01^d}  }$. For all $L>0$, consider the event
\begin{equation}
	E_t(L) \dpe \acc{ \pro^{(t)} \p{\intervalleff0L^d} \ge \lambda L^d }.
\end{equation}
Lemma~\ref{lem : nec_condition/ergodic_thm} implies the existence of $L>0$ such that
\begin{equation}
	\Pb{ E_t(L) } \ge 1/2.
\end{equation}
Moreover, by Lemma~\ref{lem : nec_condition/TSP},
\begin{equation}
	E_t(L) \subseteq \acc{ \MassPUF{ \Cr{cst : TSP} \p{\lambda L^d}^{\frac{d-1}{d}}L }\cro{\pro^{(t)}\otimes \Dirac1 }  \ge \lambda L^d }.
\end{equation}
Consequently,
\begin{align}
	\E{ \MassPUF{ \Cr{cst : TSP}\lambda^{\frac{d-1}{d}} L^d }\cro{\pro^{(t)}\otimes \Dirac1 }  }%
		&\ge \E{ \MassPUF{ \Cr{cst : TSP}\lambda^{\frac{d-1}{d}} L^d }\cro{\pro^{(t)}\otimes \Dirac1 }  \ind{E_t(L)} }\eol
		&\ge \E{ \lambda L^d \ind{E_t(L)} }  \eol
		&\ge \frac{ \lambda L^d}2.\nonumber
\end{align}
In particular,
\begin{equation}
	\sup_{\ell \ge 1} \frac{ \E{\MassPUF\ell\cro{\pro^{(t)}\otimes \Dirac1 } } }{\ell} \ge \frac{ \lambda^{1/d} }{2\Cr{cst : TSP}},
\end{equation}
thus
\begin{equation}
	\label{eqn : nec_condition/final_inequality}
	\sup_{\ell \ge 1} \frac{ \E{\MassPUF\ell\cro{\pro^{(t)}\otimes \Dirac1 } } }{\ell} \ge \frac{ \E{\pro^{(t)}\p{\intervalleff01^d}  }^{1/d} }{2\Cr{cst : TSP}}.
\end{equation}

To prove the first part of Proposition~\ref{prop : intro/nec_condition}, note that
\begin{equation*}
	\sup_{\ell \ge 1}\frac{ \E{\MassAUF\ell\cro\pro } }{\ell} \ge  t\sup_{\ell \ge 1} \frac{ \E{\MassPUF\ell\cro{\pro^{(t)}\otimes \Dirac1 } } }{\ell}.
\end{equation*}
Applying~\eqref{eqn : nec_condition/final_inequality} gives 
\begin{equation*}
	\E{\pro^{(t)}\p{\intervalleff01^d}} < \infty,
\end{equation*}
thus $\MM[\pro]$ is locally finite.

Integrating~\eqref{eqn : nec_condition/final_inequality} with respect to $t$ on $\intervalleoo0\infty$, applying~\eqref{eqn : intro/nec_condition/layers} and noting that $\E{\pro^{(t)}\p{\intervalleff01^d}  } = \nu\p{\intervallefo t\infty}$ gives the second part of the proposition. 
\end{proof}
\begin{proof}[Proof of Proposition~\ref{prop : intro/moment_d}]
	Let $\Phi$, $\nu$ and $\pro$ be as in Proposition~\ref{prop : intro/moment_d}. Assume that~\eqref{eqn : intro/moment_d} fails. We prove that Assumption~\ref{ass : intro/main/Moment} also fails. In the decomposition $\Phi = \sum_{n\ge1} \Dirac{z_n}$ (see~\eqref{eqn : intro/main/decomposition_pro}), we can assume that $\p{\norme{z_n} }_{n \ge 1}$ is nondecreasing. We claim that there exists a constant $c>0$, depending only on $\norme\cdot$, such that that almost surely, for large enough $n$,
	\begin{equation}
		\label{eqn : nec_condition/norme_xn}
		\norme{z_n} \le \frac{c n^{1/d}}{\lambda^{1/d} }.
	\end{equation}
	Indeed by Theorem~\ref{thm : nec_condition/ergodic_thm}, almost surely, there exists $\ell_0>0$ such that for all $\ell\ge \ell_0$,
	\begin{equation*}
		\Phi\p{\clball{0,\ell} } \ge \frac \lambda2\Leb\p{\clball{0, \ell}}.
	\end{equation*}
	Taking $\ell_n \dpe \p{ \frac{2n}{\lambda \Leb\p{\clball{0, 1} } } }^{1/d}$ gives
	\begin{equation*}
		\Phi\p{\clball{0,\ell_n} } \ge n
	\end{equation*}
	almost surely, for large enough $n$, thus~\eqref{eqn : nec_condition/norme_xn}.

	Let $s>0$. Since $\nu$ has an infinite $d$-th moment,
	\begin{equation*}
		\sum_{n\ge 1} \Pb{\Mass{z_n} \ge sn^{1/d} }%
			= \sum_{n\ge 1} \Pb{ \Mass{z_1}^d \ge s^dn } =\infty.
	\end{equation*}
	Besides, the $\Mass{z_n}$ are independent, thus by the Borel-Cantelli lemma, almost surely, for infinitely many $n\ge 1$,
	\begin{equation}
		\label{eqn : nec_condition/good_n}
		\MassAUF{\norme{z_n}} \ge \Mass{z_n} \ge sn^{1/d}.
	\end{equation}
	Equations~\eqref{eqn : nec_condition/norme_xn} and~\eqref{eqn : nec_condition/good_n} yield
	\begin{align*}
		\limsup_{\ell \to \infty} \frac{\MassAUF\ell}{\ell} &\ge \frac{s \lambda^{1/d} }c.
		\intertext{This inequality holds for all $s>0$ therefore}
		\limsup_{\ell \to \infty} \frac{\MassAUF\ell}{\ell} &= \infty.
	\end{align*}
	In particular, the conclusion of Theorem~\ref{thm : intro/main/MAIN} fails, hence Assumption~\ref{ass : intro/main/Moment} also fails.
\end{proof}
\section{Proof of the corollaries}
\label{sec : app}

\subsection{Lattice paths and animals}
In this section we prove Corollary~\ref{cor : intro/special/Zd}. We only treat the harder case $\AUXGeneric =\AUXAnimal$. Let $\norme\cdot = \norme[1]\cdot$ and $\p{\Mass v}_{v\in \Z^d}$ be as in the corollary. We consider the process
\begin{equation}
	\pro \dpe \sum_{v\in \Z^d} \Dirac{v, \Mass v}. 	
\end{equation}
Let $U$ be a uniform random variable on $\intervalleff01^d$, such that $U$ and $\p{\Mass v}_{v\in \Z^d}$ are independent. We define the point process
\begin{equation}
	\pro' \dpe T_U \pro.
\end{equation}
Our main arguments are Lemmas~\ref{lem : app/lattice/reduction} and~\ref{lem : app/lattice/ergodicity}, which imply that $\MassLADF xyn$ can be approximated by $\MassADFpen{x}{y}{n}\infty\cro{\pro'}$, and $\pro'$ satisfies the hypotheses of Theorem~\ref{thm : intro/main/MAIN_PENALIZED}.
\begin{Lemma}
	\label{lem : app/lattice/reduction}
	For all $x,y\in\Z^d$ and $n\in \N$,
	\begin{equation}
		\label{eqn : app/lattice/reduction}
		\MassLADF xy{n+1} = \MassADFpen{x}{y}{n}\infty\cro\pro.
	\end{equation}
\end{Lemma}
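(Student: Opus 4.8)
The plan is to prove the two inequalities $\MassLADF{x}{y}{n+1}\le\MassADFpen{x}{y}{n}\infty\cro\pro$ and $\MassADFpen{x}{y}{n}\infty\cro\pro\le\MassLADF{x}{y}{n+1}$ by two explicit constructions of animals. Throughout I would use that here $\norme\cdot=\norme[1]\cdot$ and $\pro=\sum_{v\in\Z^d}\Dirac{v,\Mass v}$, so that $\projpro=\Z^d$, the mass of any animal $\xi$ with vertex set $V\subseteq\Z^d$ equals $\sum_{v\in V}\Mass v$, and every lattice edge has length $1$; consequently the length $\norme\zeta$ of a lattice animal is its number of \emph{edges}, whereas its ``size'' as an element of $\SetLADF{x}{y}{n+1}$ is its number of \emph{vertices}, and the asymmetry between $n$ and $n+1$ in the statement is precisely the gap between edges and vertices of a connected graph. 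I would dispose of the edge case $n<\norme{x-y}$ at the outset: then any lattice animal containing $x$ and $y$ has at least $\norme{x-y}+1>n+1$ vertices, so $\SetLADF{x}{y}{n+1}=\emptyset$; likewise a nonempty admissible $\xi$ on the right, once extended by two edges to contain $x$ and $y$, would force $\norme{x-y}\le n$; hence both sides vanish, and in what follows I assume $n\ge\norme{x-y}$.

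For ``$\le$'', take a lattice animal $\zeta$ with at most $n+1$ vertices containing $x$ and $y$, and let $T$ be a spanning tree of $\zeta$. Viewed as an animal, $T$ has vertex set inside $\Z^d=\projpro$, the same vertex set (hence the same mass) as $\zeta$, and at most $n$ edges, so $\norme T\le n$; since $x,y\in T$ we get $\d(x,T)=\d(y,T)=0$, whence $\norme T+\d(x,T)+\d(y,T)\le n$, i.e.\ $T\in\SetADFalt{x}{y}{n}$ with $T\subseteq\projpro$. Therefore $\Mass\zeta=\Mass T\le\MassADFpen{x}{y}{n}\infty\cro\pro$, and taking the supremum over $\zeta$ gives the first inequality.

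For ``$\ge$'', take a nonempty animal $\xi$ with vertex set $V\subseteq\Z^d$ and $\norme\xi+\d(x,\xi)+\d(y,\xi)\le n$ (the empty animal only contributes mass $0\le\MassLADF{x}{y}{n+1}$). I would replace each edge $\{u,w\}$ of $\xi$ by a shortest lattice path from $u$ to $w$, i.e.\ a monotone staircase with exactly $\norme{u-w}$ unit steps; the union $\zeta_0$ of these lattice paths is a connected subgraph of $\Z^d$ containing $V$, with at most $\norme\xi$ edges, hence at most $\norme\xi+1$ vertices. Then I would graft onto $\zeta_0$ a shortest lattice path of $\d(x,\xi)$ steps joining $x$ to a vertex of $\xi$ realizing $\d(x,\xi)$, and likewise a shortest lattice path of $\d(y,\xi)$ steps to $y$, obtaining $\zeta$. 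This $\zeta$ is a lattice animal containing $x$, $y$ and $V$ with at most $\norme\xi+1+\d(x,\xi)+\d(y,\xi)\le n+1$ vertices, so $\zeta\in\SetLADF{x}{y}{n+1}$; and since $V\subseteq\zeta\subseteq\Z^d$ and masses are nonnegative, $\Mass\zeta\ge\sum_{v\in V}\Mass v=\Mass\xi$. Hence $\Mass\xi\le\MassLADF{x}{y}{n+1}$, and the supremum over $\xi$ yields the second inequality.

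The one point needing care is the vertex count in the last step: one must check that replacing each edge of $\xi$ by a geodesic lattice path and then attaching the two short geodesic paths to $x$ and $y$ produces a \emph{connected} subgraph of $\Z^d$ whose vertex count exceeds $\norme\xi+\d(x,\xi)+\d(y,\xi)$ by at most one. Connectedness is inherited from $\xi$ (each replaced edge keeps its endpoints joined, and the two grafted paths are attached at vertices of $\zeta_0$), and the ``$+1$'' is exactly the ``edges versus vertices'' slack of a connected graph; the rest is routine.
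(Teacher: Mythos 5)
Your proof is correct and follows essentially the same route as the paper's: a spanning tree argument for the inequality $\MassLADF xy{n+1} \le \MassADFpen{x}{y}{n}\infty\cro\pro$, and replacement of each edge of $\xi$ by a $\norme[1]\cdot$-geodesic lattice path for the converse. If anything you are slightly more careful than the paper, which leaves implicit both the grafting of the two lattice paths joining $x$ and $y$ to the animal and the edge-versus-vertex count; these are exactly the points you flag and check.
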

\begin{Lemma}
	\label{lem : app/lattice/ergodicity}
	The process $\pro'$ is stationary and satisfies Assumptions~\ref{ass : intro/main/Ergodic_Stationary} and~\ref{ass : intro/main/Moment}. Moreover, for all $x,y\in \R^d$ and $\ell >2d$,
	\begin{equation}
		\label{eqn : app/lattice/ergodicity/encadrement}
		\MassADFpen{x}{y}{\ell-2d}\infty\cro{\pro'} \le \MassADFpen{x}{y}{\ell}\infty\cro{\pro} \le \MassADFpen{x}{y}{\ell+2d}\infty\cro{\pro'}.
	\end{equation}
\end{Lemma}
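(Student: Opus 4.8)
The statement packages four claims about $\pro'=T_U\pro$ — stationarity, ergodicity (Assumption~\ref{ass : intro/main/Ergodic_Stationary}), the two-sided bound~\eqref{eqn : app/lattice/ergodicity/encadrement}, and Assumption~\ref{ass : intro/main/Moment} — and I would prove them in that order, since~\eqref{eqn : app/lattice/ergodicity/encadrement} feeds into the verification of Assumption~\ref{ass : intro/main/Moment}. The unifying idea is to realize $\pro'$ as the image of a product space carrying an $\R^d$-action. Let $\bbP_0$ be the law of $\p{\Mass v}_{v\in\Z^d}$ on its path space $\Omega_0$, with $\Z^d$-shift $\sigma$, under which $\bbP_0$ is ergodic by hypothesis; set $\Omega\dpe\Omega_0\times\intervallefo01^d$, $\bbP\dpe\bbP_0\otimes\Leb$, and let $\pi(\omega,u)$ be the image under $T_u$ of the marked configuration determined by $\omega$, so that $\pro'$ has the law of $\pi(\omega,U)$ for $(\omega,U)\sim\bbP$. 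For $z\in\R^d$ define $\tau_z(\omega,u)\dpe\p{\sigma^{\floor{z+u}}\omega,\,\{z+u\}}$, with $\{z+u\}$ the coordinatewise fractional part; since applying $T_k$ ($k\in\Z^d$) to the configuration of $\omega$ yields the configuration of $\sigma^k\omega$, one checks $\pi\circ\tau_z=T_z\circ\pi$. To obtain $(\tau_z)_*\bbP=\bbP$ I would use the measure-preserving change of variables $u\mapsto\{z+u\}$ on $\intervallefo01^d$: for a fixed value $w$ the integer part $\floor{z+u}$ becomes a fixed $k(w)\in\Z^d$, and $\sigma^{k(w)}$ preserves $\bbP_0$ because $\p{\Mass{v+k}}_v\overset{d}{=}\p{\Mass v}_v$ for every $k\in\Z^d$ (iterate the one-step assumption of the corollary); integrating over $\omega$ first, then over $w$, yields the claim, whence stationarity of $\pro'=\pi_*\bbP$.

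For ergodicity, which I expect to be the main obstacle, let $E\subseteq\ProSpace$ be invariant and put $A\dpe\pi^{-1}(E)$, so $\bbP\p{A\symdif\tau_z^{-1}A}=0$ for all $z\in\R^d$. Restricting to $z\in\Z^d$, where $\tau_z(\omega,u)=(\sigma^z\omega,u)$, Fubini together with the countability of $\Z^d$ shows that for $\Leb$-a.e.\ $u$ the section $A_u\dpe\set{\omega}{(\omega,u)\in A}$ is $\sigma$-invariant modulo $\bbP_0$-null sets; ergodicity of $\sigma$ then gives $g(u)\dpe\bbP_0(A_u)\in\acc{0,1}$ for a.e.\ $u$, and $\bbP(A)=\int_{\intervallefo01^d}g$. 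Feeding a general $z\in\R^d$ into $\bbP\p{A\symdif\tau_z^{-1}A}=0$ and integrating out $\omega$ gives $g(u)=g(\{z+u\})$ for a.e.\ $u$, for each $z$; hence $g\in\rL^\infty\p{\intervallefo01^d}$ is invariant under every translation of the torus, therefore a.e.\ constant, and being $\acc{0,1}$-valued it equals a.e.\ some $c\in\acc{0,1}$. Thus $\Pb{\pro'\in E}=\bbP(A)=c\in\acc{0,1}$.

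The bound~\eqref{eqn : app/lattice/ergodicity/encadrement} is elementary: with $\norme\cdot=\norme[1]\cdot$ we have $\norme U\le d$. Translating an animal $\xi$ carried by the atoms of $\pro$ (which sit on $\Z^d$) by $U$ produces $\xi+U$ carried by the atoms of $\pro'$ (which sit on $\Z^d+U$), with the same length, the same mass for $\pro'$ that $\xi$ has for $\pro$, and with $\d(x,\xi+U)\le\d(x,\xi)+d$ and $\d(y,\xi+U)\le\d(y,\xi)+d$; hence $\xi\in\SetADFalt{x}{y}{\ell}$ forces $\xi+U\in\SetADFalt{x}{y}{\ell+2d}$, giving $\MassADFpen{x}{y}{\ell}{\infty}\cro{\pro}\le\MassADFpen{x}{y}{\ell+2d}{\infty}\cro{\pro'}$. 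Translating by $-U$ gives the symmetric inequality with $\pro$ and $\pro'$ exchanged, and reindexing $\ell\mapsto\ell-2d$ there produces the left half of~\eqref{eqn : app/lattice/ergodicity/encadrement}.

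Finally, for Assumption~\ref{ass : intro/main/Moment} I would chain~\eqref{eqn : intro/framework/easy_inequality} (used both as $\MassAUF{\ell}\le\MassPUF{2\ell}$ and, with $q=\infty$, as $\MassPUF{m}\le\MassAUFpen{m}{\infty}$), the inclusion $\SetAUFalt{m}\subseteq\SetADFalt{0}{0}{2m}$, the left half of~\eqref{eqn : app/lattice/ergodicity/encadrement} at $x=y=0$, and Lemma~\ref{lem : app/lattice/reduction}, to get for $\ell\ge1$
\begin{equation*}
	\MassAUF{\ell}\cro{\pro'}\le\MassPUF{2\ell}\cro{\pro'}\le\MassAUFpen{2\ell}{\infty}\cro{\pro'}\le\MassADFpen{0}{0}{4\ell}{\infty}\cro{\pro'}\le\MassADFpen{0}{0}{4\ell+2d}{\infty}\cro{\pro}=\MassLAUF{\floor{4\ell+2d}+1},
\end{equation*}
where the last equality uses that lattice animals have integer $\norme[1]$-length, so $\MassADFpen{0}{0}{t}{\infty}\cro{\pro}=\MassADFpen{0}{0}{\floor t}{\infty}\cro{\pro}=\MassLADF{0}{0}{\floor t+1}=\MassLAUF{\floor t+1}$. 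Taking expectations, using monotonicity of $n\mapsto\MassLAUF{n}$ and $\floor{4\ell+2d}+1\le(2d+5)\ell$ for $\ell\ge1$, Assumption~\ref{ass : intro/main/Moment} follows from~\eqref{eqn : intro/special/Zd/moment}, with $\GeneralUB\le(2d+5)\sup_{n\ge1}\E{\MassLAUF{n}}/n<\infty$.
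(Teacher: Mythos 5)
Your proposal is correct and follows essentially the same route as the paper: translation by $\pm U$ (with $\norme[1]{U}\le d$) for the sandwich~\eqref{eqn : app/lattice/ergodicity/encadrement}, averaging over $U$ combined with ergodicity of the lattice family for Assumption~\ref{ass : intro/main/Ergodic_Stationary}, and a reduction through~\eqref{eqn : intro/framework/easy_inequality}, the sandwich, and Lemma~\ref{lem : app/lattice/reduction} to the lattice bound~\eqref{eqn : intro/special/Zd/moment} for Assumption~\ref{ass : intro/main/Moment}. Your skew-product formalization of stationarity/ergodicity and your use of the inclusion $\SetAUFalt{m}\subseteq\SetADFalt{0}{0}{2m}$ (rather than the paper's direct identification of $\MassAUFpen{\ell}{\infty}$ with $\MassADFpen{0}{0}{\ell}{\infty}$) are just more explicit versions of the same steps.
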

\begin{proof}[Proof of Corollary~\ref{cor : intro/special/Zd}]
	Let $K \subseteq \cX$ be a compact set. It is sufficient to show that
	\begin{equation}
		\label{eqn : app/lattice/win_condition}
		\sup\set{\module{ \frac{ \MassLADF{ \floor{Lx} }{ \floor{Ly} }{L\ell} }{L} - \ell\LimMassA[\pro']\p{\frac{x-y}{\ell}} }}%
		{(x,y,\ell) \in K}
		\xrightarrow[L\to\infty]{\text{a.s. and }\rL^1} 0.
	\end{equation}
	Define
	\begin{equation}
		\eps \dpe 1- \max \set{ \frac{\norme{x-y}}{\ell} }{ (x,y,\ell)\in K } >0.
	\end{equation}
	Let $0<\delta \le \frac\eps2$. Consider the set
	\begin{equation}
		K_\delta \dpe \set{ \p{ (1+\delta)x,  (1+\delta)y, \ell } }{(x,y,\ell)\in K} \cup \set{ \p{ (1-\delta)x,  (1-\delta)y, \ell } }{(x,y,\ell)\in K}.
	\end{equation}
	Note that $K_\delta$ is compact, $K_\delta \subseteq \cX$ and for all $(x,y,\ell) \in K_\delta$,
	\begin{equation}
		\label{eqn : app/lattice/K_delta}
	 	\frac{\norme{x-y} }{\ell} \le 1- \frac\eps2.	
	\end{equation} 
	Let $\omega$ be a modulus of continuity of the restriction of $\LimMassA[\pro']$ on $\clball{0, 1-\frac\eps2}$.

	By Lemma~\ref{lem : app/lattice/ergodicity}, the process $\pro'$ satisfies the hypotheses of Theorem~\ref{thm : intro/main/MAIN_PENALIZED}, thus
	\stepcounter{error}%
	\begin{equation}
		\label{eqn : app/lattice/input_thm}
		\cError(L) \dpe \sup\set{ \module{ \frac{\MassADFpen{Lx}{Ly}{L\ell}\infty\cro{\pro'} }{L} - \ell\LimMassA^{(\infty)}\p{\frac{x-y}{\ell}}\cro{\pro'}  } }%
		{(x,y,\ell) \in K_\delta}%
		\xrightarrow[L\to \infty]{\text{a.s. and }\rL^1} 0.
	\end{equation}
	By~\eqref{eqn : app/lattice/reduction} and~\eqref{eqn : app/lattice/ergodicity/encadrement}, for large enough $L>0$, for all $(x,y,\ell)\in K$,
	\begin{equation*}
		\MassLADF{ \floor{Lx} }{ \floor{Ly} }{L\ell} \le \MassADFpen{Lx}{Ly}{\frac{ L\ell}{1-\delta} }\infty\cro{\pro'},
	\end{equation*}
	thus by triangle inequality
	\begin{align*}
		&\sup\set{\cro{ \frac{ \MassLADF{ \floor{Lx} }{ \floor{Ly} }{L\ell} }{L} - \ell\LimMassA\p{\frac{x-y}{\ell}}[\pro'] }^+ }%
		{(x,y,\ell) \in K} \\%
			&\quad \le  \sup\set{\cro{ \frac{ \MassADFpen{ Lx }{ Ly }{ \frac{L\ell}{1-\delta} }\infty[\pro'] }{L} - \ell\LimMassA\p{\frac{x-y}{\ell}}[\pro'] }^+ }%
		{(x,y,\ell) \in K}\\
			&\quad \le \sup\set{\module{ \frac{ \MassADFpen{ Lx }{ Ly }{ \frac{L\ell}{1-\delta} }\infty[\pro'] }{L} - \frac{\ell}{1-\delta}\LimMassA\p{\frac{(1-\delta)(x-y)}{\ell} }[\pro'] } }%
		{(x,y,\ell) \in K}\\%
			&\qquad + \sup\set{\module{ \frac{\ell}{1-\delta}\LimMassA\p{\frac{(1-\delta)(x-y)}{\ell}}[\pro'] - \ell\LimMassA\p{\frac{x-y}{\ell}}[\pro'] } }%
		{(x,y,\ell) \in K}.
	\end{align*}
	Consequently, by~\eqref{eqn : app/lattice/K_delta}, the definition of $\cError(L)$ and the definition of $\omega$,
	\begin{equation}
		\label{eqn : app/lattice/before_limit1}
		\begin{split}
		&\sup\set{\cro{ \frac{ \MassLADF{ \floor{Lx} }{ \floor{Ly} }{L\ell} }{L} - \ell\LimMassA\p{\frac{x-y}{\ell}}[\pro'] }^+ }%
		{(x,y,\ell) \in K} \\ &\quad\le \frac{1}{1-\delta} \cError\p{\frac L{1-\delta}} + \p{\frac{1}{1-\delta} - 1 }\LimMassA(0)[\pro']  + \omega\p{\delta}.
		\end{split}
	\end{equation}
	Similarly,
	\begin{equation}
		\label{eqn : app/lattice/before_limit2}
		\begin{split}
		&\sup\set{\cro{ \frac{ \MassLADF{ \floor{Lx} }{ \floor{Ly} }{L\ell} }{L} - \ell\LimMassA\p{\frac{x-y}{\ell}}[\pro'] }^- }%
		{(x,y,\ell) \in K} \\ &\quad \le \frac{1}{1+\delta} \cError\p{\frac L{1+\delta}} + \p{1-\frac{1}{1+\delta} }\LimMassA(0)[\pro']  + \omega\p{\delta}.
		\end{split}
	\end{equation}
	Taking $L\to \infty$ then $\delta\to 0$ gives the almost sure convergence in~\eqref{eqn : app/lattice/win_condition}. The inequality~\eqref{eqn : app/lattice/before_limit1} also gives the $\rL^1$ convergence by domination. 
\end{proof}
\begin{proof}[Proof of Lemma~\ref{lem : app/lattice/reduction}]
	Let $x,y\in\Z^d$ and $n\in \N$. Any lattice animal $\xi \in \SetLADF xy{n+1}$ may be covered by a tree belonging to $\SetADFalt{x}{y}{n}$ and included in $\Z^d$, therefore
	\begin{equation}
		\label{eqn : app/lattice/reduction/UB}
		\MassLADF xy{n+1} \le \MassADFpen{x}{y}{n}\infty\cro\pro.
	\end{equation}

	Conversely, let $\xi=(V,E) \in \SetADFalt{x}{y}{n}$. Assume that $\xi \subseteq \Z^d$. For all edges $\acc{z,z'} \in E$, there exists a lattice path %
	\[ \gamma(z,z') = (z=z_0,\dots, z_r = z'), \]%
	with $r=\norme[1]{z-z'}$. Let \[ \xi' \dpe \bigcup_{\acc{z,z'}\in E} \gamma(z,z'). \] Then $V\subseteq \xi'$ and $\xi' \in \SetLADF xy{n+1}$, thus
	\begin{equation}
		\label{eqn : app/lattice/reduction/LB}
		\MassLADF xy{n+1} \ge \MassADFpen{x}{y}{n}\infty\cro\pro.
	\end{equation}
\end{proof}
\begin{proof}[Proof of Lemma~\ref{lem : app/lattice/ergodicity}]
	Stationarity is a straightforward extension of Example~6.1.4 in \cite{Bac20} to marked processes. To show ergodicity, let $E\subset \ProSpace$ be an invariant subset, i.e.\ $E$ satisfies~\eqref{eqn : intro/main/invariant_event}. Then by independence of $\p{\Mass v}_{v\in \Z^d}$ and $U$,
	\begin{align}
		\Pb{\pro' \in E}%
			&= \int_{\intervalleff01^d} \Pb{ T_u\pro \in E } \d u.\nonumber%
		\intertext{Since $E$ is invariant,}
		\Pb{\pro' \in E}%
			&= \int_{\intervalleff01^d} \Pb{ \pro \in E } \d u = \Pb{ \pro \in E }. \nonumber
	\end{align}
	Using the ergodicity of the family $\p{\Mass v}_{v\in \Z^d}$, we obtain $\Pb{\pro'\in E} \in \acc{0,1}$, thus Assumption~\ref{ass : intro/main/Ergodic_Stationary} is proven.

	Let $x,y\in \R^d$ and $\ell > 2d$. Let $\xi' \in \SetADFalt{x}{y}{\ell-2d}$ such that $\xi' \subseteq (\pro')^*$. Then the translated animal \begin{equation*}
		\xi \dpe \xi' - U
	\end{equation*}
	satisfies $\xi \subseteq \projpro$, $\Mass{\xi}\cro\pro = \Mass{\xi'}\cro{\pro'}$ and by the triangle inequality,
	\begin{align*}
		\norme[1]{\xi} + \d(x,\xi) + \d(y,\xi)%
			&\le \norme[1]{\xi'} + \d(x,\xi') + \d(y,\xi') + 2\norme[1] U \le \ell,
	\end{align*}
	i.e.\ $\xi \dpe \SetADFalt{x}{y}{\ell}$. This implies the first inequality in~\eqref{eqn : app/lattice/ergodicity/encadrement}. The second one is similar.

	We now prove Assumption~\ref{ass : intro/main/Moment}. Let $\ell \ge 1$. By~\eqref{eqn : app/lattice/reduction/UB},
	\begin{align}
		\MassAUFpen{\ell}\infty\cro\pro %
			&= \MassADFpen00{\ell}\infty\cro\pro \nonumber\\
			&\le \MassADFpen00{\ceil\ell}\infty\cro\pro \nonumber\\
			&= \MassLADF00{\ceil \ell +1}\nonumber\\
			&= \MassLAUF{\ceil \ell +1}.
		\intertext{Applying~\eqref{eqn : intro/special/Zd/moment} yields}
		\sup_{\ell \ge 1} \frac{\E{\MassAUFpen{\ell}\infty\cro\pro} }{\ell} &<\infty.
		\intertext{Consequently, by~\eqref{eqn : intro/framework/easy_inequality},}
		\sup_{\ell \ge 1} \frac{\E{\MassAUF{\ell}\cro\pro} }{\ell} &<\infty.
	\end{align}
	The first inequality in~\eqref{eqn : app/lattice/ergodicity/encadrement} gives the analogous bound for $\pro'$, i.e.\ Assumption~\ref{ass : intro/main/Moment}. \end{proof}


\subsection{Greedy animals and paths and Determinantal point processes}
In this section we prove Corollary~\ref{cor : intro/special/DPP}. We fix a stationary marked simple point process $\pro$ on $\R^d \times \intervalleoo0\infty$, with mean measure $\Leb \otimes \nu$. We assume that $\pro$ is a good DPP with kernel $K$ and background measure $\mu= \Leb\otimes\nu$, such that $\nu$ satisfies~\eqref{eqn : intro/context/greedy_condition}. By stationarity, for all $z \in \R^d$, for $(\Leb\otimes \nu)^{\otimes k}$-almost all $\p{ (x_1,t_1), \dots, (x_k, t_k) }\in \p{\R^d \times \intervalleoo0\infty}^k$,
\begin{equation}
	\label{eqn : app/DPP/stationarity/det}
	\det\p{K\p{(x_i + z, t_i), (x_j +z, t_j)} }_{1\le i,j \le k} = \det\p{K\p{(x_i , t_i), (x_j , t_j)} }_{1\le i,j \le k}.
\end{equation}
Moreover, $\d x \nu (\d t) = \MM[\pro](\d x, \d t) = K\p{(x,t) , (x,t) } \d x \nu(\d t)$, thus for $(\Leb \otimes \nu)$-almost all $(x,t)\in\R^d\times \intervalleoo0\infty$,
\begin{equation}
	\label{eqn : app/DPP/stationarity/1}
	K\p{(x,t) , (x,t)} = 1.
\end{equation}
In particular, by Proposition~5.1.20 in \cite{Bac20}, for all $k\ge 1$ and Borel subsets $B_1,\dots,B_k \subseteq \R^d\times \intervalleoo0\infty$,
\begin{equation}
	\label{eqn : app/DPP/fmm}
	\FMM{k}\p{\prod_{i=1}^k B_i} \le \prod_{i=1}^k \MM(B_i),
\end{equation}
i.e.~\eqref{eqn : intro/suff_condition/fmm} with $C=1$. In particular, by Proposition~\ref{prop : intro/suff_condition/fmm} Assumption~\ref{ass : intro/main/Moment} holds.

To show Assumption~\ref{ass : intro/main/Ergodic_Stationary}, it is sufficient to prove Lemma~\ref{lem : app/DPP/mixing_vp}.
\begin{Lemma}
	\label{lem : app/DPP/mixing_vp}
	For all compact subsets $A,A'\subseteq \R^d$ and $B,B'\subseteq \intervalleoo0\infty$,
	\begin{equation}
		\label{eqn : app/DPP/mixing_vp}
		\Pb{\pro\p{A\times B}=0, \pro\p{(A'+z)\times B'}=0 } \xrightarrow[\norme z \to \infty]{} \Pb{\pro\p{A\times B}=0} \Pb{\pro\p{A'\times B'}=0}.
	\end{equation}
\end{Lemma}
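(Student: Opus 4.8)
The plan is to prove Lemma~\ref{lem : app/DPP/mixing_vp} using the determinantal void-probability formula together with the decay assumption~\eqref{eqn : intro/special/DPP/vanish_infty} on the kernel. Recall that for a good DPP with kernel $K$ and background measure $\mu = \Leb\otimes\nu$, the void probability on a compact set $E$ is given by a Fredholm determinant,
\begin{equation*}
	\Pb{\pro(E)=0} = \det\!\p{I - K_E},
\end{equation*}
where $K_E$ is the integral operator on $\rL^2(E,\mu)$ with kernel $K$ restricted to $E\times E$ (see Chapter~5 of \cite{Bac20}). Applied to $E = (A\times B) \cup \p{(A'+z)\times B'}$, the joint void probability is the Fredholm determinant of $I - K_E$, where $K_E$ now acts on $\rL^2\p{(A\times B)\sqcup((A'+z)\times B'),\mu}$. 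Writing this operator in block form with respect to the decomposition of $E$ into its two pieces, the diagonal blocks are exactly the operators governing $\Pb{\pro(A\times B)=0}$ and $\Pb{\pro((A'+z)\times B')=0}$ (the latter equals $\Pb{\pro(A'\times B')=0}$ by stationarity, cf.~\eqref{eqn : app/DPP/stationarity/det}), while the off-diagonal blocks have kernel $K\p{(x,s),(x'+z,t)}$ for $(x,s)\in A\times B$, $(x',t)\in A'\times B'$.

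The key analytic input is that, as $\norme z \to \infty$, the off-diagonal blocks tend to $0$ in an appropriate operator norm. By~\eqref{eqn : intro/special/DPP/vanish_infty}, for fixed $s,t$ we have $K\p{(0,s),(w,t)} \to 0$ as $\norme w \to\infty$; combined with stationarity this gives $K\p{(x,s),(x'+z,t)} = K\p{(x-x'-z,s),(0,t)} \to 0$ uniformly for $(x,x')$ in the compact set $A\times A'$ and, after a uniform-integrability or dominated-convergence argument in $(s,t)\in B\times B'$ (here one can use that $K\p{(x,s),(x,s)}=1$ by~\eqref{eqn : app/DPP/stationarity/1} together with the Hermitian nonnegativity of the $2\times 2$ Gram matrix to bound $\module{K\p{(x,s),(x'+z,t)}}\le 1$, so Lebesgue domination applies on the finite-measure set $(A\times B)\times(A'\times B')$), the Hilbert--Schmidt norm of the off-diagonal block goes to $0$. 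Since the Fredholm determinant $\det(I-\cdot)$ is continuous for the Hilbert--Schmidt (indeed trace) norm, and the block-diagonal operator with vanished off-diagonal blocks has Fredholm determinant equal to the product $\det(I-K_{A\times B})\det(I-K_{(A'+z)\times B'})$, we conclude
\begin{equation*}
	\det\!\p{I-K_E} \xrightarrow[\norme z\to\infty]{} \Pb{\pro(A\times B)=0}\,\Pb{\pro(A'\times B')=0},
\end{equation*}
which is precisely~\eqref{eqn : app/DPP/mixing_vp}. To then deduce Assumption~\ref{ass : intro/main/Ergodic_Stationary}, I would invoke the standard fact that a stationary point process whose void probabilities satisfy the mixing condition~\eqref{eqn : app/DPP/mixing_vp} over all compact $A,A',B,B'$ is mixing, hence ergodic, with respect to the $\R^d$-translations: void probabilities of the form $\acc{\pro(A\times B)=0}$ generate (a $\pi$-system generating) the invariant $\sigma$-algebra's relevant test events, and a monotone-class / approximation argument upgrades the mixing on this generating family to mixing of the whole flow.

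The main obstacle I expect is the operator-norm convergence of the off-diagonal block, specifically making precise the passage from the pointwise decay~\eqref{eqn : intro/special/DPP/vanish_infty} to convergence in Hilbert--Schmidt norm: one must handle the mark variables $(s,t)$ ranging over $B\times B'$, where a priori~\eqref{eqn : intro/special/DPP/vanish_infty} gives only pointwise-in-$(s,t)$ decay, not uniform decay. The resolution is the uniform bound $\module{K\p{(x,s),(x'+z,t)}}\le \sqrt{K((x,s),(x,s))}\sqrt{K((x'+z,t),(x'+z,t))} = 1$ coming from nonnegative-definiteness of the Gram matrix together with~\eqref{eqn : app/DPP/stationarity/1}, which furnishes an integrable dominating function on the finite-measure product set and lets dominated convergence take over; the continuity of the Fredholm determinant and the block-triangular structure are then routine. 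A secondary, more bookkeeping-level point is checking that the block decomposition of $K_E$ is legitimate when $A\times B$ and $(A'+z)\times B'$ overlap for small $\norme z$ — but this is irrelevant since we only need the limit as $\norme z\to\infty$, where the two sets are disjoint.
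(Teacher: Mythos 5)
Your route is genuinely different from the paper's: you encode the void probabilities as Fredholm determinants $\det(I-K_E)$ and argue via a block decomposition of the operator, whereas the paper works directly with the Fredholm \emph{series} (Corollary~5.1.19 in \cite{Bac20}), splits each $k$-fold integral over the union into the contributions with $n$ points in $A\times B$ and $m$ in $(A'+z)\times B'$, shows that the determinant factorizes pointwise in the limit via the permutation expansion and~\eqref{eqn : intro/special/DPP/vanish_infty}, and dominates the whole series using Hadamard's inequality. The two are morally the same computation, but your version contains one step that is false as stated: the map $T\mapsto\det(I-T)$ is \emph{not} continuous for the Hilbert--Schmidt norm, only for the trace norm, which is strictly stronger --- so your parenthetical ``(indeed trace)'' goes in the wrong direction --- and your dominated-convergence argument only yields Hilbert--Schmidt convergence of the off-diagonal blocks to $0$. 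One can have $\norme[\mathrm{HS}]{T_n}\to0$ while $\det(I-T_n)\not\to1$ (e.g.\ $T_n$ diagonal with $n$ eigenvalues equal to $n^{-3/4}$), so this is a genuine gap and not a formality.

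The gap is repairable within your framework: the perturbation you remove is purely off-diagonal in the block sense, so its integral kernel vanishes on the diagonal and the full operator and the block-diagonal one have the same trace, namely $\mu(A\times B)+\mu(A'\times B')$ by~\eqref{eqn : app/DPP/stationarity/1}; hence $\det(I-\cdot)=e^{-\operatorname{tr}}\det_2(I-\cdot)$ for both, and the regularized determinant $\det_2$ \emph{is} Hilbert--Schmidt continuous, which closes the argument. Alternatively one can avoid operator topology altogether and run the limit term by term on the series with the Hadamard bound $\module{\det\p{K(y_i,y_j)}_{i,j\le k}}\le1$ as dominating function --- which is exactly what the paper does. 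Two smaller points: stationarity~\eqref{eqn : app/DPP/stationarity/det} gives translation invariance of the \emph{determinants}, hence (via the Hermitian $2\times2$ minors) of $\module{K(y,y')}$ but not of $K$ itself, so your identity $K\p{(x,s),(x'+z,t)}=K\p{(x-x'-z,s),(0,t)}$ should be asserted only for moduli --- which suffices for the Hilbert--Schmidt estimate; and your uniform bound $\module{K}\le1$ is correct and is the same Hadamard-type input the paper uses. The concluding deduction of mixing, hence ergodicity, from~\eqref{eqn : app/DPP/mixing_vp} via R\'enyi's theorem is as in the paper.
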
 
Indeed, since $\pro$ is simple, by Rényi's theorem (see e.g.\ \cite[Theorem~2.1.10]{Bac20}) the void probabilities $\Pb{\pro\p{A\times B}=0 }$ with compact subsets $A\subseteq \R^d$ and $B\subseteq \intervalleoo0\infty$ characterize its distribution. Consequently, Lemma~12.3.II in Daly, Vere-Jones (2007) \cite{Dal07} and Lemma~\ref{lem : app/DPP/mixing_vp} imply that $\pro$ is mixing, thus ergodic. \qed

\begin{proof}[Proof of Lemma~\ref{lem : app/DPP/mixing_vp}]
	By Corollary~5.1.19 in \cite{Bac20}, for all compact subset $D\subseteq \R^d\times \intervalleoo0\infty$,
	\begin{equation}
		\label{eqn : app/DPP/mixing_vp/expression_vp}
		\Pb{\pro(D)=0}%
			=1 + \sum_{k=1}^\infty \frac{(-1)^k}{k!} \int_{D^k} \det \p{K\p{(x_i,t_i), (x_j, t_j)} }_{1\le i,j\le k}\d x_1 \dots \d x_k \nu(\d t_1)\dots \nu(\d t_k).
	\end{equation}
	For all integers $n,m,k\ge 0$ such that $n+m=k$, $x_{1:k}\dpe (x_1,\dots, x_k)\in (\R^d)^k$, $t_{1:k} \dpe (t_1,\dots, t_k)\in \intervalleoo0\infty^k $ and $z\in \R^d$, define
	\begin{equation*}
		f(n,m,k, x_{1:k}, t_{1:k})[z] \dpe \det\p{K\p{y_i,y_j} }_{1\le i,j \le k},
	\end{equation*}
	where
	\begin{equation*}
		y_i \dpe%
		\begin{cases}
			(x_i, t_i) &\text{ if } 1\le i \le n,\\
			(x_i+z, t_i) &\text{ if } n+1 \le j \le k.
		\end{cases}
	\end{equation*}

	Fix $n,m,k$. We claim that for $\Leb^{\otimes k} \otimes \nu^{\otimes k}$-almost all $(x_{1:k} , t_{1:k})$,
	\begin{equation}
		\label{eqn : app/DPP/mixing_vp/pointwise_cvg}
		\lim_{\norme z \to \infty} f(n,m,k, x_{1:k}, t_{1:k})[z] =\det\p{K\p{(x_i,t_i), (x_j,t_j)} }_{1\le i,j \le n} \cdot \det\p{K\p{(x_i,t_i), (x_j,t_j)} }_{n+1\le i,j \le k}.
	\end{equation}
	Indeed, consider the subgroup $\Sym[1]{\intint1k}$ of $\Sym{\intint1k}$ consisting of permutations that fix $\intint1n$ and $\intint{n+1}{k}$, and $\Sym[2]{\intint1k} \dpe \Sym{\intint1k} \setminus \Sym[1]{\intint1k}$. We have
	\begin{align}
		&f(n,m,k, x_{1:k}, t_{1:k})[z]\\%
			&\quad= \sum_{\sigma \in \Sym{\intint1k} } \eps(\sigma) \prod_{i=1}^k K\p{y_i, y_{\sigma(i)} }\eol
			&\quad= \sum_{\sigma \in \Sym[1]{\intint1k} } \eps(\sigma) \prod_{i=1}^k K\p{y_i, y_{\sigma(i)} }
				+ \sum_{\sigma \in \Sym[2]{\intint1k} } \eps(\sigma) \prod_{i=1}^k K\p{y_i, y_{\sigma(i)} }\eol
			&\quad= \p{\sum_{\sigma_1 \in \Sym{\intint1n} } \eps(\sigma_1) \prod_{i=1}^n K\p{y_i, y_{\sigma_1(i)} }} \p{\sum_{\sigma_2 \in \Sym{\intint{n+1}{m} } } \eps(\sigma_2) \prod_{i=n+1}^k K\p{y_i, y_{\sigma_2(i)} }} \eol%
				&\qquad + \sum_{\sigma \in \Sym[2]{\intint1k} } \eps(\sigma) \prod_{i=1}^k K\p{y_i, y_{\sigma(i)} }.\nonumber
	\end{align}
	By~\eqref{eqn : app/DPP/stationarity/det}, for $\Leb^{\otimes k} \otimes \nu^{\otimes k}$-almost all $(x_{1:k} , t_{1:k})$,
	\begin{align}
		f(n,m,k, x_{1:k}, t_{1:k})[z] &= \det\p{K\p{(x_i,t_i), (x_j,t_j)} }_{1\le i,j \le n} \cdot \det\p{K\p{(x_i,t_i), (x_j,t_j)} }_{n+1\le i,j \le k} \eol
		&\quad + \sum_{\sigma \in \Sym[2]{\intint1k} } \eps(\sigma) \prod_{i=1}^k K\p{y_i, y_{\sigma(i)} }.\nonumber
	\end{align}
	Each term in the sum is the product of factors that are either constant with respect to $z$ or converges to $0$ when $\norme z \to \infty$, and at least one of them belongs to the second category, thus applying~\eqref{eqn : intro/special/DPP/vanish_infty} implies~\eqref{eqn : app/DPP/mixing_vp/pointwise_cvg}.

	Let $A,A'\subseteq \R^d$ and $B,B'\subseteq \intervalleoo0\infty$ be compact subsets and $z\in \R^d$. Assume that $\norme z$ is large enough so that $A\cap (A'+z) = \emptyset$. Then by~\eqref{eqn : app/DPP/mixing_vp/expression_vp},
	\begin{align}
		&\Pb{\pro\p{A\times B}=0,\quad \pro\p{(A'+z)\times B'}=0}\eol
			&\quad= \Pb{ \pro\p{ \p{A\times B} \cup \p{(A'+z)\times B'}  } = 0 }\eol
			&\quad=1 + \sum_{k=1}^\infty \frac{(-1)^k}{k!} \int_{\p{ \p{A\times B} \cup \p{(A'+z)\times B'}  }^k} \det \p{K\p{(x_i,t_i), (x_j, t_j)} }_{1\le i,j\le k}\d x_1 \dots \d x_k \nu(\d t_1)\dots \nu(\d t_k)\eol
			&\quad=1 + \sum_{k=1}^\infty \frac{(-1)^k}{k!} \sum_{n+m=k}\Biggl[%
			\frac{k!}{n!m!} \eol
			&\qquad\qquad \cdot\int_{ \p{A\times B}^n\times \p{(A'+z)\times B'}^m } \det \p{K\p{(x_i,t_i), (x_j, t_j)} }_{1\le i,j\le k}\d x_1 \dots \d x_k \nu(\d t_1)\dots \nu(\d t_k) \Biggr]\eol
			&\quad=1 + \sum_{k=1}^\infty \sum_{n+m=k}\cro{%
			\frac{ (-1)^k }{n!m!} \int_{ \p{A\times B}^n\times \p{A'\times B'}^m } f(n,m,k, x_{1:k}, t_{1:k})[z] \d x_1 \dots \d x_k \nu(\d t_1)\dots \nu(\d t_k) }.
		\label{eqn : app/DPP/coupled}
	\end{align}
	We will apply the dominated convergence theorem to the integral in~\eqref{eqn : app/DPP/coupled}. By Hadamard's inequality (see e.g.\ \cite[Theorem~7.8.1]{Hor85}) and~\eqref{eqn : app/DPP/stationarity/1},
	\begin{equation*}
		0\le  f(n,m,k, x_{1:k}, t_{1:k})[z]  \le \prod_{i=1}^k K\p{y_i, y_i} = 1,
	\end{equation*}
	thus
	\begin{align}
		&1+\sum_{k=1}^\infty \sum_{n+m=k}\cro{%
			\frac{ 1 }{n!m!} \int_{ \p{A\times B}^n\times \p{A'\times B'}^m } f(n,m,k, x_{1:k}, t_{1:k})[z] \d x_1 \dots \d x_k \nu(\d t_1)\dots \nu(\d t_k) }\eol
		&\quad \le 1+\sum_{k=1}^\infty \sum_{n+m=k}%
			\frac{  \Leb(A)^n\nu(B)^n \Leb(A')^m\nu(B')^m }{n!m!} \eol
		&\quad = \p{1+ \sum_{n=1}^\infty \frac{\Leb(A)^n \nu(B)^n}{n!} } \cdot \p{1+ \sum_{m=1}^\infty \frac{\Leb(A')^m \nu(B')^m}{m!} } < \infty.\nonumber
	\end{align}
	Consequently,~\eqref{eqn : app/DPP/mixing_vp/pointwise_cvg} and~\eqref{eqn : app/DPP/coupled} yield
	\begin{equation}
	\begin{split}
		&\lim_{\norme z \to \infty }\Pb{\pro\p{A\times B}=0, \pro\p{(A'+z)\times B'}=0} \\
		&\quad= 1 + \sum_{k=1}^\infty \sum_{n+m=k}\Biggl[%
			\frac{ (-1)^k }{n!m!} \int_{ \p{A\times B}^n\times \p{A'\times B'}^m }\eol
		&\qquad \det\p{K\p{(x_i,t_i), (x_j,t_j)} }_{1\le i,j \le n} \cdot \det\p{K\p{(x_i,t_i), (x_j,t_j)} }_{n+1\le i,j \le k} \d x_1 \dots \d x_k \nu(\d t_1)\dots \nu(\d t_k) \Biggl],
	\end{split}
	\end{equation}
	thus applying~\eqref{eqn : app/DPP/mixing_vp/expression_vp} once again gives~\eqref{eqn : app/DPP/mixing_vp}. 
\end{proof}

\appendix
\section{A superadditive ergodic theorem}
\label{appsec : AK81}
In this section we prove Theorem~\ref{thm : intro/outline/AK81}. We follow Akcoglu and Krengel (1981) \cite{AK81}. For all processes $X$ as in the theorem, we define
\begin{equation}
	\label{eqn : AK81/time_constant}
	\TC{X} \dpe \sup_{t>0} \frac{\E{X(0,t)}}{t} = \sup_{t\ge 1} \frac{\E{X(0,t)}}{t} = \lim_{t\to\infty} \frac{\E{X(0,t)}}{t},
\end{equation}
the second and third equalities being consequences of Fekete's lemma. Our main argument is the maximal inequality given by Lemma~\ref{lem : AK81/max_inequality}.
\begin{Lemma}
	\label{lem : AK81/max_inequality}
	Let $\p{ \hat X(s,t) }_{\substack{s<t \\ s,t\in \Z} }$ be a stationary, nonnegative, superadditive, discrete process. Then for all $\alpha>0$,
	\begin{equation}
		\label{eqn : AK81/max_inequality}
		\Pb{\sup_{n \ge 1} \frac{\hat X(-n,n)}{2n}> \alpha} \le \frac{3\TC{\hat X} }{\alpha}.
	\end{equation}
\end{Lemma}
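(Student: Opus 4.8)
The plan is to run a one-dimensional Vitali-type covering argument, in the spirit of the Hardy--Littlewood maximal inequality. First I would reduce to a finite horizon: for an integer $N\ge 1$ set
\[
A_N \dpe \acc{\,\exists\, n\in\intint1N,\ \hat X(-n,n) > 2n\alpha\,},
\]
so that $A_N$ increases to $\acc{\sup_{n\ge1}\hat X(-n,n)/(2n)>\alpha}$; by continuity from below it suffices to prove $\Pb{A_N}\le 3\TC{\hat X}/\alpha$ for every $N$ and then let $N\to\infty$.

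Now fix $N$ and a large integer $L$. Call an integer $j$ with $\module j\le L$ \emph{marked} (for a given outcome) if $\hat X(j-n,j+n) > 2n\alpha$ for some $n\in\intint1N$; by stationarity $\Pb{j \text{ marked}}=\Pb{A_N}$ for every such $j$. For each marked $j$ pick one such $n=n_j$ and associate the discrete interval $I_j\dpe\intint{j-n_j}{\,j+n_j-1}$, which has exactly $2n_j$ elements, contains $j$, is contained in $\intint{-L-N}{\,L+N-1}$, and satisfies $\hat X(j-n_j,j+n_j)>\alpha\,\#I_j$. I then apply the classical $1$-dimensional covering lemma to the finite family $(I_j)_{j\text{ marked}}$: greedily selecting a longest interval and discarding those that meet it (each discarded interval lies inside the triple of the one that removed it, hence the constant $3$) produces a pairwise disjoint subfamily $I_{j_1},\dots,I_{j_k}$ with $\sum_\ell\#I_{j_\ell}\ge\frac13\,\#\bigl(\bigcup_{j\text{ marked}}I_j\bigr)\ge\frac13\,\#\acc{j\text{ marked}}$, the last step because $j\in I_j$. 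Summing $\hat X(j_\ell-n_{j_\ell},j_\ell+n_{j_\ell})>\alpha\,\#I_{j_\ell}$ over $\ell$ and bounding $\sum_\ell\hat X(\cdot)$ from above by $\hat X(-L-N,L+N)$ — obtained by expanding this enveloping interval at the successive endpoints of the $I_{j_\ell}$ via superadditivity and dropping the nonnegative "gap" terms — gives the pointwise estimate
\[
\frac\alpha3\,\#\acc{\,j\in\intint{-L}{L}:j\text{ marked}\,}\ \le\ \hat X(-L-N,\,L+N).
\]

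Taking expectations, the left side is $\frac\alpha3(2L+1)\Pb{A_N}$ by stationarity, while the right side equals $\E{\hat X(0,2L+2N)}\le(2L+2N)\TC{\hat X}$ by stationarity and the definition~\eqref{eqn : AK81/time_constant} of $\TC{\hat X}$. Therefore $\Pb{A_N}\le\frac{3\TC{\hat X}}{\alpha}\cdot\frac{2L+2N}{2L+1}$; letting $L\to\infty$ and then $N\to\infty$ yields~\eqref{eqn : AK81/max_inequality}.

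The proof is mostly routine; the points needing care are the discrete bookkeeping — using half-open intervals of length exactly $2n_j$ together with the inclusion $j\in I_j$, so that the covering constant comes out to be exactly $3$ rather than something larger — and the verification that every $I_j$ sits inside $\intint{-L-N}{L+N-1}$, which is precisely what allows superadditivity to be applied cleanly without boundary complications.
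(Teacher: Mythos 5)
Your argument is correct, but it takes a different route from the paper: the paper's proof of this lemma is essentially a one-line citation of Corollary~4.5 in Akcoglu--Krengel \cite{AK81}, applied to the $1$-regular family $\p{\intint{-n}{n}}_{1\le n\le N}$, followed by letting $N\to\infty$. You instead give a self-contained proof by a discrete Vitali covering argument in the style of the Hardy--Littlewood maximal inequality. The mechanics all check out: the half-open discrete intervals $I_j=\intint{j-n_j}{j+n_j-1}$ of cardinality exactly $2n_j$ do contain $j$ and sit inside $\intint{-L-N}{L+N-1}$; the greedy selection yields a disjoint subfamily covering at least a third of the marked points; disjointness of the $I_{j_\ell}$ lets you chain superadditivity across $\intervalleff{-L-N}{L+N}$ and drop the nonnegative gap terms; and the ratio $\frac{2L+2N}{2L+1}\to1$ as $L\to\infty$ with $N$ fixed, so the constant comes out to exactly $3$, matching the statement. (One cosmetic point: you should fix a measurable choice of $n_j$, e.g.\ the smallest admissible $n$, but this is immediate.) What your approach buys is that the appendix becomes fully self-contained rather than resting on the cited maximal inequality; what it costs is length, since the covering argument you write out is essentially the same mechanism that underlies the Akcoglu--Krengel result being cited.
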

\begin{proof}
	Let $N\in \N^*$. Corollary~4.5 in Akcoglu and Krengel (1981) \cite{AK81} with the $1$-regular family $\p{\intint{-n}{n}}_{1\le n \le N}$ gives
	\begin{equation*}
		\Pb{\sup_{1\le n \le N} \frac{\hat X(-n,n)}{2n}> \alpha} \le \frac{3\TC{\hat X} }{\alpha}.
	\end{equation*}
	Letting $N\to \infty$ yields~\eqref{eqn : AK81/max_inequality}.
\end{proof}

\begin{proof}[Proof of Theorem~\ref{thm : intro/outline/AK81}]
Let $a\le b$. We define
\begin{equation}
	\Yinf(a,b) \dpe \liminf_{\ell \to \infty} \frac{X(a\ell, b\ell)}{\ell}%
	\text{ and }%
	\Ysup(a,b) \dpe \limsup_{\ell \to \infty} \frac{X(a\ell, b\ell)}{\ell}.
\end{equation}
For all $t>0$ and $n,m \in \Z$ such that $n\le m$, we define
\begin{equation}
	S_t(n,m) \dpe \sum_{k=n}^{m-1}X\p{kt, (k+1)t}.
\end{equation}
By Birkhoff's ergodic theorem (see e.g.\ \cite[Theorem~1.14]{Wal00}), for all $t>0$, for all integers $a\le b$, the limit
\begin{equation}
	\label{eqn : AL81/Birkhoff}
	\lim_{n\to \infty} \frac1n S_t(an,bn)
\end{equation}
exists a.s. and in $\rL^1$. In particular, the inequality
\begin{equation*}
	X(a\ell, b\ell) \ge \sum_{k = \ceil{\frac{\ell a}{t} } }^{\floor{\frac{\ell b}{t}-1}} X\p{kt, (k+1)t}
\end{equation*}
implies
\begin{equation*}
	\E{\Yinf(a,b)} \ge (b-a)\TC{X}.
\end{equation*}
Fatou's lemma yields the converse inequality, thus
\begin{equation}
	\label{eqn : AK81/expectaction_limit}
 	\E{\Yinf(a,b)} = (b-a)\TC{X}.
\end{equation}
To prove the almost sure convergence, we treat three cases differently, according to the values of $a$ and $b$.

\emph{Case 1: Assume that $a$ and $b$ are rational and satisfy $a\le 0 \le b$.} Define $c\dpe \module a \vee \module b$. Let $\alpha>0$. Since $\ell \mapsto X(a\ell, b\ell)$ is nondecreasing, for all $L>0$,
\begin{equation}
	\label{eqn : AK81/Yinf_Ysup_tranches}
	\Yinf(a,b) = \liminf_{k \to \infty} \frac{X(akL, bkL)}{kL}%
	\text{ and }%
	\Ysup(a,b) = \limsup_{k \to \infty} \frac{X(akL, bkL)}{kL}.
\end{equation}
In particular, without loss of generality $a$ and $b$ may be assumed to be integers. By~\eqref{eqn : AK81/time_constant} there exists $t>0$ such that
\begin{equation}
	\label{eqn : AK81/large_t_TC}
	\frac{\E{X(0,t)} }{t} \ge \TC{X} -\eps.
\end{equation}
Consider the discrete process $\p{\hat X_t(n,m)}_{\substack{n<m \\ n,m\in \Z} }$ defined by
\begin{equation}
	\hat X_t(n,m) \dpe X(nt, mt) - S_t(n,m).
\end{equation}
Then by~\eqref{eqn : AL81/Birkhoff} and~\eqref{eqn : AK81/Yinf_Ysup_tranches}, almost surely,
\begin{equation*}
	\Ysup(a,b) - \Yinf(a,b)%
		= \limsup_{k\to\infty} \frac{\hat X_t(ka, kb)}{kt}  - \liminf_{k\to\infty} \frac{\hat X_t(ka, kb)}{kt}.
\end{equation*}
Moreover, since $\hat X_t$ is nonnegative and superadditive, for all $k\ge 1$,
\begin{equation*}
	0 \le \frac{\hat X_t(ka, kb)}{kt} \le \frac{\hat X_t(-kc, kc)}{kt}.
\end{equation*}
Consequently,
\begin{equation*}
	\Ysup(a,b) - \Yinf(a,b)%
		\le \sup_{k\ge 1} \frac{\hat X_t\p{-kc,kc} }{kt}.
\end{equation*}
Besides, $\hat X_t$ satisfies the hypothesis of Lemma~\ref{lem : AK81/max_inequality}, thus for all $\alpha>0$,
\begin{align}
	\Pb{\Ysup(a,b) - \Yinf(a,b) > \alpha} %
		&\le \Pb{\sup_{k\ge 1} \frac{\hat X_t\p{-kc,kc} }{2kc} > \frac{t\alpha}{2c} } \nonumber\\%
		&\le \frac{6c\TC{\hat X_t} }{ t\alpha}.\nonumber%
	\intertext{Moreover, by~\eqref{eqn : AK81/large_t_TC}, $\TC{\hat X_t} = t\p{ \TC{X} - \TC{S_t} } \le \eps$, therefore}
	\Pb{\Ysup(a,b) - \Yinf(a,b) > \alpha} %
		&\le \frac{6c\eps }{ \alpha}.\nonumber
\end{align}
Consequently, almost surely, $\Yinf(a,b) = \Ysup(a,b)$, i.e.\ the limit
\begin{equation}
	\label{eqn : AK81/as_cvg}
	Y(a,b) \dpe \lim_{\ell \to \infty} \frac{X(a\ell, b\ell)}{\ell}
\end{equation}
exists.

\emph{Case 2: Assume that $a$ and $b$ are rational numbers with the same sign.} We only treat the subcase $0\le a \le b$, the other one being similar. Using Case 1, by superaddivity,
\begin{equation}
	\Yinf(a,b) \le \Ysup(a,b) \le Y(0,b) - Y(0,a).
\end{equation}
By~\eqref{eqn : AK81/expectaction_limit} the lefthand side and the righthand side have the same expectation, thus almost surely, 
\begin{equation}
	Y(a,b) \dpe \lim_{\ell \to \infty} \frac{X(a\ell, b\ell)}{\ell} = Y(0,b) - Y(0,a).
\end{equation}

\emph{General case.} Consider two sequences of rational numbers $(a_n)$ and $(b_n)$ such that
\begin{equation}
	\inclim{n\to \infty} a_n = a\text{ and } \declim{n\to \infty} b_n =b.
\end{equation}
Using Case 1 or 2, depending on the sign of $a_n$ and $b_n$, by monotone convergence, almost surely,
\begin{equation}
	\Yinf(a,b) \le \Ysup(a,b) \le \declim{n\to \infty} Y(a_n ,b_n).
\end{equation}
By~\eqref{eqn : AK81/expectaction_limit} the leftmost and rightmost terms have the same expectation, thus almost surely,
\begin{equation*}
	Y(a,b) \dpe \lim_{\ell \to \infty} \frac{X(a\ell, b\ell)}{\ell} =  \declim{n\to \infty} Y(a_n ,b_n),
\end{equation*}
i.e.\ the almost sure part in~\eqref{eqn : intro/outline/AK81/convergence}, and~\eqref{eqn : intro/outline/AK81/convergence_ab} hold. 

We now turn to the $\rL^1$ convergence in~\eqref{eqn : intro/outline/AK81/convergence}. By~\eqref{eqn : AK81/expectaction_limit}, $Y(a,b)$ is integrable, hence by dominated convergence
\begin{equation}
	\label{eqn : AK81/cvg_negative_part}
	\lim_{\ell \to \infty} \E{\p{\frac{X(a\ell, b\ell)}{\ell} - Y(a,b)}^- } =0.
\end{equation}
Furthermore, using~\eqref{eqn : AK81/expectaction_limit} again yields
\begin{equation}
	\label{eqn : AK81/cvg_expect}
	\lim_{\ell \to \infty} \E{\frac{X(a\ell, b\ell)}{\ell} - Y(a,b) } =0.
\end{equation}
Combining~\eqref{eqn : AK81/cvg_negative_part} and~\eqref{eqn : AK81/cvg_expect}, we obtain the $\rL^1$ convergence.
\end{proof}



\newpage

\end{document}